\documentclass[12pt,reqno]{amsart}
\usepackage{dsfont}
\usepackage{bbm}
\usepackage{mathrsfs}
\usepackage{color}
\usepackage{comment}
\usepackage{caption}
\usepackage{enumitem}
\usepackage{diagbox}
\allowdisplaybreaks[4]
\usepackage{amssymb, amsmath}
\usepackage{bm}
\DeclareMathOperator{\diag}{diag}
\newcommand{\one}{\mathbbm{1}}
\DeclareMathOperator{\adj}{adj}

\usepackage{mathtools}
\usepackage{verbatim}
\usepackage{graphicx,amsmath,amsfonts,amssymb,amscd,amsthm}
\usepackage{hyperref}
\usepackage[numbers,sort&compress]{natbib}
\usepackage{enumitem}
\usepackage{xstring}   
\usepackage{etoolbox}
\usepackage{multirow} 
\usepackage[numbers,sort&compress]{natbib}
\usepackage{array}
\usepackage{booktabs}
\usepackage{multirow}
\usepackage{makecell}
\usepackage{mathtools}
\usepackage{abstract}
\usepackage{float}
\usepackage{pifont}
\usepackage{xcolor,listings,fancyhdr}
\setenumerate[1]{itemsep=1pt,partopsep=0pt,parsep=\parskip,topsep=1pt}
\setitemize[1]{itemsep=1pt,partopsep=0pt,parsep=\parskip,topsep=1pt}

\newtheorem{theorem}{Theorem}[section]
\newtheorem{lemma}{Lemma}[section]
\newtheorem{proposition}{Proposition}[section]
\newtheorem{remark}{Remark}[section]

\newtheorem{corollary}{Corollary}

 \newcommand{\Bp}{\begin{proof}}
 \newcommand{\Ep}{\end{proof}}
\hypersetup{colorlinks,linkcolor=blue,citecolor=blue}
\newtheoremstyle{kai}
{3pt} {3pt} {} {} {\bfseries} {.} {.5em} {}
\def\EquationsBySection{\def\theequation
{\thesection.\arabic{equation}}
\@addtoreset{equation}{section}}

\newcommand\old[1]{}
\renewcommand{\theequation}{\thesection.\arabic{equation}}

 \newcommand{\R}{\mathbb{R}}

\newcommand{\Om}{\Omega}
\newcommand{\dd}{\,\mathrm{d}}
\newcommand{\Rea}{\operatorname{Re}}
\newcommand{\Ima}{\operatorname{Im}}

\newcommand{\norm}[1]{\left\lVert#1\right\rVert}

\newcommand{\C}{\mathbb C}
\newcommand{\Xzero}{\mathcal X_0}
\newcommand{\HN}{H^2_N}
\newcommand{\beq}{\begin{equation}}
\newcommand{\eeq}{\end{equation}}
\newfloat{figtab}{htb}{fgtb}
\makeatletter
  \newcommand\figcaption{\def\@captype{figure}\caption}
  \newcommand\tabcaption{\def\@captype{table}\caption}
\makeatother

\numberwithin{equation}{section}
\addcontentsline{toc}{section}{Acknowledgement}

\begin{document}
\raggedbottom

\author{Fang Li}
\address{School of Mathematics, Sun Yat-sen University,
Guangzhou 510275, China}
\email{lifang55@mail.sysu.edu.cn}

\author{Pengfei Song}
\address{School of Mathematics and Statistics, Xi'an Jiaotong University, Xi'an, Shaanxi 710049, China }
\email{song921012@xjtu.edu.cn}

\author{Kexin Zhang}
\address{School of Mathematics, Sun Yat-sen University,
Guangzhou 510275, China}
\email{kxzmath@163.com}

\author{Maolin Zhou}
\address{Chern Institute of Mathematics, Nankai University,
Tianjin 300071, China}
\email{zhouml123@nankai.edu.cn}

\title
{Time-periodic solutions of an SIS epidemic model}
\date{}
\maketitle

 \begin{abstract}
We prove the existence of strictly positive, nonstationary time-periodic solutions for the spatially heterogeneous SIS epidemic model with standard incidence. On a bounded interval with Neumann boundary conditions, we construct smooth, time-independent transmission and recovery rates satisfying \(\beta(x)>\gamma(x)>0\), together with diffusion coefficients \(d_I>d_S>0\), such that periodic solutions coexist with the unique endemic equilibrium for every prescribed positive total population. Thus, pointwise dominance of transmission over recovery does not ensure convergence to equilibrium. Our approach combines an inverse construction of an unstable endemic equilibrium with a coefficient path that preserves both the equilibrium and the pointwise inequalities. A change in the number of unstable eigenvalues along this path yields small-amplitude periodic solutions through a center-manifold reduction and a generalized Hopf bifurcation argument.
The proof uses an AI-assisted construction of an unstable
endemic equilibrium under the pointwise condition
$\beta>\gamma>0$.
\end{abstract}

\section{Introduction}
Reaction–diffusion models are widely used to study the effects of spatial heterogeneity and individual movement on the spread of infectious diseases; see \cite{Bailey1980,BellomoBrezziChaplain2021,BellomoBrezziChaplain2022,BellomoBuriniOutada2022,LouSalako2023,LouSalakoSong2023,Mollison1972} and the references therein. The susceptible–infected–susceptible (SIS) model is one of the basic models in mathematical epidemiology.

Allen, Bolker, Lou and Nevai \cite{Allen} introduced an SIS reaction–diffusion model with standard incidence in a spatially heterogeneous environment. Its one-dimensional form on the interval $(0,\ell)$ is
\begin{equation}\label{eq:modeli}
\begin{cases}
    &S_t=d_SS_{xx}-\beta(x)\frac{SI}{S+I}+\gamma(x)I,
0<x<\ell,\ t>0,\\[5pt]
&I_t=d_II_{xx}+\beta(x)\frac{SI}{S+I}-\gamma(x)I,
0<x<\ell,\ t>0,
\end{cases}
\end{equation}
with homogeneous Neumann boundary conditions
\[
S_x(t,0)=S_x(t,\ell)=I_x(t,0)=I_x(t,\ell)=0.
\]
Here $S(t,x)$ and $I(t,x)$ denote the densities of susceptible and infected individuals, respectively. The positive constants $d_S$ and $d_I$ are their respective diffusion coefficients. The positive functions $\beta(x)$ and $\gamma(x)$ represent the disease transmission and recovery rates, respectively. The constant $\ell$ is the length of the interval. The total population
\[
M=\int_0^\ell \bigl(S(t,x)+I(t,x)\bigr)\,dx
\]
is conserved. 

For a prescribed total population $M>0$, an equilibrium
is a nonnegative, time-independent solution $(S_0,I_0)$.
It satisfies
\begin{equation}\label{eq:steady}
\begin{cases}
\displaystyle
d_S S_0''-\beta(x)\frac{S_0I_0}{S_0+I_0}
+\gamma(x)I_0=0,
&0<x<\ell,\\[5pt]
\displaystyle
d_I I_0''+\beta(x)\frac{S_0I_0}{S_0+I_0}
-\gamma(x)I_0=0,
&0<x<\ell,\\[5pt]
S_0'(0)=S_0'(\ell)=I_0'(0)=I_0'(\ell)=0,
\end{cases}
\end{equation}
together with the mass constraint
\[
\int_0^\ell \bigl(S_0(x)+I_0(x)\bigr)\,dx=M.
\]
The disease-free equilibrium is $(M/\ell,0)$.
An equilibrium with both components strictly positive
on $[0,\ell]$ is called an endemic equilibrium.

Allen et al.~\cite{Allen} thoroughly studied the existence and uniqueness of  the endemic equilibrium. In particular,
they proved that under the   condition
$\int_0^\ell  (\beta(x)-\gamma(x) )dx>0,$
the system admits a unique endemic equilibrium for any
$d_S,d_I>0$ and any prescribed total population $M>0$.
However, existence and uniqueness alone do not determine
the long-term behavior of solutions. The stability of the
endemic equilibrium has therefore attracted further attention.

Peng and Liu \cite{pl} established global stability of the endemic equilibrium in two special cases: equal diffusion coefficients, or $\beta(x)=r\gamma(x)$ for a constant $r>1$. Peng and Zhao \cite{pz} also studied the global dynamics of the model in a time-periodic environment. For unequal diffusion coefficients and spatially nonconstant endemic equilibria, Bai, Du, Li and Zhou \cite{Bai} obtained sufficient conditions for local stability. Under suitable conditions on the coefficients, they also proved local stability in certain small- or large-diffusion regimes.
In particular, if $\beta(x)>\gamma(x)$ on $[0,\ell]$,
there exists $\delta>0$ such that, for every $d_S>0$,
the endemic equilibrium is locally stable whenever
$0<d_I\le\delta$.
The global stability result under the stronger
condition $\beta(x)>2\gamma(x)$ \cite{PENG}
further motivates asking whether
$\beta(x)>\gamma(x)>0$ on $[0,\ell]$ alone
ensures convergence to the endemic equilibrium.
The question of whether the system can admit
nonstationary positive periodic solutions was also
explicitly raised by Bai et al.~\cite{Bai}.

Our main result shows that such periodic solutions
can occur   when $\beta(x)>\gamma(x)>0$
on $[0,\ell]$.

\begin{theorem}\label{thm:main}
For every $d_S>0$, there exist $d_I>d_S$ and smooth
functions $\widehat\beta,\widehat\gamma$ satisfying
\[
\widehat\beta(x)>\widehat\gamma(x)>0,
\qquad x\in[0,\ell],
\]
such that, for every $M>0$, system \eqref{eq:modeli}
has a unique endemic equilibrium and a strictly
positive classical periodic solution that is
nonconstant in time.
\end{theorem}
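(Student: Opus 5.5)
We follow the strategy announced in the abstract: build an unstable endemic equilibrium by an inverse construction, join it to a stable configuration by a path of coefficients along which the equilibrium and the pointwise inequalities are preserved, and extract a periodic orbit from the change in the number of unstable eigenvalues.

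\emph{Reduction in $M$.} The nonlinearity is homogeneous of degree one, so $(S,I)\mapsto(\lambda S,\lambda I)$ maps solutions to solutions and scales $M$ by $\lambda$. Hence it suffices to treat a single value of $M$. Uniqueness of the endemic equilibrium follows from Allen et al.\ \cite{Allen}, since $\int_0^\ell(\widehat\beta-\widehat\gamma)\,dx>0$. Because mass is conserved, we work on the invariant affine slice $\{\int(S+I)=M\}$. There the linearization has no forced zero eigenvalue.

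\emph{Inverse construction.} Fix $d_S$. We prescribe smooth positive Neumann profiles $S_0,I_0$ and solve the equilibrium equations for the coefficients. Adding the two equations gives $d_SS_0''+d_II_0''=0$. We therefore take $S_0=c-(d_I/d_S)I_0$ with $c$ large enough that $S_0>0$. Writing $\rho=I_0/(S_0+I_0)$, the $I$-equation becomes $d_II_0''+(\beta(1-\rho)-\gamma)I_0=0$. So
\[
\gamma=\beta(1-\rho)+d_II_0''/I_0 .
\]
Since $\rho>0$, we have $\gamma<\beta$ automatically. Positivity of $\gamma$ is obtained by choosing $\beta$ large wherever $I_0''<0$. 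This leaves $I_0$, $d_I$ and $\beta$ free, subject only to $\beta>d_II_0''_{-}/(I_0(1-\rho))$.

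Along the path $\beta_s=\beta+s\phi$ and $\gamma_s=\gamma+s(1-\rho)\phi$, with $\phi\ge0$, the pair $(S_0,I_0)$ remains an equilibrium. The inequalities $\beta_s>\gamma_s>0$ also persist. At one end, say $\phi$ huge or $s$ large, we expect stability in the spirit of the results of Bai et al.\ \cite{Bai}. Alternatively, we can deform $d_I$ toward a regime covered there, such as $d_I$ small with the profile adjusted consistently.

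\emph{Main obstacle: exhibiting instability.} We must choose $I_0$, $d_I>d_S$ and $\beta,\phi$ so that at some parameter the linearized operator on the mass-zero subspace has an eigenvalue with positive real part. This is the step the abstract says was AI-assisted, so we expect it to be delicate. Our first attempt would use a sharply localized, oscillating $I_0$, with large $d_I/d_S$ and $\beta$ minimal in regions of concavity. The resulting instability would be verified by a Galerkin or rigorous numerical computation on a cosine basis, with explicit error bounds via a Gershgorin or Kato-type perturbation estimate.

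\emph{Bifurcation.} Along the path the spectrum consists of isolated eigenvalues, because the operator is sectorial with compact resolvent. The count of unstable eigenvalues changes between the two ends, and no eigenvalue can cross zero, because the equilibrium is unique and nondegenerate. To see this, a zero eigenvalue would give a nondegeneracy failure; we rule it out via the uniqueness proof in \cite{Allen} and an implicit-function argument. Therefore at some parameter a complex pair crosses the imaginary axis.

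At such a crossing we apply a center-manifold reduction in the mass slice, using analytic semigroup theory for the sectorial operator. This gives a finite-dimensional system. Crossing multiplicity or transversality may fail, so we invoke a generalized Hopf theorem based on a change in crossing number, such as Alexander–Yorke/Fiedler global Hopf or the degree-based version. That theorem needs only an odd net change of unstable pairs together with no zero eigenvalue. It yields small-amplitude periodic orbits near $(S_0,I_0)$. These orbits are strictly positive because they are $C^0$-close to the positive equilibrium, classical by parabolic regularity, and nonconstant in time by construction. Finally we fix $\widehat\beta=\beta_{s^*}$, $\widehat\gamma=\gamma_{s^*}$ and rescale to every $M$.
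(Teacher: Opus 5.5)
Your bifurcation half follows the paper's Section~2. With $\phi=\beta$ and $s=\tau-1$, your path is exactly the paper's $\beta_\tau=\tau\beta$, $\gamma_\tau=\gamma+(\tau-1)(1-\rho)\beta$, and the paper then also uses a center-manifold reduction and a generalized Hopf theorem. The genuine gap is the step you yourself call the main obstacle. You never exhibit an endemic equilibrium with $\beta>\gamma>0$ whose fixed-mass linearization has an eigenvalue with positive real part. A recipe (localized oscillating $I_0$, large $d_I/d_S$, minimal $\beta$) plus an unspecified validated Galerkin computation does not constitute a proof. This step is the whole difficulty of the theorem: small $d_I$, $d_S=d_I$, $\beta=r\gamma$ and $\beta>2\gamma$ all force stability. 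Without a concrete equilibrium and a proof of its instability, everything downstream is conditional.

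The paper's mechanism is analytic. It takes $d_S=p\to0$, $d_I=1$, normalizes $ps+h=1$, smooths $h_0=(1-\varepsilon-x^4)_+$ at scale $p^4$ to get the infected profile, and uses the minimal admissible choice $t=\beta r=K_++pK_-h/(1-h)+p^2$ with $K=h''/h$. With $\lambda=p\mu$, the operator $-\partial_{xx}+2K_+-K_-$ has a single near-zero mode close to $h_0$, and the large $K_+$ layer contributes only $O(p^{3/2})$. So the eigenproblem collapses to the scalar equation $\mu+c_\varepsilon+\int k_\varepsilon T_\mu h_0\,dx=0$. Because $1-h_0=\varepsilon+x^4$, the weight $k_\varepsilon$ concentrates at $x=0$ with total mass of order $\varepsilon^{-5/4}$. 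Meanwhile $T_\mu h_0=12x^2/\mu+24/\mu^2$ up to a term exponentially small away from $x=a$. After $\mu=\varepsilon^{-5/12}z$ the equation tends to $z^3=-C_*$, whose root $C_*^{1/3}e^{i\pi/3}$ lies in the right half-plane. Rouch\'e's theorem and a Lyapunov--Schmidt reduction carry this root to a fixed small $p$. A $\kappa$-smoothing of $K_\pm$ then keeps the equilibrium exact and the eigenvalue unstable. Your heuristics (large $d_I/d_S$, minimal $\beta$) point the right way, but a construction of this kind is exactly what is missing.

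Several supporting claims also need repair. These are repairable; the missing instability construction is not.

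\textbf{The inequality $\gamma<\beta$ is not automatic.} We have $\beta-\gamma=\beta\rho-d_II_0''/I_0$. For nonconstant $I_0$, $I_0''>0$ somewhere, since $\int_0^\ell I_0''\,dx=0$. So $\beta$ must exceed both $d_I(I_0'')_+/(I_0\rho)$ and $d_I(I_0'')_-/(I_0(1-\rho))$. This two-sided constraint is exactly why the choice of $t$ is delicate.

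\textbf{Stability at the far end is only ``expected''.} Bai et al.\ treat small $d_I$, not large $s$, and deforming $d_I$ moves the equilibrium. The paper proves stability with the weighted energy $E=\frac12\int(|u|^2/S_0+|v|^2/I_0)\,dx$. With $Z=u/S_0$, $W=v/I_0$ and your path, $E'=-d_S\int S_0|Z_x|^2-d_I\int I_0|W_x|^2-\int\bigl((\beta+s\phi)\rho^2S_0+\frac{d_I}{2}I_0''\bigr)|Z-W|^2$. A Poincar\'e bound on the mass-zero space then gives a uniform spectral bound once the last coefficient is positive. For this to hold at large $s$, you need $\phi>0$ wherever $\beta\rho^2S_0+\frac{d_I}{2}I_0''\le0$.

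\textbf{Uniqueness of the equilibrium does not exclude a zero eigenvalue.} You need a direct argument. At $\lambda=0$, $d_S\phi+d_I\psi\equiv F$ is constant. Then $r=\psi/I_0-F/k$, with $k=d_SS_0+d_II_0$, solves $d_I(I_0^2r')'=c(x)r$ with $c>0$ and Neumann data, so $r=0$. Hence the kernel element is a multiple of $(S_0,I_0)$, and the mass constraint kills it.

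\textbf{The Hopf step needs two more things.} A local generalized Hopf theorem needs the critical parameter to be isolated; the paper gets this from analyticity in $\tau$ and a resultant of $P(z,\tau)$ and $P(-z,\tau)$. Also, your ``odd net change of pairs'' hypothesis is never verified. Use a version requiring only that $n_+$ change, as the paper does via Hooton's theorem.
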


A difficulty in the proof is to construct
smooth functions satisfying $\beta>\gamma>0$
on $[0,\ell]$ and admitting an endemic equilibrium
whose linearization at fixed total mass has an
eigenvalue with positive real part.
AI-assisted exploration helped us identify such
a construction.
The instability of the resulting equilibrium is
proved analytically in
Section~\ref{seed:construction}.

Starting from this unstable equilibrium, we vary
the coefficients along a path that keeps the
equilibrium fixed and leads to a stable regime.
A center-manifold reduction and a generalized
Hopf theorem then yield nonstationary positive
periodic solutions; see Section~\ref{basic:path}.

As a complement to the main result, we present 
  an explicit analytical
construction based on the shadow system obtained
as $d_I\to+\infty$ in Appendix~\ref{A}.
Combined with the bifurcation argument in
Section~\ref{basic:path}, this construction obtains the
nonstationary positive periodic solutions for
suitable smooth positive functions satisfying
\begin{equation}\label{int-positive}
    \int_0^\ell \bigl(\beta(x)-\gamma(x)\bigr)\,dx>0.
\end{equation}
This provides an alternative route to periodic
solutions through an explicit analytical construction
of the unstable equilibrium, although it does not
ensure the pointwise inequality required in
Theorem~\ref{thm:main}.

Compared to the continuous SIS model, finite-patch SIS models provide a discrete-space counterpart
of the reaction--diffusion model. Allen, Bolker, Lou and
Nevai~\cite{AllenPatch} established the reproduction-number
threshold and uniqueness of endemic equilibria.
Li and Peng~\cite{LiPengPatch} proved global stability
in a special proportional-risk case and studied limiting
profiles under dispersal. Chen, Shi, Shuai and
Wu~\cite{ChenPatch} studied steady states with asymmetric
connections. Gao and Li~\cite{GaoLiPatch} examined the
effects of heterogeneous mobility on disease prevalence.

To compare stability in the continuous and discrete models,
we consider equal-volume patches arranged along a path,
with equal bidirectional weights on each edge.
We assume that the transmission and recovery rates are
positive in every patch, and that the transmission rate
is larger than the recovery rate in each patch.
We conclude: for two or three patches, the endemic equilibrium is locally exponentially stable at fixed total mass for all
positive dispersal rates. Whereas, for four patches, we find
an unstable endemic equilibrium under the same condition.
Thus four is the smallest number of patches allowing
local endemic instability within this path class.

The rest of this paper is organized as follows. In Section \ref{basic:path}, we assume the existence of an unstable positive equilibrium. We then use a bifurcation argument to obtain small-amplitude periodic solutions that are nonconstant in time. Section \ref{seed:construction} presents an AI-assisted construction of an unstable positive equilibrium and an analytical proof of its instability. Together with the results of Section \ref{basic:path}, this completes the proof of Theorem \ref{thm:main}. Section \ref{sec:patch} studies the discrete-space SIS counterpart of the continuous model and compares the stability of endemic equilibria in two-, three-, and four-patch systems. Appendix \ref{A} constructs an unstable positive equilibrium satisfying the  condition \eqref{int-positive}. Appendix \ref{B} provides theoretical proofs for the two- and four-patch cases, a computer-assisted proof for the three-patch case, and the corresponding verification code.

\section{From an unstable equilibrium to periodic solutions}
\label{basic:path}
For simplicity, we use $p=d_S$, $q=d_I$, the endemic equilibrium $S_0(x)=s(x)$, $I_0(x)=h(x)$ in the following.
We consider the SIS reaction--diffusion system
\begin{equation}\label{eq:model}
\begin{cases}
S_t=pS_{xx}-f(x,S,I),\\
I_t=qI_{xx}+f(x,S,I),
\end{cases}
\qquad
f(x,S,I)=\beta(x)\frac{SI}{S+I}-\gamma(x)I,
\end{equation}
on the interval $\Omega=(0,\ell)$, with Neumann boundary conditions
\[
S_x(t,0)=S_x(t,\ell)=I_x(t,0)=I_x(t,\ell)=0.
\]
Here $p,q>0$, and $\beta,\gamma$ are smooth functions satisfying
\[
\int_0^\ell\beta(x)-\gamma(x)dx>0,
\quad x\in[0,\ell].
\]
The total mass
\[
M=\int_0^\ell \bigl(S(t,x)+I(t,x)\bigr)\,dx
\]
is conserved. A positive equilibrium is a time-independent solution
$(s,h)$ with both components strictly positive.

In this Section, we aim to prove the following theorem.
\begin{theorem}
\label{thm:pathp}
Fix $p,q>0$, an interval $(0,\ell)$, smooth coefficients
$\beta>\gamma>0$, and a positive equilibrium $(s,h)$.
Suppose that the linearization at $(s,h)$ has an eigenvalue
$\lambda$ with $\operatorname{Re}\lambda>0$ on the space of
perturbations satisfying
\[
\int_0^\ell(u+v)\,dx=0.
\]
For $\tau\ge1$, define
\begin{equation}\label{eq:path}
\beta_\tau(x)=\tau\beta(x),
\qquad
\gamma_\tau(x)
=\gamma(x)+(\tau-1)\frac{\beta(x)s(x)}{s(x)+h(x)}.
\end{equation}
Then there exist a finite $\tau_1>1$, a parameter
$\tau_*\in(1,\tau_1)$, and a sequence $\tau_j\to\tau_*$ such that
the system with coefficients $(\beta_{\tau_j},\gamma_{\tau_j})$
has strictly positive classical periodic solutions $(S_j,I_j)$
that are nonconstant in time.

Each solution has the same total mass as $(s,h)$, and
\[
\sup_{t\in\mathbb R}
\left(
\|S_j(t,\cdot)-s\|_{L^\infty(0,\ell)}
+
\|I_j(t,\cdot)-h\|_{L^\infty(0,\ell)}
\right)
\longrightarrow0.
\]
After passing to a subsequence, their minimal periods satisfy
\[
T_j\longrightarrow T_*
\qquad\text{for some }T_*\in(0,\infty).
\]
\end{theorem}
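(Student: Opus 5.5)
The plan is to use the path \eqref{eq:path} as a homotopy that freezes the equilibrium, moves the spectrum from unstable to stable, and then to extract periodic orbits at the last loss of stability.

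\textbf{Properties of the path.} First I check what $\tau\mapsto(\beta_\tau,\gamma_\tau)$ preserves. Write $f_\tau(x,S,I)=\beta_\tau SI/(S+I)-\gamma_\tau I$. Then
\[
f_\tau(x,s,h)=\tau\beta\frac{sh}{s+h}-\gamma h-(\tau-1)\beta\frac{sh}{s+h}=f(x,s,h),
\]
so $(s,h)$ is a positive equilibrium with the same mass for every $\tau\ge1$. Moreover
\[
\beta_\tau-\gamma_\tau=\beta-\gamma+(\tau-1)\beta\frac{h}{s+h}>0,
\qquad \gamma_\tau\ge\gamma>0,
\]
so the pointwise inequalities persist. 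By Allen et al.\ \cite{Allen}, $(s,h)$ is therefore the unique endemic equilibrium for every $\tau$.

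Next I compute the linearization. Set
\[
a=\frac{\beta h^2}{(s+h)^2},\qquad b=\frac{\beta sh}{(s+h)^2}.
\]
Then $\partial_S f_\tau=\tau a$ and $\partial_I f_\tau=\partial_I f-(\tau-1)b$, so the reaction part is $f_S u+f_I v+(\tau-1)(a u-b v)$. Since $au-bv=\tfrac{\beta h}{(s+h)^2}(hu-sv)$, we get $L_\tau=L_1+(\tau-1)K$, where $K$ is a rank-one-in-fibre multiplication operator pushing $(u,v)$ toward the line $hu=sv$. On the invariant closed subspace $X_0=\{\int_0^\ell(u+v)=0\}$, each $L_\tau$ is sectorial with compact resolvent. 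Its eigenvalues therefore depend continuously on $\tau$, with finite total multiplicity in any bounded region.

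\textbf{Stability for large $\tau$ (main obstacle).} I will show that $L_\tau|_{X_0}$ has spectrum in $\{\operatorname{Re}\lambda<0\}$ for all $\tau\ge\tau_1$. Formally, the fast variable $g=hu-sv$ relaxes at rate $O(\tau)$, which forces $u=s\varphi$ and $v=h\varphi$. The conserved slow variable $W=u+v=(s+h)\varphi$ then satisfies
\[
W_t=(DW)_{xx},\qquad D=\frac{ps+qh}{s+h}>0,
\]
with Neumann conditions and $\int W=0$. Substituting $z=DW$ turns the eigenproblem into $z''=\lambda z/D$, which is self-adjoint in $L^2(1/D)$. Its eigenvalues are $\le0$, and $0$ occurs only for constant $z$. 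But constant $z$ means $W=c/D$, which has nonzero mean, so it is excluded from $X_0$. Hence the slow spectrum is strictly negative.

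The hard part is making this singular limit rigorous and uniform. It requires a priori estimates showing that eigenvalues of $L_\tau$ with $\operatorname{Re}\lambda\ge0$ stay bounded as $\tau\to\infty$, together with a compactness argument on normalized eigenfunctions giving convergence to the slow problem. I would first try an energy estimate: pair the $g$-equation with $g$ to obtain $\|g\|\lesssim\tau^{-1/2}$ times bounded quantities, and handle large $|\lambda|$ by a standard sectorial resolvent bound.

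\textbf{Critical parameter and nondegeneracy.} I also need $0\notin\sigma(L_\tau|_{X_0})$ for all $\tau$. I would derive this from nondegeneracy of the unique endemic equilibrium at fixed mass, using the linearized form of the Allen et al.\ uniqueness argument; alternatively, a kernel element would produce a one-parameter family of nearby equilibria, contradicting uniqueness. Now define
\[
\tau_*=\sup\{\tau\in[1,\tau_1]:\ L_\tau|_{X_0}\ \text{has an eigenvalue with }\operatorname{Re}\lambda>0\}.
\]
By hypothesis $\tau_*\ge1$, and by continuity $\tau_*<\tau_1$ and $\tau_*>1$. At $\tau_*$ the spectrum contains purely imaginary eigenvalues $\pm i\omega_k$ with $\omega_k>0$, and the number of unstable eigenvalues drops from positive (along a sequence $\tau\uparrow\tau_*$) to zero for $\tau>\tau_*$.

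\textbf{Bifurcation and properties of the orbits.} I will reduce to a finite-dimensional, $\tau$-dependent center manifold through $(s,h)$ for $\tau$ near $\tau_*$. Because transversality and simplicity are not available, I will invoke a degree/crossing-number version of Hopf bifurcation (Alexander--Yorke or Kielhöfer/Fiedler type). The change in the number of unstable eigenvalue pairs across $\tau_*$ yields periodic orbits $(S_j,I_j)$ at parameters $\tau_j\to\tau_*$, with amplitude tending to $0$ and minimal periods converging, after passing to a subsequence, to $T_*=2\pi m/\omega$ for some critical $\omega$ and some $m$. Since the dynamics stays in the mass-conserving class, each orbit has the mass of $(s,h)$. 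Parabolic regularity makes the orbits classical, and smallness in $L^\infty$ gives strict positivity. Finally, they are nonconstant in time, because a time-independent solution would be a positive equilibrium with mass $M$, hence equal to $(s,h)$ by uniqueness, contradicting nontriviality of the bifurcating orbit.
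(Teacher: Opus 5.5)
Your outline has the same architecture as the paper: an equilibrium-preserving path, stability beyond a finite $\tau_1$, $0\notin\sigma(L_\tau)$ on the zero-mass space, a jump of $n_+$, a center-manifold reduction and a degree-type Hopf theorem. The gap is in the step that produces $\tau_*$, which does not give what that Hopf theorem needs.

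\textbf{Isolation of $\tau_*$.} With only continuity of the eigenvalues, $\tau_*=\sup\{\tau:n_+(\tau)>0\}$ need not be an \emph{isolated} center. As you note yourself, $n_+$ is known to be positive only along a sequence $\tau\uparrow\tau_*$. Nothing you say excludes purely imaginary eigenvalues at parameters accumulating at $\tau_*$ from either side; an eigenvalue could touch the imaginary axis at infinitely many $\tau$, or stay on it over an interval. The Alexander--Yorke, Fiedler and Kielh\"ofer crossing-number theorems, and the finite-dimensional version the paper applies after reduction, all require two things. First, $\sigma(L_\tau)\cap i\mathbb R=\emptyset$ for $0<|\tau-\tau_*|\le\varrho$. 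Second, $n_+(\tau_*-\varrho)\ne n_+(\tau_*+\varrho)$. Without these the crossing number is undefined.

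The missing idea is to exploit that $L_\tau=L_1+(\tau-1)K$ is affine in $\tau$. The paper fixes a circle $|\lambda|=R$ that lies in the resolvent set of every $L_\tau$, $\tau\in[1,\tau_1]$, and encloses all spectrum with $\operatorname{Re}\lambda\ge0$. The Riesz projection is then real-analytic in $\tau$. An analytically transported basis gives a real-analytic matrix $M(\tau)$ carrying exactly that spectrum. Now take the resultant $\Delta(\tau)$ of $P(z,\tau)=\det(zI-M(\tau))$ and $P(-z,\tau)$. It is real-analytic, it vanishes at every $\tau$ with spectrum on $i\mathbb R$, and it is nonzero at $\tau_1$, where all roots have negative real part. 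Hence only finitely many $\tau\in[1,\tau_1]$ carry imaginary spectrum, and $n_+$ is piecewise constant. An isolated $\tau_*$ with a jump of $n_+$ then exists, and your supremum construction works as well.

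\textbf{Two further repairs.}

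Your fallback for $0\notin\sigma(L_\tau)$ is invalid. Uniqueness of the equilibrium does not imply nondegeneracy, and a kernel vector does not generate a curve of equilibria (compare $x^3=0$). A direct argument works instead:
\begin{itemize}
\item Adding the eigen-equations gives $p\phi+q\psi\equiv F$ for a constant $F$.
\item With $ps+qh\equiv k$ and $a=\beta h^2/(s+h)^2$, the function $r=\psi/h-F/k$ solves $q(h^2r')'-\tau kahr/p=0$ with Neumann data.
\item Hence $r\equiv0$, so $(\phi,\psi)=\frac Fk(s,h)$, and the zero-mass condition forces $F=0$.
\end{itemize}

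The large-$\tau$ stability you leave as the main obstacle needs no singular limit. Set $Z=u/s$, $W=v/h$ and $E=\frac12\int(|u|^2/s+|v|^2/h)\,dx$. Solutions of the linearized system satisfy the exact identity
\[
E'=-p\int s|Z_x|^2-q\int h|W_x|^2-\int\bigl(\tau as+\tfrac q2h''\bigr)|Z-W|^2 .
\]
For $\tau\ge\tau_1:=2+q\|h''\|_\infty/(2\min(as))$, the last weight is at least $2\min(as)$. The zero-mass constraint plus Poincar\'e's inequality then give $E'\le-2\kappa_*E$ with $\kappa_*$ independent of $\tau$.

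This weighting is what makes the estimate uniform in $\tau$. In the unweighted norm, $(\tau-1)K$ is not sign-definite and its norm grows linearly in $\tau$, so a ``standard sectorial bound'' would not control large $\tau$. In the weighted inner product it contributes exactly the dissipative term $-(\tau-1)\int as|Z-W|^2$.
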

\begin{remark}\label{rmk}
    If we change the condition $\beta>\gamma$ into $\int_0^l \beta-\gamma dx>0$, results in Theorem \ref{thm:pathp}  still holds.
\end{remark}

\subsection{The linearized problem  and the parameter path }

Let $p,q>0$. Assume that $\beta$ and $\gamma$ are smooth and satisfy
\[
\beta(x)>\gamma(x)>0
\qquad\text{for all }x\in[0,l].
\]
We study the following reaction--diffusion system on $(0,l)$:
\begin{equation}\label{aug:original}
\begin{cases}
\partial_t S(t,x)=p\partial_{xx}S(t,x)
-f_\tau(x,S(t,x),I(t,x)),&t>0,\quad 0<x<l,\\
\partial_t I(t,x)=q\partial_{xx}I(t,x)
+f_\tau(x,S(t,x),I(t,x)),&t>0,\quad 0<x<l,\\
\partial_x S(t,0)=\partial_x S(t,l)
=\partial_x I(t,0)=\partial_x I(t,l)=0,&t>0,
\end{cases}
\end{equation}
where
\[
f_\tau(x,S,I)=\beta_\tau(x)\frac{SI}{S+I}-\gamma_\tau(x)I.
\]
At $\tau=1$, we take $\beta_1=\beta$ and $\gamma_1=\gamma$;
the coefficient path for $\tau\ge1$ is defined below.
We restrict attention to solutions with a fixed total mass,
\[
\int_0^l\bigl(S(t,x)+I(t,x)\bigr)\,dx=M.
\]
Let $(s(x),h(x))$ be a smooth positive equilibrium at $\tau=1$.
Thus
\begin{equation}\label{basic:steady}
\begin{cases}
ps''(x)=\displaystyle\beta(x)\frac{s(x)h(x)}{s(x)+h(x)}-\gamma(x)h(x),
&0<x<l,\\[4pt]
qh''(x)=\displaystyle-\beta(x)\frac{s(x)h(x)}{s(x)+h(x)}+\gamma(x)h(x),
&0<x<l,\\[4pt]
s'(0)=s'(l)=h'(0)=h'(l)=0.
\end{cases}
\end{equation}
Set
\[
M=\int_0^l(s+h)\,dx,\quad
a(x)=\frac{\beta h^2}{(s+h)^2}>0.
\]

Adding the two equations in \eqref{basic:steady} gives
\[
(ps(x)+qh(x))''=0.
\]
Hence $(ps(x)+qh(x))'$ is constant. The Neumann conditions show that
this constant is zero. Integrating once more gives
\begin{equation}\label{basic:k}
ps(x)+qh(x)=k
\quad\text{for all }x\in[0,l],
\end{equation}
where $k>0$ because $p,q,s(x),h(x)>0$.

Throughout the paper, primes on functions of $x$ denote spatial
derivatives. We use $(u(t,x),v(t,x))$ for perturbations and
$(\phi(x),\psi(x))$ for eigenfunctions.

We consider spaces
\begin{equation}\label{basic:spaces}
\begin{aligned}
X&:=\left\{U=(u,v)\in H^1(0,l;\mathbb R^2):
\int_0^l(u(x)+v(x))\,dx=0\right\},\\
Z&:=\left\{U=(u,v)\in H^3(0,l;\mathbb R^2)\cap X:
u'(0)=u'(l)=v'(0)=v'(l)=0\right\}.
\end{aligned}
\end{equation}
The spaces $X$ and $Z$ carry the usual $H^1$ and $H^3$ norms,
respectively. In particular, $Z$ is densely and compactly embedded in $X$.
For spectral analysis, write $X_{\mathbb C}$ and $Z_{\mathbb C}$
for their complexifications, obtained by replacing $\mathbb R^2$
with $\mathbb C^2$ in \eqref{basic:spaces}. The inner product is
\[
\langle U,V\rangle_{X_{\mathbb C}}
=\int_0^l\bigl(U(x)\cdot\overline{V(x)}
+U'(x)\cdot\overline{V'(x)}\bigr)\,dx.
\]
For normed spaces $Z$ and $X$, $\mathcal L(Z,X)$ denotes the
space of bounded linear maps from $Z$ into $X$, and
$\mathcal L(X):=\mathcal L(X,X)$.

The diffusion operator is
\[
D:Z\subset X\longrightarrow X,\quad
(DU)(x)=\binom{pu''(x)}{qv''(x)}.
\]
Indeed, $U''\in H^1$ for $U\in Z$, and the Neumann conditions give
$\int_0^l((DU)_1+(DU)_2)\,dx=0$. Hence $DU\in X$.

We use the same notation for its complex-linear extension
$D:Z_{\mathbb C}\subset X_{\mathbb C}\to X_{\mathbb C}$.
Finite Neumann cosine sums satisfying the mass constraint are
dense in $X_{\mathbb C}$. They also give an orthogonal eigenbasis for $D$
in the above $H^1$ inner product.
Moreover,
\begin{equation}\label{basic:graph-estimate}
\|U\|_{H^3}\le C\bigl(\|DU\|_{H^1}+\|U\|_{H^1}\bigr),
\qquad U\in Z_{\mathbb C}.
\end{equation}

\begin{lemma}
\label{lem:fixed-equilibrium-path}
For each $\tau\ge1$, define
\begin{equation}\label{basic:coefficients}
\beta_\tau=\tau\beta,
\qquad
\gamma_\tau=\gamma+(\tau-1)\frac{\beta s}{s+h}.
\end{equation}
Write
\[
f_\tau(x,S,I)=\beta_\tau\frac{SI}{S+I}-\gamma_\tau I.
\]
Then $\beta_\tau$ and $\gamma_\tau$ are smooth, and
\[
\beta_\tau(x)>\gamma_\tau(x)>0
\qquad\text{for all }x\in[0,l].
\]
Moreover, $(s,h)$ remains a positive equilibrium for these
coefficients, with the same total mass $M$.
\end{lemma}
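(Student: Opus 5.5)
The proof is a direct verification in three parts: smoothness, the pointwise inequalities, and the equilibrium property. No earlier result is needed beyond the positivity and smoothness of $(s,h)$ and the assumption $\beta>\gamma>0$ on $[0,l]$.

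\emph{Smoothness.} The functions $s,h$ are smooth and positive on the compact interval $[0,l]$, so $s+h$ is bounded below by a positive constant. Hence $\beta s/(s+h)$ is smooth, and so are $\beta_\tau$ and $\gamma_\tau$.

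\emph{Pointwise inequalities.} Set $\theta(x)=s(x)/(s(x)+h(x))\in(0,1)$. Then $\gamma_\tau=\gamma+(\tau-1)\beta\theta$.
\begin{itemize}
\item Positivity: $\gamma>0$, $\tau-1\ge0$ and $\beta\theta>0$, so $\gamma_\tau>0$.
\item Dominance: we have
\[
\beta_\tau-\gamma_\tau=(\beta-\gamma)+(\tau-1)\beta(1-\theta)
=(\beta-\gamma)+(\tau-1)\frac{\beta h}{s+h}.
\]
The first term is positive and the second is nonnegative, so $\beta_\tau>\gamma_\tau$.
\end{itemize}
This is the only step with any content, and it relies on $\theta<1$, i.e.\ on $h>0$.

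\emph{Equilibrium preserved.} At $(S,I)=(s,h)$ we compute
\[
f_\tau(x,s,h)=\tau\beta\frac{sh}{s+h}-\gamma h-(\tau-1)\frac{\beta s}{s+h}\,h
=\beta\frac{sh}{s+h}-\gamma h=f_1(x,s,h).
\]
So the nonlinearity evaluated along $(s,h)$ does not depend on $\tau$. The system \eqref{basic:steady}, its Neumann boundary conditions and $(s,h)$ itself are all unchanged, so $(s,h)$ solves the steady problem for every $\tau\ge1$. It is still strictly positive, and its total mass $\int_0^l(s+h)\,dx=M$ is unchanged.

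There is no real obstacle. The only point needing care is noting that positivity of $h$ gives $\beta_\tau-\gamma_\tau\ge\beta-\gamma$; the rest is bookkeeping.
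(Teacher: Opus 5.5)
Your proposal is correct and follows the paper's proof essentially line for line. Both arguments get smoothness from $s+h>0$, show via $f_\tau(x,s,h)=\beta\frac{sh}{s+h}-\gamma h$ that $(s,h)$ and its mass are unchanged, and use $\gamma_\tau\ge\gamma>0$ and $\beta_\tau-\gamma_\tau=(\beta-\gamma)+(\tau-1)\frac{\beta h}{s+h}>0$ (where $h\ge0$, i.e. $\theta\le1$, would already suffice).
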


\begin{proof}
Since $s+h>0$, the coefficients in \eqref{basic:coefficients}
are smooth.
At $(S,I)=(s,h)$, we have
\begin{align}
f_\tau(x,s,h)
=\tau\beta\frac{sh}{s+h}
-\left[\gamma+(\tau-1)\frac{\beta s}{s+h}\right]h
=\beta\frac{sh}{s+h}-\gamma h.
\label{basic:sameequilibrium}
\end{align}
Thus the steady equations and the boundary conditions for $(s,h)$
remain unchanged. Its total mass is still $M$.

Also, for every $x\in[0,l]$,
\begin{equation}\label{basic:positivecoefficients}
\begin{aligned}
\gamma_\tau&\ge\gamma>0,\\
\beta_\tau-\gamma_\tau
&=(\beta-\gamma)+(\tau-1)\frac{\beta h}{s+h}>0.
\end{aligned}
\end{equation}
The last inequality uses $\beta>\gamma$ and $\tau\ge1$.
\end{proof}

\subsection{The linearized equations and the eigenvalue problem}

Write
\begin{equation}\label{si}
S(t,x)=s(x)+u(t,x),\qquad I(t,x)=h(x)+v(t,x).
\end{equation}
The second equation in \eqref{basic:steady} gives
\[
\gamma=\frac{\beta s}{s+h}+q\frac{h''}{h}.
\]
It follows from \eqref{basic:coefficients} that
\[
\gamma_\tau=\frac{\tau\beta s}{s+h}+q\frac{h''}{h}.
\]
Taking the partial derivatives of $f_\tau$ at $(s,h)$, we obtain
\begin{align}
a_\tau&:=(f_\tau)_S(x,s,h)
=\frac{\tau\beta h^2}{(s+h)^2}=\tau a,
\notag\\
c_\tau&:=(f_\tau)_I(x,s,h)
=\frac{\tau\beta s^2}{(s+h)^2}-\gamma_\tau
=-\tau a\frac{s}{h}-q\frac{h''}{h}.
\label{basic:derivatives}
\end{align}
Keeping only the terms that are linear in $(u,v)$ gives
\begin{equation}\label{basic:linear}
\begin{cases}
\partial_tu(t,x)=p\partial_{xx}u(t,x)
-\tau a(x)u(t,x)-c_\tau(x)v(t,x),&t>0,\quad 0<x<l,\\
\partial_tv(t,x)=q\partial_{xx}v(t,x)
+\tau a(x)u(t,x)+c_\tau(x)v(t,x),&t>0,\quad 0<x<l,\\
\partial_xu(t,0)=\partial_xu(t,l)
=\partial_xv(t,0)=\partial_xv(t,l)=0,&t>0.
\end{cases}
\end{equation}
Adding the equations and integrating in $x$, we obtain
\[
\frac{d}{dt}\int_0^l(u(t,x)+v(t,x))\,dx
=0.
\]
For solutions with the same mass as $(s,h)$, this gives
\begin{equation}\label{basic:zeromass}
\int_0^l(u(t,x)+v(t,x))\,dx=0.
\end{equation}

To study the eigenvalue problem, set
\[
(u(t,x),v(t,x))=e^{\lambda t}(\phi(x),\psi(x)),
\]
where $\lambda,\phi,\psi$ may be complex. Substitution gives
\begin{equation}\label{basic:mode}
\begin{cases}
\lambda\phi(x)=p\phi''(x)-\tau a(x)\phi(x)-c_\tau(x)\psi(x),
&0<x<l,\\
\lambda\psi(x)=q\psi''(x)+\tau a(x)\phi(x)+c_\tau(x)\psi(x),
&0<x<l,\\
\phi'(0)=\phi'(l)=\psi'(0)=\psi'(l)=0,\\[2pt]
\displaystyle\int_0^l(\phi(x)+\psi(x))\,dx=0,
\end{cases}
\qquad (\phi,\psi)\not\equiv(0,0).
\end{equation}

The matrix of the linear terms in \eqref{basic:mode} is
\[
J_\tau(x)
:=
\begin{pmatrix}
-a_\tau(x)&-c_\tau(x)\\
a_\tau(x)&c_\tau(x)
\end{pmatrix}.
\]
Using the expressions for $a_\tau$ and $c_\tau$, we obtain
\[
J_\tau(x)
=
\tau
\begin{pmatrix}
-a(x)&\displaystyle a(x)\frac{s(x)}{h(x)}\\[2mm]
a(x)&\displaystyle-a(x)\frac{s(x)}{h(x)}
\end{pmatrix}
+
\begin{pmatrix}
0&\displaystyle q\frac{h''(x)}{h(x)}\\[2mm]
0&\displaystyle-q\frac{h''(x)}{h(x)}
\end{pmatrix}.
\]
Both matrices on the right-hand side are independent of $\tau$.
Thus $J_\tau$ depends affinely on $\tau$.
Define
\[
(J_\tau U)(x):=J_\tau(x)U(x).
\]

The smooth entries of $J_\tau$ define a bounded multiplication
operator on $X_{\mathbb C}$. Its two components sum to zero, so it
preserves the mass constraint. For each $\tau\ge1$, define
\[
L_\tau:Z_{\mathbb C}\subset X_{\mathbb C}\longrightarrow X_{\mathbb C},
\qquad L_\tau=D+J_\tau.
\]
For $\Phi=(\phi,\psi)\in Z_{\mathbb C}$,
\[
(L_\tau\Phi)(x)=
\binom{p\phi''(x)-\tau a(x)\phi(x)-c_\tau(x)\psi(x)}
      {q\psi''(x)+\tau a(x)\phi(x)+c_\tau(x)\psi(x)}.
\]
Its common domain is $Z_{\mathbb C}$, independent of $\tau$.
Thus \eqref{basic:mode} is equivalent to
\[
L_\tau\Phi=\lambda\Phi,\qquad
\Phi\in Z_{\mathbb C}\setminus\{0\}.
\]
The restriction to real-valued functions is denoted by the same symbol,
$L_\tau:Z\subset X\to X$.

\subsection{An energy identity and stability for large $\tau$}

We now use the time-dependent linear system \eqref{basic:linear}.
The equilibrium functions $s(x),h(x)$ are real-valued,
strictly positive, and independent of time.
We will use the identities
\[
ps''(x)=-qh''(x),\qquad
\tau a(x)s(x)+c_\tau(x)h(x)=-qh''(x).
\]

We first prove an energy identity for \eqref{basic:linear}.
We then apply it to a solution of the form
\[
u(t,x)=e^{\lambda t}\phi(x),\qquad
v(t,x)=e^{\lambda t}\psi(x),
\]
to estimate the real part of an eigenvalue $\lambda$.

Define
\[
Z(t,x)=\frac{u(t,x)}{s(x)},\qquad
W(t,x)=\frac{v(t,x)}{h(x)},
\]
and
\[
E(t)=\frac12\int_0^l
\left(s(x)|Z(t,x)|^2+h(x)|W(t,x)|^2\right)\,\mathrm dx.
\]
Equivalently,
\[
E(t)=\frac12\int_0^l
\left(\frac{|u(t,x)|^2}{s(x)}
+\frac{|v(t,x)|^2}{h(x)}\right)\,\mathrm dx.
\]

\begin{lemma}\label{lem:energy-identity}
Let $(u(t,x),v(t,x))$ be a classical solution of
\eqref{basic:linear}, possibly complex-valued.
Then
\begin{align}\label{basic:energyidentity}
E'(t)=&-p\int_0^l s(x)|\partial_x Z(t,x)|^2\,\mathrm dx
-q\int_0^l h(x)|\partial_x W(t,x)|^2\,\mathrm dx\nonumber\\
&-\int_0^l
\left(\tau a(x)s(x)+\frac q2h''(x)\right)
|Z(t,x)-W(t,x)|^2\,\mathrm dx.
\end{align}
\end{lemma}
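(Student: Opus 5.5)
The plan is a direct computation: differentiate $E(t)$, substitute the linear system \eqref{basic:linear}, rewrite the reaction terms in the variables $Z,W$, integrate the diffusion terms by parts, and use the two identities $ps''=-qh''$ and $\tau a s+c_\tau h=-qh''$ to merge the leftover terms into a multiple of $|Z-W|^2$. Since the solution may be complex-valued, I will work with
\[
E'(t)=\operatorname{Re}\int_0^l\Bigl(\overline Z\,\partial_t u+\overline W\,\partial_t v\Bigr)\,\mathrm dx ,
\]
which follows from $\partial_t|u|^2/s=2\operatorname{Re}(\overline{u}\,\partial_tu)/s$ and $u/s=Z$. Classical regularity justifies differentiating under the integral.

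\emph{Reaction terms.} By \eqref{basic:derivatives}, $c_\tau=-\tau a s/h-qh''/h$. Hence, with $u=sZ$ and $v=hW$,
\[
-\tau a u-c_\tau v=-\tau a s(Z-W)+qh''W .
\]
The reaction part of the $v$-equation is the negative of this expression. Pairing with $\overline Z$ and $\overline W$ and adding gives the reaction contribution
\[
\operatorname{Re}\int_0^l\Bigl(-\tau a s|Z-W|^2+qh''\bigl(\overline Z W-|W|^2\bigr)\Bigr)\,\mathrm dx .
\]

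\emph{Diffusion terms.} Next I integrate $\operatorname{Re}\int_0^l p(sZ)''\overline Z\,\mathrm dx$ by parts once. The boundary term vanishes because $\partial_x u=(sZ)'=0$ at $x=0,l$. This yields $-p\int s|Z'|^2-p\operatorname{Re}\int s'Z\overline{Z'}$. Since $\operatorname{Re}(Z\overline{Z'})=\tfrac12(|Z|^2)'$, a second integration by parts, using $s'(0)=s'(l)=0$, turns the last term into $\tfrac p2\int s''|Z|^2$. The $v$-term similarly gives $-q\int h|W'|^2+\tfrac q2\int h''|W|^2$. By $ps''=-qh''$, the two lower-order pieces combine to $\tfrac q2\int h''(|W|^2-|Z|^2)$.

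\emph{Combination.} Adding this to the term $qh''(\operatorname{Re}\overline ZW-|W|^2)$ from the reaction part gives
\[
qh''\Bigl(\operatorname{Re}\overline ZW-\tfrac12|Z|^2-\tfrac12|W|^2\Bigr)=-\tfrac q2 h''|Z-W|^2 .
\]
This is exactly the extra term in \eqref{basic:energyidentity}.

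The only delicate point is the bookkeeping of the $h''$ terms: one piece comes from the zero-order coefficient $c_\tau$, and the other comes from commuting the weights $s,h$ through the Laplacian. Their combination into a perfect square relies on both identities $ps''=-qh''$ and $\tau a s+c_\tau h=-qh''$. The boundary terms all vanish because of the Neumann conditions on $u,v,s,h$.
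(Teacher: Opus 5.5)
Your proof is correct and follows essentially the same route as the paper. You compute $E'(t)=\operatorname{Re}\int_0^l(\overline Z\,\partial_t u+\overline W\,\partial_t v)\,\mathrm dx$, rewrite the reaction terms using $\tau a s+c_\tau h=-qh''$, integrate the weighted diffusion terms by parts twice under the Neumann conditions, and then merge the $h''$ contributions into $-\tfrac q2 h''|Z-W|^2$ via $ps''=-qh''$.
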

\begin{proof}
Since $u=sZ$, $v=hW$, and $s,h$ do not depend on time,
we have $\partial_t u=s\,\partial_t Z$ and
$\partial_t v=h\,\partial_t W$.
We use the following equalities:
\begin{align*}
&ps''(x)=-qh''(x),
\quad \tau a(x)s(x)+c_\tau(x)h(x)=-qh''(x),\quad
\operatorname{Re}(\overline{f}\,f')=\frac12(|f|^2)',\\
&|Z(t,x)-W(t,x)|^2
=|Z(t,x)|^2+|W(t,x)|^2
-2\operatorname{Re}
\bigl(W(t,x)\overline{Z(t,x)}\bigr).
\end{align*}
Here $f$ is any differentiable complex-valued function of $x$.
The Neumann conditions give
\[
\partial_x u(t,x)=\partial_x v(t,x)=s'(x)=h'(x)=0
\qquad\text{at }x=0,l,
\]
so all boundary terms below vanish.

Differentiating $E(t)$ and using \eqref{basic:linear},
we obtain
\begin{align*}
E'(t)
&=\operatorname{Re}\int_0^l
\left(s(x)\overline{Z(t,x)}\,\partial_t Z(t,x)
+h(x)\overline{W(t,x)}\,\partial_t W(t,x)\right)\,\mathrm dx\\
&=\operatorname{Re}\int_0^l
\left(\overline{Z(t,x)}\,\partial_t u(t,x)
+\overline{W(t,x)}\,\partial_t v(t,x)\right)\,\mathrm dx\\
&=p\operatorname{Re}\int_0^l
\overline{Z(t,x)}\,\partial_x^2(s(x)Z(t,x))\,\mathrm dx
+q\operatorname{Re}\int_0^l
\overline{W(t,x)}\,\partial_x^2(h(x)W(t,x))\,\mathrm dx\\
&\quad+\operatorname{Re}\int_0^l
\bigl(\overline{W(t,x)}-\overline{Z(t,x)}\bigr)
\bigl(\tau a(x)u(t,x)+c_\tau(x)v(t,x)\bigr)\,\mathrm dx.
\end{align*}
We compute the two diffusion terms and the reaction term separately.

For the $Z$ term, integrating by parts twice gives
\begin{align}
&p\operatorname{Re}\int_0^l
\overline{Z(t,x)}\,\partial_x^2(s(x)Z(t,x))\,\mathrm dx\nonumber\\
=&-p\operatorname{Re}\int_0^l
\overline{\partial_x Z(t,x)}
\bigl(s'(x)Z(t,x)+s(x)\partial_x Z(t,x)\bigr)\,\mathrm dx
\nonumber\\
=&-p\int_0^l s(x)|\partial_x Z(t,x)|^2\,\mathrm dx
-\frac p2\int_0^l
s'(x)\partial_x|Z(t,x)|^2\,\mathrm dx
\nonumber\\
=&-p\int_0^l s(x)|\partial_x Z(t,x)|^2\,\mathrm dx
+\frac p2\int_0^l s''(x)|Z(t,x)|^2\,\mathrm dx.
\label{basic:firstdiffusion}
\end{align}

For the $W$ term, the same calculation gives
\begin{align}
&q\operatorname{Re}\int_0^l
\overline{W(t,x)}\,\partial_x^2(h(x)W(t,x))\,\mathrm dx
\nonumber\\
=&-q\operatorname{Re}\int_0^l
\overline{\partial_x W(t,x)}
\bigl(h'(x)W(t,x)+h(x)\partial_x W(t,x)\bigr)\,\mathrm dx
\nonumber\\
=&-q\int_0^l h(x)|\partial_x W(t,x)|^2\,\mathrm dx
-\frac q2\int_0^l
h'(x)\partial_x|W(t,x)|^2\,\mathrm dx
\nonumber\\
=&-q\int_0^l h(x)|\partial_x W(t,x)|^2\,\mathrm dx
+\frac q2\int_0^l h''(x)|W(t,x)|^2\,\mathrm dx.
\label{basic:seconddiffusion}
\end{align}

For the reaction term, using $\tau as+c_\tau h=-qh''$,
we obtain
\begin{align}
&\operatorname{Re}\int_0^l
\bigl(\overline{W(t,x)}-\overline{Z(t,x)}\bigr)
\bigl(\tau a(x)u(t,x)+c_\tau(x)v(t,x)\bigr)\,\mathrm dx
\notag\\
&\quad=\operatorname{Re}\int_0^l
\bigl(\overline{W(t,x)}-\overline{Z(t,x)}\bigr)
\bigl(\tau a(x)s(x)(Z(t,x)-W(t,x))
-qh''(x)W(t,x)\bigr)\,\mathrm dx
\notag\\
&\quad=-\int_0^l
\tau a(x)s(x)|Z(t,x)-W(t,x)|^2\,\mathrm dx
-q\int_0^l h''(x)|W(t,x)|^2\,\mathrm dx
\notag\\
&\qquad+q\int_0^l h''(x)
\operatorname{Re}
\bigl(W(t,x)\overline{Z(t,x)}\bigr)\,\mathrm dx.
\label{basic:reactionenergy}
\end{align}

Adding \eqref{basic:firstdiffusion},
\eqref{basic:seconddiffusion}, and \eqref{basic:reactionenergy},
and using $ps''=-qh''$, we find
\begin{align*}
E'(t)
&=-p\int_0^l s(x)|\partial_x Z(t,x)|^2\,\mathrm dx
-q\int_0^l h(x)|\partial_x W(t,x)|^2\,\mathrm dx\\
&\quad-\int_0^l
\tau a(x)s(x)|Z(t,x)-W(t,x)|^2\,\mathrm dx\\
&\quad+\frac p2\int_0^l s''(x)|Z(t,x)|^2\,\mathrm dx
-\frac q2\int_0^l h''(x)|W(t,x)|^2\,\mathrm dx\\
&\quad+q\int_0^l h''(x)
\operatorname{Re}
\bigl(W(t,x)\overline{Z(t,x)}\bigr)\,\mathrm dx\\
&=-p\int_0^l s(x)|\partial_x Z(t,x)|^2\,\mathrm dx
-q\int_0^l h(x)|\partial_x W(t,x)|^2\,\mathrm dx\\
&\quad-\int_0^l
\tau a(x)s(x)|Z(t,x)-W(t,x)|^2\,\mathrm dx\\
&\quad-\frac q2\int_0^l h''(x)
\left(|Z(t,x)|^2+|W(t,x)|^2
-2\operatorname{Re}
\bigl(W(t,x)\overline{Z(t,x)}\bigr)\right)\,\mathrm dx\\
&=-p\int_0^l s(x)|\partial_x Z(t,x)|^2\,\mathrm dx
-q\int_0^l h(x)|\partial_x W(t,x)|^2\,\mathrm dx\\
&\quad-\int_0^l
\left(\tau a(x)s(x)+\frac q2h''(x)\right)
|Z(t,x)-W(t,x)|^2\,\mathrm dx.
\end{align*}
This proves \eqref{basic:energyidentity}.
\end{proof}

\begin{lemma}\label{lem:stable-endpoint}
Consider $L_\tau$ on $X_{\mathbb C}$ with domain $Z_{\mathbb C}$.
Let
\[
A_*=\min_{x\in[0,l]}a(x)s(x)>0,
\]
and define
\begin{equation}\label{basic:tauone}
\tau_1=2+\frac{q\|h''\|_\infty}{2A_*}>1.
\end{equation}
There is a constant $\kappa_*>0$, independent of $\tau$, such that
every eigenvalue $\lambda$ of $L_\tau$ on $X_{\mathbb C}$ satisfies
\begin{equation}\label{basic:stableend}
\operatorname{Re}\lambda\le-\kappa_*<0
\qquad\text{for all }\tau\ge\tau_1.
\end{equation}
\end{lemma}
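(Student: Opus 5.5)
The plan is to apply the energy identity of Lemma~\ref{lem:energy-identity} to the separable solution $u=e^{\lambda t}\phi$, $v=e^{\lambda t}\psi$, where $(\lambda,\Phi)$ is an eigenpair of $L_\tau$ with $\Phi=(\phi,\psi)\in Z_{\mathbb C}$. Put $Z_0=\phi/s$ and $W_0=\psi/h$. Then $E(t)=e^{2\operatorname{Re}\lambda\, t}E_0$, where
\[
E_0=\tfrac12\int_0^l\bigl(s|Z_0|^2+h|W_0|^2\bigr)\,dx>0 .
\]
Hence $E'(t)=2\operatorname{Re}\lambda\,E(t)$. Evaluating at $t=0$ gives
\[
2\operatorname{Re}\lambda\,E_0=-p\int s|Z_0'|^2-q\int h|W_0'|^2-\int\Bigl(\tau a s+\tfrac q2 h''\Bigr)|Z_0-W_0|^2 .
\]
By the choice \eqref{basic:tauone}, for $\tau\ge\tau_1$ we have
\[
\tau a s+\tfrac q2h''\ge \tau_1A_*-\tfrac q2\|h''\|_\infty = 2A_*>0 .
\]
So every term on the right is nonpositive. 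This already gives $\operatorname{Re}\lambda\le0$; the remaining work is to turn this into a uniform negative gap.

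For the gap, let $m=\min(\min s,\min h)>0$ and $K=\max(\max s,\max h)$, and set
\[
\mathcal Q(Z_0,W_0)=p\int s|Z_0'|^2+q\int h|W_0'|^2+2A_*\int|Z_0-W_0|^2 .
\]
I want $\mathcal Q\ge c\,E_0$ for all admissible pairs, with $c>0$ independent of $\tau$. Then $\operatorname{Re}\lambda\le -c/2=:-\kappa_*$. The key point is that $\mathcal Q$ no longer involves $\tau$, so the bound only concerns a fixed quadratic form on the constrained space.

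I would prove the coercivity by contradiction and compactness. Suppose there are pairs with $E_0=1$ and $\mathcal Q\to0$. Then $Z_0,W_0$ are bounded in $H^1$, and $Z_0'$, $W_0'\to0$ in $L^2$. By Rellich, a subsequence converges in $L^2$ (indeed uniformly) to constants $z,w$, and $\int|Z_0-W_0|^2\to0$ forces $z=w=c_0$. The mass constraint $\int(sZ_0+hW_0)=\int(\phi+\psi)=0$ passes to the limit and gives $c_0\int(s+h)=0$, so $c_0=0$. This contradicts the limit satisfying $E_0=1$.

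Alternatively, an explicit version uses Poincaré--Wirtinger on $Z_0$ and $W_0$ separately, bounds the difference of their means by $\int|Z_0-W_0|^2$ plus the gradient terms, and controls the common mean via the constraint $\int(sZ_0+hW_0)=0$. The main care point is this coercivity step, in particular checking that the zero-mass constraint is exactly what excludes the constant direction $Z_0=W_0=\mathrm{const}$, which corresponds to $(\phi,\psi)=(s,h)$.
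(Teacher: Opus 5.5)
Your proposal is correct and follows essentially the paper's route: the energy identity applied to $e^{\lambda t}(\phi,\psi)$, the bound $\tau a s+\frac q2h''\ge 2A_*$ for $\tau\ge\tau_1$, and then a $\tau$-independent coercivity estimate on the zero-mass space. The only difference is in the coercivity step. The paper makes the constant explicit via the identity $E=\frac12\int_0^l\bigl(s|Z-\zeta|^2+h|W-\zeta|^2\bigr)\,dx-\frac M2|\zeta|^2$, with $\zeta$ the mean of $Z$ (the cross term vanishes by the mass constraint). It then needs only Poincar\'e for $Z$ and $|W-\zeta|^2\le2|W-Z|^2+2|Z-\zeta|^2$. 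Your compactness argument gives a non-explicit but equally valid $\kappa_*$.
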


\begin{proof}
For $\tau\ge\tau_1$ and $x\in[0,l]$, we have
\begin{align}
\tau a(x)s(x)+\frac q2h''(x)
&\ge\tau_1A_*-\frac q2\|h''\|_\infty
=2A_*=:d_*>0.
\label{basic:dpositive}
\end{align}
Set
\[
s_{\min}=\min_{[0,l]}s>0,\qquad
s_{\max}=\max_{[0,l]}s,\qquad
h_{\max}=\max_{[0,l]}h.
\]

Let $(u(t,x),v(t,x))$ be a classical solution of
\eqref{basic:linear} with initial data in $X_{\mathbb C}$, and write
$u(t,x)=s(x)Z(t,x)$ and $v(t,x)=h(x)W(t,x)$.
Adding the two equations and using the Neumann conditions shows
that the total mass is constant. Hence
\begin{equation}\label{basic:zero-mass-energy}
\int_0^l
\bigl(s(x)Z(t,x)+h(x)W(t,x)\bigr)\,\mathrm dx=0.
\end{equation}
Define
\begin{align*}
\zeta(t)&=\frac1l\int_0^l Z(t,x)\,\mathrm dx,
\qquad M=\int_0^l(s(x)+h(x))\,\mathrm dx,\\
C_*&=\max\left\{
\left(\frac{s_{\max}}2+h_{\max}\right)l^2,\ h_{\max}
\right\},
\qquad b_*=\min\{ps_{\min},d_*\}.
\end{align*}

We first estimate $E(t)$:
\begin{align}
E(t)
&=\frac12\int_0^l
\left(s(x)|Z(t,x)|^2+h(x)|W(t,x)|^2\right)\,\mathrm dx
\notag\\
&=\frac12\int_0^l
\left(s(x)|Z(t,x)-\zeta(t)+\zeta(t)|^2
+h(x)|W(t,x)-\zeta(t)+\zeta(t)|^2\right)\,\mathrm dx
\notag\\
&=\frac12\int_0^l
\left(s(x)|Z(t,x)-\zeta(t)|^2
+h(x)|W(t,x)-\zeta(t)|^2\right)\,\mathrm dx
+\frac M2|\zeta(t)|^2
\notag\\
&\quad+\operatorname{Re}\left[
\overline{\zeta(t)}
\int_0^l
\left(s(x)(Z(t,x)-\zeta(t))
+h(x)(W(t,x)-\zeta(t))\right)\,\mathrm dx
\right]
\notag\\
&=\frac12\int_0^l
\left(s(x)|Z(t,x)-\zeta(t)|^2
+h(x)|W(t,x)-\zeta(t)|^2\right)\,\mathrm dx
-\frac M2|\zeta(t)|^2
\notag\\
&\quad+\operatorname{Re}\left[
\overline{\zeta(t)}
\int_0^l
\bigl(s(x)Z(t,x)+h(x)W(t,x)\bigr)\,\mathrm dx
\right]
\notag\\
&=\frac12\int_0^l
\left(s(x)|Z(t,x)-\zeta(t)|^2
+h(x)|W(t,x)-\zeta(t)|^2\right)\,\mathrm dx
-\frac M2|\zeta(t)|^2
\label{basic:meanidentity}\\
&\le\frac{s_{\max}}2
\int_0^l|Z(t,x)-\zeta(t)|^2\,\mathrm dx
+\frac{h_{\max}}2
\int_0^l|W(t,x)-\zeta(t)|^2\,\mathrm dx
\notag\\
&\le\left(\frac{s_{\max}}2+h_{\max}\right)
\int_0^l|Z(t,x)-\zeta(t)|^2\,\mathrm dx
+h_{\max}\int_0^l|Z(t,x)-W(t,x)|^2\,\mathrm dx
\label{basic:Qestimateone}\\
&\le\left(\frac{s_{\max}}2+h_{\max}\right)l^2
\int_0^l|\partial_x Z(t,x)|^2\,\mathrm dx
+h_{\max}\int_0^l|Z(t,x)-W(t,x)|^2\,\mathrm dx
\label{basic:poincare}\\
&\le C_*\left(
\int_0^l|\partial_x Z(t,x)|^2\,\mathrm dx
+\int_0^l|Z(t,x)-W(t,x)|^2\,\mathrm dx
\right).
\label{basic:Qestimate}
\end{align}
The cross term involving $\int_0^l(sZ+hW)\,\mathrm dx$
vanishes by \eqref{basic:zero-mass-energy}.
For the first inequality, we drop the nonpositive term
$-M|\zeta(t)|^2/2$ and bound $s,h$ by their maxima.
The next inequality uses
$|W-\zeta|^2\le2|W-Z|^2+2|Z-\zeta|^2$.
Since $\zeta(t)$ is the spatial average of $Z(t,\cdot)$,
the Poincare inequality gives
$\int_0^l|Z-\zeta|^2\,\mathrm dx
\le l^2\int_0^l|\partial_x Z|^2\,\mathrm dx$,
which proves \eqref{basic:poincare}.
The last inequality follows from the definition of $C_*$.

Next, Lemma~\ref{lem:energy-identity} gives
\begin{align}
-E'(t)
&=p\int_0^l s(x)|\partial_x Z(t,x)|^2\,\mathrm dx
+q\int_0^l h(x)|\partial_x W(t,x)|^2\,\mathrm dx
\notag\\
&\quad+\int_0^l
\left(\tau a(x)s(x)+\frac q2h''(x)\right)
|Z(t,x)-W(t,x)|^2\,\mathrm dx
\notag\\
&\ge ps_{\min}\int_0^l|\partial_x Z(t,x)|^2\,\mathrm dx
+d_*\int_0^l|Z(t,x)-W(t,x)|^2\,\mathrm dx
\label{basic:dissipation}\\
&\ge b_*\left(
\int_0^l|\partial_x Z(t,x)|^2\,\mathrm dx
+\int_0^l|Z(t,x)-W(t,x)|^2\,\mathrm dx
\right).
\label{basic:dissipation-comparison}
\end{align}
The first inequality uses \eqref{basic:dpositive},
$s(x)\ge s_{\min}$, and the nonnegativity of the term containing
$|\partial_x W|^2$.
The second follows from $b_*=\min\{ps_{\min},d_*\}$.

Set $\kappa_*=b_*/(2C_*)>0$.
Combining \eqref{basic:Qestimate} and
\eqref{basic:dissipation-comparison}, we obtain
\begin{align}
E'(t)\le-\frac{b_*}{C_*}E(t)=-2\kappa_*E(t).
\label{basic:energybound}
\end{align}

Finally, let $\lambda$ be an eigenvalue of $L_\tau$ on $X_{\mathbb C}$,
with a corresponding eigenfunction $(\phi(x),\psi(x))\ne(0,0)$.
Then $u(t,x)=e^{\lambda t}\phi(x)$ and
$v(t,x)=e^{\lambda t}\psi(x)$ form a solution in $X_{\mathbb C}$.
Since $E(t)=e^{2\operatorname{Re}\lambda\,t}E(0)$ and $E(0)>0$,
\eqref{basic:energybound} yields
\begin{align*}
2\operatorname{Re}\lambda\,E(t)
&=E'(t)\le-2\kappa_*E(t),
\end{align*}
and then $\operatorname{Re}\lambda\le-\kappa_*<0.$ The constants $C_*,b_*,\kappa_*$ are independent of
$\tau\ge\tau_1$.
\end{proof}

\subsection{Spectrum properties of $L_{\tau}$}
\label{sec:finite}

Throughout this subsection, we work on $X_{\mathbb C}$ with the
common domain $Z_{\mathbb C}$ defined in \eqref{basic:spaces}.
Recall that $L_\tau=D+J_\tau$ and that $J_\tau$ is a bounded
multiplication operator on $X_{\mathbb C}$, depending affinely on $\tau$.
Put
\[
K=\sup_{\tau\in[1,\tau_1]}\|J_\tau\|_{\mathcal L(X_{\mathbb C})}
<\infty.
\]

We
 study the spectrum properties of the linear operator $L_{\tau}$.

\begin{lemma}\label{lem:nozero}
For every $\tau\ge1$, every eigenvalue $\lambda$ of
\eqref{basic:mode} satisfies
\begin{equation}\label{basic:nozero}
\lambda\ne0.
\end{equation}
\end{lemma}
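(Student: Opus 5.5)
The plan is to compute the kernel of $L_\tau$ without the mass constraint, show it is one-dimensional and spanned by the equilibrium $(s,h)$, and then observe that $(s,h)$ has positive mass, so no nonzero zero-mass function lies in the kernel.

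\emph{Step 1: $(s,h)$ lies in the unconstrained kernel.} The nonlinearity $f_\tau(x,S,I)$ is positively homogeneous of degree one in $(S,I)$. Euler's identity therefore gives
\[
a_\tau s+c_\tau h=f_\tau(x,s,h)=\beta\frac{sh}{s+h}-\gamma h,
\]
using \eqref{basic:sameequilibrium}. Hence $(\phi,\psi)=(s,h)$ satisfies the first three lines of \eqref{basic:mode} with $\lambda=0$; these are exactly the steady equations \eqref{basic:steady}. It fails the mass condition, since $\int_0^l(s+h)\,dx=M>0$.

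\emph{Step 2: reduce $\lambda=0$ to a scalar problem.} Suppose $(\phi,\psi)$ solves \eqref{basic:mode} with $\lambda=0$, possibly complex; by linearity we may treat real and imaginary parts separately. Adding the two equations gives $(p\phi+q\psi)''=0$. With the Neumann conditions, exactly as in the derivation of \eqref{basic:k}, this yields $p\phi+q\psi\equiv\kappa$ for some constant $\kappa$. Substituting $\phi=(\kappa-q\psi)/p$ into the second equation gives
\[
\mathcal A\psi:=q\psi''+m_0\psi-V\psi=-\frac{\tau a\kappa}{p}.
\]
Here $m_0:=\frac{\beta s}{s+h}-\gamma=-q\frac{h''}{h}$, and
\[
V:=\frac{\tau\beta sh}{(s+h)^2}+\frac{\tau a q}{p}>0.
\]
To obtain this form we use the identity $c_\tau=m_0-\tau\beta sh/(s+h)^2$. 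This identity follows from $\beta_\tau s/(s+h)-\gamma_\tau=\beta s/(s+h)-\gamma$, which is the content of \eqref{basic:sameequilibrium}.

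\emph{Step 3: $\mathcal A$ is invertible.} The operator $q\,d^2/dx^2+m_0$ with Neumann conditions has the positive eigenfunction $h$ with eigenvalue $0$, so $0$ is its principal eigenvalue. Since $V>0$ on $[0,l]$, the principal eigenvalue of $\mathcal A$ is strictly negative. To see this, test the Rayleigh quotient with the principal eigenfunction, or multiply $\mathcal A\psi=0$ by $h$ and integrate to get $\int Vh\psi=0$, and then combine this with the variational characterization. Thus $\mathcal A$ is invertible on Neumann functions, and for each $\kappa$ the function $\psi$, and hence $\phi$, is uniquely determined and depends linearly on $\kappa$.

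\emph{Step 4: conclusion.} By Step 3 the unconstrained kernel is at most one-dimensional. By Step 1 it contains $(s,h)$, which corresponds to $\kappa=k>0$. So every kernel element is $c(s,h)$ for some constant $c$. Its mass is $cM$, and the constraint $\int_0^l(\phi+\psi)\,dx=0$ forces $c=0$, contradicting $(\phi,\psi)\not\equiv0$. Hence $\lambda\neq0$.

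The main point needing care is Step 3, the strict negativity of the principal eigenvalue of $\mathcal A$. I expect this to follow from the standard monotonicity of the principal Neumann eigenvalue in the potential, together with the fact that $h>0$ is the principal eigenfunction for $q\,d^2/dx^2+m_0$. I would also check that no regularity issue arises for the complex case, which holds since the problem is real.
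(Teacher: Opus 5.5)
Your proof is correct and follows essentially the same route as the paper. Both arguments add the equations to get $p\phi+q\psi$ constant, reduce to a scalar Neumann problem with potential $-qh''/h-V$, show its homogeneous version has only the trivial solution, conclude that $(\phi,\psi)$ is a multiple of $(s,h)$, and use the mass constraint to force it to vanish. The only difference is presentational: the paper proves the triviality of the kernel by the explicit ground-state substitution $r=\psi/h-F/k$ and the identity $-q\int_0^l h^2|r'|^2\,dx-\frac{\tau k}{p}\int_0^l ah|r|^2\,dx=0$, whereas you argue via the principal eigenvalue of $\mathcal A$ and identify $(s,h)$ as the particular solution through Euler's identity.
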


\begin{proof}
Suppose, to the contrary, that \eqref{basic:mode} has a nonzero
solution $(\phi,\psi)$ with $\lambda=0$.
Adding the two differential equations gives
\[
(p\phi+q\psi)''=0.
\]
Hence $(p\phi+q\psi)'$ is constant. The Neumann boundary conditions give
$(p\phi+q\psi)'=0$, so
$p\phi+q\psi=F,$
where $F$ may be a  complex constant. Therefore,
\begin{align}\label{essu}
    \phi=\frac{F-q\psi}{p}.
\end{align}
Substituting \eqref{essu} and \eqref{basic:derivatives} into
the second equation of \eqref{basic:mode}, we obtain
\begin{equation}\label{basic:zeroequation}
q\psi''-q\frac{h''}{h}\psi-\frac{\tau ka}{ph}\psi
+\frac{\tau a}{p}F=0.
\end{equation}

Since $h$ is real-valued and strictly positive, we first set
\[
z=\frac{\psi}{h}.
\]
Substituting this identity and $\psi=hz$ into
\eqref{basic:zeroequation}, and multiplying by $h$, gives
\[
q(h^2z')'-\frac{\tau kah}{p}z
+\frac{\tau ah}{p}F=0.
\]
We can write this equation as
\[
q(h^2z')'
-\frac{\tau kah}{p}\left(z-\frac{F}{k}\right)=0.
\]

We then define
\[
r=z-\frac{F}{k}
=\frac{\psi}{h}-\frac{F}{k}.
\]
We claim that $r\equiv 0$ and then $\psi\equiv\frac{F}{k}h$. In fact, $r$ satisfies the following equation
\begin{equation}\label{basic:ratiozero}
q(h^2r')'-\frac{\tau kah}{p}r=0,
\end{equation}
with the Neumann boundary condition
\[
r'(0)=r'(l)=0.
\]

We multiply
\eqref{basic:ratiozero} by 
$\overline r$ and integrate:
\begin{align*}  0=q\int_0^l(h^2r')'\overline r\,dx
-\frac{\tau k}{p}\int_0^l ah\,r\overline r\,dx=-q\int_0^l h^2|r'|^2\,dx
-\frac{\tau k}{p}\int_0^l ah|r|^2\,dx,
\end{align*}
where we use integration by parts and the boundary conditions.
Therefore, $r\equiv0$ on $[0,l]$.
Consequently,
\[
\psi=\frac{F}{k}h.
\]
Using $ps+qh=k$, we obtain
\[
\phi=\frac{F-q\psi}{p}
=\frac{F}{pk}(k-qh)
=\frac{F}{k}s.
\]

However, we have
\[
0=\int_0^l(\phi+\psi)\,dx
=\frac{F}{k}\int_0^l(s+h)\,dx
=\frac{F}{k}M.
\]
Since $M/k>0$, then we obtain $F=0$, which implies $\phi=\psi=0$.
This contradicts the choice of a nonzero eigenfunction.
\end{proof}
 
\begin{lemma}
    Each $L_\tau$, $\tau\in[1,\tau_1]$, is closed and has compact
resolvent. Its spectrum consists of isolated eigenvalues of finite
algebraic multiplicity and it have finitely many eigenvalues in bounded domain.
\end{lemma}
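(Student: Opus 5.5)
Since $L_\tau=D+J_\tau$ with $J_\tau$ a bounded multiplication operator on $X_{\mathbb C}$, the plan is to first establish closedness and compact resolvent for the diffusion part $D$ alone, and then transfer both properties to $L_\tau$ by a bounded perturbation argument.

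For $D$ we use the orthogonal cosine eigenbasis described before the lemma. The finite Neumann cosine sums satisfying the mass constraint are dense in $X_{\mathbb C}$. On them, $D$ acts diagonally, with eigenvalues $-p(n\pi/l)^2$ and $-q(n\pi/l)^2$ for $n\ge1$. In addition there is the mean mode $(c,-c)$ with eigenvalue $0$. Take $\mu=1$, say. Then $(\mu-D)^{-1}$ is given on this basis by multiplication by $(\mu+p(n\pi/l)^2)^{-1}$ and $(\mu+q(n\pi/l)^2)^{-1}$. These factors tend to zero, and the basis is orthogonal in the $H^1$ inner product, so $(\mu-D)^{-1}$ is a compact self-adjoint operator on $X_{\mathbb C}$. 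We must check that its range is exactly $Z_{\mathbb C}$. The inclusion into $Z_{\mathbb C}$ follows from elliptic regularity for the Neumann problem $\mu u-pu''=f\in H^1$, which gives $u\in H^3$ with $u'(0)=u'(l)=0$; the mass constraint is preserved because $\int(u+v)=\mu^{-1}\int(f+g)=0$. The reverse inclusion is immediate, since $\mu-D$ maps $Z_{\mathbb C}$ into $X_{\mathbb C}$ and is injective. Closedness of $D$ then follows from the graph estimate \eqref{basic:graph-estimate}. Alternatively, it follows simply because $D$ has a bounded inverse after shifting by $\mu$. The compactness could also be read off from the compact embedding $Z\hookrightarrow X$ together with \eqref{basic:graph-estimate}.

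For the perturbation we use that $J_\tau$ is bounded with norm at most $K$. Sums of closed operators with bounded ones remain closed, so $L_\tau$ is closed on the domain $Z_{\mathbb C}$. Take real $\mu>K$, large enough that $\|J_\tau(\mu-D)^{-1}\|<1$. This is possible because $\|(\mu-D)^{-1}\|\le 1/\mu$, which holds since $D$ is self-adjoint with nonpositive spectrum in the $H^1$ inner product. We then write
\[
\mu-L_\tau=\bigl(I-J_\tau(\mu-D)^{-1}\bigr)(\mu-D).
\]
A Neumann series inverts the first factor, so $(\mu-L_\tau)^{-1}=(\mu-D)^{-1}\bigl(I-J_\tau(\mu-D)^{-1}\bigr)^{-1}$. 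This is the composition of a compact operator with a bounded one, hence compact. Consequently the resolvent is compact at one point, and by the resolvent identity at every point of the resolvent set.

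The spectral conclusion is then the standard one: by Kato's theorem (Theorem III.6.29), a closed operator with compact resolvent has spectrum consisting only of isolated eigenvalues of finite algebraic multiplicity, with no finite accumulation point. In particular, every bounded region contains only finitely many eigenvalues. The main technical point is identifying $\operatorname{Ran}(\mu-D)^{-1}$ with $Z_{\mathbb C}$, including the $H^3$ regularity and the boundary conditions. This is routine one-dimensional ODE regularity: one solves the equation explicitly mode by mode, or uses \eqref{basic:graph-estimate}.
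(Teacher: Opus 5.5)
Your proposal is correct and follows essentially the same route as the paper: the explicit nonpositive spectrum and resolvent bound $\|(\mu-D)^{-1}\|\le 1/\mu$ for $D$, the factorization $\mu-L_\tau=\bigl[I-J_\tau(\mu-D)^{-1}\bigr](\mu-D)$ inverted by a Neumann series for $\mu>K$, compactness of $(\mu-D)^{-1}$, and then the standard spectral theory of operators with compact resolvent. The only difference is that you also justify closedness of $D$ and the identification $\operatorname{Ran}(\mu-D)^{-1}=Z_{\mathbb C}$, which the paper takes for granted.
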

\begin{proof}
We first consider the diffusion part of $L_{\tau}$. The eigenvalues of the diffusion operator $D$ can be computed
explicitly, and its spectrum is
\[
\sigma(D)=
\left\{
0,\,
-p\left(\frac{n\pi}{l}\right)^2,\,
-q\left(\frac{n\pi}{l}\right)^2:
n\ge1
\right\}\subset  (-\infty,0]\subset \mathbb R.
\]
Moreover, its resolvent satisfies
\begin{equation}\label{reso}
    \|(\lambda I-D)^{-1}\|_{\mathcal L(X_{\mathbb C})}
=\frac{1}{\operatorname{dist}(\lambda,\sigma(D))},
\quad \lambda\in\rho(D).
\end{equation}

Since $J_\tau$ is bounded on $X_{\mathbb C}$,
$L_\tau=D+J_\tau$ is closed with domain $Z_{\mathbb C}$.
For real constant $\xi>K$, since $\sigma(D)\le 0$ and then we have $\operatorname{dist}(\xi,\sigma(D))=\xi$, then
$$
\|J_\tau(\xi I-D)^{-1}\|_{\mathcal L(X_{\mathbb C})}
\le\frac{K}{\operatorname{dist}(\xi,\sigma(D))}=\frac{K}{\xi}<1.
$$
Note that
$$
\xi I-L_\tau=\bigl[I-J_\tau(\xi I-D)^{-1}\bigr](\xi I-D)
$$
and the Neumann series therefore give
\begin{equation}\label{factor}
    (\xi I-L_\tau)^{-1}
=(\xi I-D)^{-1}\bigl[I-J_\tau(\xi I-D)^{-1}\bigr]^{-1}.
\end{equation}

The operator $(\xi I-D)^{-1}$  is compact on $X_{\mathbb C}$ and $[I-J_\tau(\xi I-D)^{-1}]^{-1}$ is a bounded operator.
Thus $(\xi I-L_\tau)^{-1}$ is compact. The spectral assertions
 follow directly from the compact  operator.
\end{proof}

\begin{lemma}
    For every $\tau\in[1,\tau_1]$ and every
$\lambda\in\sigma(L_\tau)$, we have
\begin{equation}\label{spec:box}
\operatorname{Re}\lambda\le K,
\qquad
|\operatorname{Im}\lambda|\le K.
\end{equation}
Moreover, for every fixed $\tau\in[1,\tau_1]$, the set
$\sigma(L_\tau)\cap i\mathbb R$ is finite, and there exists
$\delta_\tau>0$ such that
\[
\sigma(L_\tau)\cap
\{\lambda\in\mathbb C:0<|\operatorname{Re}\lambda|<\delta_\tau\}
=\varnothing.
\]

\end{lemma}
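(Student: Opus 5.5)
The plan is a perturbation argument around the diffusion operator $D$, followed by the discreteness result of the previous lemma.

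\textbf{Step 1 (the box).} Fix $\tau\in[1,\tau_1]$ and take $\lambda$ with $\operatorname{Re}\lambda>K$ or $|\operatorname{Im}\lambda|>K$. Since $\sigma(D)\subset(-\infty,0]$, in either case
\[
\operatorname{dist}(\lambda,\sigma(D))\ge\max\{\operatorname{Re}\lambda,|\operatorname{Im}\lambda|\}>K.
\]
The resolvent identity \eqref{reso} then gives
\[
\|J_\tau(\lambda I-D)^{-1}\|_{\mathcal L(X_{\mathbb C})}\le K/\operatorname{dist}(\lambda,\sigma(D))<1 .
\]
Next write
\[
\lambda I-L_\tau=\bigl[I-J_\tau(\lambda I-D)^{-1}\bigr](\lambda I-D).
\]
The bracket is invertible by a Neumann series, and $\lambda I-D$ is invertible from $Z_{\mathbb C}$ onto $X_{\mathbb C}$. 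Hence $\lambda\in\rho(L_\tau)$, exactly as in \eqref{factor}. This proves \eqref{spec:box}.

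The only point needing care is \eqref{reso}. It holds because $D$ is normal (indeed self-adjoint) in the $H^1$ inner product, with the orthogonal cosine eigenbasis stated earlier; this is where the choice of the $H^1$ inner product on $X_{\mathbb C}$ matters. I take this as given from the previous lemma.

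\textbf{Step 2 (finiteness and the gap).} By the previous lemma, $L_\tau$ has compact resolvent, so its spectrum consists of isolated eigenvalues, and only finitely many lie in any bounded set. Step 1 confines $\sigma(L_\tau)\cap i\mathbb R$ to the segment $\{iy:|y|\le K\}$, which is bounded. Hence $\sigma(L_\tau)\cap i\mathbb R$ is finite.

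For the gap, consider the bounded rectangle
\[
R=\{\lambda:|\operatorname{Re}\lambda|\le 1,\ |\operatorname{Im}\lambda|\le K\}.
\]
It contains only finitely many eigenvalues. Let $\delta_\tau$ be the minimum of $1$ and of $|\operatorname{Re}\lambda|$ over the eigenvalues in $R$ with $\operatorname{Re}\lambda\ne0$, taking $\delta_\tau=1$ if there are none. This is a minimum over a finite set of positive numbers, so $\delta_\tau>0$.

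Now take any eigenvalue $\lambda$ with $0<|\operatorname{Re}\lambda|<\delta_\tau$. By Step 1 it satisfies $|\operatorname{Im}\lambda|\le K$, and since $\delta_\tau\le 1$ it satisfies $|\operatorname{Re}\lambda|<1$. So $\lambda\in R$, and then $|\operatorname{Re}\lambda|\ge\delta_\tau$ by the definition of $\delta_\tau$, a contradiction.

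\textbf{Expected obstacle.} The main subtlety is Step 1's use of \eqref{reso} together with the fact that the box bound holds uniformly in $\tau\in[1,\tau_1]$ with the same $K$. Uniformity is immediate from the definition of $K$ as a supremum over $[1,\tau_1]$. Everything else is routine.
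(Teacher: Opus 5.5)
Your proof is correct. Step 2 matches the paper's argument (same rectangle, same definition of $\delta_\tau$), but Step 1 takes a different route. The paper proves the box with a numerical-range computation. For a normalized eigenvector $\Phi\in Z_{\mathbb C}$, it takes the $H^1$ inner product of $L_\tau\Phi=\lambda\Phi$ with $\Phi$ and integrates by parts to get $\lambda=-p\int(|\phi'|^2+|\phi''|^2)\,dx-q\int(|\psi'|^2+|\psi''|^2)\,dx+\langle J_\tau\Phi,\Phi\rangle_{X_{\mathbb C}}$. The box then follows because the diffusion part is real and nonpositive while $|\langle J_\tau\Phi,\Phi\rangle|\le K$. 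That argument needs only the elementary dissipativity of $D$ on its domain. However, it locates eigenvalues only, so it relies on the compact-resolvent lemma to cover all of $\sigma(L_\tau)$. Your Neumann-series argument shows directly that the complement of the box lies in $\rho(L_\tau)$, so it needs no information about the nature of the spectrum. It also yields the quantitative bound $\|(\lambda I-L_\tau)^{-1}\|\le(\operatorname{dist}(\lambda,\sigma(D))-K)^{-1}$. This is the same mechanism the paper later uses for the high-frequency resolvent estimate and for the common circle $\Gamma_R$, so your version unifies those steps. The price is that you use the exact resolvent identity \eqref{reso}, which requires $D$ to be self-adjoint (not merely dissipative) in the $H^1$ inner product on $Z_{\mathbb C}$. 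That holds here via the orthogonal Neumann cosine eigenbasis and is already asserted in the paper, so taking it as given is legitimate.
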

\begin{proof}
Let $L_\tau\Phi=\lambda\Phi$, where
$\Phi=(\phi,\psi)\in Z_{\mathbb C}$, and normalize
$\|\Phi\|_{X_{\mathbb C}}=1$.
Taking the $H^1$ inner product with $\Phi$ and integrating by parts,
using $\phi'=\psi'=0$ at the endpoints, gives
\begin{equation}\label{EST}
\lambda
=-p\int_0^l\bigl(|\phi'|^2+|\phi''|^2\bigr)\,dx
-q\int_0^l\bigl(|\psi'|^2+|\psi''|^2\bigr)\,dx
+\langle J_\tau\Phi,\Phi\rangle_{X_{\mathbb C}}.
\end{equation}
The diffusion terms are real and nonpositive, while
\[
|\langle J_\tau\Phi,\Phi\rangle_{X_{\mathbb C}}|
\le\|J_\tau\|_{\mathcal L(X_{\mathbb C})}
\|\Phi\|_{X_{\mathbb C}}^2\le K.
\]
Taking real and imaginary parts  in \eqref{EST} proves \eqref{spec:box}.

Fix $\tau\in[1,\tau_1]$. By (1) and \eqref{spec:box}, the set
$\{\lambda\in\sigma(L_\tau):|\operatorname{Re}\lambda|\le1\}$
is finite, since it lies in the bounded rectangle
$\{|\operatorname{Re}\lambda|\le1,\ |\operatorname{Im}\lambda|\le K\}$.
In particular, $\sigma(L_\tau)\cap i\mathbb R$ is finite. Set
\[
\delta_\tau
=\min\left(
\{1\}\cup
\left\{|\operatorname{Re}\lambda|:
\lambda\in\sigma(L_\tau),
0<|\operatorname{Re}\lambda|\le1\right\}
\right)>0.
\]
\end{proof}

\begin{lemma}
    For every $\tau\in[1,\tau_1]$ and every $\omega\in\mathbb R$
with $|\omega|\ge2K+1$,
\begin{equation}\label{spec:high-frequency}
i\omega\in\rho(L_\tau),\qquad
\|(i\omega I-L_\tau)^{-1}\|_{\mathcal L(X_{\mathbb C})}
\le\frac{2}{|\omega|}.
\end{equation}
\end{lemma}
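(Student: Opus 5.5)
The plan is to use the factorization
\[
i\omega I-L_\tau=\bigl[I-J_\tau(i\omega I-D)^{-1}\bigr](i\omega I-D),
\]
exactly as in the proof of compact resolvent, and to control the bracket by a Neumann series. Everything rests on the resolvent identity \eqref{reso} for $D$. This identity holds because $D$ is diagonalized by the $H^1$-orthogonal Neumann cosine basis on $X_{\mathbb C}$, so $D$ is normal (indeed self-adjoint) in the $H^1$ inner product.

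First, since $\sigma(D)\subset(-\infty,0]$, every point $\mu\in\sigma(D)$ satisfies $|i\omega-\mu|=\sqrt{\omega^2+\mu^2}\ge|\omega|$. Hence $\operatorname{dist}(i\omega,\sigma(D))\ge|\omega|$. For $\omega\neq0$ we have $i\omega\in\rho(D)$, and by \eqref{reso}
\[
\|(i\omega I-D)^{-1}\|_{\mathcal L(X_{\mathbb C})}\le\frac1{|\omega|}.
\]

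Next, by the definition of $K$ as a uniform bound for $\|J_\tau\|$ on $[1,\tau_1]$, the condition $|\omega|\ge 2K+1$ gives
\[
\|J_\tau(i\omega I-D)^{-1}\|\le\frac{K}{|\omega|}\le\frac{K}{2K+1}<\frac12.
\]
The Neumann series then shows that $I-J_\tau(i\omega I-D)^{-1}$ is invertible on $X_{\mathbb C}$, with inverse of norm at most $1/(1-\tfrac12)=2$. Since $(i\omega I-D)^{-1}$ maps $X_{\mathbb C}$ bijectively onto $Z_{\mathbb C}$, the composition
\[
(i\omega I-D)^{-1}\bigl[I-J_\tau(i\omega I-D)^{-1}\bigr]^{-1}
\]
is a two-sided inverse of $i\omega I-L_\tau$. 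Therefore $i\omega\in\rho(L_\tau)$, and
\[
\|(i\omega I-L_\tau)^{-1}\|\le\frac{1}{|\omega|}\cdot 2=\frac{2}{|\omega|}.
\]
This is \eqref{spec:high-frequency}.

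The only step needing care is \eqref{reso} itself, i.e.\ that the cosine modes are orthogonal in the $H^1$ inner product and that $D$ acts diagonally on them. This was asserted earlier in the paper, so it may be assumed. The modes themselves need some care because of the mass constraint: they are $(\cos(n\pi x/l),0)$ and $(0,\cos(n\pi x/l))$ for $n\ge1$, together with $(1,-1)$. The argument is uniform in $\tau\in[1,\tau_1]$ because only the uniform bound $K$ enters.
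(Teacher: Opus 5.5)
Your proposal is correct and follows essentially the paper's proof: the same factorization $i\omega I-L_\tau=[I-J_\tau(i\omega I-D)^{-1}](i\omega I-D)$, the bound $\|(i\omega I-D)^{-1}\|\le 1/|\omega|$ obtained from \eqref{reso} and $\sigma(D)\subset(-\infty,0]$, and a Neumann series. The only cosmetic difference is that you bound the Neumann inverse by $2$ using $K/|\omega|<1/2$, whereas the paper writes the bound as $1/(|\omega|-K)\le 2/|\omega|$.
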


\begin{proof}
   For $\omega\in\mathbb R\setminus\{0\}$, we recall that $\sigma(D)\subset(-\infty,0]$ and \eqref{reso}, we have
\[
\|(i\omega I-D)^{-1}\|_{\mathcal L(X_{\mathbb C})}
\le\frac1{|\omega|}.
\]
If $|\omega|\ge2K+1$, then
$\|J_\tau(i\omega I-D)^{-1}\|\le K/|\omega|<1$.
The fact \eqref{factor} yields
$i\omega\in\rho(L_\tau)$ and
\[
\|(i\omega I-L_\tau)^{-1}\|_{\mathcal L(X_{\mathbb C})}
\le\frac{1}{(1-K/|\omega|)|\omega|}
=\frac{1}{|\omega|-K}
\le\frac{2}{|\omega|}.
\]
\end{proof}

\begin{lemma}
\label{lem:common-circle}
There exists $R>\sqrt{2}K+1$, independent of $\tau$, such that
\[
\Gamma:=\{\lambda\in\mathbb C:|\lambda|=R\}
\subset\rho(L_\tau)
\qquad\text{for every }\tau\in[1,\tau_1].
\]
\end{lemma}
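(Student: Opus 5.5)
We need a circle of radius $R$ that avoids the spectrum of every $L_\tau$, $\tau\in[1,\tau_1]$. The natural route is to show that, for large $|\lambda|$, the point $\lambda$ is either excluded by the spectral box \eqref{spec:box} or lies in the resolvent set by a Neumann-series perturbation of $D$, as in \eqref{factor}.

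\textbf{Choice of $R$.} I would take $R=2K+2$; this satisfies $R>\sqrt2K+1$ since $2K+2>\sqrt2 K+1$. Fix $\tau\in[1,\tau_1]$ and $\lambda\in\Gamma$, so $|\lambda|=R$, and suppose $\lambda\in\sigma(L_\tau)$.

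\textbf{The box argument.} By the preceding lemma on compact resolvent, $\lambda$ would be an eigenvalue, so \eqref{spec:box} gives $\operatorname{Re}\lambda\le K$ and $|\operatorname{Im}\lambda|\le K$. If also $\operatorname{Re}\lambda\ge -K$, then $|\lambda|\le\sqrt2K<R$, a contradiction. Hence it only remains to treat $\operatorname{Re}\lambda<-K$, where $\lambda$ lies near the negative real axis, which contains $\sigma(D)$.

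\textbf{The main obstacle.} Points of $\Gamma$ with $\operatorname{Re}\lambda<-K$ can sit close to $\sigma(D)$, for instance near some $-p(n\pi/l)^2$. There the plain distance bound $\operatorname{dist}(\lambda,\sigma(D))\ge|\operatorname{Im}\lambda|$ fails to beat $K$. To handle this, I would exploit the freedom to choose $R$, using only that $\sigma(D)$ is a discrete set of real numbers.

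\textbf{Choosing $R$ to avoid $\sigma(D)$.} The points of $\sigma(D)\cap[-R_0-4K-2,\,-R_0]$, for a given $R_0>\sqrt2K+1$, are finitely many. Since the eigenvalues $-p(n\pi/l)^2$ and $-q(n\pi/l)^2$ have gaps tending to infinity, I can choose $R\ge 2K+2$ such that $\operatorname{dist}(-R,\sigma(D))>2K+1$. Concretely, take $R$ to be the midpoint of two consecutive elements of $\sigma(D)$ whose gap exceeds $4K+2$; such gaps exist far out, because consecutive gaps among $p n^2$ and among $q n^2$ both grow linearly in $n$, and between consecutive points of the union the gap is at least the minimum of these, which can be arranged by going far enough.

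\textbf{The resolvent estimate for $\operatorname{Re}\lambda<-K$.} With this $R$, take $\lambda\in\Gamma$ with $\operatorname{Re}\lambda<-K$.
\begin{itemize}
\item If $|\operatorname{Im}\lambda|>K$, then $\operatorname{dist}(\lambda,\sigma(D))\ge|\operatorname{Im}\lambda|>K$.
\item If $|\operatorname{Im}\lambda|\le K$, then $|\operatorname{Re}\lambda+R|\le |\operatorname{Im}\lambda|^2/R\le K$. It follows that $\operatorname{dist}(\lambda,\sigma(D))\ge \operatorname{dist}(-R,\sigma(D))-|\lambda+R|>2K+1-2K>K$.
\end{itemize}
In both cases \eqref{reso} gives $\|J_\tau(\lambda I-D)^{-1}\|\le K/\operatorname{dist}(\lambda,\sigma(D))<1$. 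The factorization in \eqref{factor}, with $\xi$ replaced by $\lambda$, then shows $\lambda\in\rho(L_\tau)$.

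This contradicts the assumption $\lambda\in\sigma(L_\tau)$. Since $K$ is uniform over $[1,\tau_1]$, the radius $R$ is independent of $\tau$, and so $\Gamma\subset\rho(L_\tau)$ for every $\tau\in[1,\tau_1]$.
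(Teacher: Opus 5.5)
Your strategy (exclude most of $\Gamma$ with the eigenvalue box \eqref{spec:box}, then handle the arc near the negative real axis by a Neumann-series perturbation of $D$ with $-R$ placed in a wide gap of $\sigma(D)$) can be made to work, but two steps fail as written.

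\textbf{The final inequality.} From $\operatorname{dist}(-R,\sigma(D))>2K+1$ and $|\lambda+R|\le|\operatorname{Re}\lambda+R|+|\operatorname{Im}\lambda|\le 2K$ you only get $\operatorname{dist}(\lambda,\sigma(D))>1$, not $>K$, since $2K+1-2K=1$. Then \eqref{reso} gives only $\|J_\tau(\lambda I-D)^{-1}\|<K$, which does not close the Neumann series once $K\ge1$. You need a wider gap, e.g.\ $\operatorname{dist}(-R,\sigma(D))>3K+1$.

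\textbf{The existence of wide gaps.} Your justification here is false: consecutive points of $\sigma(D)$ need not be far apart. For $p=1$, $q=2$, the Pell equation $n^2-2m^2=\pm1$ has infinitely many solutions. So $-p(n\pi/l)^2$ and $-q(m\pi/l)^2$ lie at distance $(\pi/l)^2$ infinitely often, arbitrarily far out. What is true, and suffices, is a counting argument. Given $G>0$, take $x>\sqrt2K+1$ large enough that both sequences $p(n\pi/l)^2$ and $q(n\pi/l)^2$ have spacing larger than $G$ beyond $x$. Then $[x,x+G]$ contains at most two points of $-\sigma(D)$. It therefore contains a subinterval of length at least $G/3$ with no such points in its interior. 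Take $R$ to be the midpoint of that subinterval; note that it is $-R$, not $R$, that should sit in the gap of $\sigma(D)$. Then $\operatorname{dist}(-R,\sigma(D))\ge G/6$.

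\textbf{Comparison with the paper.} Once these are fixed your argument is correct, but it takes a detour the paper avoids. The paper observes that for $|\lambda|=R$ and $b\in\sigma(D)\subset(-\infty,0]$,
\[
|\lambda-b|\ge\bigl||\lambda|-|b|\bigr|=|R+b|.
\]
Hence $\operatorname{dist}(\lambda,\sigma(D))\ge\operatorname{dist}(-R,\sigma(D))$ on the \emph{whole} circle, with no loss. It therefore needs neither the box \eqref{spec:box} nor any case split on $\operatorname{Re}\lambda$ and $\operatorname{Im}\lambda$. It only requires $\operatorname{dist}(-R,\sigma(D))>K+1$, which the paper asserts without proof and which your corrected counting argument supplies.
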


\begin{proof}
We can consequently choose $R$ such that
\[
R>\sqrt{2}K+1,
\qquad
\operatorname{dist}(-R,\sigma(D))>K+1.
\]
If $|\lambda|=R$ and $b\in\sigma(D)\subset(-\infty,0]$, then
\[
|\lambda-b|
\ge\bigl||\lambda|-|b|\bigr|
=|-R-b|.
\]
Hence $\operatorname{dist}(\lambda,\sigma(D))>K+1$, and we recall that $J_{\tau}$ is uniformly bounded with respect to $\tau$, we have
\[
\|J_\tau(\lambda I-D)^{-1}\|_{\mathcal L(X_{\mathbb C})}
\le\frac{K}{K+1}<1.
\]
The same argument used in (1)
gives $\lambda\in\rho(L_\tau)$ for every $\tau\in[1,\tau_1]$.
\end{proof}

Let \(\Gamma_R=\{\lambda\in\mathbb C:|\lambda|=R\}\) be the common
resolvent circle obtained above, oriented counterclockwise.
For \(\tau\in[1,\tau_1]\), define
\[
\Pi_\tau
:=\frac{1}{2\pi i}\int_{\Gamma_R}
(\lambda I-L_\tau)^{-1}\,d\lambda,
\qquad
E_\tau:=\Pi_\tau(X_{\mathbb C}),
\qquad
E_\tau^{\mathbb R}:=\Pi_\tau(X).
\]
By standard properties of Riesz projections,
\(\Pi_\tau\in\mathcal L(X_{\mathbb C},Z_{\mathbb C})\) is a finite-rank
projection whose range is the direct sum of the generalized eigenspaces
associated with the eigenvalues inside \(\Gamma_R\). Moreover,
\[
L_\tau\Pi_\tau U=\Pi_\tau L_\tau U,
\qquad U\in Z_{\mathbb C}.
\]
Since \(L_\tau\) has real coefficients and \(\Gamma_R\) is invariant
under complex conjugation, \(\Pi_\tau\) commutes with complex
conjugation. Consequently,
\[
E_\tau^{\mathbb R}=E_\tau\cap X\subset Z,
\qquad
E_\tau=E_\tau^{\mathbb R}+iE_\tau^{\mathbb R}.
\]

Next, we give some properties for the spectrum projection and the finite dimension function space producted by the spectrum projection in the cycle $\Gamma_R$ with respect to $\tau$.
\begin{lemma}
For the spectral projection \(\Pi_\tau\) with \(\Gamma_R=\{\lambda\in\mathbb C:|\lambda|=R\}\) and the space \(E_\tau\) defined above,
the following statements hold.
\begin{enumerate}
\item[(1)]
The map \(\tau\mapsto\Pi_\tau\) is real-analytic  with  respect to real constant $\tau$ in
\(\mathcal L(X_{\mathbb C},Z_{\mathbb C})\).

\item[(2)]
There exists a nonnegative integer \(m\), independent of \(\tau\),
such that
\[
\dim_{\mathbb C}E_\tau
=\dim_{\mathbb R}E_\tau^{\mathbb R}=m,
\qquad \tau\in[1,\tau_1].
\]

\item[(3)]
There exist real-analytic maps
\[
\Phi_1,\ldots,\Phi_m:[1,\tau_1]\to Z
\]
such that, for every \(\tau\), these functions form a real basis
of \(E_\tau^{\mathbb R}\) and a complex basis of \(E_\tau\).
\end{enumerate}
\end{lemma}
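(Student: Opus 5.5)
The plan is to build everything on the resolvent analyticity in $\tau$, using the affine structure $J_\tau=\tau J^{(1)}+J^{(0)}$ and the common resolvent circle $\Gamma_R$ from Lemma~\ref{lem:common-circle}.

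For (1), fix $\lambda\in\Gamma_R$. For $\tau\in[1,\tau_1]$ we have the factorization
\[
(\lambda I-L_\tau)^{-1}=(\lambda I-D)^{-1}\bigl[I-J_\tau(\lambda I-D)^{-1}\bigr]^{-1},
\]
with $\|J_\tau(\lambda I-D)^{-1}\|\le K/(K+1)<1$. Because $J_\tau$ is affine in $\tau$, this formula extends to complex $\tau$ in a small neighbourhood of $[1,\tau_1]$, chosen uniformly in $\lambda\in\Gamma_R$ so that the Neumann series still converges. On that neighbourhood the bracket inverse is a holomorphic $\mathcal L(X_{\mathbb C})$-valued function of $\tau$.

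Next, $(\lambda I-D)^{-1}$ maps $X_{\mathbb C}$ boundedly into $Z_{\mathbb C}$. This follows from the graph estimate \eqref{basic:graph-estimate}, together with $\|D(\lambda-D)^{-1}\|\le 1+|\lambda|\,\|(\lambda-D)^{-1}\|$. Hence $(\lambda I-L_\tau)^{-1}$ is holomorphic in $\tau$ with values in $\mathcal L(X_{\mathbb C},Z_{\mathbb C})$, and it is jointly continuous in $(\lambda,\tau)$ on $\Gamma_R\times U$. Integrating over the compact contour $\Gamma_R$ preserves holomorphy, so $\tau\mapsto\Pi_\tau$ is holomorphic near $[1,\tau_1]$. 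Its restriction to real $\tau$ is therefore real-analytic.

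For (2), $\Pi_\tau$ is a projection, continuous in $\tau$ in operator norm on $X_{\mathbb C}$. I will invoke the standard lemma: if $P,Q$ are projections with $\|P-Q\|<1$, then $\operatorname{rank}P=\operatorname{rank}Q$. Finite rank at each $\tau$ holds because only finitely many eigenvalues of finite algebraic multiplicity lie inside $\Gamma_R$, by the compact resolvent lemma. By continuity, the rank $m$ is locally constant on the connected set $[1,\tau_1]$, hence constant. Since $\Pi_\tau$ commutes with conjugation, $E_\tau=E_\tau^{\mathbb R}\oplus iE_\tau^{\mathbb R}$. A real basis of $E^{\mathbb R}_\tau$ is then a complex basis of $E_\tau$, which gives $\dim_{\mathbb C}E_\tau=\dim_{\mathbb R}E_\tau^{\mathbb R}$.

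For (3), I would use Kato's transformation function rather than a local patching argument. Solve the linear ODE
\[
W'(\tau)=[\Pi_\tau',\Pi_\tau]\,W(\tau),\qquad W(1)=I,
\]
in $\mathcal L(X)$. Its coefficients are real-analytic and real, since $\Pi_\tau$ commutes with conjugation. The solution $W(\tau)$ is then real-analytic and invertible, and it satisfies $W(\tau)\Pi_1=\Pi_\tau W(\tau)$. Choose a basis $\Phi_1^0,\dots,\Phi_m^0$ of $E_1^{\mathbb R}$ and set $\Phi_j(\tau)=W(\tau)\Phi_j^0$. These lie in $E^{\mathbb R}_\tau$ and are linearly independent because $W(\tau)$ is invertible, so they form a basis.

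To obtain analyticity into $Z$ rather than only into $X$, write $\Phi_j(\tau)=\Pi_\tau\Phi_j(\tau)$. Here $\Pi_\tau$ is analytic in $\mathcal L(X,Z)$ by (1), and $\Phi_j$ is analytic in $X$, so the product is analytic into $Z$.

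The main obstacle I expect is in (3): checking carefully the intertwining identity $W\Pi_1=\Pi_\tau W$, which rests on the relation $\Pi\Pi'\Pi=0$, and the upgrade from $X$-valued to $Z$-valued analyticity. Both are standard (Kato, Ch.~II), so I would cite them rather than reprove them.
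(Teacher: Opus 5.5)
Your proposal is correct and follows essentially the paper's route. For (1), both arguments perturb the resolvent by a Neumann series and integrate over $\Gamma_R$. For (2), both use the fact that projections at distance less than one have equal rank, plus connectedness of $[1,\tau_1]$. For (3), both use Kato's transformation with generator $[\Pi_\tau',\Pi_\tau]$ and then recover $Z$-valued analyticity from $\Phi_j=\Pi_\tau\Phi_j$.

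The differences are cosmetic. You get analyticity through a holomorphic extension to complex $\tau$, whereas the paper writes an explicit power series around each $\tau_0$. You also solve for the operator $W(\tau)$ and read off linear independence from its invertibility, whereas the paper solves the vector ODEs directly and proves $(I-\Pi_\tau)\Phi_j\equiv0$ and independence by ODE uniqueness.
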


\begin{proof}
For (1), we first show that $\Pi_\tau$ is real-analytic with
values in $\mathcal L(X_{\mathbb C},Z_{\mathbb C})$,
where $Z_{\mathbb C}$ carries the $H^3$ norm.

Write
\[
\mathcal R_\tau(\lambda)=(\lambda-L_\tau)^{-1}.
\]
By Lemma~\ref{lem:common-circle} and the graph estimate
\eqref{basic:graph-estimate}, there is a constant $C_0>0$ such that
\[
\|\mathcal R_\tau(\lambda)\|_{\mathcal L(X_{\mathbb C})}
\leq1,
\qquad
\|\mathcal R_\tau(\lambda)\|_{\mathcal L(X_{\mathbb C},Z_{\mathbb C})}
\leq C_0
\]
for all $\tau\in[1,\tau_1]$ and $\lambda\in\Gamma$.

Fix $\tau_0\in[1,\tau_1]$.
Due to the definition of  $J_\tau$, there is
a fixed $B\in\mathcal L(X_{\mathbb C})$ such that
\[
L_{\tau_0+h}=L_{\tau_0}+hB, \quad h\in \mathbb R.
\]
For $|h|\|B\|_{\mathcal L(X_{\mathbb C})}<1$,
the Neumann series gives
\[
\mathcal R_{\tau_0+h}(\lambda)
=
\sum_{n=0}^{\infty}
h^n\mathcal R_{\tau_0}(\lambda)
\bigl[B\mathcal R_{\tau_0}(\lambda)\bigr]^n.
\]
Moreover,
\begin{align}
\left\|
h^n\mathcal R_{\tau_0}(\lambda)
\bigl[B\mathcal R_{\tau_0}(\lambda)\bigr]^n
\right\|_{\mathcal L(X_{\mathbb C},Z_{\mathbb C})} \leq
C_0\bigl(|h|\|B\|_{\mathcal L(X_{\mathbb C})}\bigr)^n.
\end{align}
Thus this series converges in
$\mathcal L(X_{\mathbb C},Z_{\mathbb C})$, uniformly for
$\lambda\in\Gamma$.
Integrating term by term gives a convergent power
series for $\Pi_{\tau_0+h}$ in the same operator norm.
Hence $\Pi_\tau$ is real-analytic with values in
$\mathcal L(X_{\mathbb C},Z_{\mathbb C})$.

Since $Z_{\mathbb C}$ is continuously included in
$X_{\mathbb C}$, $\Pi_\tau$ is also real-analytic,
and in particular continuous, with values in
$\mathcal L(X_{\mathbb C})$.

To prove (2), by Lemma~\ref{lem:common-circle}, each $L_\tau$ has
compact resolvent. Thus there are only  finitely many
eigenvalues inside $\Gamma$, and each has finite
algebraic multiplicity. The properties of the spectral
projection $\Pi_\tau$ give
$\Pi_\tau^2=\Pi_\tau$.
Moreover, $E_\tau$ is the span of all generalized
eigenvectors belonging to the eigenvalues inside
$\Gamma$. Hence
\[
E_\tau\subset Z_{\mathbb C},
\quad
\dim_{\mathbb C}E_\tau<\infty.
\]

We  claim that there exists $m\ge0$, for any $\tau\in[1,\tau_1]$, it holds that $\dim_{\mathbb C}E_\tau=m$.
 Fix $\tau_0$. By continuity, there exists $\delta>0$ such that  for $|\tau-\tau_0|<\delta$ we have
\[
c:=\|\Pi_\tau-\Pi_{\tau_0}\|<1.
\]
Let $\Phi\in E_{\tau_0}$ and suppose that $\Pi_\tau \Phi=0$.
Since $\Pi_{\tau_0}\Phi=\Phi$, we obtain
\[
\Phi=(\Pi_{\tau_0}-\Pi_\tau)\Phi,
\]
then
$
\|\Phi\|_{X_{\mathbb C}}\leq c\|\Phi\|_{X_{\mathbb C}}.$
Since $c<1$, this implies $\Phi=0$. It means that if $\Phi\in E_{\tau_0}$ and $\Pi_\tau \Phi=0$, then $\Phi=0$.
Thus the linear map
$\Pi_\tau:E_{\tau_0}\to E_\tau$
is one-to-one.   Hence
$
\dim_{\mathbb C}E_{\tau_0}
\leq \dim_{\mathbb C}E_\tau.$
Similarly, we have
$
\dim_{\mathbb C}E_\tau
\leq \dim_{\mathbb C}E_{\tau_0}.$
Hence we have  
$
\dim_{\mathbb C}E_\tau=\dim_{\mathbb C}E_{\tau_0}.$
Thus $\dim_{\mathbb C}E_\tau$ is locally constant.
Since $[1,\tau_1]$ is connected, it is constant
on the whole interval. We conclude that
\[
m=\dim_{\mathbb C}E_1=\dim_{\mathbb C}E_{\tau}=\dim_{\mathbb R}E_\tau^{\mathbb R}\quad \tau\in[1,\tau_1].
\]

Next, we complete the proof of this lemma. Since $\Pi_1$ maps real-valued functions to real-valued
functions, and   $\dim_{\mathbb C}E_1=\dim_{\mathbb R}E_1^{\mathbb R}=m$, we can choose
real-valued functions $f_1,\ldots,f_m$ which form
a complex basis of $E_1$.

To prove (3), we use the method in \cite{kato}.
We move this basis from $\tau=1$ to any $\tau\in [1,\tau_1]$.
All primes below mean Fr\'echet  derivatives with respect to
$\tau$. 
Set
\[
A_\tau
=
\Pi_\tau'\Pi_\tau-\Pi_\tau\Pi_\tau'.
\]
Note that $\Pi_{\tau}$ is real-analytic on $\tau$ and then $\Pi_{\tau}'$ is real-analytic on $\tau$. Thus $A_\tau$ is a real-analytic family of bounded operators.
For each $j=1,\ldots,m$, define $\Phi_j$ by the equation
\[
\Phi_j'(\tau)=A_\tau\Phi_j(\tau),
\qquad
\Phi_j(1)=f_j.
\]
 In fact, $\Phi_j({\tau})$  is the moving real basis we need.
Since $A_\tau$ is continuous and bounded on the
parameter interval, there exists
a unique solution on $[1,\tau_1]$.
Since $A_\tau$ is real-analytic, the solution $\Phi_j(\tau)$
is real-analytic with values in $X_{\mathbb C}$. Moreover, since $A_{\tau}$ maps real-valued function to  real-valued function and $f_j$ is real-valued, the solution $\Phi_j(\tau)$ is real-valued function.

Next, we show that $\Phi_j(\tau)$ stays in $E_\tau\subset Z_{\mathbb C}$.
Differentiating $\Pi_\tau^2=\Pi_\tau$ gives
$\Pi_\tau'\Pi_\tau+\Pi_\tau\Pi_\tau'
=\Pi_\tau'.$
Multiplying this identity on the left by $\Pi_\tau$
gives
$
\Pi_\tau\Pi_\tau'\Pi_\tau=0.$
Consequently,
$
(I-\Pi_\tau)A_\tau=\Pi_\tau'\Pi_\tau.$

Let
\[
r_j(\tau)=(I-\Pi_\tau)\Phi_j(\tau).
\]
We claim that $r_j(\tau)\equiv 0$.
In fact, we have
\begin{align*}
r_j'
=-\Pi_\tau'\Phi_j+(I-\Pi_\tau)\Phi_j'=-\Pi_\tau'\Phi_j+\Pi_\tau'\Pi_\tau\Phi_j=-\Pi_\tau'r_j.
\end{align*}
Since $f_j\in E_1$, we have $r_j(1)=0$.
Uniqueness for this linear equation gives $r_j=0$.
Thus
\[
\Pi_\tau\Phi_j(\tau)=\Phi_j(\tau),
\qquad
\Phi_j(\tau)\in E_\tau\subset Z_{\mathbb C}.
\]
Since $\Pi_\tau$ is real-analytic in
$\mathcal L(X_{\mathbb C},Z_{\mathbb C})$, the identity
$\Phi_j(\tau)=\Pi_\tau\Phi_j(\tau)$ also proves real-analyticity
with values in $Z_{\mathbb C}$.

Finally, we claim that for any $\tau\in[1,\tau_1]$, $\Phi_j(\tau)$ is linearly independent for $j=1,...m.$ Fix any $\hat\tau\in[1,\tau_1]$ and suppose
that
\[
\sum_{j=1}^m c_j\Phi_j(\hat\tau)=0,
\qquad c_j\in\mathbb C.
\]
The function
\[
w(\tau)=\sum_{j=1}^m c_j\Phi_j(\tau)
\]
satisfies
\[
w'=A_\tau w,
\qquad
w(\hat\tau)=0.
\]
By uniqueness, $w=0$ on the whole interval.
In particular,
\[
0=w(1)=\sum_{j=1}^m c_jf_j.
\]
Since $f_1,\ldots,f_m$ form a basis of $E_1$,
all $c_j$ are zero. Hence
$\Phi_1(\tau),\ldots,\Phi_m(\tau)$ are linearly
independent for every $\tau$.
They belong to the $m$-dimensional space $E_\tau$,
so they form a basis of $E_\tau$.
\end{proof}

\subsection{Finite-dimensional matrix representation of $L_\tau$}

We first represent the restriction of $L_\tau$ to $E_\tau$ by a finite matrix and show that this representation preserves
the eigenvalues and their algebraic multiplicities.

\textbf{Matrix representation for $L_{\tau}$ on $E_{\tau}$.}
Fix $\tau\in[1,\tau_1]$.
The space $E_\tau$ is the sum of the generalized
eigenspaces corresponding to the eigenvalues inside
$\Gamma$. Hence
\[
L_\tau E_\tau\subset E_\tau.
\]
Since $\Phi_1(\tau),\ldots,\Phi_m(\tau)$ form a basis
of $E_\tau$, there are unique numbers $M_{ij}(\tau)$
such that
\[
L_\tau\Phi_j(\tau)
=
\sum_{i=1}^m M_{ij}(\tau)\Phi_i(\tau),
\qquad j=1,\ldots,m.
\]
Define the constant matrix
\[
M(\tau)=\bigl(M_{ij}(\tau)\bigr)_{i,j=1}^m.
\]
Thus the $j$th column of $M(\tau)$ contains the
coordinates of $L_\tau\Phi_j(\tau)$ in this basis.

Every $\Phi\in E_\tau$ has a unique expression
\[
\Phi=\sum_{j=1}^m c_j\Phi_j(\tau),
\quad
c=(c_1,\ldots,c_m)^{T}\in\mathbb C^m.
\]
By the linearity of $L_\tau$,
\begin{align*}
L_\tau \Phi
=\sum_{j=1}^m c_j
  \sum_{i=1}^m M_{ij}(\tau)\Phi_i(\tau)
=\sum_{i=1}^m
  \left(\sum_{j=1}^m M_{ij}(\tau)c_j\right)
  \Phi_i(\tau)
=\sum_{i=1}^m (M(\tau)c)_i\Phi_i(\tau).
\end{align*}
Therefore,
\[
L_\tau \Phi-\lambda \Phi
=
\sum_{i=1}^m
\bigl((M(\tau)c)_i-\lambda c_i\bigr)\Phi_i(\tau).
\]
The basis functions are linearly independent, so
the right-hand side is zero if and only if
\[
(M(\tau)c)_i=\lambda c_i
\qquad\text{for every }i=1,\ldots,m.
\]
Equivalently,
\begin{align}\label{ev}
    L_\tau \Phi=\lambda \Phi
\quad\text{iff}\quad
M(\tau)c=\lambda c.
\end{align}
Also, $\Phi\ne0$ if and only if $c\ne0$.
Thus the eigenvalues of $L_\tau$ on $E_\tau$ are
exactly the eigenvalues of $M(\tau)$.

The same relation holds for generalized eigenvectors.
Indeed, for every integer $k\geq1$,
\[
(L_\tau-\lambda I)^kU
=
\sum_{i=1}^m
\bigl((M(\tau)-\lambda I)^kc\bigr)_i\Phi_i(\tau).
\]
Hence algebraic multiplicities are also preserved.

We conclude that the eigenvalues of $M(\tau)$
 represents the restriction of $L_\tau$ to
$E_\tau$. Thus, for every $\tau\in[1,\tau_1]$,
\begin{equation}\label{eq:matrix-operator-spectrum}
\sigma(M(\tau))
=
\sigma(L_\tau|_{E_\tau})
=
\sigma(L_\tau)\cap\{z\in\mathbb C:|z|<R\},
\end{equation}
with the same algebraic multiplicities.
Since the circle $\Gamma$ encloses every eigenvalue of $L_\tau$
with nonnegative real part, we also have
\begin{equation}\label{eq:nonnegative-spectrum-correspondence}
\sigma(M(\tau))\cap\{z\in\mathbb C:\operatorname{Re}z\ge0\}
=
\sigma(L_\tau)\cap\{z\in\mathbb C:\operatorname{Re}z\ge0\},
\end{equation}
again with the same algebraic multiplicities.
In particular,
\begin{equation}\label{eq:imaginary-spectrum-correspondence}
\sigma(M(\tau))\cap i\mathbb R
=
\sigma(L_\tau)\cap i\mathbb R.
\end{equation}

\textbf{Explicit formula of $M(\tau)$.}
We next derive an explicit formula for $M(\tau)$
using the $L^2$ Gram matrix of the chosen basis.
Define the real matrices $H(\tau)$ and $C(\tau)$ by
\[
H_{ij}(\tau)
=
\int_0^l
\Phi_i(\tau,x)\cdot\Phi_j(\tau,x)\,dx,\quad
C_{ij}(\tau)
=
\int_0^l
\Phi_i(\tau,x)\cdot
\bigl(L_\tau\Phi_j(\tau)\bigr)(x)\,dx.
\]
Taking the  product of
\[
L_\tau\Phi_j(\tau)
=
\sum_{k=1}^m M_{kj}(\tau)\Phi_k(\tau)
\]
with $\Phi_i(\tau)$ and integrating over $(0,l)$
gives
\begin{align*}
C_{ij}(\tau)=
\sum_{k=1}^m M_{kj}(\tau)
\int_0^l
\Phi_i(\tau,x)\cdot\Phi_k(\tau,x)\,dx=
\sum_{k=1}^m H_{ik}(\tau)M_{kj}(\tau).
\end{align*}
This is the matrix identity
\[
C(\tau)=H(\tau)M(\tau).
\]

The matrix $H(\tau)$ is the Gram matrix of the basis
$\{\Phi_j(\tau)\}_{j=1}^m$.
Hence $H(\tau)$ is positive definite and invertible.
Consequently,
\[
M(\tau)=H(\tau)^{-1}C(\tau).
\]
Since $H(\tau)$ and $C(\tau)$ are real,
$M(\tau)$ is also real.

\textbf{$M(\tau)$ depends real-analytically
on $\tau$.}
We finally prove that $M(\tau)$ depends real-analytically
on $\tau$. In view of $M(\tau)=H(\tau)^{-1}C(\tau)$,
it suffices to prove that $H(\tau)$ and $C(\tau)$
are real-analytic.

The basis functions $\Phi_j(\tau)$ are real-analytic
with values in $X_{\mathbb C}$, which is continuously
embedded in $L^2(0,l;\mathbb C^2)$.
Since the integral defining $H_{ij}(\tau)$ is a
continuous bilinear pairing, every entry of $H(\tau)$
is real-analytic.
Moreover, $\det H(\tau)\ne0$, so $H(\tau)^{-1}$
is real-analytic.

It remains to prove that $C(\tau)$ is real-analytic.
 Note that
\[
\mathcal R_\tau(\lambda)=(\lambda-L_\tau)^{-1}.
\]
By the resolvent series proved above, near each fixed $\tau_0$,
the expansion of $\mathcal R_\tau(\lambda)$ in powers of $\tau-\tau_0$
converges in $\mathcal L(X_{\mathbb C})$, uniformly for
$\lambda\in\Gamma$. Hence we may multiply this series by
$\lambda$ and integrate term by term. It follows that
\[
S_\tau:=
\frac{1}{2\pi i}\int_\Gamma
\lambda \mathcal R_\tau(\lambda)\,d\lambda
\]
is real-analytic with values in $\mathcal L(X_{\mathbb C})$.

Since
\[
L_\tau \mathcal R_\tau(\lambda)=\lambda \mathcal R_\tau(\lambda)-I
\]
and $L_\tau$ is closed, integration along $\Gamma$ gives
\[
L_\tau\Pi_\tau
=
\frac{1}{2\pi i}\int_\Gamma
\bigl(\lambda \mathcal R_\tau(\lambda)-I\bigr)\,d\lambda
=
S_\tau,
\]
where we used $\int_\Gamma d\lambda=0$.

For each $j$, we have $\Pi_\tau\Phi_j(\tau)=\Phi_j(\tau)$.
Therefore,
\[
L_\tau\Phi_j(\tau)
=
L_\tau\Pi_\tau\Phi_j(\tau)
=
S_\tau\Phi_j(\tau).
\]
Both $S_\tau$ and $\Phi_j(\tau)$ are real-analytic in their
respective spaces. Thus $\tau\mapsto L_\tau\Phi_j(\tau)$
is real-analytic with values in $X_{\mathbb C}$.
Hence each $C_{ij}(\tau)$ is real-analytic.

Therefore, $M(\tau)=H(\tau)^{-1}C(\tau)$ is real-analytic. We can therefore study the eigenvalues inside
$\Gamma$ using the characteristic polynomial
\[
P(z,\tau)=\det\bigl(zI-M(\tau)\bigr),
\]
whose coefficients are real-analytic functions of
$\tau$.

\subsection{An isolated parameter where the number of unstable eigenvalues changes}
We
 recall that 
 the entries of $M(\tau)$ are real-analytic on an
open interval $J$ containing $[1,\tau_1]$, where $\tau_1>1$.
And that every eigenvalue of $L_{\tau_1}$ has negative real part. Then we have
\begin{lemma}\label{finite}
Let $L_\tau$, $E_\tau$, and $M(\tau)$ be as above.
The set
\[
\mathcal Z
=
\left\{
\tau\in[1,\tau_1]:
\sigma(L_\tau)\cap i\mathbb R\ne\emptyset
\right\}
\]
is finite. This includes the case $\mathcal Z=\emptyset$.
\end{lemma}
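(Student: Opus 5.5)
The approach is to reduce the question to zeros of a real-analytic function of $\tau$ and then use the identity theorem, with the endpoint $\tau_1$ supplying a point where that function does not vanish. By \eqref{eq:imaginary-spectrum-correspondence}, $\tau\in\mathcal Z$ if and only if $M(\tau)$ has an eigenvalue on $i\mathbb R$. By Lemma~\ref{lem:nozero}, $0$ is never an eigenvalue, so such an eigenvalue must be $\pm i\omega$ with $\omega>0$. Because $M(\tau)$ is real, these eigenvalues come in conjugate pairs $\pm i\omega$.

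To detect such pairs with a single scalar function, I would use the bialternate (Kronecker sum) construction. Set
\[
G(\tau)=\det\bigl(M(\tau)\otimes I+I\otimes M(\tau)\bigr),
\]
where the Kronecker sum acts on $\mathbb C^{m^2}$. Its eigenvalues are exactly the sums $\mu_i+\mu_j$, where $\mu_1,\dots,\mu_m$ are the eigenvalues of $M(\tau)$ listed with algebraic multiplicity. Hence $G(\tau)=\prod_{i,j}(\mu_i+\mu_j)$, and $G(\tau)=0$ if and only if $\mu_i+\mu_j=0$ for some $i,j$.

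The next step is to check that the zero set of $G$ contains $\mathcal Z$. If $\tau\in\mathcal Z$, then $M(\tau)$ has eigenvalues $i\omega$ and $-i\omega$, so taking these as $\mu_i$ and $\mu_j$ gives $G(\tau)=0$. The converse need not hold, since $G$ also vanishes when two real eigenvalues are negatives of each other. That does no harm, because I only need the inclusion $\mathcal Z\subset G^{-1}(0)$.

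The function $G$ is a polynomial in the entries of $M(\tau)$. Since those entries are real-analytic on the open interval $J\supset[1,\tau_1]$, $G$ is real-analytic on $J$.

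The main obstacle is to show that $G\not\equiv0$. If $G$ were identically zero, it would vanish at $\tau_1$. I would rule this out using Lemma~\ref{lem:stable-endpoint}, which gives that every eigenvalue of $L_{\tau_1}$ has real part at most $-\kappa_*<0$. By \eqref{eq:matrix-operator-spectrum}, the same holds for every eigenvalue $\mu_i$ of $M(\tau_1)$. Then $\operatorname{Re}(\mu_i+\mu_j)\le-2\kappa_*<0$ for all $i,j$, so no such sum is zero and $G(\tau_1)\neq0$.

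If $m=0$, then $E_\tau=\{0\}$ and \eqref{eq:imaginary-spectrum-correspondence} gives $\mathcal Z=\emptyset$ directly, so that case needs no argument.

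Finally, $G$ is real-analytic, not identically zero, and defined on the open interval $J$. By the identity theorem its zeros have no accumulation point in $J$. The compact interval $[1,\tau_1]$ therefore contains only finitely many zeros of $G$. Since $\mathcal Z\subset G^{-1}(0)\cap[1,\tau_1]$, the set $\mathcal Z$ is finite, possibly empty.
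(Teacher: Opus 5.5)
Your proof is correct and is essentially the paper's argument. Your Kronecker-sum determinant $G(\tau)=\prod_{i,j}(\mu_i+\mu_j)$ coincides up to sign with the paper's $\Delta(\tau)=\det Q(\tau)$, the Sylvester resultant of $P(z,\tau)$ and $P(-z,\tau)$, and both proofs then use $\mathcal Z\subset\Delta^{-1}(0)$, nonvanishing at the stable endpoint $\tau_1$, and the identity theorem; the Kronecker-sum form only spares you the paper's hand proof that $\Delta(\tau)=0$ exactly when $P(z,\tau)$ and $P(-z,\tau)$ share a root.
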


\begin{proof}
Recall that $m=\dim_{\mathbb C}E_\tau$ is independent of
$\tau\in[1,\tau_1]$.
If $m=0$, then $\mathcal Z=\emptyset$ by
\eqref{eq:imaginary-spectrum-correspondence}, so the conclusion holds.
We therefore assume that $m\ge1$.
Let
\[
P(z,\tau)=\det(zI-M(\tau))
=\sum_{k=0}^{m}p_k(\tau)z^k.
\]
For each $\tau$, $P(z,\tau)$ is a polynomial of degree $m$ in $z$.
The coefficients $p_k(\tau)$ are real and real-analytic on $J$.

Fix $\tau$. We consider whether there exist complex
polynomials $A$ and $B$, not both zero, of degree at most
$m-1$, such that
\begin{equation}\label{eq:auxiliary-polynomial}
A(z)P(z,\tau)+B(z)P(-z,\tau)\equiv0.
\end{equation}
Write
\[
A(z)=\sum_{r=0}^{m-1}a_rz^r,
\qquad
B(z)=\sum_{r=0}^{m-1}b_rz^r.
\]
Here the coefficients
\[
a_0,\ldots,a_{m-1},b_0,\ldots,b_{m-1}
\]
are unknown, while the coefficients of $P(z,\tau)$ are fixed.

The polynomial on the left-hand side of
\eqref{eq:auxiliary-polynomial} has degree at most $2m-1$.
Comparing the coefficients of $z^0,\ldots,z^{2m-1}$
gives a homogeneous linear system of $2m$ equations
in the $2m$ unknowns
$a_0,\ldots,a_{m-1},b_0,\ldots,b_{m-1}$.
Let $Q(\tau)$ be its $2m\times2m$ coefficient matrix.
Then this system can be written as
\[
Q(\tau)
(a_0,\ldots,a_{m-1},b_0,\ldots,b_{m-1})^T=0.
\]
Set
\[
\Delta(\tau)=\det Q(\tau).
\]
Every entry of $Q(\tau)$ is either zero or a coefficient
$p_k(\tau)$ with a fixed sign.
Thus $\Delta$ is real-analytic on $J$.
Moreover, \eqref{eq:auxiliary-polynomial} has a solution
with $A$ and $B$ not both zero if and only if $\Delta(\tau)=0$.

We first claim that
\begin{equation}\label{common}
\Delta(\tau)=0
\quad\Longleftrightarrow\quad
P(z,\tau)\ \text{and}\ P(-z,\tau)
\ \text{have a common complex root}.
\end{equation}

Suppose first that $z_0$ is a common root of
$P(z,\tau)$ and $P(-z,\tau)$. We define
\[
A(z)=\frac{P(-z,\tau)}{z-z_0},
\qquad
B(z)=-\frac{P(z,\tau)}{z-z_0}.
\]
They are nonzero polynomials of degree $m-1$. They satisfy
\[
A(z)P(z,\tau)+B(z)P(-z,\tau)\equiv0.
\]
Thus the coefficient system has a nonzero solution,
and $\Delta(\tau)=0$.

For the converse, suppose that $P(z,\tau)$ and $P(-z,\tau)$
have no common root, and let $A,B$ satisfy
\eqref{eq:auxiliary-polynomial}, with
$\deg A,\deg B\le m-1$.
Then $B$ must vanish at every root of $P(z,\tau)$
with at least the same multiplicity.
Indeed,
\[
B(z)P(-z,\tau)=-A(z)P(z,\tau),
\]
and $P(-z,\tau)$ does not vanish at any root of $P(z,\tau)$.
Since $\deg P=m$ and $\deg B<m$, this gives $B=0$.
The identity then gives $A=0$.
Thus the corresponding homogeneous linear system
has only the zero solution, so $Q(\tau)$ is
invertible and $\Delta(\tau)\ne0$.

By \eqref{eq:matrix-operator-spectrum} and the assumption on
$L_{\tau_1}$, all roots of $P(z,\tau_1)$ have negative real part.
If $P(z,\tau_1)$ and $P(-z,\tau_1)$ had a common root $z_0$,
then both $z_0$ and $-z_0$ would be roots of $P(z,\tau_1)$.
This would require
\[
\operatorname{Re}z_0<0
\qquad\text{and}\qquad
-\operatorname{Re}z_0<0,
\]
which is impossible. Hence, by \eqref{common},
\[
\Delta(\tau_1)\ne0.
\]

Now suppose that $L_\tau$ has an eigenvalue $i\omega$,
where $\omega\neq 0$.
By \eqref{eq:imaginary-spectrum-correspondence},
$i\omega$ is also an eigenvalue of $M(\tau)$.
Since $P$ has real coefficients,
\[
P(i\omega,\tau)=0
\quad\Longrightarrow\quad
P(-i\omega,\tau)
=
\overline{P(i\omega,\tau)}
=0.
\]
Thus $i\omega$ is a common root of $P(z,\tau)$ and
$P(-z,\tau)$. By \eqref{common}, $\Delta(\tau)=0$.
Therefore,
\[
\mathcal Z
\subset
\{\tau\in[1,\tau_1]:\Delta(\tau)=0\}.
\]

Since $\Delta$ is real-analytic on $J$ and $\Delta(\tau_1)\ne0$,
it is not identically zero on $J$.
Hence its zeros cannot accumulate at any point of $J$.
If $\Delta$ had infinitely many distinct zeros in $[1,\tau_1]$,
compactness would give an accumulation point in
$[1,\tau_1]\subset J$, a contradiction.
Thus $\mathcal Z$ is finite.
\end{proof}

For $\tau\in[1,\tau_1]$, let $n_+(\tau)$ be the number
of eigenvalues of $L_\tau$ with positive real part,
counted with algebraic multiplicity.

\begin{lemma}\label{lem:unstable-count-jump}
Under the assumptions of Lemma~\ref{finite}, if
$n_+(1)>0$ and for every $\tau\in[1,\tau_1]$,
$0\notin\sigma(L_\tau)$.
Then there exist $\tau_*\in(1,\tau_1)$ and $\rho_0>0$ small enough,
such that
$
\sigma(L_{\tau_*})\cap i\mathbb R\ne\varnothing,$
whereas
\[
\sigma(L_\tau)\cap i\mathbb R=\varnothing
\qquad
\text{if }0<|\tau-\tau_*|\le\rho_0.
\]
Moreover, for every $0<\rho\le\rho_0$, we have
$n_+(\tau_*-\rho)\ne n_+(\tau_*+\rho)$.
\end{lemma}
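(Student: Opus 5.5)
Since the spectrum on the imaginary axis governs changes in the count of unstable eigenvalues, the plan is to combine Lemma~\ref{finite} with a continuity argument for $n_+$ based on the Riesz projection onto the finite-dimensional space $E_\tau$.

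\textbf{Step 1: $n_+$ is locally constant off $\mathcal Z$.} By \eqref{eq:nonnegative-spectrum-correspondence}, $n_+(\tau)$ equals the number of roots of $P(z,\tau)=\det(zI-M(\tau))$ with $\operatorname{Re}z>0$, counted with multiplicity. The polynomial $P$ has fixed degree $m$, and its coefficients are continuous (indeed real-analytic) in $\tau$. Hence its roots depend continuously on $\tau$ as an unordered $m$-tuple. Suppose $\tau\notin\mathcal Z$, so no root lies on $i\mathbb R$. Then nearby roots cannot cross the axis, and $n_+$ is constant in a neighborhood of $\tau$. Concretely, one can apply the argument principle to $P(\cdot,\tau)$ on a contour consisting of $i[-R,R]$ closed by the right half of $\Gamma_R$. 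This contour encloses every root with positive real part, since all such roots lie inside $\Gamma_R$.

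\textbf{Step 2: locating $\tau_*$.} We have $n_+(1)>0$, while Lemma~\ref{lem:stable-endpoint} gives $n_+(\tau_1)=0$. By Lemma~\ref{finite}, $\mathcal Z$ is finite; write $\mathcal Z=\{z_1<\dots<z_N\}\subset[1,\tau_1]$. Also $\tau_1\notin\mathcal Z$, since all eigenvalues of $L_{\tau_1}$ have negative real part.

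First suppose $1\notin\mathcal Z$. On each open interval between consecutive points of $\{1\}\cup\mathcal Z\cup\{\tau_1\}$, Step 1 and connectedness show that $n_+$ is constant. The value is $n_+(1)>0$ on the first interval and $0$ on the last. Since the values differ at the ends, some interior point $z_k$ must separate two intervals carrying different values of $n_+$. Take $\tau_*=z_k\in(1,\tau_1)$.

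If $1\in\mathcal Z$, a small issue arises. Replace $n_+(1)$ by its value on $(1,1+\epsilon)$; we need this value to be positive. Eigenvalues with $\operatorname{Re}>0$ at $\tau=1$ persist for nearby $\tau$ by continuity of the roots, so $n_+>0$ just to the right of $1$. The same argument then yields an interior jump point $\tau_*$.

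\textbf{Step 3: choosing $\rho_0$.} Let $\rho_0$ be smaller than the distance from $\tau_*$ to the other points of $\mathcal Z\cup\{1,\tau_1\}$. Then $\sigma(L_\tau)\cap i\mathbb R=\varnothing$ for $0<|\tau-\tau_*|\le\rho_0$, because the punctured interval avoids $\mathcal Z$. On each side of $\tau_*$, $n_+$ is constant on the corresponding interval and takes the two distinct values found in Step 2. This gives $n_+(\tau_*-\rho)\ne n_+(\tau_*+\rho)$ for all $0<\rho\le\rho_0$.

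The hypothesis $0\notin\sigma(L_\tau)$, guaranteed by Lemma~\ref{lem:nozero}, is not needed for the jump argument itself. It ensures that the crossing at $\tau_*$ occurs at nonzero purely imaginary eigenvalues $\pm i\omega$, which is the situation required by the subsequent Hopf argument.

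The main obstacle is Step 1: making the continuity of $n_+$ rigorous, including multiplicities. I would handle it with Rouché's theorem applied to $P(\cdot,\tau)$ on small circles around each root, using that the coefficients $p_k(\tau)$ are continuous.
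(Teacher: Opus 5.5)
Your proposal is correct and follows essentially the same route as the paper: you reduce $n_+(\tau)$ to counting the right half-plane roots of $P(z,\tau)$, use continuity of roots to get local constancy of $n_+$ off the finite set $\mathcal Z$, combine persistence of an unstable root near $\tau=1$ with $n_+(\tau_1)=0$, and take $\tau_*$ to be a point of $\mathcal Z$ where the constant value of $n_+$ changes. The only cosmetic difference is that the paper avoids your case split on whether $1\in\mathcal Z$: it uses Rouch\'e's theorem to choose an auxiliary $\tau_a\in(1,\tau_1)\setminus\mathcal Z$ with $n_+(\tau_a)>0$, and then runs the same interval argument on $[\tau_a,\tau_1]$.
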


\begin{proof}
By \eqref{eq:nonnegative-spectrum-correspondence},
the eigenvalues of $L_\tau$ and $M(\tau)$ with nonnegative
real part agree, with the same algebraic multiplicities.
Thus $n_+(\tau)$ is finite and equals the number of roots of
\[
P(z,\tau)=\det(zI-M(\tau))
\]
in the open right half-plane, counted with multiplicity.
The coefficients of $P(z,\tau)$ depend continuously on $\tau$.
By continuity of the roots, $n_+$ is locally constant outside
$\mathcal Z$ and is constant on every interval disjoint from
$\mathcal Z$.

Next, choose a root $\mu$ of $P(z,1)$ with
$\operatorname{Re}\mu>0$; such a root exists because $n_+(1)>0$.
Take a small disk around $\mu$ whose closure lies in the
open right half-plane and whose boundary contains no root
of $P(z,1)$.
 Rouch\'e Theorem shows that this disk contains
a root of $P(z,\tau)$ for all $\tau$ sufficiently close
to $1$. Therefore,
\[
n_+(\tau)>0
\qquad\text{for all }\tau>1\text{ sufficiently close to }1.
\]
Since $\mathcal Z$ is finite, we can choose
$\tau_a\in(1,\tau_1)\setminus\mathcal Z$ such that
\[
n_+(\tau_a)>0.
\]
However, at 
$\tau=\tau_1\notin\mathcal Z$,
$n_+(\tau_1)=0.$
Hence there must be at least one point of $\mathcal Z$ between
$\tau_a$ and $\tau_1$.
By Lemma~\ref{finite}, the set
\[
\mathcal Z
=
\{\tau\in[1,\tau_1]:
\sigma(L_\tau)\cap i\mathbb R\ne\varnothing\}
\]
is finite and even non-empty.
Set
\[
\mathcal Z\cap(\tau_a,\tau_1)
=
\{\theta_1,\ldots,\theta_N\},
\qquad
\theta_1<\cdots<\theta_N.
\]
Set $\theta_0=\tau_a$ and $\theta_{N+1}=\tau_1$.
For $j=0,\ldots,N$, let $k_j$ be the constant value of
$n_+$ on $(\theta_j,\theta_{j+1})$.
Local constancy at $\tau_a$ and $\tau_1$ gives
\[
k_0=n_+(\tau_a)>0,
\qquad
k_N=n_+(\tau_1)=0.
\]
Thus there is an index $j\in\{1,\ldots,N\}$ such that $k_{j-1}\ne k_j.$

Set $\tau_*=\theta_j$ and choose
\[
0<\rho_0<
\frac12\min\{\theta_j-\theta_{j-1},
             \theta_{j+1}-\theta_j\}.
\]
Then $\tau_*$ is the only point of $\mathcal Z$ in
$[\tau_*-\rho_0,\tau_*+\rho_0]$.
For every $0<\rho\le\rho_0$, we have
\[
n_+(\tau_*-\rho)=k_{j-1}\neq k_j=
n_+(\tau_*+\rho)
\]

Finally, we conclude  $\tau_*\in\mathcal Z$, and $L_{\tau_*}$ has
an isolated purely imaginary eigenvalue.
Since $0\notin\sigma(L_{\tau_*})$, every such eigenvalue
has the form $i\omega$ with $\omega\ne0$.
\end{proof}

\subsection{Construction of the center manifold}
Let $\tau_*$ and $\rho_0$ be given by
Lemma~\ref{lem:unstable-count-jump}. We use the perturbations
in \eqref{si} for system \eqref{aug:original} and the real spaces
$X$ and $Z$ defined in \eqref{basic:spaces}.

We set
$\tau=\tau_*+\eta,$
where $\eta$ is a constant and we consider the following augmented system:
\begin{equation}\label{aug:system}
\begin{cases}
\dot{\eta}=0,\\[2mm]
\begin{aligned}
\partial_t u(t,x)=p\partial_{xx}u(t,x)
-f_{\tau_*+\eta}(x,s(x)+u(t,x),h(x)+v(t,x))
+f_{\tau_*+\eta}(x,s(x),h(x)),
\end{aligned}\\[2mm]
\begin{aligned}
\partial_t v(t,x)=q\partial_{xx}v(t,x)+f_{\tau_*+\eta}(x,s(x)+u(t,x),h(x)+v(t,x))-f_{\tau_*+\eta}(x,s(x),h(x)).
\end{aligned}
\end{cases}
\end{equation}
Here $t>0$ and $0<x<l$. The boundary conditions are
\[
\partial_xu(t,0)=\partial_xu(t,l)
=\partial_xv(t,0)=\partial_xv(t,l)=0.
\]
The mass condition is
\[
\int_0^l (u(t,x)+v(t,x))\,dx=0
\quad\text{for all }t\geq 0.
\]

Let
\[
U(t)=\binom{u(t,\cdot)}{v(t,\cdot)},
\quad
J_*=
\begin{pmatrix}
-\tau_*a & -c_{\tau_*}\\
\tau_*a & c_{\tau_*}
\end{pmatrix}.
\]
For a spatial function $U=(u,v)\in Z$, define the reaction map by
\[
(R_\tau(U))(x)
=\binom{
-f_\tau(x,s(x)+u(x),h(x)+v(x))+f_\tau(x,s(x),h(x))
}{
f_\tau(x,s(x)+u(x),h(x)+v(x))-f_\tau(x,s(x),h(x))
}.
\]
Then the linear operator can be read as 
\[
L_*:=L_{\tau_*},\qquad L_*U=DU+J_*U.
\]

Then system \eqref{aug:system} can be written as
\begin{equation}\label{sys}
\frac{d}{dt}
\binom{\eta}{U}
=
A\binom{\eta}{U}
+
F(\eta,U),
\end{equation}
with 
$$
A\binom{\eta}{U}=\binom{0}{L_*U},\quad F(\eta,U)=\binom{
0
}{
R_{\tau_*+\eta}(U)-J_*U
}.
$$

Thus $\mathcal D(L_*)=Z$ on $X$, as already defined in
Section~\ref{basic:path}.

We consider the spectrum and resolvent of \(L_*\) on
\(X_{\mathbb C}\), with domain \(Z_{\mathbb C}\), and write
\[
\sigma(L_*)=\sigma_u\cup\sigma_c\cup\sigma_s,
\]
where
\[
\begin{aligned}
\sigma_u&:=\{\lambda\in\sigma(L_*):
\operatorname{Re}\lambda>0\},\\
\sigma_c&:=\sigma(L_*)\cap i\mathbb R,\\
\sigma_s&:=\{\lambda\in\sigma(L_*):
\operatorname{Re}\lambda<0\}.
\end{aligned}
\]
By Lemmas~\ref{lem:common-circle} and \ref{finite},
\(\sigma_c\) and \(\sigma_u\) are finite.
Choose a positively oriented, conjugation-invariant simple
closed contour \(\Gamma_c\subset\rho(L_*)\) enclosing
\(\sigma_c\) and no other spectral points, and define
\[
\Pi_c
:=\frac{1}{2\pi i}
\int_{\Gamma_c}(\lambda I-L_*)^{-1}\,d\lambda
\in\mathcal L(X_{\mathbb C},Z_{\mathbb C}).
\]
Since \(L_*\) has real coefficients, \(\Pi_c\) commutes with
complex conjugation and hence maps \(X\) into \(Z\).
We use the same notation for its restriction to \(X\).

Define the real subspaces
\[
X_c:=\Pi_c(X)\subset Z,
\qquad
X_h:=\{U\in X:\Pi_cU=0\},
\qquad
Z_h:=Z\cap X_h.
\]
Then \(X_c\) is finite-dimensional, and
\[
X=X_c\oplus X_h,
\qquad
Z=X_c\oplus Z_h.
\]
The corresponding complex center subspace is
\[
\Pi_c(X_{\mathbb C})=X_c+iX_c.
\]
We equip \(X_c\) and \(X_h\) with the norms inherited from
\(X\), and \(Z_h\) with the norm inherited from \(Z\).

The augmented phase space and operator domain for \eqref{sys} are
\[
\mathbb X:=\mathbb R\times X,
\qquad
\mathbb Z:=\mathbb R\times Z,
\]
with
\[
\|(\eta,U)\|_{\mathbb X}^2
=|\eta|^2+\|U\|_X^2,\quad \|(\eta,U)\|_{\mathbb Z}^2
:=|\eta|^2+\|U\|_{Z}^2,
\quad
A(\eta,U)=(0,L_*U).
\]

We stress that we consider the perturbations \(u(t,x)\) and \(v(t,x)\) in spaces of real-valued functions. When studying the spectrum of the linear operators, we use the corresponding spaces of complex-valued functions. All derivatives of maps between function spaces below are
Fr\'echet derivatives.

\begin{theorem}\label{center}
Under the preceding assumptions, there exist a neighborhood
$\mathcal V$ of $(0,0)$ in $\mathbb R\times X_c$
and a $C^3$ map
\[
\Psi:\mathcal V\longrightarrow Z_h
\]
such that
\[
\Psi(0,0)=0,\qquad D\Psi(0,0)=0,
\]
and
\[
\mathcal M_c
=
\left\{
(\eta,z+\Psi(\eta,z)):
(\eta,z)\in\mathcal V
\right\}
\]
is a finite-dimensional local center manifold of
\eqref{sys} near $(0,0)$. Moreover, for all sufficiently small $\eta$, we have $\Psi(\eta,0)=0.$
\end{theorem}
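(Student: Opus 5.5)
We will obtain $\mathcal M_c$ from a standard local center manifold theorem for semilinear parabolic equations with parameters, in the form of Vanderbauwhede--Iooss or Haragus--Iooss (Theorem 2.9 and its parameter version in their monograph). The augmented parameter is handled by the equation $\dot\eta=0$: its center space is $\mathbb R\times X_c$ and its hyperbolic complement is $\{0\}\times X_h$. The plan is to check the hypotheses of that theorem for \eqref{sys} on the pair $\mathbb Z\subset\mathbb X$, with $k=3$.

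\textbf{Regularity of the nonlinearity.} We first show that $F:\mathbb R\times Z\to X$ is $C^k$ for some $k\ge3$ (in fact $C^\infty$) near $(0,0)$, with $F(0,0)=0$ and $DF(0,0)=0$. Since $Z\subset H^3(0,l)\hookrightarrow C^2[0,l]$ and $s,h\ge c_0>0$, the map $(u,v)\mapsto f_\tau(x,s+u,h+v)$ is a superposition of a function smooth in $(S,I)$ on $\{S,I>c_0/2\}$. On a small $C^0$-ball such superpositions are $C^\infty$ from $H^3$ into $H^1$, because $H^1$ is a Banach algebra in one dimension. The dependence on $\eta$ is affine through $\beta_\tau,\gamma_\tau$.

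The mass constraint holds because the two components of $R_\tau(U)$ sum to zero, so $F$ takes values in $\{0\}\times X$. The relation $DF(0,0)=0$ holds because $D_UR_{\tau_*}(0)=J_*$, and the $\eta$-derivative at $U=0$ vanishes since $R_\tau(0)=0$ for every $\tau$, by Lemma~\ref{lem:fixed-equilibrium-path}.

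\textbf{Spectral hypotheses on $L_*$.} By the lemmas of Section~\ref{sec:finite}, $\sigma_c$ consists of finitely many eigenvalues of finite algebraic multiplicity, and there is $\delta>0$ with $\sigma(L_*)\cap\{0<|\operatorname{Re}\lambda|<\delta\}=\varnothing$. This gives the spectral gap and the finite dimensionality of $X_c$. The remaining and main hypothesis is the resolvent estimate together with the linear solvability (optimal regularity) condition on the hyperbolic part.

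By \eqref{spec:high-frequency}, $\|(i\omega-L_*)^{-1}\|_{\mathcal L(X_{\mathbb C})}\le 2/|\omega|$ for $|\omega|\ge 2K+1$. For the sectorial part we use the factorization \eqref{factor} together with \eqref{reso}: $D$ is self-adjoint and nonpositive in the $H^1$ inner product, and $J_*$ is bounded, so $L_*$ is sectorial. Hence $L_*|_{X_h}$ generates an analytic semigroup. Haragus--Iooss (Remark 2.13 there) states that this resolvent bound suffices for the required solvability in exponentially weighted spaces, with the $Z$-bound supplied by \eqref{basic:graph-estimate}.

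This is the step I expect to need the most care. The point is that the Hilbert-space structure ($X$ is a Hilbert space) allows the $1/|\omega|$ estimate to replace maximal regularity. Applying the theorem, with the cutoff handled in the standard way, yields a $C^3$ map $\Psi$ on a neighborhood $\mathcal V$ of $(0,0)$ in $\mathbb R\times X_c$, taking values in $Z_h$, with $\Psi(0,0)=0$ and $D\Psi(0,0)=0$. The manifold $\mathcal M_c$ is locally invariant and contains all small bounded solutions. It is finite-dimensional because $\dim X_c<\infty$.

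\textbf{The identity $\Psi(\eta,0)=0$.} For each small $\eta$, the point $(\eta,0)$ is an equilibrium of \eqref{sys}, because $R_{\tau_*+\eta}(0)=0$ and $\dot\eta=0$. It is a bounded solution that stays in the small neighborhood for all $t\in\mathbb R$, so it lies on $\mathcal M_c$. Writing it as $(\eta,z+\Psi(\eta,z))$ forces $z=\Pi_c0=0$, and then $\Psi(\eta,0)=0$ as claimed.
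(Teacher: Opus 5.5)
Your proposal is correct and takes essentially the same route as the paper. Both verify the hypotheses of the Haragus--Iooss center manifold theorem for the augmented system (the paper cites their Theorem~2.20). These hypotheses are: $C^3$ regularity of $F$ near $(0,0)$ via the $H^1$ Banach-algebra property, with $F(0,0)=0$ and $DF(0,0)=0$; a finite center spectrum $\{0\}\cup\sigma_c$ separated from the rest; and the Hilbert-space resolvent bound $\|(i\omega-L_h)^{-1}\|\le 2/|\omega|$ replacing the optimal-regularity hypothesis. Both then obtain $\Psi(\eta,0)=0$ because the equilibria $(\eta,0)$ remain in the neighborhood for all time and so lie on $\mathcal M_c$. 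Your sectoriality remark is harmless but not needed.
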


\begin{proof}
    We verify the hypotheses of Theorem 2.20 in \cite{HaragusIooss} to give the proof.

\textbf{Properties of the operators $A$ and $F(\eta,U)$:} We will prove that $A\in\mathcal L(\mathbb Z,\mathbb X)$ and there exists neighborhood $\mathcal O$ of $(0,0)$ such that $F\in C^3(\mathcal O,\mathbb X)$ with $F(0,0)=0$ and $DF(0,0)=0$.

By the definition in Section~\ref{basic:path}, $L_*:Z\to X$ is
bounded. Hence
\[
\|A(\eta,U)\|_{\mathbb X}=\|L_*U\|_X
\le C\|U\|_Z\le C\|(\eta,U)\|_{\mathbb Z},
\]
so $A\in\mathcal L(\mathbb Z,\mathbb X)$.

For the nonlinear part, due to the definition of $F$, we have $F(0,0)=0$. Next, we choose a small neighborhood $\mathcal O$ of $(0,0)$ and prove that $F\in C^3(\mathcal O,\mathbb X)$ and $DF(0,0)=0$.

Since $s$ and $h$ are smooth and strictly positive on
$[0,l]$, we have
\[
m_0
:=\min_{x\in[0,l]}
\{s(x),h(x)\}>0.
\]
The Sobolev compact embeddings
$
Z\hookrightarrow H^1(0,l;\mathbb R^2)
\hookrightarrow C([0,l];\mathbb R^2)
$
imply that there exists a constant $C>0$ such that
\[
\max\{\|u\|_{L^\infty},\|v\|_{L^\infty}\}
\le C\|U\|_Z,
\quad\text{for every }U=(u,v)\in Z.
\]
Choose $0<\varepsilon<\rho_0$ and $\delta>0$ such that
$C\delta<\frac{m_0}{2}$, and define
\[
\mathcal O
:=(-\varepsilon,\varepsilon)\times B_Z(0,\delta)
\subset\mathbb Z.
\]
 For every $(\eta,U)\in\mathcal O$ and every
$x\in[0,l]$, we have
\[
\begin{aligned}
s(x)+u(x)
&\ge m_0-\|u\|_{L^\infty}
\ge m_0-C\|U\|_Z
>\frac{m_0}{2},\\
h(x)+v(x)
&\ge m_0-\|v\|_{L^\infty}
\ge m_0-C\|U\|_Z
>\frac{m_0}{2}.
\end{aligned}
\]
Consequently,
\[
s(x)+h(x)+u(x)+v(x)>m_0>0.
\]
Thus the denominator appearing in the reaction term is uniformly
bounded away from zero on $\mathcal O$.
Since $H^1(0,l)$ is a Banach algebra, addition, multiplication,
and reciprocal operations define smooth maps on this neighborhood. Therefore,
\[
(\eta,U)\longmapsto R_{\tau_*+\eta}(U)
\]
is a $C^3$ map from $\mathcal O$ into
$H^1(0,l;\mathbb R)^2$.  Since $U\mapsto J_*U$ is bounded
and linear, it follows that
$F\in C^3(\mathcal O,\mathbb X)$: the two components of
both $R_\tau(U)$ and $J_*U$ sum to zero pointwise.

Finally, we claim that $DF(0,0)=0$. The definition of $R_\tau$ gives
$F(\eta,0)=0
$ for small $\eta.$
Thus $F(0,0)=0$ and
$\mathrm D_\eta F(0,0)[\zeta]=0$ for every $\zeta\in\mathbb R$.
For $V=(w_1,w_2)\in Z$, the chain rule
gives
\[
\mathrm D_U R_\tau(0)[V]
=
\binom{
-\partial_S f_\tau(x,s,h)w_1
-\partial_I f_\tau(x,s,h)w_2
}{
\partial_S f_\tau(x,s,h)w_1
+\partial_I f_\tau(x,s,h)w_2
}
=J_\tau V.
\]
Taking $\tau=\tau_*$, we find
\[
\mathrm D_UF(0,0)[V]
=\binom{0}{J_*V-J_*V}=0.
\]
Consequently,  $DF(0,0)=0.$

\textbf{Spectrum of $A$ and Resolvent estimate on the hyperbolic part of $A$.}
By the preceding spectral results, the center spectrum of
\(A=0\oplus L_*\) is \(\{0\}\cup\sigma_c\), which consists of
finitely many eigenvalues of finite algebraic multiplicity,
and the remaining spectrum is separated from the imaginary axis.

Since \(\Pi_c\) commutes with \(L_*\), the restriction
\[
L_h:Z_h\subset X_h\longrightarrow X_h,
\qquad L_hU:=L_*U,
\]
is well defined. The hyperbolic part of \(A\) is identified
with \(L_h\) via \(U\mapsto(0,U)\).

We now work on the complexified spaces. By
\eqref{spec:high-frequency} at \(\tau=\tau_*\),
\[
i\omega\in\rho(L_*),
\qquad
\|(i\omega I-L_*)^{-1}\|_{\mathcal L(X_{\mathbb C})}
\le\frac{2}{|\omega|},
\qquad |\omega|\ge2K+1.
\]
Since \(\Pi_c\) commutes with this resolvent,
\((i\omega I-L_*)^{-1}\) maps
\((X_h)_{\mathbb C}\) into \((Z_h)_{\mathbb C}\).
Consequently,
\[
(i\omega I-L_h)^{-1}
=
\left.(i\omega I-L_*)^{-1}\right|_{(X_h)_{\mathbb C}},
\]
and hence
\[
\|(i\omega I-L_h)^{-1}\|_{\mathcal L((X_h)_{\mathbb C})}
\le\frac{2}{|\omega|},
\qquad |\omega|\ge2K+1.
\]
Thus the spectral and resolvent hypotheses of
Theorem~2.20 in \cite{HaragusIooss} are satisfied.

Therefore, Theorem 2.20 in \cite{HaragusIooss}
yields a $C^3$ reduction map from a neighborhood
of the origin in $\mathbb R\times X_c$
into $\{0\}\times Z_h$.
Writing this map as $(0,\Psi(\eta,z))$,
we obtain
\[
\Psi(0,0)=0,\qquad D\Psi(0,0)=0,
\]
and the local center manifold
\[
\mathcal M_c
=
\left\{
(\eta,z+\Psi(\eta,z)):
(\eta,z)\in\mathcal V
\right\}.
\]

Finally, since $F(\eta,0)=0$ and $A(\eta,0)=0$,
the constant trajectory $(\eta,0)$ is an equilibrium
of the augmented system for every sufficiently
small $\eta$.
It remains in the neighborhood supplied by the
center manifold theorem for all $t\in\mathbb R$.
Hence this equilibrium belongs to $\mathcal M_c$.
Its center projection is $(\eta,0)$, so its graph
representation gives
\[
(\eta,0)=(\eta,\Psi(\eta,0)).
\]
Hence $\Psi(\eta,0)=0$ for all sufficiently small $\eta$.
\end{proof}

In $\mathcal V$, fix $\eta\in(-\delta,\delta)$  and choose $z_0\in X_c$
such that $(\eta,z_0)\in\mathcal V$.
Consider a local classical solution $U(t)$ of
\begin{equation}\label{eqU}
U_t=L_*U+R_{\tau_*+\eta}(U)-J_*U
\end{equation}
with initial value
\[
U(0)=z_0+\Psi(\eta,z_0).
\]
By the local invariance of the center manifold
(Theorem~\ref{center}), as long as the solution remains
in the neighborhood where local invariance holds,
we have
\[
U(t)=z(t)+\Psi(\eta,z(t)),
\qquad
z(t)=\Pi_cU(t)\in X_c,
\qquad
\Psi(\eta,z(t))\in Z_h.
\]

Choose a real basis $e_1(x),...,e_d(x)$ of $X_c$ and write
\[
z(t,x)=\sum_{k=1}^d z_k(t)e_k(x).
\]
Since $\Pi_c$ is a bounded linear operator independent of
time and $\Pi_cL_*=L_*\Pi_c$ on $Z$, applying $\Pi_c$
to \eqref{eqU} gives
\begin{align*}
 \partial_t z(t,x)
=\Pi_cU_t
=L_*z(t,x)+\Pi_c\Bigl[
R_{\tau_*+\eta}\bigl(z(t,x)+\Psi(\eta,z(t,x))\bigr)
-J_*\bigl(z+\Psi(\eta,z)\bigr)
\Bigr].
\end{align*}
These two sides belong to $X_c$.
Expressing it in the basis $e_1(x),...,e_d(x)$, we obtain
the finite-dimensional ODE system for $\eta\in(-\delta,\delta)$:
\[
\dot z_k=g_k(\eta,z_1,\ldots,z_d),
\qquad k=1,\ldots,d.
\]
Here $g=(g_1,\ldots,g_d)$ is $C^3$ and we define $ z=(z_1(t),...,z_d(t))$.
Moreover, since $\Psi(\eta,0)=0$ and
$R_{\tau_*+\eta}(0)=0$, we have
$g(\eta,0)=0.$
We then have
$$
\dot {z}=g(\eta,z),\quad g(\eta,0)=0.
$$

The linearization of the reduced system at the equilibrium is
\[
\dot{ z}=D_{z}g(\eta,0) z,
\quad
B_\eta:=D_{ z}g(\eta,0).
\]
We now compare its eigenvalues with those of the full linearized operator \(L_{\tau_*+\eta}\).

\[
B_\eta:=D_zg(\eta,0),\qquad
G_\eta z:=z+D_z\Psi(\eta,0)z,\qquad
E_\eta:=G_\eta X_c.
\]

\begin{proposition}\label{prop:reduced-spectrum}
There exist \(\varepsilon,\rho>0\) such that, for every \(\eta\in[-\rho,\rho]\),
\[
\sigma(B_\eta)
=
\sigma(L_{\tau_*+\eta})
\cap
\bigl\{\lambda\in\mathbb C:
|\operatorname{Re}\lambda|<\varepsilon\bigr\},
\]
with equality of the corresponding algebraic multiplicities. In particular, the remaining spectrum of \(L_{\tau_*+\eta}\) stays uniformly separated from the imaginary axis for \(\eta\in[-\rho,\rho]\).
\end{proposition}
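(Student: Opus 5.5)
The plan is to show that $E_\eta=G_\eta X_c$ is an $L_{\tau_*+\eta}$-invariant subspace on which $L_{\tau_*+\eta}$ is conjugate to $B_\eta$. Then I would identify $E_\eta$ with the range of the Riesz projection of $L_{\tau_*+\eta}$ for the part of the spectrum near $\sigma_c$.

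\emph{Step 1: invariance.} For fixed small $\eta$, the set $\{z+\Psi(\eta,z)\}$ is locally invariant under the flow of \eqref{eqU}, and $(\eta,0)$ is an equilibrium on it, since $\Psi(\eta,0)=0$ and $g(\eta,0)=0$. I would differentiate the invariance identity
\[
\frac{d}{dt}\bigl(z+\Psi(\eta,z)\bigr)=L_*(z+\Psi)+R_{\tau_*+\eta}(z+\Psi)-J_*(z+\Psi),\qquad \dot z=g(\eta,z),
\]
that is,
\[
G(\eta,z)g(\eta,z)=L_*(z+\Psi)+R_{\tau_*+\eta}(z+\Psi)-J_*(z+\Psi),
\qquad G(\eta,z)=I+D_z\Psi(\eta,z),
\]
with respect to $z$ at $z=0$. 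Using $g(\eta,0)=0$ and $D_UR_{\tau}(0)=J_\tau$, this gives
\[
G_\eta B_\eta=L_{\tau_*+\eta}G_\eta \quad\text{on }X_c .
\]
Here the $\Psi$ term contributes no second-derivative term, because $g(\eta,0)=0$. The map $G_\eta$ is injective, since $\Pi_cG_\eta=I$. Hence $E_\eta$ is a $d$-dimensional $L_{\tau_*+\eta}$-invariant subspace of $Z$, and $L_{\tau_*+\eta}|_{E_\eta}$ is similar to $B_\eta$. Consequently $\sigma(B_\eta)\subset\sigma(L_{\tau_*+\eta})$, and the Jordan structures agree: the algebraic multiplicity of each eigenvalue of $B_\eta$ is at most its multiplicity for $L_{\tau_*+\eta}$.

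\emph{Step 2: perturbation of the spectral splitting.} The family $L_{\tau_*+\eta}=L_*+\eta B$ is a bounded perturbation of $L_*$. Since $\sigma_c$ is finite, Lemma~\ref{lem:unstable-count-jump} together with the spectral-gap lemma gives a $\delta_{\tau_*}>0$. I would pick $\varepsilon<\delta_{\tau_*}/2$ such that the strip $\{|\operatorname{Re}\lambda|<2\varepsilon\}$ meets $\sigma(L_*)$ only in $\sigma_c$. The boundary $\partial\{|\operatorname{Re}\lambda|\le\varepsilon,\ |\operatorname{Im}\lambda|\le 2K+1\}$ lies in $\rho(L_*)$, as do the lines $\operatorname{Re}\lambda=\pm\varepsilon$. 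On those lines the resolvent is bounded: use \eqref{spec:high-frequency} in its shifted form for large $|\operatorname{Im}\lambda|$ (the same Neumann-series argument with $\operatorname{Re}\lambda=\pm\varepsilon$), and compactness elsewhere. A Neumann series then gives, for $|\eta|\|B\|$ small, that the lines $\operatorname{Re}\lambda=\pm\varepsilon$ stay in $\rho(L_{\tau_*+\eta})$, and that the Riesz projection $\Pi_c(\eta)$ onto the spectrum inside the rectangle varies continuously. The argument of part (2) of the earlier projection lemma then shows $\operatorname{rank}\Pi_c(\eta)=d$. Moreover, by \eqref{spec:high-frequency}, no spectrum of $L_{\tau_*+\eta}$ lies in the strip outside the rectangle. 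This gives the final uniform-separation claim.

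\emph{Step 3: identification.} $E_\eta$ is invariant and finite-dimensional, so it lies in the sum of the generalized eigenspaces of the eigenvalues of $B_\eta$. These eigenvalues are close to $\sigma_c$, because $B_\eta\to B_0$ and $B_0$ is conjugate to $L_*|_{X_c}$, whose spectrum is $\sigma_c$ (Step 1 at $\eta=0$ with $G_0=I$). Hence $E_\eta\subset\operatorname{Ran}\Pi_c(\eta)$. Both spaces have dimension $d$, so they are equal. Therefore $\sigma(B_\eta)=\sigma(L_{\tau_*+\eta}|_{\operatorname{Ran}\Pi_c(\eta)})=\sigma(L_{\tau_*+\eta})\cap\{|\operatorname{Re}\lambda|<\varepsilon\}$, with equal algebraic multiplicities by the similarity, after shrinking $\rho$.

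I expect Step 1 to be the main obstacle. Differentiating the invariance relation needs $\Psi$ to be $C^1$ in $z$ into $Z_h$, and it needs invariance to hold as an identity of vector fields, not only along trajectories. I would justify this through the characterization in \cite{HaragusIooss} of the reduced vector field, $G(\eta,z)g=\ldots$, valid on $\mathcal V$.
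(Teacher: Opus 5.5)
Your proposal is correct and follows essentially the paper's route. You differentiate the center-manifold invariance identity at $z=0$ to obtain a $d$-dimensional $L_{\tau_*+\eta}$-invariant subspace on which the operator is represented by $B_\eta$. You then use norm-continuity of a Riesz projection of constant rank $d$ to match $\sigma(B_\eta)$ with the spectrum near $\sigma_c$.

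The only difference is bookkeeping. You use a single rectangle bounded by $\operatorname{Re}\lambda=\pm\varepsilon$ and identify $E_\eta$ with the Riesz range by a dimension count, which also gives the uniform separation directly. The paper instead uses small disks around $\sigma_c$, a per-eigenvalue multiplicity inequality combined with equality of total multiplicity, and a separate continuity argument inside $\Gamma_R$ for the remaining spectrum.
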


\begin{proof}
Fix \(\eta\), and write the center-manifold parametrization
as a function of the real coordinates \(z=(z_1,\ldots,z_d)\):
\[
U(z)
=\sum_{k=1}^d z_ke_k
+\Psi\!\left(\eta,\sum_{k=1}^d z_ke_k\right).
\]
Along a solution of the reduced system, the chain rule and
the equation for \(U\) give
\[
\sum_{k=1}^d
\frac{\partial U}{\partial z_k}(z)\,g_k(\eta,z)
=L_*U(z)+R_{\tau_*+\eta}(U(z))-J_*U(z).
\]
This identity holds for every sufficiently small real \(z\),
since such a \(z\) can be chosen as the initial center coordinate.

Set
\[
\phi_j:=\frac{\partial U}{\partial z_j}(0)
=e_j+D_z\Psi(\eta,0)e_j,
\qquad j=1,\ldots,d.
\]
These are real-valued functions in \(Z\).
Since \(\Psi\) is \(C^3\) with values in \(Z_h\) and
\(L_*:Z\to X\) is bounded, we may differentiate the preceding
identity with respect to \(z_j\) at \(z=0\).
Using \(U(0)=0\),
\(D_UR_{\tau_*+\eta}(0)=J_{\tau_*+\eta}\),
and \(g(\eta,0)=0\), we obtain
\[
\begin{aligned}
L_{\tau_*+\eta}\phi_j
=\sum_{k=1}^d
\frac{\partial^2U}{\partial z_j\partial z_k}(0)\,
g_k(\eta,0)
+\sum_{k=1}^d
\phi_k\frac{\partial g_k}{\partial z_j}(\eta,0)=\sum_{k=1}^d(B_\eta)_{kj}\phi_k.
\end{aligned}
\]
Moreover, since \(D_z\Psi(\eta,0)e_j\in Z_h\),
\[
\Pi_c\phi_j=e_j.
\]
Thus \(\phi_1,\ldots,\phi_d\) are linearly independent.
Their real span is invariant under \(L_{\tau_*+\eta}\),
and the matrix of its restriction in this basis is exactly
\(B_\eta\). Consequently,
\[
\sigma(B_\eta)\subset\sigma(L_{\tau_*+\eta}),
\]
and each eigenvalue has algebraic multiplicity for \(B_\eta\)
no larger than its algebraic multiplicity for the full operator.

At \(\eta=0\), we have \(D_z\Psi(0,0)=0\).
The preceding matrix representation therefore shows that
\(B_0\) is the matrix of \(L_*|_{X_c}\), so
\[
\sigma(B_0)=\sigma(L_*)\cap i\mathbb R.
\]
Choose small disjoint disks enclosing these eigenvalues
and no other spectral points of \(L_*\).
Their total algebraic multiplicity is
\(d=\dim_{\mathbb R}X_c\).
By resolvent continuity, their boundaries remain in the
resolvent set of \(L_{\tau_*+\eta}\), and the total algebraic
multiplicity inside them remains \(d\), for sufficiently
small \(|\eta|\).

Continuity of \(B_\eta\) ensures that all its \(d\) eigenvalues,
counted with algebraic multiplicity, remain in these disks.
The multiplicity inequality proved above, together with
equality of the total multiplicities, shows that the two
groups coincide, including algebraic multiplicities.

Finally, the remaining eigenvalues inside the fixed common
circle \(\Gamma_R\) have nonzero real parts at \(\eta=0\).
By resolvent continuity, they remain uniformly separated
from the imaginary axis for sufficiently small \(|\eta|\).
The preceding spectral bounds also keep all spectrum outside
\(\Gamma_R\) in a fixed left half-plane.
Thus there exist \(\delta,\rho>0\) such that every eigenvalue
outside the chosen disks satisfies
\[
|\operatorname{Re}\lambda|\ge\delta,
\qquad |\eta|\le\rho.
\]
Choose \(0<\varepsilon<\delta\).
Since \(\sigma(B_0)\subset i\mathbb R\), we may reduce
\(\rho\) further so that
\[
\sigma(B_\eta)\subset
\{\lambda:|\operatorname{Re}\lambda|<\varepsilon\},
\qquad |\eta|\le\rho.
\]
This proves the asserted spectral equality, including
algebraic multiplicities, and the uniform separation of
the remaining spectrum.

Furthermore, the number of unstable eigenvalues outside
these disks is constant. Hence
\[
n_+(L_{\tau_*+\eta})=n_+(B_\eta)+N,
\qquad |\eta|\le\rho,
\]
where \(N\) is independent of \(\eta\).
\end{proof}

\subsection{Periodic solutions of the reduced system.}

\begin{proposition}
\label{prop:small-periodic-solutions}
Under the preceding assumptions, the reduced system
\[
\dot z=g(\eta,z)
\]
has nonconstant periodic solutions
\(z_n:\mathbb R\to\mathbb R^d\), with parameters
\(\eta_n\to0\) and periods \(T_n\to T_*\in(0,\infty)\), such that
\[
\sup_{t\in\mathbb R}|z_n(t)|\longrightarrow0.
\]
\end{proposition}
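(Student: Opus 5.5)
The plan is to apply a generalized Hopf bifurcation theorem that requires only a change in the number of unstable eigenvalues, not a simple transversal crossing. The Alexander–Yorke global Hopf theorem, or its local version by Ize or Fiedler, is the natural candidate. It applies to a finite-dimensional $C^1$ family $\dot z=g(\eta,z)$ with $g(\eta,0)=0$. The requirements are that $0$ is an isolated center at $\eta=0$, that $B_\eta$ has no eigenvalue on $i\mathbb R$ for $0<|\eta|\le\rho$, and that the crossing number is nonzero.

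\textbf{Spectral hypotheses.} By Proposition~\ref{prop:reduced-spectrum}, $\sigma(B_\eta)$ coincides, with multiplicities, with the eigenvalues of $L_{\tau_*+\eta}$ in the strip $|\operatorname{Re}\lambda|<\varepsilon$. Moreover,
\[
n_+(L_{\tau_*+\eta})=n_+(B_\eta)+N,
\]
with $N$ independent of $\eta$. Lemma~\ref{lem:unstable-count-jump} gives $n_+(\tau_*-\eta)\ne n_+(\tau_*+\eta)$ for $0<\eta\le\min(\rho,\rho_0)$. Therefore
\[
n_+(B_{-\eta})\ne n_+(B_\eta).
\]
By the same lemma, $\sigma(B_\eta)\cap i\mathbb R=\varnothing$ for $\eta\ne0$ small. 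Lemma~\ref{lem:nozero} gives $0\notin\sigma(B_\eta)$ for all $\eta$, including $\eta=0$. Hence $B_\eta$ is invertible throughout, and $z=0$ is the only equilibrium near the origin for every small $\eta$. Also, $\sigma(B_0)\subset i\mathbb R\setminus\{0\}$ is finite and nonempty, say $\{\pm i\omega_1,\dots,\pm i\omega_r\}$ with $\omega_j>0$.

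\textbf{Crossing number.} Since $B_\eta$ is real, nonreal eigenvalues come in conjugate pairs. The change in $n_+$ is therefore even and nonzero. It is accounted for by eigenvalues leaving the disks around some of the $\pm i\omega_j$. For each $j$, let $\chi_j$ be half the net change in the number of eigenvalues in the right half of the disk around $i\omega_j$. At least one $\chi_j$ must be nonzero. This is the Alexander–Yorke condition, and I will use it in the form of Fiedler's ``crossing number'' criterion. That criterion yields a connected branch of nonstationary periodic orbits bifurcating from $(\eta,z,T)=(0,0,2\pi/\omega_j)$ with $\chi_j\ne0$.

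\textbf{Extracting the sequence.} The local theorem yields a continuum $\mathcal C$ of triples $(\eta,z(\cdot),T)$ with $z$ a nonconstant $T$-periodic solution, whose closure contains $(0,0,2\pi/\omega_j)$. From $\mathcal C$ I pick a sequence with $\eta_n\to0$, amplitudes $\sup_t|z_n(t)|\to0$, and periods $T_n\to T_*$.

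To identify $T_*$, I use a standard limiting argument. Normalize $w_n=z_n/\|z_n\|_\infty$ and rescale time to period $1$. In the limit, $w_*$ solves $\dot w=T_*B_0w$ and is nonzero. Hence $T_*$ is a multiple of $2\pi/\omega$ for some $i\omega\in\sigma(B_0)$. In particular $T_*\in(0,\infty)$, provided the periods stay bounded, which the local version of the theorem guarantees; I can also choose $T_n$ to be the period from the branch near $2\pi/\omega_j$. Periods cannot tend to $0$, since Yorke's bound gives $T\ge2\pi/\mathrm{Lip}(g)$. Minimal periods are also bounded below by this bound, which handles the passage to minimal periods later.

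\textbf{Main obstacle.} The key difficulty is that eigenvalues on $i\mathbb R$ at $\tau_*$ may be multiple, or non-simple in several pairs, so the classical Hopf theorem does not apply. The crux is checking that the degree-theoretic hypothesis (nonzero crossing number at some $i\omega_j$) follows from the bare jump in $n_+$.

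If the $\omega_j$ are rationally related, resonance between them could complicate the local theorem. I would handle this by using the $S^1$-equivariant degree version, Ize's theorem. That version requires only a nonzero crossing number at a single $\omega_j$ and the invertibility of $B_0$ (nonresonance at $0$), which we have. I will also need $g\in C^1$, which holds since $g\in C^3$.
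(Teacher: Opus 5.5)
Your proposal is correct and follows essentially the same route as the paper. The paper also feeds the invertibility of $B_\eta$, the isolated center at $\eta=0$, and the jump $n_+(B_{-\rho})\ne n_+(B_\rho)$ into a degree-theoretic generalized Hopf theorem (Hooton's), checking its spectral-trapping condition on a box $[-\rho,\rho]\times[-R,R]\times[0,R]$ that counts all right half-plane roots at once. The only difference is that you localize the nonzero crossing number to a single frequency $\omega_j$ and invoke the Ize/Fiedler local theorem: this gives the explicit limit $T_*=2\pi/\omega_j$, whereas the paper passes to a subsequence of periods lying in a compact subinterval of $(0,\infty)$.
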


\begin{proof}
By Lemma~\ref{lem:unstable-count-jump} and
Proposition~\ref{prop:reduced-spectrum}, we can choose
$\rho>0$ sufficiently small so that $B_\eta$ is
invertible for $|\eta|\le\rho$, and only $B_0$ has
purely imaginary eigenvalues, and
\[
n_+(B_{-\rho})\ne n_+(B_\rho).
\]

Using a smooth cutoff in $z$, extend $g$ to a
$C^2$ map on $[-\rho,\rho]\times\mathbb R^d$,
keeping it unchanged in a neighborhood of
$[-\rho,\rho]\times\{0\}$.
We retain the notation $g$ for this extension.
The regularity of $g$, the identity $g(\eta,0)=0$,
and the invertibility of $B_\eta$ verify
conditions (P0)--(P3) of \cite{Hooton}.

To verify the spectral trapping condition (P4),
choose
$R>\sup_{|\eta|\le\rho}\|B_\eta\|$
and set
\[
\mathcal P
=[-\rho,\rho]\times[-R,R]\times[0,R],
\]
with coordinates $(\eta,\omega,r)$.
All eigenvalues of $B_\eta$ lie in $\{|\lambda|<R\}$.
Consequently,
\[
\det\bigl((r+i\omega)I-B_\eta\bigr)\ne0
\]
on the faces $|\omega|=R$ and $r=R$.
The two end faces $\eta=\pm\rho$ contain exactly
$n_+(B_{\pm\rho})$ characteristic roots,
counted with algebraic multiplicity.
We have $n_+(B_{\rho})\neq n_+(B_{-\rho})$, so (P4) holds.
The isolated-center condition also holds because
$\eta=0$ is the only parameter with purely
imaginary eigenvalues.

Extend \(g\) as a \(C^3\) map in \(z\), keeping it unchanged
near \(z=0\), and apply that theorem.
It gives small-amplitude nonconstant periodic solutions
with periods in a compact subinterval of \((0,\infty)\).
Since \(\eta=0\) is the only parameter with purely imaginary
eigenvalues, we obtain the stated sequences after passing
to a subsequence.
Their amplitudes tend to zero, so they also solve the
original reduced system for all sufficiently large \(n\).
\end{proof}

\subsection{Periodic solutions of the original system.}

\textbf{Proof of Theorem \ref{thm:pathp}:}
Choose \(p_0\in(0,1)\),
\(q=1\), and smooth coefficients satisfying the hypotheses
of Theorem~\ref{thm:pathp}.
Let \((\beta_\tau,\gamma_\tau)\), \(\tau\in[1,\tau_1]\),
be the resulting path, and let \((s,h)\) be its common
positive equilibrium. Set
\[
M_0:=\int_0^2(s+h)\,dx>0.
\]

By Proposition~\ref{prop:small-periodic-solutions},
there are \(\eta_n\to0\) and nonconstant periodic solutions
\(z_n=(z_{n,1},\ldots,z_{n,d})\) of the reduced system,
with periods \(T_n\to T_*\in(0,\infty)\), such that
\(\sup_t|z_n(t)|\to0\).
Define
\[
\tau_n=\tau_*+\eta_n,\qquad
U_n(t)=\sum_{k=1}^d z_{n,k}(t)e_k
+\Psi\!\left(\eta_n,\sum_{k=1}^d z_{n,k}(t)e_k\right).
\]
Since \(\Psi\) is \(C^3\) with values in \(Z_h\) and
\(\Psi(\eta,0)=0\), we have
\[
\sup_{t\in\mathbb R}\|U_n(t)\|_Z
\le C\sup_{t\in\mathbb R}|z_n(t)|
\longrightarrow0.
\]
Thus, for all sufficiently large \(n\), the whole orbit
lies in the neighborhood where the center-manifold
equation agrees with \eqref{eqU}.
The center-manifold invariance equation then shows that
\(U_n\) solves \eqref{eqU} at \(\eta=\eta_n\).
Since \((s,h)\) is an equilibrium at \(\tau=\tau_n\),
\[
(S_n,I_n):=(s,h)+U_n
\]
solves the original system \eqref{eq:model}.

The graph representation shows that \(U_n\) is
\(T_n\)-periodic. Moreover,
\[
\Pi_cU_n(t)=\sum_{k=1}^d z_{n,k}(t)e_k,
\]
so \(U_n\) is nonstationary.
This identity and the graph representation also show
that \(U_n\) and \(z_n\) have exactly the same periods.

Since \(z_n\) is \(C^1\), \(e_k\in Z\), and
\(\Psi\) is \(C^3\) with values in \(Z_h\),
we have \(U_n\in C^1(\mathbb R;Z)\).
Together with
\[
Z\subset H^3(0,2;\mathbb R^2)
\hookrightarrow C^2([0,2];\mathbb R^2),
\]
this shows that \((S_n,I_n)\) is a classical solution.
Moreover we have
\[
\int_0^2\bigl(S_n(t,x)+I_n(t,x)\bigr)\,dx=M_0
\qquad(t\in\mathbb R).
\]
Also, \(U_n\to0\) uniformly in \(t\) and \(x\).
Since \(s,h>0\) on \([0,2]\), we have \(S_n,I_n>0\)
for all sufficiently large \(n\).

Choose one such index \(n_0\) with
\(\tau_{n_0}\in(1,\tau_1)\), and fix
\[
\widehat\beta:=\beta_{\tau_{n_0}},\qquad
\widehat\gamma:=\gamma_{\tau_{n_0}},\qquad
(S^0,I^0):=(S_{n_0},I_{n_0}).
\]
The path construction gives
\(\widehat\beta>\widehat\gamma>0\).
These coefficients are now fixed.

For every \(c>0\),
\[
\widehat\beta\frac{(cS)(cI)}{cS+cI}
-\widehat\gamma(cI)
=
c\left(\widehat\beta\frac{SI}{S+I}
-\widehat\gamma I\right).
\]
Hence, by the linearity of diffusion, for every \(M>0\),
\[
(S^{(M)},I^{(M)})
:=\frac{M}{M_0}(S^0,I^0)
\]
is a positive nonstationary classical periodic solution
of the same system, with total mass \(M\).
Likewise, \(\frac{M}{M_0}(s,h)\) is a positive equilibrium
of mass \(M\).

It remains to prove that this positive equilibrium is unique.
Let \((\bar S,\bar I)\) be any positive equilibrium of
mass \(M\). Adding its equations and using the Neumann
conditions gives
\[
(p_0\bar S+\bar I)''=0,\qquad
p_0\bar S+\bar I=\kappa
\]
for some constant \(\kappa>0\).
Set \(w=\bar I/\kappa\). Then
\[
0<w<1,\qquad
\bar S=\frac{\kappa}{p_0}(1-w),\qquad
\bar I=\kappa w.
\]
The infected equation becomes
\[
w''+w\,a(x,w)=0,\qquad w'(0)=w'(2)=0,
\]
where
\[
a(x,w)=
\widehat\beta(x)\frac{1-w}{1-(1-p_0)w}
-\widehat\gamma(x).
\]
We have
\[
\partial_w a(x,w)
=
-\frac{p_0\widehat\beta(x)}
       {[1-(1-p_0)w]^2}<0.
\]

Let \(w_1,w_2\) be two such solutions, and set
\[
\theta=\max_{[0,2]}\frac{w_1}{w_2}.
\]
If \(\theta>1\), then \(v=\theta w_2-w_1\ge0\)
and \(v(x_0)=0\) for some \(x_0\in[0,2]\).
Thus \(v''(x_0)\ge0\).
If \(x_0\) is an endpoint, the same conclusion follows
from \(v'(x_0)=0\) and the one-sided Taylor expansion.
On the other hand, the equations for \(w_1,w_2\) give
\[
v''(x_0)
=
\theta w_2(x_0)
\bigl[a(x_0,w_1(x_0))-a(x_0,w_2(x_0))\bigr]<0,
\]
because \(w_1(x_0)=\theta w_2(x_0)>w_2(x_0)\)
and \(a\) is strictly decreasing in \(w\).
This contradiction gives \(w_1\le w_2\).
Interchanging \(w_1,w_2\) gives \(w_1=w_2\).

Finally, the mass condition determines \(\kappa\) uniquely:
\[
\kappa=
\frac{M}{\displaystyle
\int_0^2\left(\frac{1-w(x)}{p_0}+w(x)\right)\,dx}.
\]
Therefore the positive equilibrium is unique for each
prescribed mass \(M>0\).

\section{Construction of an unstable positive equilibrium}
\label{seed:construction}

We construct the unstable starting equilibrium needed for the parameter
path. The construction is inverse: we first prescribe an equilibrium
profile and then choose the transmission and recovery rates so that this
profile is an exact equilibrium. A special choice of the profile produces
an unstable eigenvalue.

\begin{proposition}
\label{prop:unstable-seed}
There exist $p\in(0,1)$ and $\beta,\gamma\in C^\infty([0,2])$ satisfying
$\beta(x)>\gamma(x)>0$ on $[0,2]$ such that the Neumann problem
\begin{equation}\label{seed:model}
\begin{cases}
S_t=pS_{xx}-f(x,S,I),\\
I_t=I_{xx}+f(x,S,I),
\end{cases}
\qquad
f(x,S,I)=\beta(x)\dfrac{SI}{S+I}-\gamma(x)I
\end{equation}
has a strictly positive equilibrium whose linearization, restricted to
perturbations of zero total mass, has an eigenvalue with strictly positive
real part. The same coefficients admit such an equilibrium for every
prescribed positive total mass. In each mass class the positive equilibrium
is unique.
\end{proposition}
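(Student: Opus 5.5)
The plan is an \emph{inverse} construction. We prescribe the equilibrium first and then solve for the coefficients, with $q=1$ and $\ell=2$. Fix a smooth, strictly positive profile $h$ on $[0,2]$ with $h'(0)=h'(2)=0$. Fix $k>\max h$ and set
\[
s=\frac{k-h}{p}>0,
\]
so that $ps+h=k$, as required by \eqref{basic:k}. The second equation in \eqref{basic:steady} then forces
\[
\gamma=\beta\frac{s}{s+h}+\frac{h''}{h},
\]
and the first equation holds automatically because $ps''=-h''$. The pointwise conditions then read
\[
\beta-\gamma=\beta\frac{h}{s+h}-\frac{h''}{h}>0,
\qquad
\gamma=\beta\frac{s}{s+h}+\frac{h''}{h}>0 .
\]
Both hold as soon as $\beta>(s+h)|h''|/(h\min\{s,h\})$, so this step is routine.

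The freedom left is the choice of $h$, $p$, $k$ and $\beta$, and this is what must produce instability. The energy identity in Lemma~\ref{lem:energy-identity} shows where instability can come from. The dissipation weight $a s+\tfrac12h''$ can only be negative where $h''<0$. Where $h''<0$, taking $\beta$ close to its lower bound gives
\[
as=\beta\frac{h^2s}{(s+h)^2}\approx |h''|\frac{h}{s+h}.
\]
Hence $as+\tfrac12h''\approx |h''|\bigl(\tfrac{h}{s+h}-\tfrac12\bigr)$, which is negative wherever $h<s$. Therefore I will take $h$ with a sharp concave bump in which $h\ll s$. To ensure $s\gg h$ there, I take $p$ small or $k$ large. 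I take $\beta$ equal to the minimal admissible value near the bump, smoothed, and large elsewhere, so that $\beta>\gamma>0$ holds on all of $[0,2]$.

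Since $L_1$ is not self-adjoint, an indefinite energy form does not by itself give an unstable eigenvalue. I will therefore look for a \emph{real} positive eigenvalue by a shooting argument. For real $\lambda$, the eigenvalue problem \eqref{basic:mode} is a linear fourth-order ODE system. Adding the two equations gives $(p\phi+\psi)''=\lambda(\phi+\psi)$. Solving it from $x=0$ with $\phi'(0)=\psi'(0)=0$ gives a two-parameter family of solutions. The three conditions at the other end, $\phi'(2)=\psi'(2)=0$ and zero mass, reduce to the vanishing of a determinant $\mathcal D(\lambda)$, after using that the total-mass condition is compatible with the conservation structure. We know that $\mathcal D(\lambda)\neq0$ for large $\lambda$, with a definite sign, by the bound \eqref{spec:box}. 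The key step is to exhibit some $\lambda_0>0$ at which $\mathcal D$ has the opposite sign. The intermediate value theorem then yields an eigenvalue in $(\lambda_0,\infty)$.

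This sign computation is the main obstacle. To make it analytic, I would first pass to a model profile: $h$ piecewise defined, with a quadratic cap on a short interval and constant pieces elsewhere. For such a profile the coefficients are piecewise explicit, and the ODEs can be solved by exponentials, or by parabolic-cylinder-type approximations, on each piece. I would then match the pieces at the junctions and estimate $\mathcal D(\lambda_0)$ with explicit error bounds. A small mollification of the profile perturbs $\mathcal D$ continuously, so the sign is preserved and the coefficients become smooth. If this fails, the fallback is a validated numerical enclosure of $\mathcal D(\lambda_0)$.

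Finally, the mass-scaling and uniqueness claims follow exactly as at the end of the proof of Theorem~\ref{thm:pathp}. Scaling $(s,h)\mapsto c(s,h)$ preserves equilibria and the linearized spectrum, which gives an unstable equilibrium of every prescribed mass. Uniqueness in each mass class follows from the maximum-ratio argument for $w''+w\,a(x,w)=0$, since $\partial_w a<0$.
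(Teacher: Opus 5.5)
Your inverse construction of $(\beta,\gamma)$ is correct, and so are the scaling and uniqueness arguments; both agree with the paper. In fact the paper's choice $t=\beta h/(s+h)=K_++pK_-h/X+p^2$ is exactly the minimal admissible value you describe, plus $p^2$, used on the whole interval.

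\textbf{The gap.} The one substantive claim of the proposition is never established, namely the existence of an eigenvalue with positive real part. You produce no profile, no parameter regime, and no $\lambda_0$ with a certified sign of $\mathcal D(\lambda_0)$. As you note yourself, a negative dissipation weight $as+\tfrac12h''$ proves nothing.

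\textbf{Why the real-eigenvalue route is handicapped.} The lemmas of Section~\ref{basic:path} apply to your equilibrium, since your coefficients satisfy their hypotheses.
\begin{enumerate}
\item Along $L_\tau$, $\tau\in[1,\tau_1]$, zero is never an eigenvalue (Lemma~\ref{lem:nozero}).
\item The spectrum in $\{\operatorname{Re}\lambda\ge0\}$ is that of the real matrix $M(\tau)$, by \eqref{eq:nonnegative-spectrum-correspondence}.
\item $P(0,\tau_1)>0$ by Lemma~\ref{lem:stable-endpoint}.
\end{enumerate}
Let $N(\tau)$ be the number of positive real eigenvalues, counted with algebraic multiplicity. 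Since $\operatorname{sign}P(0,\tau)=(-1)^{N(\tau)}$ and $P(0,\tau)\ne0$ throughout, $N(1)$ is even. In the patch model this is the positivity of the constant coefficient that follows from Lemma~\ref{lem:zero}.

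So a real unstable eigenvalue never occurs alone. Your intermediate value argument can only succeed in a configuration with at least two real unstable eigenvalues, counted with multiplicity. You give no mechanism for such a pair, and the regimes where instability can actually be analyzed produce none:
\begin{enumerate}
\item In the shadow limit of Appendix~\ref{A}, the rank-one characteristic function in \eqref{eq:characteristic} equals $1$ plus the average of $tz_\lambda$. This is positive for real $\lambda>0$, because $t>0$ and $z_\lambda>0$ by the maximum principle.
\item In the paper's scaling limit \eqref{seed:scaledF}, the limit $z+C_*/z^2$ is positive for $z>0$. Its unstable zeros $C_*^{1/3}e^{\pm i\pi/3}$ are nonreal.
\end{enumerate}

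\textbf{The missing idea.} You need a mechanism that produces a nonreal unstable pair, together with complex-analytic root counting in place of the intermediate value theorem. The paper obtains this from a singular limit:
\begin{enumerate}
\item Set $\lambda=p\mu$ and $\delta=p^4\to0$, and use the quartic profile $h_0=(1-\varepsilon-x^4)_+$, which nearly touches $ps+h=1$ at $x=0$.
\item Then $M_\delta$ has a single slow mode close to $h_0$ (Lemma~\ref{lem:seed-one-mode}), so the eigenvalue problem collapses to the scalar equation $F_\varepsilon(\mu)=0$.
\item Because $-h_0''=12x^2$, the response $T_\mu h_0$ contains the term $24/\mu^2$. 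Weighted by $K_0(\varepsilon)\sim\varepsilon^{-5/4}$, this term dominates and yields the cubic $z^3=-C_*$.
\item Rouch\'e's theorem, a Lyapunov--Schmidt reduction and Riesz projections then transfer this root to a fixed small $p$ and through the final smoothing. Throughout, $(s,h)$ is kept an exact equilibrium.
\end{enumerate}
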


There are three successive choices of parameters. We first choose
$\varepsilon>0$ small and fix it. We then choose $p>0$ small, with
$\delta=p^4$. Finally, with these parameters fixed, we smooth the
coefficients using a sufficiently small parameter $\kappa>0$.
The intermediate profile may vanish, but the equilibrium used in
\eqref{seed:model} is strictly positive everywhere.

All unspecified integrals below are over $(0,2)$. We use the complex
$L^2$ inner product $\langle v,w\rangle=\int v\overline w\,dx$, linear in
the first variable, and write
\[
H_N^2=\{v\in H^2(0,2;\mathbb C):v'(0)=v'(2)=0\},
\qquad Av=-v'',\quad \mathcal D(A)=H_N^2.
\]

\subsection{The inverse construction of the coefficients}

At a Neumann equilibrium $(s,h)$, adding the two equations gives
$(ps+h)''=0$, hence $ps+h$ is constant. We normalize this constant to one.
Thus, for any smooth profile $h$ with
\[
0<h<1,\qquad h'(0)=h'(2)=0,
\]
we set
\begin{equation}\label{seed:inverse-variables}
X=1-h,\qquad s=\frac Xp,\qquad
r=\frac{h}{s+h}=\frac{ph}{X+ph},\qquad K=\frac{h''}{h}.
\end{equation}
The infected equilibrium equation is equivalent to
$\beta(1-r)-\gamma=-K$. It is useful to parametrize the remaining freedom
by the function $t=\beta r$. For a chosen $t$, define
\begin{equation}\label{seed:inverse-coefficients}
\beta=\frac tr,\qquad \gamma=\frac{t(1-r)}r+K.
\end{equation}
These formulas make $(s,h)$ an exact equilibrium. Moreover,
\begin{equation}\label{seed:two-inequalities}
\beta-\gamma=t-K,\qquad \gamma=\frac{tX}{ph}+K.
\end{equation}
They show precisely what must be imposed on $t$ to obtain
$\beta>\gamma>0$.

For a real-valued function $b$, write $b_+=\max\{b,0\}$ and
$b_-=\max\{-b,0\}$. We choose
\begin{equation}\label{seed:choice-t}
t=K_++pK_-\frac hX+p^2.
\end{equation}
Since $K=K_+-K_-$, direct substitution gives
\begin{equation}\label{seed:coefficient-positivity}
\begin{aligned}
\beta-\gamma&=K_-\left(1+\frac{ph}{X}\right)+p^2>0,\\
\gamma&=K_+\left(1+\frac X{ph}\right)+\frac{pX}{h}>0.
\end{aligned}
\end{equation}
The positive and negative parts are used separately to enforce the two
inequalities in \eqref{seed:two-inequalities}. The small term $p^2$ makes
both inequalities strict, including where $K=0$.

This also explains the equilibrium calculation without any approximation:
\[
f(x,s,h)=h\,[\beta(1-r)-\gamma]=-Kh=-h'',
\qquad ps''=-h''.
\]
Consequently both stationary equations hold. The functions in
\eqref{seed:inverse-coefficients} are initially Lipschitz; the final
smoothing will preserve this exact equilibrium relation.

\subsection{A quartic profile and its positive approximation}

Fix $\varepsilon\in(0,1/2)$, to be made small later, and define
\begin{equation}\label{seed:hzero}
a=(1-\varepsilon)^{1/4},\qquad
h_0(x)=(1-\varepsilon-x^4)_+,\qquad
Z_\varepsilon=\int_0^a h_0^2\,dx.
\end{equation}
The profile is nearly one near $x=0$ and vanishes on $[a,2]$.
In particular,
\begin{equation}\label{seed:quartic-denominator}
1-h_0=\varepsilon+x^4\quad\hbox{on }(0,a).
\end{equation}
This small denominator will concentrate the coupling near $x=0$.
The quartic power has a second role: the resolvent calculation below
produces a term proportional to $\mu^{-2}$, leading to a cubic
characteristic equation with roots in the right half-plane.

We next replace $h_0$ by a smooth, strictly positive profile. Choose
$\Theta\in C^\infty(\mathbb R;[0,1])$ such that
\[
\Theta=0\ \hbox{on }(-\infty,0],\qquad
\Theta=1\ \hbox{on }[2,\infty),\qquad
\Theta(2-z)=1-\Theta(z),\qquad \Theta'>0\ \hbox{on }(0,2).
\]
Define the smooth convex function
\[
\Psi(z)=
\begin{cases}
1,&z\le0,\\
1+\displaystyle\int_0^z\Theta(t)\,dt,&0<z<2,\\
z,&z\ge2.
\end{cases}
\]
The symmetry of $\Theta$ gives $\int_0^2\Theta=1$, so the pieces join
smoothly. For sufficiently small $\delta>0$, set
\begin{equation}\label{seed:hdelta}
h_\delta(x)=\delta\Psi\!\left(\frac{1-\varepsilon-x^4}{\delta}\right).
\end{equation}
Then
\begin{equation}\label{seed:profile-basic}
\delta\le h_\delta\le1-\varepsilon,\qquad
1-h_\delta\ge\varepsilon,\qquad
h_\delta'(0)=h_\delta'(2)=0.
\end{equation}
It equals $1-\varepsilon-x^4$ where this function is at least $2\delta$,
and equals $\delta$ where it is nonpositive. The transition interval has
length $O(\delta)$.

Here and below, estimates involving $\delta$ may depend on the fixed
$\varepsilon$. Writing $g(x)=1-\varepsilon-x^4$, we have
\[
h_\delta'=\Theta(g/\delta)g',\qquad
h_\delta''=\delta^{-1}\Theta'(g/\delta)(g')^2+\Theta(g/\delta)g''.
\]
Since $g''=-12x^2$, these formulas imply
\begin{equation}\label{seed:profile-estimates}
\begin{gathered}
\|h_\delta-h_0\|_\infty=O(\delta),\qquad
\|h_\delta'-h_0'\|_2=O(\delta^{1/2}),\\
\|(h_\delta'')_-\|_\infty\le C,\qquad
\operatorname{supp}(h_\delta'')_+\subset\{0\le g\le2\delta\},\qquad
\int h_\delta(h_\delta'')_+\,dx\le C\delta.
\end{gathered}
\end{equation}
For the last estimate, use $h_\delta\le2\delta$ in the transition
interval and $\int(h_\delta'')_+\,dx\le C$.

From now on, let $\delta=p^4$, use $h=h_\delta$ in
\eqref{seed:inverse-variables}, and choose $t,\beta,\gamma$ by
\eqref{seed:choice-t} and \eqref{seed:inverse-coefficients}.

\subsection{The exact eigenvalue equations and the limiting scalar equation}

The derivatives of the reaction term at the prescribed equilibrium are
\begin{equation}\label{seed:reaction-derivatives}
a_1=f_S(x,s,h)=tr,\qquad
b_1=f_I(x,s,h)=-K-t(1-r).
\end{equation}
Thus a spectral eigenfunction $(\varphi,\psi)$ satisfies
\begin{equation}\label{seed:original-eigenproblem}
\lambda\varphi=p\varphi''-a_1\varphi-b_1\psi,\qquad
\lambda\psi=\psi''+a_1\varphi+b_1\psi,
\end{equation}
with Neumann boundary conditions. Introduce
\begin{equation}\label{seed:eigenvalue-scaling}
\lambda=p\mu,\qquad \xi=p(\varphi+\psi).
\end{equation}
Adding the equations in \eqref{seed:original-eigenproblem} and substituting
$\varphi=\xi/p-\psi$ gives the exact system
\begin{equation}\label{seed:exacteigen}
\mu\xi=-A\xi-(1-p)A\psi,\qquad
(L_p+p\mu)\psi=\frac{a_1}{p}\xi,
\end{equation}
where
\begin{equation}\label{seed:operators}
\begin{gathered}
M_\delta=A+2K_+-K_-,\qquad
D_\delta=\frac{(h'')_-}{X},\qquad
L_p=M_\delta+pD_\delta+p^2,\\
\frac{a_1}{p}=B_{\delta,p}+p\alpha_{\delta,p},\qquad
B_{\delta,p}=\frac{(h'')_+}{X+ph},\qquad
\alpha_{\delta,p}=
\frac{(h'')_-h}{X(X+ph)}+\frac{ph}{X+ph}.
\end{gathered}
\end{equation}
In particular, no small terms have been discarded in
\eqref{seed:exacteigen}.

For $\operatorname{Re}\mu>0$, define the bounded operator
\begin{equation}\label{seed:Tdefinition}
T_\mu=I-\mu(\mu+A)^{-1}.
\end{equation}
Equivalently, if $y\in H_N^2$ solves $\mu y-y''=f$, then
$T_\mu f=f-\mu y$. For $f\in H^1$, its value $w=T_\mu f$ is
characterized by
\begin{equation}\label{seed:Tweak}
\mu\int w\overline\chi\,dx+\int w'\overline{\chi'}\,dx
=\int f'\overline{\chi'}\,dx
\qquad(\chi\in H^1).
\end{equation}
The Neumann cosine expansion shows that $T_\mu$ multiplies the $n$th
coefficient by $\rho_n/(\mu+\rho_n)$, where $\rho_n=(n\pi/2)^2$.
It is analytic in $\mu$ and uniformly bounded on $H^1$ on each compact
subset of the right half-plane. The first equation in
\eqref{seed:exacteigen} becomes
\begin{equation}\label{seed:first-elimination}
\xi=-(1-p)T_\mu\psi.
\end{equation}

The choice of $t$ gives $M_\delta$ just one eigenvalue near zero; its
corresponding eigenfunction approaches a multiple of $h_0$.
We shall prove that, uniformly for $\mu$ in compact subsets of
$\{\operatorname{Re}\mu>0\}$,
\begin{equation}\label{seed:target-response}
(L_p+p\mu)^{-1}\frac{a_1}{p}\xi
\longrightarrow
\frac{h_0}{\mu+c_\varepsilon}\int_0^a k_\varepsilon\xi\,dx
\quad\hbox{in operator norm on }H^1,
\end{equation}
where
\begin{equation}\label{seed:kc}
c_\varepsilon=\frac1{Z_\varepsilon}
\int_0^a\frac{12x^2h_0^2}{1-h_0}\,dx,\qquad
k_\varepsilon(x)=
\frac{12x^2h_0(x)^2}{Z_\varepsilon(1-h_0(x))^2}\,
\mathbf1_{(0,a)}(x).
\end{equation}
Both are nonnegative. Consequently, the limiting equations are
\[
\xi=-T_\mu h_0\,z,\qquad
(\mu+c_\varepsilon)z=\int_0^a k_\varepsilon\xi\,dx.
\]
A nonzero solution therefore requires
\begin{equation}\label{seed:F}
F_\varepsilon(\mu):=
\mu+c_\varepsilon+\int_0^a k_\varepsilon T_\mu h_0\,dx=0.
\end{equation}
We first show why this scalar equation is unstable. We then justify
\eqref{seed:target-response} and transfer the root to the original system.

\subsection{A right half-plane root of the scalar equation}

\begin{lemma}\label{lem:seed-scalar-root}
For all sufficiently small $\varepsilon>0$, $F_\varepsilon$ has a simple
zero $\mu_\varepsilon$ with $\operatorname{Re}\mu_\varepsilon>0$.
\end{lemma}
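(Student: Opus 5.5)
The plan is to compute $\int_0^a k_\varepsilon T_\mu h_0\,dx$ essentially exactly by an explicit resolvent formula. I will then rescale $\mu$ so that $F_\varepsilon$ becomes, after normalization, a small perturbation of a cubic with a simple root in the right half-plane, and conclude with Rouch\'e's theorem.

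\textbf{Step 1: explicit form of $T_\mu h_0$.} On $(0,a)$ the profile is the polynomial $h_0=1-\varepsilon-x^4$, so $Ah_0=12x^2$, $A^2h_0=-24$ and $A^3h_0=0$ there. Since $T_\mu=A(\mu+A)^{-1}$, I will write $y=(\mu+A)^{-1}h_0$ as a particular solution plus a homogeneous correction:
\[
y=\frac{h_0}{\mu}-\frac{12x^2}{\mu^2}-\frac{24}{\mu^3}+c_1\cosh(\sqrt\mu\,x)+(\text{terms on }[a,2]).
\]
The particular part already satisfies the Neumann condition at $x=0$. The correction is generated only by the $C^1$ kink of $h_0$ at $x=a$ (where $h_0''$ jumps) and by the boundary at $x=2$. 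Hence on $[0,a/2]$ it is bounded by $C|\mu|^{-2}e^{-\operatorname{Re}\sqrt\mu\, a/2}$, and on all of $(0,a)$ by $C|\mu|^{-2}$. This yields
\[
T_\mu h_0=\frac{12x^2}{\mu}+\frac{24}{\mu^2}+\mathcal E_\mu(x),
\]
with $\mathcal E_\mu$ exponentially small on $[0,a/2]$ for $|\mu|$ large in a sector $|\arg\mu|\le\pi/2-\theta$.

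\textbf{Step 2: asymptotics of the weights.} Since $1-h_0=\varepsilon+x^4$ and $Z_\varepsilon\to Z_0>0$, substituting $x=\varepsilon^{1/4}y$ gives
\[
\int_0^a k_\varepsilon\,dx\sim A_0\varepsilon^{-5/4},\qquad
\int_0^a 12x^2k_\varepsilon\,dx\sim A_1\varepsilon^{-3/4},\qquad
c_\varepsilon\sim A_2\varepsilon^{-1/4},
\]
with explicit positive constants $A_0,A_1,A_2$. Inserting these into \eqref{seed:F}, the dominant balance is between $\mu$ and $24A_0\varepsilon^{-5/4}\mu^{-2}$. This suggests the scaling $\mu=\varepsilon^{-5/12}\nu$. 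Then
\[
\varepsilon^{5/12}F_\varepsilon(\varepsilon^{-5/12}\nu)=\nu+\frac{C_0}{\nu^2}+O(\varepsilon^{1/12}),\qquad C_0=24A_0/Z_0\text{-normalized}>0,
\]
uniformly for $\nu$ in compact subsets of the open right half-plane. Here $c_\varepsilon$ contributes $O(\varepsilon^{1/6})$ and the $1/\mu$ term contributes $O(\varepsilon^{1/12})$. The contribution of $\mathcal E_\mu$ to the integral is $O(\varepsilon^{-\text{power}}e^{-c\varepsilon^{-5/24}})$ from $[0,a/2]$. On $[a/2,a]$ it is $O(1)$ times $|\mu|^{-2}$, because $k_\varepsilon$ is bounded there. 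Both are negligible after multiplying by $\varepsilon^{5/12}$.

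\textbf{Step 3: locating the root.} The limit function $\nu^{-2}(\nu^3+C_0)$ has the simple zeros $\nu_\pm=C_0^{1/3}e^{\pm i\pi/3}$, which lie in the open right half-plane. Take a small disk around $\nu_+$ that stays inside the right half-plane. Rouch\'e's theorem, applied to the analytic family above (analyticity of $T_\mu$ in $\mu$ was noted after \eqref{seed:Tweak}), gives for small $\varepsilon$ exactly one zero $\nu_\varepsilon$ in this disk, and that zero is simple. Then $\mu_\varepsilon=\varepsilon^{-5/12}\nu_\varepsilon$ is the required simple zero with $\operatorname{Re}\mu_\varepsilon>0$.

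\textbf{Main obstacle.} The delicate step is Step 1, namely the uniform control of the boundary-layer correction. Two scales compete: the resolvent length $|\mu|^{-1/2}\sim\varepsilon^{5/24}$ and the concentration scale $\varepsilon^{1/4}$ of $k_\varepsilon$. The point of the argument is that near $x=0$ the polynomial structure makes the $1/\mu$ expansion exact, with no local error. I would therefore bound the correction directly from the explicit cosh/exponential representation, on $[0,a]$ and on the flat region $[a,2]$ where $h_0=0$, rather than through a generic Neumann-series estimate. For the same reason I will verify that the $O(1)$ portion of $k_\varepsilon$ away from the origin really contributes only lower order after rescaling.
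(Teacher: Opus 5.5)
Your strategy is the same as the paper's. You solve for $T_\mu h_0$ explicitly on $(0,a)$ as $12x^2/\mu+24/\mu^2$ plus a $\cosh(\sqrt\mu\,x)$ correction, compute the moments of $k_\varepsilon$ via $x=\varepsilon^{1/4}y$, rescale $\mu=\varepsilon^{-5/12}\nu$ to reach $\nu+C_0/\nu^2$, and apply Rouch\'e at $C_0^{1/3}e^{i\pi/3}$. Your scalings and the limiting cubic are right. The constant is $C_0=24A_0=405\pi\sqrt2/16$, with no extra factor $1/Z_0$ because $k_\varepsilon$ already contains $1/Z_\varepsilon$; only $C_0>0$ matters anyway.

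Step 1, however, rests on a false regularity claim. The profile $h_0=(1-\varepsilon-x^4)_+$ is not $C^1$ at $x=a$: $h_0'$ jumps from $-4a^3$ to $0$. Hence $Ah_0=12x^2\mathbf 1_{(0,a)}-4a^3\delta_a$, which is exactly the point contribution the paper insists on retaining in \eqref{seed:jumpweak}.

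This changes the size of your correction. In your matching at $a$, the inner particular solution has $y_p'(a^-)=-4a^3/\mu-24a/\mu^2$, while the outer solution is a multiple of $\cosh(\sqrt\mu\,(2-x))$. A derivative mismatch of order $|\mu|^{-1}$ forces a homogeneous correction in $y$ of size $|\mu|^{-3/2}$, not $|\mu|^{-2}$. Consequently the error in $T_\mu h_0=h_0-\mu y$ is of order $|\mu|^{-1/2}e^{-\operatorname{Re}\sqrt\mu\,(a-x)}$, with leading term $-2a^3\mu^{-1/2}e^{-\sqrt\mu(a-x)}$ near $x=a$. This is the bound \eqref{seed:werror}. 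The contribution of $[a/2,a]$ to $\int k_\varepsilon T_\mu h_0\,dx$ is therefore $O(|\mu|^{-1})$ (amplitude $|\mu|^{-1/2}$ over a layer of width $|\mu|^{-1/2}$), not $O(|\mu|^{-2})$.

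The conclusion survives. After multiplication by $\varepsilon^{5/12}$, the $[a/2,a]$ term is $O(\varepsilon^{5/6})$. The $[0,a/2]$ part is bounded by $CK_0(\varepsilon)|\mu|^{-1/2}e^{-c\sqrt{|\mu|}}$ and remains exponentially small after normalization. So the uniform convergence to $\nu+C_0/\nu^2$ and the Rouch\'e step are unaffected. You should nonetheless replace your heuristic bound by the explicit solution of the transmission problem at $a$, with the jump condition $w'(a+)-w'(a-)=4a^3$ and the resulting formula for $C_\mu$. The $4a^3$ term is the dominant source of the correction, not a lower-order effect of a second-derivative jump.
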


\begin{proof}
The derivative of $h_0$ jumps from $-4a^3$ to zero at $a$. In particular,
for every $\chi\in H^1$,
\begin{equation}\label{seed:jumpweak}
\int h_0'\overline{\chi'}\,dx
=\int_0^a12x^2\overline\chi\,dx-4a^3\overline{\chi(a)}.
\end{equation}
The point contribution in this identity must be retained.
Let $w=T_\mu h_0$ and choose $\sigma=\sqrt\mu$ with
$\operatorname{Re}\sigma>0$. Equations \eqref{seed:Tweak} and
\eqref{seed:jumpweak} give
\[
\mu w-w''=
\begin{cases}12x^2,&0<x<a,\\0,&a<x<2,\end{cases}
\qquad w'(0)=w'(2)=0,
\]
together with continuity of $w$ and the jump condition
$w'(a+)-w'(a-)=4a^3$. Solving these two constant-coefficient equations
gives, for $0\le x\le a$,
\begin{equation}\label{seed:wformula}
w(x)=\frac{12x^2}{\mu}+\frac{24}{\mu^2}
+C_\mu\cosh(\sigma x),
\end{equation}
where
\begin{equation}\label{seed:Cformula}
C_\mu=-\frac{
(4a^3+24a/\mu)\cosh(\sigma(2-a))
+\sigma(12a^2/\mu+24/\mu^2)\sinh(\sigma(2-a))
}{\sigma\sinh(2\sigma)}.
\end{equation}
The formula on $(a,2)$ is a multiple of $\cosh(\sigma(2-x))$.
Since $\operatorname{Re}\sigma\ge |\mu|^{1/2}/\sqrt2$ in the right
half-plane, the exponential formulas for the hyperbolic functions imply
\begin{equation}\label{seed:werror}
|C_\mu\cosh(\sigma x)|
\le C|\mu|^{-1/2}e^{-c\sqrt{|\mu|}(a-x)}
\qquad(0\le x\le a)
\end{equation}
for sufficiently large $|\mu|$. The constants can be chosen independently
of small $\varepsilon$.

Set $K_j(\varepsilon)=\int_0^a x^jk_\varepsilon(x)\,dx$ for $j=0,2$.
Polynomial integration and the substitution $x=\varepsilon^{1/4}y$ give
\begin{equation}\label{seed:moments}
\begin{gathered}
Z_\varepsilon=\frac{32}{45}(1-\varepsilon)^{9/4}
\longrightarrow Z_0=\frac{32}{45},\\
K_0(\varepsilon)\sim
\frac{3\pi\sqrt2}{4Z_0}\varepsilon^{-5/4},\qquad
K_2(\varepsilon)=O(\varepsilon^{-3/4}),\qquad
c_\varepsilon=O(\varepsilon^{-1/4}).
\end{gathered}
\end{equation}
For example, the leading constant in $K_0$ follows from
\[
\int_0^\infty\frac{y^2}{(1+y^4)^2}\,dy=\frac{\pi\sqrt2}{16}.
\]
The other estimates use the integrability of
$y^4/(1+y^4)^2$ and $y^2/(1+y^4)$ on $(0,\infty)$.
Dominated convergence applies after extending the transformed integrands
by zero outside their integration intervals.

On $[a/2,a]$, $k_\varepsilon$ is bounded independently of small
$\varepsilon$. Splitting the error integral at $a/2$ and applying
\eqref{seed:werror}, we obtain
\begin{equation}\label{seed:integratederror}
\left|\int_0^a k_\varepsilon C_\mu\cosh(\sigma x)\,dx\right|
\le CK_0(\varepsilon)|\mu|^{-1/2}e^{-c\sqrt{|\mu|}}
+C|\mu|^{-1}.
\end{equation}
Hence
\begin{equation}\label{seed:Fexpanded}
F_\varepsilon(\mu)=\mu+c_\varepsilon
+\frac{12K_2(\varepsilon)}{\mu}
+\frac{24K_0(\varepsilon)}{\mu^2}+E_\varepsilon(\mu),
\end{equation}
with $E_\varepsilon$ bounded by \eqref{seed:integratederror}.
Balancing $\mu$ against $K_0/\mu^2$ suggests
$\mu=\varepsilon^{-5/12}z$. Indeed, uniformly on every compact subset
of $\{\operatorname{Re}z>0\}$,
\begin{equation}\label{seed:scaledF}
\varepsilon^{5/12}F_\varepsilon(\varepsilon^{-5/12}z)
\longrightarrow G(z):=z+\frac{C_*}{z^2},
\qquad C_*=\frac{405\pi\sqrt2}{16}.
\end{equation}
The normalized $K_2$ and $c_\varepsilon$ terms are respectively
$O(\varepsilon^{1/12})$ and $O(\varepsilon^{1/6})$.
The normalized error is bounded on such a compact set by
\[
C\varepsilon^{-5/8}e^{-c\varepsilon^{-5/24}}
+C\varepsilon^{5/6},
\]
which tends to zero.

The equation $G(z)=0$ is $z^3=-C_*$. In particular,
\[
z_*=C_*^{1/3}e^{i\pi/3},\qquad
\operatorname{Re}z_*>0,\qquad G'(z_*)=3.
\]
Choose a small closed disk about $z_*$ contained in the right half-plane
and containing no other zero of $G$. By the uniform convergence in
\eqref{seed:scaledF}, Rouch\'e's theorem gives exactly one zero, counted
with multiplicity, of the normalized $F_\varepsilon$ in this disk.
It is therefore simple. Equivalently,
\begin{equation}\label{seed:muepsilon}
\mu_\varepsilon=\varepsilon^{-5/12}z_\varepsilon,\qquad
z_\varepsilon\longrightarrow z_*,\qquad
\operatorname{Re}\mu_\varepsilon>0.
\end{equation}
\end{proof}

We now fix such an $\varepsilon$. All constants in the remaining
small-$p$ estimates may depend on this choice.

\subsection{Why the infected component has only one surviving direction}

The following energy argument justifies the single function direction in
\eqref{seed:target-response}.

\begin{lemma}\label{lem:seed-one-mode}
For all sufficiently small $\delta>0$, the first two Neumann eigenvalues
of $M_\delta$ satisfy
\begin{equation}\label{seed:gapestimate}
0\le\nu_\delta:=\lambda_1(M_\delta)\le C\delta,
\qquad \lambda_2(M_\delta)\ge g_*>0,
\end{equation}
where $g_*$ is independent of $\delta$. Its first eigenfunction can be
chosen real, strictly positive and normalized by
$\|\phi_\delta\|_2=1$. With this choice,
\begin{equation}\label{seed:phiconvergence}
\phi_\delta\longrightarrow
\phi_0:=\frac{h_0}{\sqrt{Z_\varepsilon}}
\quad\hbox{strongly in }H^1(0,2).
\end{equation}
\end{lemma}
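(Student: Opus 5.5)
The proof rests on one observation: $h=h_\delta$ is an exact positive ground state of the Schr\"odinger operator $A+K$, where $K=h''/h=K_+-K_-$. Indeed $(A+K)h=-h''+h''=0$ with $h'(0)=h'(2)=0$ and $h>0$. Since a strictly positive Neumann eigenfunction belongs to the lowest eigenvalue, $\lambda_1(A+K)=0$. Equivalently, the ground-state transform $v=hw$ gives
\[
\int\bigl(|v'|^2+K|v|^2\bigr)\,dx=\int h^2|w'|^2\,dx\ge0 .
\]
Because $M_\delta=A+K+K_+$ and $K_+\ge0$, the min--max principle yields $\lambda_k(M_\delta)\ge\lambda_k(A+K)$ for every $k$. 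In particular $\nu_\delta\ge0$. For the upper bound I use $h_\delta$ as a trial function. The quadratic form of $M_\delta$ at $h_\delta$ equals $\int K_+h_\delta^2\,dx=\int h_\delta(h_\delta'')_+\,dx\le C\delta$ by \eqref{seed:profile-estimates}. Also $\|h_\delta\|_2^2\to Z_\varepsilon>0$ by \eqref{seed:profile-estimates}. Hence $\nu_\delta\le C\delta$. The first eigenfunction can be taken real and strictly positive by the standard Krein--Rutman/Perron argument for one-dimensional Neumann Schr\"odinger operators with bounded potential, which applies for each fixed $\delta$.

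For the gap it suffices to bound $\lambda_2(A+K)$ from below uniformly in $\delta$. I would argue by contradiction and compactness. Suppose $\delta_n\to0$ and $\lambda_2(A+K_{\delta_n})\to0$. Then there are $L^2$-normalized real functions $v_n$ that are orthogonal to $h_{\delta_n}$ and satisfy $\int h_{\delta_n}^2|w_n'|^2\,dx\to0$, where $w_n=v_n/h_{\delta_n}$. The analysis splits the interval into three regions.

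\emph{Interior region} $[0,a-\eta]$. Here $h_{\delta_n}\ge c_\eta>0$, so $w_n$ is nearly constant in $H^1$. Thus $v_n$ is close to $c_nh_0$ on this set.

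\emph{Outer region} $\{g\le0\}$. Here $h_{\delta_n}\equiv\delta_n$ and $K=0$, so the form reduces to $\int|v_n'|^2$, which tends to zero. Hence $v_n$ is nearly a constant $d_n$ on $[a+\eta,2]$.

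\emph{Transition layer} near $a$. Here $h_0$ vanishes linearly with slope $4a^3\neq0$. The smallness of $\int h^2|w'|^2$ must be used to show that $v_n$ cannot carry $L^2$ mass in the layer, and that it cannot connect a nonzero value $d_n$ on the right to the value $\approx0$ near $a$ on the left without paying energy of order $d_n^2$. The natural comparison is the mixed Dirichlet--Neumann eigenvalue $(\pi/(2(2-a)))^2>0$ of $(a,2)$.

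Together these force $d_n\to0$ and $v_n\to c\,h_0/\sqrt{Z_\varepsilon}$ in $L^2$. Orthogonality to $h_{\delta_n}\to h_0$ then gives $c=0$, contradicting $\|v_n\|_2=1$.

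The \textbf{main obstacle} is the transition layer. There $K_-=(h'')_-/h$ behaves like $3/(a(a-x)+\delta)$. This potential is not uniformly form-bounded with respect to $A$, since a naive bound loses a factor $\log(1/\delta)$. So one cannot simply pass to the limit in the form $\int(|v'|^2-K_-|v|^2)$. I would first try to work entirely with the weighted form $\int h^2|w'|^2$ and the explicit linear vanishing of $h_0$ at $a$. The weighted one-dimensional inequality I aim to prove is
\[
\int_{a-\eta}^{a+\eta}|v|^2\,dx\le C\eta\Bigl(\int h^2|w'|^2\,dx+\|v\|_{L^2(\{|x-a|\ge\eta\})}^2\Bigr),
\]
uniformly in $\delta$. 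It is obtained by integrating $w'$ from the edges of the layer and applying Cauchy--Schwarz with the weight $h^{-2}\sim(a-x+\delta)^{-2}$. If that fails, I would use an IMS localization instead, with cutoffs at scale $\eta$ around $a$ and an explicit lower bound for the localized Dirichlet problems.

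Finally, \eqref{seed:phiconvergence} follows from the gap. Let $\tilde h=h_\delta/\|h_\delta\|_2$, and decompose $\tilde h=\langle\tilde h,\phi_\delta\rangle\phi_\delta+r$ with $r\perp\phi_\delta$. The trial-function bound gives $g_*\|r\|_2^2\le C\delta$, and positivity fixes the sign of $\langle\tilde h,\phi_\delta\rangle$, so $\phi_\delta-\tilde h\to0$ in $L^2$. Since $Q(\phi_\delta)$ and $Q(\tilde h)$ are both $O(\delta)$, the form distance $Q(\phi_\delta-\tilde h)$ also tends to zero. To upgrade this to $H^1$ convergence I would use the bound $\int|v'|^2\le Q_{A+K}(v)+\int K_-|v|^2$, where the last term is controlled on the layer by the same weighted estimate as above. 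Combined with $h_\delta\to h_0$ in $H^1$ and $\|h_\delta\|_2^2\to Z_\varepsilon$ from \eqref{seed:profile-estimates}, this gives $\phi_\delta\to h_0/\sqrt{Z_\varepsilon}$ in $H^1$.
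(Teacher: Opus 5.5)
Your proposal is correct, but for the gap and the $H^1$ convergence it takes a genuinely different route from the paper.

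\textbf{The paper's route.} It keeps the second copy of $K_+$ in $E_\delta[v]=\int h^2|(v/h)'|^2+\int K_+|v|^2$. The large positive potential on $J_\delta$ gives the trace bound \eqref{seed:trace}. This is balanced against the $\log(1/\delta)$-lossy bound on $\int K_-|v|^2$ to obtain the uniform coercivity \eqref{seed:uniformenergy}. The paper then passes to the limiting form $E_0$ on $\{v(a)=0\}$, proves \eqref{seed:lowerlimit}, identifies $\ker E_0=\operatorname{span}\{h_0\}$, and gets the gap from two orthogonal eigenfunctions. Its $H^1$ convergence comes from $N_\delta\to\|\phi_0'\|_2^2$ together with convergence of norms.

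\textbf{Your route.} You drop that extra $K_+$ by min--max, use that $h_\delta$ is the exact ground state of $A+K$, and pin the junction at $a$ through the degenerate weight alone. This works, and more cleanly than you feared. Since $c(a-x+\delta)\le h_\delta\le C(a-x+\delta)$ on $(a/2,a)$ and $h_\delta\ge c$ on $(0,a/2)$, Cauchy--Schwarz against $h^{-2}$ from a suitable $y_0\in(0,a/2)$ gives, for every $v\in H^1$,
\[
|v(x)|^2\le C(a-x+\delta)^2\|v\|_2^2+C(a-x+\delta)\int h^2|(v/h)'|^2\,dx,\qquad \tfrac a2<x<a .
\]
It is this pointwise form that does the work, not the integrated inequality you wrote; the latter is compatible with a limit profile $d\,\mathbf 1_{(a,2)}$.
\begin{itemize}
\item At $x=a$ the pointwise bound gives $v_n(a)\to0$, hence $d_n\to0$, because $\int_a^2|v_n'|^2\to0$.
\item Integrated against $K_-\le C/(a-x+\delta)$, it gives $\int K_-|v|^2\le C\bigl(\|v\|_2^2+Q_{A+K}(v)\bigr)$ with no logarithmic loss.
\end{itemize}
The second point closes your $H^1$ step, since $\|\phi_\delta-\tilde h\|_2^2$ and the form value of $\phi_\delta-\tilde h$ are both $O(\delta)$. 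It also reproduces \eqref{seed:uniformenergy}, so no IMS localization is needed.

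\textbf{Comparison.} Your route is more elementary and quantitative. Orthogonality to the exact ground state $h_\delta$ kills the limit at once, no limiting variational problem is required, and you obtain the rate $\|\phi_\delta-h_\delta/\|h_\delta\|_2\|_2^2=O(\delta)$. It also shows that the uniform gap already holds for $A+K$, without the doubling of $K_+$ built into $M_\delta$. The paper's route, in exchange, makes the limiting Dirichlet condition at $a$ explicit through $E_0$, and it supplies the tail estimate \eqref{seed:tail} used for the convergence of $N_\delta$.
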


\begin{proof}
The quadratic form of $M_\delta$ is
\begin{equation}\label{seed:energy}
\begin{aligned}
E_\delta[v]
&=\int|v'|^2\,dx+\int(2K_+-K_-)|v|^2\,dx\\
&=\int h_\delta^2\left|\left(v/h_\delta\right)'\right|^2dx
+\int K_+|v|^2\,dx\ge0.
\end{aligned}
\end{equation}
The identity follows by expanding the square and integrating by parts;
the endpoint terms vanish because $h_\delta'=0$ there.
Testing with $h_\delta$ and using \eqref{seed:profile-estimates} gives
$\nu_\delta\le C\delta$, since $\|h_\delta\|_2$ is bounded away from zero.

We record the compactness estimates needed for the second eigenvalue.
The formula for $h_\delta''$ provides an interval $J_\delta$ of length
at least $c\delta$, at distance at most $C\delta$ from $a$, on which
$K_+\ge c\delta^{-2}$. Also,
\begin{equation}\label{seed:negative-potential}
K_-(x)\le\frac{C}{a-x+\delta}\quad(0<x<a),\qquad
K_-=0\quad(a<x<2).
\end{equation}
To obtain the first assertion, restrict $g/\delta$ to a fixed closed
subinterval of $(0,2)$ on which $\Theta'$ is bounded below. The second
uses $(h_\delta'')_-\le C$ and
$h_\delta\ge c(a-x+\delta)$ on $(0,a)$.

Choose $x_\delta\in J_\delta$ with $|v(x_\delta)|^2$ no larger than
its average on $J_\delta$. The lower bound for $K_+$ and the fundamental
theorem of calculus give
\begin{equation}\label{seed:trace}
|v(a)|^2\le C\delta
\left(\|v'\|_2^2+\int K_+|v|^2\,dx\right).
\end{equation}
For a fixed small $r>0$, use
\[
|v(a-t)|^2\le2|v(a)|^2+2t\int_{a-t}^a|v'|^2\,dx
\]
in $(a-r,a)$, and bound the potential on $(0,a-r)$. This yields
\[
\int K_-|v|^2\,dx
\le C\log(1/\delta)|v(a)|^2
+Cr\|v'\|_2^2+C_r\|v\|_2^2.
\]
Inserting \eqref{seed:trace}, first choosing $r$ small and then $\delta$
small, we can absorb the derivative and positive-potential terms.
Consequently
\begin{equation}\label{seed:uniformenergy}
E_\delta[v]+C\|v\|_2^2
\ge c\left(\|v\|_{H^1}^2+\int K_+|v|^2\,dx\right).
\end{equation}

The limiting form is
\begin{equation}\label{seed:limitenergy}
E_0[v]=\int_0^2|v'|^2\,dx
+\int_0^a\frac{h_0''}{h_0}|v|^2\,dx,
\qquad v\in H^1(0,2),\quad v(a)=0.
\end{equation}
Its potential term is integrable because
\begin{equation}\label{seed:zerotrace}
|v(x)|^2\le(a-x)\int_x^a|v'|^2\,dy\quad(x<a).
\end{equation}
If $\delta_n\downarrow0$ and
$E_{\delta_n}[v_n]+\|v_n\|_2^2$ is bounded, then
\eqref{seed:uniformenergy} gives, after passing to a subsequence, weak
$H^1$ convergence and strong $L^2$ convergence to some $v$.
One-dimensional compactness also gives uniform convergence, and
\eqref{seed:trace} implies $v(a)=0$.
For the negative potential near $a$, the preceding estimates give
\begin{equation}\label{seed:tail}
\int_{a-r}^a K_-|v_n|^2\,dx
\le Cr\|v_n'\|_2^2
+C\delta_n\log(1/\delta_n)
\bigl(E_{\delta_n}[v_n]+C\|v_n\|_2^2\bigr).
\end{equation}
First let $n\to\infty$ and then $r\downarrow0$. Away from $a$, the
negative potentials converge uniformly. Thus their integrals converge
to $\int_0^a(-h_0''/h_0)|v|^2\,dx$. Dropping the nonnegative positive
potential and using weak lower semicontinuity gives
\begin{equation}\label{seed:lowerlimit}
E_0[v]\le\liminf_{n\to\infty}E_{\delta_n}[v_n].
\end{equation}

The kernel of $E_0$ is one-dimensional. Indeed, integration by parts on
$(0,a-t)$ gives
\[
\int_0^{a-t}\left(|v'|^2+\frac{h_0''}{h_0}|v|^2\right)dx
=\int_0^{a-t}h_0^2\left|\left(v/h_0\right)'\right|^2dx
+\frac{h_0'(a-t)}{h_0(a-t)}|v(a-t)|^2.
\]
The final term tends to zero: its absolute value is bounded by
$C|v(a-t)|^2/t\le C\int_{a-t}^a|v'|^2\,dx$.
Letting $t\downarrow0$ therefore proves
\begin{equation}\label{seed:limitkernel}
E_0[v]\ge0,\qquad
E_0[v]=0\ \Longleftrightarrow\ v\in\operatorname{span}\{h_0\}.
\end{equation}
For the equality case, $v/h_0$ is constant on $(0,a)$, whereas $v$ is
constant on $(a,2)$ and its value there is zero because $v(a)=0$.

If the second bound in \eqref{seed:gapestimate} failed, two orthogonal
normalized eigenfunctions with eigenvalues tending to zero would have
strong $L^2$ limits. By \eqref{seed:lowerlimit} both limits would belong
to $\operatorname{span}\{h_0\}$. Strong convergence preserves their
norms and orthogonality, which is impossible. This proves the gap.
The first eigenfunction is positive and simple by the elementary
one-dimensional ground-state argument: minimize the Rayleigh quotient,
replace a minimizer by its absolute value, and use uniqueness of the
second-order initial-value problem to exclude zeros; the Wronskian
then proves simplicity.

The same compactness argument identifies its only possible limit as
$\phi_0$. To strengthen the convergence to $H^1$, write
\[
P_\delta=\int K_+|\phi_\delta|^2\,dx,\qquad
N_\delta=\int K_-|\phi_\delta|^2\,dx.
\]
The tail estimate \eqref{seed:tail} and local convergence give
\[
N_\delta\longrightarrow
\int_0^a\frac{-h_0''}{h_0}|\phi_0|^2\,dx
=\|\phi_0'\|_2^2.
\]
Since
$\|\phi_\delta'\|_2^2+2P_\delta=\nu_\delta+N_\delta$
and $P_\delta\ge0$, weak lower semicontinuity implies convergence of
the derivative norms. Weak convergence and convergence of these norms
give strong convergence of the derivatives, proving
\eqref{seed:phiconvergence}.
\end{proof}

\begin{lemma}\label{lem:seed-response}
With $\varepsilon$ fixed as above and $\delta=p^4$, the convergence in
\eqref{seed:target-response} holds uniformly for $\mu$ in each compact
subset of the right half-plane.
\end{lemma}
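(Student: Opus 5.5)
Write $\mathcal R_p(\mu):=(L_p+p\mu)^{-1}\frac{a_1}{p}$ for the operator on $H^1$ and $\mathcal R_0(\mu)\xi:=\frac{h_0}{\mu+c_\varepsilon}\int_0^a k_\varepsilon\xi\,dx$ for the target. The plan is a rank-one reduction along the ground state $\phi_\delta$ of $M_\delta$ from Lemma~\ref{lem:seed-one-mode}. Split $H^1$ as $\operatorname{span}\{\phi_\delta\}\oplus\{\phi_\delta\}^\perp$ ($L^2$-orthogonal). On the complement, the gap $\lambda_2(M_\delta)\ge g_*$ together with the form bound \eqref{seed:uniformenergy} should make $M_\delta$ uniformly coercive: $E_\delta[v]\ge c\|v\|_{H^1}^2$ for $v\perp\phi_\delta$.

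The perturbation $pD_\delta+p^2+p\mu$ has nonnegative real part when $\operatorname{Re}\mu>0$, since $D_\delta\ge0$. A Lax--Milgram argument in the form sense then gives $\|(L_p+p\mu)^{-1}\|$ uniformly bounded on the complement.

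In the ground-state direction, the scalar symbol is
\[
\nu_\delta+p\langle D_\delta\phi_\delta,\phi_\delta\rangle+p^2+p\mu .
\]
Here $\nu_\delta\le C\delta=Cp^4$, so this behaves like $p\bigl(\mu+\langle D_\delta\phi_\delta,\phi_\delta\rangle\bigr)+O(p^2)$. The off-diagonal couplings come only from $pD_\delta$ and are $O(p)$ in form norm. A Schur-complement (Feshbach) computation should therefore give
\[
\mathcal R_p(\mu)\xi
=\frac{\phi_\delta\,\langle \tfrac{a_1}{p}\xi,\phi_\delta\rangle}{p\bigl(\mu+\langle D_\delta\phi_\delta,\phi_\delta\rangle\bigr)+o(p)}+O(1)\,\tfrac{a_1}{p}\xi ,
\]
with the complement part later shown negligible.

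Next identify the limits, using $\frac{a_1}{p}=B_{\delta,p}+p\alpha_{\delta,p}$. First, $\langle D_\delta\phi_\delta,\phi_\delta\rangle\to\frac1{Z_\varepsilon}\int_0^a\frac{12x^2h_0^2}{1-h_0}\,dx=c_\varepsilon$. This uses $(h'')_-\to12x^2$ on $(0,a)$, $X\ge\varepsilon$, and the $H^1$ convergence \eqref{seed:phiconvergence}. Second, $B_{\delta,p}$ is supported in the transition layer $\{0\le g\le2\delta\}$, where $\phi_\delta$ is $O(\delta^{1/2})$-small by \eqref{seed:trace}, so $\langle B_{\delta,p}\xi,\phi_\delta\rangle$ should be $o(p)$ against smooth $\xi$; this needs care. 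Third, the $p\alpha_{\delta,p}$ part gives
\[
p\langle\alpha_{\delta,p}\xi,\phi_\delta\rangle\approx p\int_0^a\frac{12x^2h_0}{(1-h_0)^2}\,\xi\,\frac{h_0}{\sqrt{Z_\varepsilon}}\,dx .
\]
Multiplying by $\phi_\delta\to h_0/\sqrt{Z_\varepsilon}$ and dividing by $p(\mu+c_\varepsilon)$ yields exactly $\mathcal R_0(\mu)\xi$, with the kernel $k_\varepsilon$ of \eqref{seed:kc}.

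The main obstacle is the $B_{\delta,p}$ term. It is of size $\delta^{-1}$ on a layer of width $\delta$, so $\|B_{\delta,p}\|$ is not small; its action on the complement, divided by nothing, must still vanish in operator norm on $H^1$. I would handle it through the form rather than pointwise. Since $B_{\delta,p}\le K_+/X\cdot h/h$, one has $B_{\delta,p}\le C K_+ h\le CK_+\delta$ on the layer. Then
\[
|\langle B_{\delta,p}\xi,w\rangle|\le C\delta^{1/2}\Bigl(\int K_+|\xi|^2\Bigr)^{1/2}\Bigl(\int K_+|w|^2\Bigr)^{1/2}.
\]
The factor $\int K_+|w|^2$ is controlled by \eqref{seed:uniformenergy} for $w=(L_p+p\mu)^{-1}(\cdot)$. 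The factor $\int K_+|\xi|^2$ is controlled by $\|K_+\|_{L^1}\|\xi\|_\infty^2\le C\delta^{-1}\|\xi\|_{H^1}^2$. Combined with $\delta=p^4$, this gives a contribution $O(\delta^{1/2}\cdot\delta^{-1/2})$ on the complement, which is only bounded, not small. So I would refine it using $h\le2\delta$ and $B_{\delta,p}\le C(h'')_+h/\varepsilon$ together with $\int h(h'')_+\le C\delta$, which gains a further power.

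If that still falls short, the fallback is to transfer the root directly. Only the composition with $\xi=-(1-p)T_\mu\psi$ matters, and $T_\mu\psi$ is smooth away from the layer, so the needed convergence can be proved on that restricted range. Uniformity in compact $\mu$-sets follows since all bounds depend on $\mu$ only through $|\mu|$ and a lower bound on $\operatorname{Re}\mu$.
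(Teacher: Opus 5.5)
Your treatment of the $p\alpha_{\delta,p}$ part follows the paper. You decompose along $\phi_\delta$, and you use the gap $\lambda_2(M_\delta)\ge g_*$ together with \eqref{seed:uniformenergy} to get $H^1$-coercivity on $\phi_\delta^\perp$. You then read off $z$ from a scalar equation whose coefficient tends to $\mu+c_\varepsilon$, which produces $k_\varepsilon=\alpha_0h_0/Z_\varepsilon$.

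The gap is the $B_{\delta,p}$ term, which is the real content of the lemma, and you leave it unresolved:
\begin{enumerate}
\item[(i)] Your own estimate concludes that its contribution on the complement is ``only bounded, not small.''
\item[(ii)] The proposed refinement is only asserted. Moreover, the inequality $B_{\delta,p}\le C(h'')_+h/\varepsilon$ as written is off by a factor of $h$; what holds is $B_{\delta,p}=K_+h/(X+ph)\le (h'')_+/\varepsilon$.
\item[(iii)] The fallback of working only on the range of $T_\mu$ would not give the operator-norm convergence on all of $H^1$ that the lemma asserts.
\end{enumerate}

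The obstruction is an arithmetic slip, not a wrong idea. Since $h\le 2\delta$ on the layer, the bound $B_{\delta,p}\le C\delta K_+$ gives a prefactor $\delta$, not $\delta^{1/2}$. Combined with $\int K_+\le\delta^{-1}\int(h'')_+\le C\delta^{-1}$ (because $h\ge\delta$), this yields
\[
\Bigl|\int B_{\delta,p}\xi\,\overline w\,dx\Bigr|
\le C\sqrt\delta\,\|\xi\|_{H^1}\Bigl(\int K_+|w|^2\,dx\Bigr)^{1/2}
\le C\sqrt\delta\,\|\xi\|_{H^1}E_\delta[w]^{1/2}.
\]
This is exactly \eqref{seed:Bestimate}; the paper obtains it by writing $(h'')_+=\sqrt{h(h'')_+}\sqrt{(h'')_+/h}$ and using $\int h(h'')_+\le C\delta$.

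What you are then missing is the absorption step: the weight $K_+$ must be matched against the nonnegative energy, not estimated through $\|w\|_{H^1}$. Take the real part of the energy identity for $v=R_p(\mu)B_{\delta,p}\xi$. This gives $\mathcal E\le C\sqrt\delta\|\xi\|_{H^1}\mathcal E^{1/2}$, hence $\mathcal E\le C\delta\|\xi\|_{H^1}^2$. Then $p\sigma_0\|v\|_2^2\le\mathcal E$ together with \eqref{seed:uniformenergy} gives $\|v\|_{H^1}\le C\sqrt{\delta/p}\,\|\xi\|_{H^1}=Cp^{3/2}\|\xi\|_{H^1}$.

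The paper does this on the whole space, with no decomposition. It also works inside your Feshbach scheme:
\begin{enumerate}
\item[(i)] On the complement you get $\|w\|_{H^1}\le C(\sqrt\delta\|\xi\|_{H^1}+p|z|)$.
\item[(ii)] In the ground-state direction you get $p^{-1}|\langle B_{\delta,p}\xi,\phi_\delta\rangle|\le Cp^{-1}\sqrt{\delta\nu_\delta}\,\|\xi\|_{H^1}\le Cp^3\|\xi\|_{H^1}$.
\end{enumerate}
Both bounds hold for every $\xi\in H^1$, so no smoothness of $\xi$ is needed.
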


\begin{proof}
Fix such a compact set $\mathcal C$ and let
$\sigma_0=\min_{\mu\in\mathcal C}\operatorname{Re}\mu>0$.
Write $R_p(\mu)=(L_p+p\mu)^{-1}$.
This inverse exists: the real part of its homogeneous energy identity is
\[
E_\delta[v]+p\int D_\delta|v|^2\,dx
+(p^2+p\operatorname{Re}\mu)\|v\|_2^2=0,
\]
which forces $v=0$. Existence follows from the self-adjoint Neumann
operator $L_p$ and its eigenfunction expansion. For fixed $p$, the inverse
maps $L^2$ into $H_N^2$ and depends analytically on $\mu$.

\paragraph{The transition contribution vanishes.}
Although $B_{\delta,p}$ is large in the transition interval, for
$\xi,v\in H^1$ we have
\begin{equation}\label{seed:Bestimate}
\begin{aligned}
\left|\int B_{\delta,p}\xi\overline v\,dx\right|
&\le C\|\xi\|_\infty
\left(\int h(h'')_+\,dx\right)^{1/2}
\left(\int\frac{(h'')_+}{h}|v|^2\,dx\right)^{1/2}\\
&\le C\sqrt\delta\,\|\xi\|_{H^1}E_\delta[v]^{1/2}.
\end{aligned}
\end{equation}
Here we used \eqref{seed:profile-estimates}, \eqref{seed:energy},
and $H^1(0,2)\hookrightarrow C([0,2])$.
Apply the energy identity to $v=R_p(\mu)B_{\delta,p}\xi$ and denote its
nonnegative left-hand side by $\mathcal E$. Then
\[
\mathcal E\le C\sqrt\delta\,\|\xi\|_{H^1}\mathcal E^{1/2},
\qquad \mathcal E\le C\delta\|\xi\|_{H^1}^2.
\]
Using $p\sigma_0\|v\|_2^2\le\mathcal E$ and
\eqref{seed:uniformenergy}, we conclude that
\begin{equation}\label{seed:layernorm}
\|R_p(\mu)B_{\delta,p}\xi\|_{H^1}
\le C\sqrt{\delta/p}\,\|\xi\|_{H^1}
=Cp^{3/2}\|\xi\|_{H^1}.
\end{equation}

\paragraph{The remaining response approaches a multiple of $h_0$.}
Because $X\ge\varepsilon$ and $(h'')_-$ is uniformly bounded,
$D_\delta$ and $\alpha_{\delta,p}$ are bounded in $L^\infty$ uniformly
as $p\to0$. Pointwise convergence away from $a$ and dominated convergence
give
\begin{equation}\label{seed:multipliers}
\begin{aligned}
D_\delta&\longrightarrow D_0=
\frac{12x^2}{1-h_0}\mathbf1_{(0,a)},\\
\alpha_{\delta,p}&\longrightarrow\alpha_0=
\frac{12x^2h_0}{(1-h_0)^2}\mathbf1_{(0,a)}
\end{aligned}
\qquad\hbox{in }L^2.
\end{equation}
Now let $v=R_p(\mu)(p\alpha_{\delta,p}\xi)$ and decompose
\[
v=z\phi_\delta+w,\qquad
\langle w,\phi_\delta\rangle=0.
\]
Let $Q_\delta$ be the $L^2$-orthogonal projection onto the complement
of $\phi_\delta$. Its projected equation is
\[
M_\delta w+pQ_\delta(D_\delta w)+(p\mu+p^2)w
=pQ_\delta(\alpha_{\delta,p}\xi)
-pzQ_\delta(D_\delta\phi_\delta).
\]
The spectral gap implies $E_\delta[w]\ge g_*\|w\|_2^2$.
Taking the real part of the inner product with $w$, and then using
\eqref{seed:uniformenergy}, yields
\begin{equation}\label{seed:wbound}
\|w\|_{H^1}\le Cp\bigl(\|\xi\|_{H^1}+|z|\bigr).
\end{equation}
Taking the inner product of the unprojected equation with $\phi_\delta$
and dividing by $p$ gives
\begin{equation}\label{seed:zequation}
\left(\mu+\frac{\nu_\delta}{p}+p
+\int D_\delta\phi_\delta^2\,dx\right)z
+\int D_\delta w\phi_\delta\,dx
=\int\alpha_{\delta,p}\xi\phi_\delta\,dx.
\end{equation}
The real part of the coefficient of $z$ is at least $\sigma_0$.
Substitute \eqref{seed:wbound} and absorb the resulting $Cp|z|$ term
for small $p$. This proves
\begin{equation}\label{seed:zwbounds}
|z|\le C\|\xi\|_{H^1},\qquad
\|w\|_{H^1}\le Cp\|\xi\|_{H^1}.
\end{equation}
Furthermore,
\[
\frac{\nu_\delta}{p}=O(p^3),\qquad
\int D_\delta\phi_\delta^2\,dx\longrightarrow
\int D_0\phi_0^2\,dx=c_\varepsilon.
\]
Strong $H^1$ convergence of $\phi_\delta$ and \eqref{seed:multipliers}
also imply
\[
\|\alpha_{\delta,p}\phi_\delta-\alpha_0\phi_0\|_2\longrightarrow0.
\]
Thus \eqref{seed:zequation} gives, uniformly for $\mu\in\mathcal C$,
\[
z=\frac{\int\alpha_0\xi\phi_0\,dx}{\mu+c_\varepsilon}
+o(1)\|\xi\|_{H^1}.
\]
The remainder is uniform over the $H^1$ unit ball: the error in the
numerator is at most
$\|\alpha_{\delta,p}\phi_\delta-\alpha_0\phi_0\|_2\|\xi\|_2$,
and all denominators have modulus at least $\sigma_0$.
Combining this with \eqref{seed:zwbounds} and
$\alpha_0h_0/Z_\varepsilon=k_\varepsilon$ gives
\[
R_p(\mu)(p\alpha_{\delta,p}\xi)
=\frac{h_0}{\mu+c_\varepsilon}\int k_\varepsilon\xi\,dx
+o(1)\|\xi\|_{H^1}
\quad\hbox{in }H^1.
\]
Adding \eqref{seed:layernorm} proves the asserted operator-norm
convergence.
\end{proof}

\subsection{An actual unstable eigenvalue for a fixed positive diffusion}

We now transfer the simple root of $F_\varepsilon$ to
\eqref{seed:original-eigenproblem}. By
\eqref{seed:first-elimination}, the latter is equivalent to
\begin{equation}\label{seed:operatorroot}
\psi+\mathcal K_p(\mu)\psi=0,\qquad
\mathcal K_p(\mu)=(1-p)R_p(\mu)\frac{a_1}{p}T_\mu.
\end{equation}
Lemma~\ref{lem:seed-response} implies
\begin{equation}\label{seed:Klimit}
\mathcal K_p(\mu)\longrightarrow\mathcal K_0(\mu),\qquad
\mathcal K_0(\mu)v=
\frac{h_0}{\mu+c_\varepsilon}\int k_\varepsilon T_\mu v\,dx,
\end{equation}
in operator norm on $H^1$, uniformly on compact subsets of the right
half-plane. These operators depend analytically on $\mu$.

Choose a small closed disk $\mathcal D$ around $\mu_\varepsilon$, contained
in the right half-plane, whose only zero of $F_\varepsilon$ is
$\mu_\varepsilon$. Define bounded projections on $H^1$ by
\[
\ell(v)=\frac1{Z_\varepsilon}\int vh_0\,dx,
\qquad Pv=h_0\ell(v),\qquad Q=I-P.
\]
Then $\ell(h_0)=1$ and $Q\mathcal K_0=0$.
Seek a solution of \eqref{seed:operatorroot} in the form
$\psi=h_0+y$, with $\ell(y)=0$. Its $Q$ component is
\[
(I+Q\mathcal K_p(\mu))y=-Q\mathcal K_p(\mu)h_0.
\]
The norm of $Q\mathcal K_p(\mu)$ tends to zero uniformly on
$\mathcal D$. Hence $I+Q\mathcal K_p(\mu)$ is invertible on
$\ker\ell$ for small $p$ by a Neumann series. This yields an analytic function
$y_p(\mu)\in\ker\ell$ satisfying
$\sup_{\mu\in\mathcal D}\|y_p(\mu)\|_{H^1}\to0$.
The remaining scalar equation is
\begin{equation}\label{seed:gp}
g_p(\mu):=1+\ell\!\left(
\mathcal K_p(\mu)(h_0+y_p(\mu))\right)=0.
\end{equation}
Uniformly on $\mathcal D$,
\[
g_p(\mu)\longrightarrow
1+\ell(\mathcal K_0(\mu)h_0)
=\frac{F_\varepsilon(\mu)}{\mu+c_\varepsilon}.
\]
The denominator does not vanish in the right half-plane. Rouch\'e's
theorem therefore provides a zero $\mu_p\in\mathcal D$ for all
sufficiently small $p>0$.

Set
\[
\psi=h_0+y_p(\mu_p),\qquad
\xi=-(1-p)T_{\mu_p}\psi,\qquad
\varphi=\xi/p-\psi.
\]
The function $\psi$ is nonzero because $\ell(\psi)=1$.
Equation \eqref{seed:operatorroot} implies $\psi\in H_N^2$;
the resolvent definition of $T_{\mu_p}$ then gives $\xi\in H_N^2$ and
$\varphi\in H_N^2$. Reversing the exact substitutions proves that
$(\varphi,\psi)$ is an eigenfunction of the original linearization, with
\begin{equation}\label{seed:actualpositive}
\lambda_p=p\mu_p,\qquad \operatorname{Re}\lambda_p>0.
\end{equation}
This is an eigenvalue for a fixed, strictly positive $p$, rather than
only a root of a limiting equation.

Integrating the first equation in \eqref{seed:exacteigen} gives
$\mu_p\int\xi\,dx=0$. Since $\mu_p\ne0$,
\begin{equation}\label{seed:zero-mass}
\int(\varphi+\psi)\,dx=p^{-1}\int\xi\,dx=0.
\end{equation}
Thus the unstable eigenfunction belongs to the required fixed-mass
space. The coefficients are Lipschitz for the fixed $p$, so
\eqref{seed:original-eigenproblem} also implies
$\varphi,\psi\in H^3$. Consequently the same eigenvalue is present in
the $H^1$ realization, whose domain has $H^3$ regularity and the same
Neumann and zero-mass conditions.

\subsection{Smoothing the coefficients while preserving instability}

Fix a sufficiently small $p$ for which \eqref{seed:actualpositive}
holds, and fix $\delta=p^4$. For $\kappa>0$, define
\begin{equation}\label{seed:smoothcoeff}
\begin{gathered}
K_\kappa^+=\frac{\sqrt{K^2+\kappa^2}+K}{2},\qquad
K_\kappa^-=\frac{\sqrt{K^2+\kappa^2}-K}{2},\\
t_\kappa=K_\kappa^++pK_\kappa^-\frac hX+p^2,\qquad
\beta_\kappa=\frac{t_\kappa}{r},\qquad
\gamma_\kappa=\frac{t_\kappa(1-r)}r+K.
\end{gathered}
\end{equation}
These coefficients are smooth. Since $K_\kappa^+-K_\kappa^-=K$,
the positivity identities remain exact:
\[
\beta_\kappa-\gamma_\kappa
=K_\kappa^-\left(1+\frac{ph}{X}\right)+p^2>0,
\qquad
\gamma_\kappa=K_\kappa^+\left(1+\frac X{ph}\right)+\frac{pX}{h}>0.
\]
Also $\beta_\kappa(1-r)-\gamma_\kappa=-K$, so the same $(s,h)$
remains an exact equilibrium.

It remains to preserve the eigenvalue. The estimate
$0\le\sqrt{K^2+\kappa^2}-|K|\le\kappa$ gives
\[
\|t_\kappa-t\|_\infty
\le\frac\kappa2\left(1+p\|h/X\|_\infty\right)\longrightarrow0.
\]
The corresponding change in the reaction matrix is exactly
\begin{equation}\label{seed:matrixperturbation}
\Delta J_\kappa=(t_\kappa-t)
\begin{pmatrix}-r&1-r\\r&-(1-r)\end{pmatrix},
\qquad
\|\Delta J_\kappa\|_{\mathcal L(L^2)}
\le\sqrt2\|t_\kappa-t\|_\infty.
\end{equation}
Its column sums are zero, so it preserves the closed subspace
\[
\mathcal H_0=\left\{(\varphi,\psi)\in L^2(0,2;\mathbb C)^2:
\int(\varphi+\psi)\,dx=0\right\}.
\]
Let $\mathcal L$ denote the unsmoothed linearization on this space, with
domain $(H_N^2)^2\cap\mathcal H_0$. It has compact resolvent. Choose a
small positively oriented contour $\Gamma$ surrounding $\lambda_p$,
with its interior contained in the right half-plane and with
$\Gamma\subset\rho(\mathcal L)$.
For sufficiently small $\kappa$,
\[
\sup_{\lambda\in\Gamma}
\|\Delta J_\kappa(\lambda-\mathcal L)^{-1}\|<1.
\]
The factorization
\[
\lambda-(\mathcal L+\Delta J_\kappa)
=\bigl[I-\Delta J_\kappa(\lambda-\mathcal L)^{-1}\bigr]
(\lambda-\mathcal L)
\]
and a Neumann series give uniform convergence of the resolvents on
$\Gamma$. Their Riesz projections therefore converge in operator norm.
The original projection is nonzero and finite rank. Once the distance
between two projections is less than one, their restrictions to each
other's ranges are injective; hence these finite ranks agree.
The smoothed linearization consequently still has an eigenvalue inside
$\Gamma$, with strictly positive real part. Smoothness of the
corresponding eigenfunction follows from the differential equations.

\subsection{Arbitrary total mass and uniqueness}

For the fixed final coefficients, $f(x,cS,cI)=c f(x,S,I)$ for every
$c>0$. Hence $c(s,h)$ is an equilibrium with total mass
$c\int(s+h)\,dx$. The derivatives
\[
f_S=\beta\frac{I^2}{(S+I)^2},\qquad
f_I=\beta\frac{S^2}{(S+I)^2}-\gamma
\]
are unchanged by this common scaling. Thus the same unstable
linearization is obtained for every prescribed positive mass.

For completeness, uniqueness can be checked directly. Any positive
equilibrium $(\widetilde s,\widetilde h)$ satisfies
$p\widetilde s+\widetilde h=C$ for some $C>0$. Setting
$w=\widetilde h/C\in(0,1)$ gives the scalar Neumann equation
\[
w''+w b(x,w)=0,\qquad
b(x,w)=\beta(x)\frac{1-w}{1-(1-p)w}-\gamma(x).
\]
For $0<w<1$,
\[
b_w(x,w)=-\frac{p\beta(x)}{[1-(1-p)w]^2}<0.
\]
If $w_1,w_2$ were two positive solutions and
$\theta=\max_{[0,2]}w_1/w_2>1$, then at a point attaining this maximum,
\[
w_1''-\theta w_2''\le0,
\qquad
w_1''-\theta w_2''
=\theta w_2\,[b(x,w_2)-b(x,w_1)]>0,
\]
a contradiction. At an endpoint the same second-derivative inequality
holds because the first derivatives vanish. Interchanging $w_1,w_2$
proves $w_1=w_2$. Every positive equilibrium is therefore a common
multiple of $(s,h)$, and its total mass determines that multiple uniquely.

\noindent\textbf{Completion of the proof of Proposition~\ref{prop:unstable-seed}:}
Choose $\varepsilon$ by Lemma~\ref{lem:seed-scalar-root}. With this
$\varepsilon$ fixed, choose $p>0$ sufficiently small and set
$\delta=p^4$ so that \eqref{seed:actualpositive} holds. Finally choose
$\kappa>0$ sufficiently small in \eqref{seed:smoothcoeff}. The final
coefficients satisfy $\beta_\kappa>\gamma_\kappa>0$, the prescribed
equilibrium is strictly positive, and its fixed-mass linearization has
a right half-plane eigenvalue. The preceding scaling and uniqueness
arguments give the statement for every positive mass.

\noindent\textbf{Proof of Theorem \ref{thm:main}:} Combining with Theorem \ref{thm:pathp} and Proposition \ref{prop:unstable-seed}, we complete the proof of Theorem \ref{thm:main}.

\section{Two, three, and four patches: a contrast in endemic stability}\label{sec:patch}
The reaction--diffusion model \eqref{eq:model} has a natural discrete-space counterpart. We restrict attention to $n$ equal-volume patches arranged along a path, with identical bidirectional weights on each edge. Let
\[
L_n=\begin{pmatrix}
-1&1&&&\\
1&-2&1&&\\
&\ddots&\ddots&\ddots&\\
&&1&-2&1\\
&&&1&-1
\end{pmatrix},
\qquad n\geq 2,
\]
where $L_2=\bigl(\begin{smallmatrix}-1&1\\1&-1\end{smallmatrix}\bigr)$. We consider the standard-incidence system
\begin{equation}\label{eq:patchmodel}
\begin{split}
\dot S_i&=p(L_nS)_i-\beta_i\frac{S_iI_i}{S_i+I_i}+\gamma_iI_i,\\
\dot I_i&=q(L_nI)_i+\beta_i\frac{S_iI_i}{S_i+I_i}-\gamma_iI_i,\qquad i=1,\ldots,n,
\end{split}
\end{equation}
with $p,q>0$ and $\beta_i>\gamma_i>0$. Thus the migration matrix is \emph{symmetric} in the ordinary Euclidean inner product, and $\sum_i(S_i+I_i)=M$ is conserved. The associated basic reproduction number is
\[
\mathcal R_0^{(n)}=\rho\bigl[\operatorname{diag}(\beta_i)
 (\operatorname{diag}(\gamma_i)-qL_n)^{-1}\bigr]>1.
\]
For each $M>0$ there is a unique positive endemic equilibrium $(S_*,I_*)$ \cite{AllenPatch,LiPengPatch}. As in the PDE, stability is understood on the fixed-mass space
$\mathcal T_M=\{(u,v)\in\R^{2n}:\sum_i(u_i+v_i)=0\}$.

\begin{theorem}[A four-patch threshold for local instability]\label{thm:patchcomparison}
For the equal-weight path model \eqref{eq:patchmodel} with $\beta_i>\gamma_i>0$:
\begin{enumerate}
\item for $n=2$ or $n=3$, the unique positive endemic equilibrium is locally exponentially asymptotically stable for every $p,q>0$ and every admissible choice of coefficients;
\item for $n=4$, there are strictly positive coefficients with $\beta_i>\gamma_i$ for which the positive equilibrium has a conjugate pair of eigenvalues in the open right half-plane on $\mathcal T_M$.
\end{enumerate}
Consequently, four is the smallest number of patches for which \emph{local endemic instability} can occur in this equal-weight path class.
\end{theorem}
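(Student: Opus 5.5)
The plan follows the continuous case. Writing $u=S-S_*$, $v=I-I_*$ and setting $a_i=\beta_iI_{*i}^2/(S_{*i}+I_{*i})^2>0$, the linearization on $\mathcal T_M$ is
\[
\dot u=pL_nu-a\circ u-c\circ v,\qquad \dot v=qL_nv+a\circ u+c\circ v.
\]
As in \eqref{basic:derivatives}, the equilibrium equation gives $c_i=-a_iS_{*i}/I_{*i}-q(L_nI_*)_i/I_{*i}$.

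For part (1), I would first show, as in Lemma~\ref{lem:nozero}, that $0$ is not an eigenvalue on $\mathcal T_M$. Adding the equations gives $L_n(p\phi+q\psi)=0$, so $p\phi+q\psi$ is constant. The remaining scalar problem is negative definite after the substitution $r=\psi/I_*-F/k$, using the discrete summation by parts for $L_n$. The rest depends on $n$.

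For $n=2$ the fixed-mass space is 3-dimensional, so the linearization is a $3\times3$ real matrix $A$. I would verify the Routh--Hurwitz conditions $\operatorname{tr}A<0$, $\det A<0$, and $-\operatorname{tr}A\cdot E_2(A)>-\det A$, where $E_2$ is the sum of the principal $2\times2$ minors. Using $pS_*+qI_*=\text{const}$ from the patch analogue of \eqref{basic:k}, the equilibrium is parametrized by a single ratio. I expect each Hurwitz quantity to become a polynomial in $p,q,a_i,S_{*i}/I_{*i}$ with manifestly positive coefficients once $\beta_i>\gamma_i$ is used in the form $-c_i<a_iS_{*i}/I_{*i}+{}$(something controlled).

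For $n=3$ the matrix is $5\times5$, and a hand Hurwitz check is heavy. I would instead try a weighted energy $E=\frac12\sum(u_i^2/S_{*i}+v_i^2/I_{*i})$, the discrete analogue of Lemma~\ref{lem:energy-identity}. The dissipation contains $\sum(\tau a_iS_{*i}+\frac q2(L_nI_*)_i)|Z_i-W_i|^2$, and the hope is that for $n\le3$ the sign-indefinite terms $(L_nI_*)_i$ can be absorbed by the gradient terms $p\sum S_*|\nabla Z|^2+q\sum I_*|\nabla W|^2$ together with the mass constraint. If this fails, as the paper's mention of a computer-assisted proof suggests it may, I would normalize (scaling invariance, the affine relation $pS_*+qI_*=k$, and a time rescale setting $q=1$). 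I would then write the Hurwitz determinants as rational functions of the remaining parameters (the three values of $I_*/k$, $p$, and the slacks $\beta_i-\gamma_i>0$) and certify positivity on the admissible region. That certification could use a sum-of-squares decomposition or interval arithmetic on a compactified domain, after eliminating $\beta_i,\gamma_i$ by the inverse construction \eqref{seed:inverse-coefficients}.

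For part (2), I would mimic Section~\ref{seed:construction} in discrete form. Prescribe $I_*=(h_1,\dots,h_4)$ with $h$ close to $1$ on patch $1$ and small elsewhere, mimicking the quartic profile. Set $S_*=(1-h)/p$ and define $\beta,\gamma$ by the discrete version of \eqref{seed:choice-t}, with $K_i=(L_4h)_i/h_i$. This guarantees $\beta_i>\gamma_i>0$ exactly. Then choose explicit rational values of $p$ and $h$, and exhibit a complex pair with positive real part. For the exhibition I would compute the degree-7 characteristic polynomial exactly and show that a Routh--Hurwitz determinant is negative, or locate roots by the argument principle with rigorous bounds.

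The main obstacle is part (1) for $n=3$: proving Hurwitz positivity uniformly over all admissible parameters. This is where I expect to need the computer-assisted certificate. The uniqueness of the endemic equilibrium for each $M$ is quoted from \cite{AllenPatch,LiPengPatch}, and the final statement about four being minimal follows by combining (1) and (2).
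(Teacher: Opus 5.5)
Your overall architecture matches the paper's: a cubic Routh--Hurwitz computation for $n=2$, an exact certificate for the Hurwitz determinants of the fixed-mass quintic for $n=3$, and explicit rational data with an exact Routh table for $n=4$.

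\textbf{The gap for $n=3$.} The genuine gap is the $n=3$ step, which is where the content of part (1) lies. You say ``certify positivity'' but do not supply the idea that makes a finite exact certificate exist, and the setup you sketch would not produce one.

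In the normalization $p=1$, $x_i=qh_i/c_0\in(0,1)$, $t_i=\beta_ir_i$, admissibility is $t_i>\max\{q(L_3x)_i/x_i,\,-(L_3x)_i/(1-x_i)\}$. The first bound is $\beta_i>\gamma_i$, the second is $\gamma_i>0$, and which one is active depends on the sign of $(L_3x)_i$. Taking the slacks $\beta_i-\gamma_i$ as free parameters drops $\gamma_i>0$. Where $(L_3x)_i<0$ it even permits $t_i\le0$, i.e.\ $\beta_i\le0$, so you would be certifying on a strictly larger, inadmissible set.

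Your two certification tools also face concrete obstacles:
\begin{itemize}
\item Interval arithmetic on a compactification breaks down because the admissible set is open and unbounded, and the Hurwitz quantities tend to zero on parts of its boundary. For example, letting the rates tend to zero at a spatially constant profile brings a zero eigenvalue back onto $\mathcal T_M$, so $a_5\to0$.
\item Generic sums of squares are unrealistic at this size: the numerator of $\Delta_4$ has more than a million terms.
\end{itemize}

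The missing idea is a parametrization that turns positivity into a sign pattern of coefficients. The paper splits according to the order pattern of $(x_1,x_2,x_3)$: middle maximal, middle minimal, or monotone with either gap larger. It rescales by $\min_ix_i$ and writes each $t_i$ as its \emph{active} lower bound plus a slack $z_i$. The admissible set then becomes exactly the orthant $q,d,z_i>0$, $b,c\ge0$. Every reduced numerator and denominator of $a_5,\Delta_1,\dots,\Delta_4$ then has nonnegative integer coefficients. Strict positivity follows from one positive monomial free of $b,c$, detected by evaluating at $(1,0,0,1,1,1,1)$. The single exception, $\Delta_4$ in the fourth region, is repaired by subtracting three explicit nonnegative multiples of squares.

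Your weighted-energy fallback is unlikely to help. Dissipativity of that functional is a coercivity condition much stronger than spectral stability, and even in the continuum it closes only for large $\tau$.

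\textbf{Smaller points.}
\begin{itemize}
\item For $n=2$, the equilibrium is not described by a single ratio. Positivity of the Hurwitz quantities also does not come from $\beta_i>\gamma_i$; the paper's computation uses only $\beta_i>0$ and $0<f_i<1$. $C$ becomes manifestly positive only after inserting the balance identity $(q-p)(\theta-1)=p(1/f_1-\theta/f_2)$, obtained from $ps_i+qh_i=c_0$. $AB-C$ needs a case split on the sign of $q-p$ using $k_1<u$, $k_2<v$.
\item For $n=4$, the continuum argument is asymptotic ($\varepsilon,p,\delta\to0$, with the quartic contact producing the $\mu^{-2}$ term) and does not transfer to four patches. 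Your discrete inverse construction is therefore only a search heuristic, and part (2) rests on explicit data you have not produced. The paper's example has $qI_*/c_0=(0.01,0.965,0.999,0.998)$ and $q/p=20000$, a different shape from the one you suggest.
\item If you use Hurwitz determinants for $n=4$, one negative determinant only shows instability. To get exactly a conjugate pair in the open right half-plane, you need the Routh table with nonzero pivots and exactly two sign changes. You also need positivity of all seven coefficients, which by Descartes' rule excludes positive real roots.
\item Part (1) asserts local \emph{exponential} stability. Add the standard passage from spectrum in the open left half-plane on $\mathcal T_M$ to nonlinear decay on the fixed-mass manifold, via variation of constants and Gronwall.
\end{itemize}
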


The two-patch assertion follows from the cubic Routh--Hurwitz criterion. For three patches the fixed-mass characteristic polynomial has degree five; the all-parameter result uses an exact polynomial-positivity certificate over four exhaustive parameter regions. The four-patch example below is checked by an exact rational Routh table. The algebraic proofs and their verification scripts are supplied separately, rather than reproduced here. In particular, local exponential stability for $n=2,3$ should not be confused with global convergence from all positive initial data; the latter is not asserted in Theorem~\ref{thm:patchcomparison}.

For completeness we give the exact four-patch parameters. Take $p=1$, $q=20000$, and write $D_0=20562972$. The positive equilibrium of total mass $M=1$ is
\begin{equation}\label{eq:patchEE}
 S_* =\frac1{D_0}(19800000,700000,20000,40000)^\top,
 \qquad
 I_* =\frac1{D_0}(10,965,999,998)^\top.
\end{equation}
Choose
\begin{equation}\label{eq:patchrates}
\begin{aligned}
\beta&=\left(3781803890001,\ \frac{3785211}{193},\ \frac{83996}{111},\ \frac{410410479}{499}\right)^\top,\\
\gamma&=\left(3781803890000,\ \frac{96000}{193},\ \frac{20000}{999},\ \frac{400430000}{499}\right)^\top.
\end{aligned}
\end{equation}
Direct substitution gives $pL_4S_*=F(S_*,I_*)=-qL_4I_*$, where $F_i=\beta_iS_iI_i/(S_i+I_i)-\gamma_iI_i$; moreover all four rate differences are strictly positive. After removal of the conserved-mass direction, the $7\times7$ Jacobian has Routh first-column signs
\[
(+,+,+,+,+,+,-,+),
\]
and hence precisely two unstable eigenvalues. They form a nonreal conjugate pair, numerically
\[
\lambda_{\pm}\approx 0.668305872335\ \pm\ 10.483140993846\,\mathrm i.
\]
The instability is therefore present despite the symmetry of $L_4$ and the pointwise high-risk condition.

Figure~\ref{fig:patchorbit} illustrates this example using the original nonlinear eight-dimensional system. A mass-preserving perturbation of the equilibrium produces growing oscillations; the panels show total infection and the susceptible population in the second patch. The time integration is independent of the exact Routh proof, and the finite observation interval does not establish the existence or stability of a periodic orbit for the stated parameters.
\begin{figure}[H]
\centering
\includegraphics[width=0.96\textwidth]{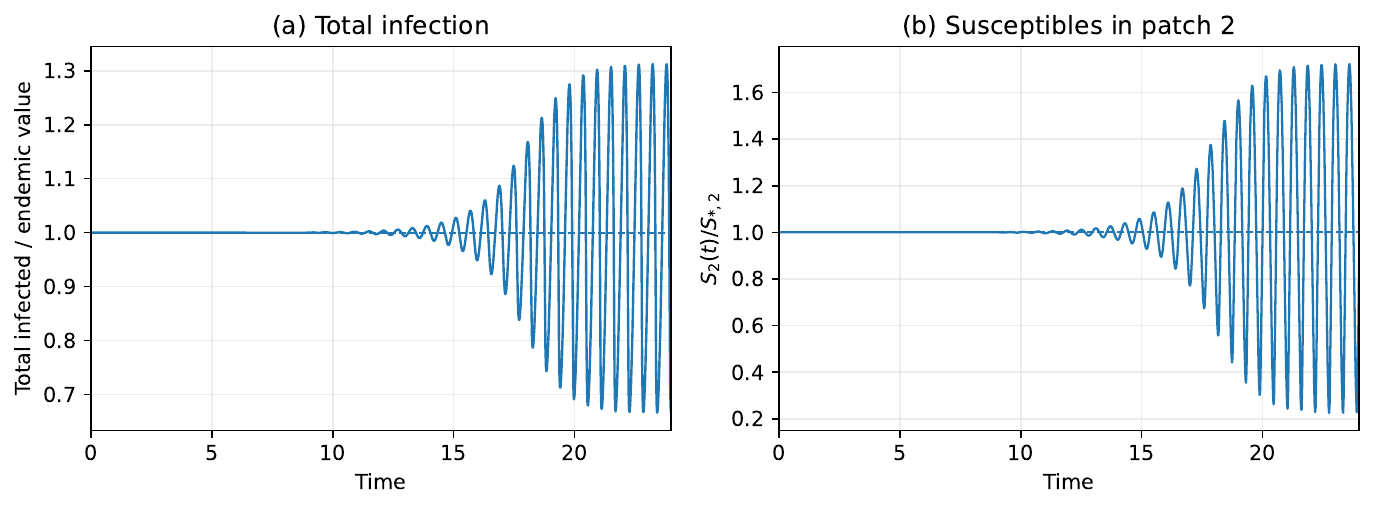}
\caption{Evolution of the four-patch model \eqref{eq:patchmodel}--\eqref{eq:patchrates} from a positive, mass-preserving perturbation. Left: total infected population relative to its endemic value; right: $S_2(t)/S_{*,2}$. Dashed lines denote equilibrium levels. The trajectories solve the full nonlinear ODE with $p=1$, $q=20000$; the numerical observation is not used to infer a periodic-orbit theorem.}\label{fig:patchorbit}
\end{figure}

The two- and three-patch systems thus exclude local instability in this model class, whereas the four-patch chain permits it, as does the continuum equation. This is a comparison of stability properties, not a claim of spectral convergence for discretizations of one fixed heterogeneous medium. The minimum-patch assertion does not extend to arbitrary weighted graphs or nonsymmetric movement.

\appendix
\section{Unstable construction for   \eqref{int-positive} by shadow system}\label{A}

Here, we recall that $p=d_S$ and $q=d_I$. We construct the coefficients $\beta,\gamma$ such that the corresponding positive equilibrium whose fiexed mass linearization has an eigenvalue with positive real part.
\begin{theorem}\label{thm:mains}
For every $p>0$, on the interval $\Om=(-1,2\pi)$ there exist fixed functions $\beta,\gamma\in C^\infty(\overline\Om)$ such that
\begin{equation}\label{eq:positivecoeff}
\min\beta>0,\qquad \min\gamma>0,\qquad
\int_\Om(\beta-\gamma)\dd x>0.
\end{equation}
There is a finite $q_0>0$ such that, for every $q>q_0$ and every $M>0$, system \eqref{eq:model} has a unique positive equilibrium of mass $M$, and its linearization on $\Xzero$ has a nonreal eigenvalue with positive real part. Such eigenvalues occur arbitrarily close to
\begin{equation}\label{eq:lambdastar}
\lambda_*=p(3+4\mathrm i)
\end{equation}
for all sufficiently large $q$. The equilibrium is also nonlinearly Lyapunov unstable in the positive fixed-mass phase space.
\end{theorem}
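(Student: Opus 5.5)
We follow the inverse strategy of Section~\ref{seed:construction}, but pass to the shadow limit $q\to\infty$ instead of the small-$p$ limit. Fix $p>0$. Prescribe the equilibrium through its infected profile: as $q\to\infty$ we expect $h\approx$ const, and the susceptible profile $s$ is then essentially free. Concretely, we use the relation $ps+qh=k$ and first choose a smooth positive $s_0$ on $\Om=(-1,2\pi)$ with $s_0'=0$ at the endpoints. For each $q$ we set
\[
h=\frac{k-ps_0}{q}.
\]
With $k$ normalized so that $h>0$, we then define $\beta,\gamma$ by the inverse formulas \eqref{seed:inverse-coefficients}, with a free function $t=\beta r$. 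This makes $(s_0,h)$ an exact equilibrium.

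To keep $\beta,\gamma$ independent of $q$, it is better to work in the scaled variable $w=qh$, so that $w=k-ps_0$. Because $K=h''/h=w''/w$, the quantity $K$ is $q$-independent. Similarly $r=h/(s+h)$ behaves like $w/(qs_0)$, and a fixed $\beta$ then gives $f_S=\beta r^2$. This is small, so the reaction becomes a weak coupling in the $S$-equation. In the limit $q\to\infty$ the $v$-component is slaved to a constant $\bar v$ with $\int(\dots)=0$, and the linearization reduces to a shadow system: a Schr\"odinger-type operator $p\partial_{xx}+V(x)$ for $u$, coupled to one scalar ODE for $\bar v$ via nonlocal integrals.

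The key step is to choose $\beta,\gamma$ (equivalently $s_0$ and $t$) so that the shadow eigenvalue problem has an explicit root at $\lambda_*=p(3+4\mathrm i)$. The interval length $2\pi+1$ suggests the following design. On $(0,2\pi)$ take trigonometric data, so that the $u$-equation has solutions $\cos(\sigma x)$ with $\sigma^2=-\lambda/p$. For $\lambda_*=p(3+4i)$ we get $\sigma^2=-3-4i$, so $\sigma=1-2i$, a clean square root. The extra piece $(-1,0)$ is used to place the support of the coupling weights. The characteristic function of the shadow system is then $\mathcal F(\lambda)=\lambda+c+\int k\,T_\lambda(\cdot)$, in analogy with \eqref{seed:F}. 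We tune a finite number of constants (the heights of $\beta-\gamma$ on each piece) so that $\mathcal F(\lambda_*)=0$ and $\mathcal F'(\lambda_*)\neq0$. Positivity $\min\beta,\min\gamma>0$ comes from the explicit inverse formulas, as in \eqref{seed:coefficient-positivity}. The integral condition \eqref{eq:positivecoeff} is only needed on average, which leaves room to make $\beta-\gamma$ negative where $K>0$. We then smooth the data as in the final subsection of Section~\ref{seed:construction}.

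The next step transfers the root to large finite $q$. We write the eigenproblem as $\psi$ plus a compact operator family $\mathcal K_q(\lambda)$. We show $\mathcal K_q\to\mathcal K_\infty$ in operator norm, uniformly on a disk around $\lambda_*$, using $q^{-1}$ resolvent bounds for $q\partial_{xx}$ on mean-zero functions. A Lyapunov--Schmidt reduction and Rouch\'e's theorem then give an eigenvalue $\lambda_q\to\lambda_*$, as in \eqref{seed:gp}. This also yields arbitrarily close eigenvalues and nonreality for large $q$. Zero mass follows by integrating the summed equation, since $\lambda_q\neq0$.

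The remaining claims are routine. Uniqueness for each $M$ uses the comparison argument already given. Nonlinear Lyapunov instability follows from the unstable linearization, since the semiflow is smooth on $X$ and the spectrum of the linearization is discrete; we invoke the standard principle of linearized instability, e.g.\ from \cite{HaragusIooss}-type settings.

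The main obstacle is the explicit shadow computation: producing data that is simultaneously $q$-independent, satisfies the pointwise positivity and integral conditions, and places an exact simple root at $p(3+4\mathrm i)$. We will first attempt piecewise-constant $K$, so that $T_\lambda$ can be computed by $\cosh$ formulas like \eqref{seed:wformula}.
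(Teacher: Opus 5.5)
Your proposal has a genuine gap at its foundation: the equilibrium scaling you prescribe is incompatible with coefficients that are independent of $q$, and the theorem requires such coefficients. The inverse formulas \eqref{seed:inverse-coefficients} were derived with infected diffusion equal to $1$. For diffusion $q$, the infected steady equation reads $\beta(1-r)-\gamma=-q\,h''/h=-qK$.

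With your normalization, $w=qh=k-ps_0$ is fixed and $s_0$ is nonconstant, so $K=w''/w$ is a fixed function that is negative somewhere (because $\int w''\,dx=0$). If you hold $\beta$ fixed, as you propose, then $\gamma=\beta(1-r)+qK\to-\infty$ wherever $K<0$. If you hold $t=\beta r$ fixed instead, then $\beta=t/r\sim tqs_0/w$ grows linearly in $q$. Equivalently, for fixed $\beta,\gamma$ the infected equation $w''=-q^{-1}w[\beta(1-r)-\gamma]$ forces $w''/w=O(q^{-1})$. So the infected profile is constant to relative order $q^{-1}$; it cannot be an $h$ of size $O(1/q)$ with $O(1)$ relative variation, as you prescribe.

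The version with $t$ fixed is, after rescaling time by $q$, just the small-$d_S/d_I$ regime of Section~\ref{seed:construction}, with $q$-dependent coefficients; this is why you arrive at an analogue of \eqref{seed:F}. Your weak-coupling heuristic with $\beta$ fixed fails for a further reason. There $f_S=\beta r^2=O(q^{-2})$ while $f_I=O(q)$, so the slaved infected component is $O(q^{-2})$. Near $\lambda_*$ the $S$-equation then reduces to $p\partial_{xx}$ on mean-zero functions, whose spectrum is real and negative, so no eigenvalue near $p(3+4\mathrm i)$ can appear.

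The correct shadow limit keeps both components of order one. For fixed coefficients the equilibrium tends to $(Ht,H)$ with constant infected part. Hence $f_S=\beta/(1+t)^2=:a$ is not small, and the limiting fixed-mass operator is the rank-one perturbation $B=-A-(At)P$ of $-A=p\partial_{xx}-a$; see \eqref{eq:B}. The inverse construction is carried out only at $q=\infty$, through $\beta=a(1+t)^2$ and $\gamma=at(1+t)-pt''$ as in \eqref{eq:coeff}. The finite-$q$ equilibrium of these fixed coefficients then comes from the implicit function theorem in $\varepsilon=q^{-1}$.

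Independently of the regime, you leave the decisive step — an exact root at $\lambda_*$ with $\min\beta,\min\gamma>0$ — as unspecified tuning of piecewise constants. You give no argument that this complex equation is solvable inside the positivity constraints. The missing idea is that, by bilinear symmetry, the characteristic equation \eqref{eq:characteristic} is the \emph{linear} moment condition $P(tz_{\lambda_*})=-1$, where $z_\lambda=(A+\lambda)^{-1}a$.

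Taking $a$ close to $p\mathbf 1_{(-1,0)}$ gives $z_{\lambda_*}=C\cosh((2+\mathrm i)(2\pi-x))$ on $(0,2\pi)$; this is your $\cos(\sigma x)$ with $\sigma=1-2\mathrm i$. Its values at $2\pi-x=0,\pi/2,\pi,3\pi/2$ lie, up to the factor $C\neq0$, on the four coordinate half-axes. Hence the nonnegative cone of bump moments is all of $\C$. You can therefore take $t=Tw_0+w_1$ with $w_0\ge1$, $w_1\ge0$, $P(w_0z)=0$ and $P(w_1z)=-1$, which satisfies the moment condition exactly. Choosing $T$ large then makes $\gamma>0$ without disturbing it. Note also the roles of the two pieces: $(-1,0)$ carries the potential, while the weights sit inside $(0,2\pi)$.

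Once these points are fixed, your transfer step ($O(q^{-1})$ resolvent bounds for $q\partial_{xx}$ on mean-zero functions, then Rouch\'e or Riesz projections) is essentially the paper's Schur-complement argument.
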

The coefficients and $q_0$ do not depend on $M$. This follows from the degree-one homogeneity of the incidence term.

\noindent\textbf{Preliminary:}
Define the average operator and its complementary projection by
\[
Pf=\frac1{|\Om|}\int_\Om f\dd x,\qquad Q=\operatorname{Id}-P,
\]
identifying an average with a constant function. Spectral statements are made on the fixed-mass tangent space
\begin{equation}\label{eq:massspace}
\Xzero=\{(\phi,\psi)\in L^2(\Om;\C)^2:P(\phi+\psi)=0\}.
\end{equation}

Let $a\in C^\infty(\overline\Om)$ be real and strictly positive, and set
\begin{equation}\label{eq:A}
A=-p\partial_{xx}+a(x),\qquad D(A)=\HN(\Om;\C).
\end{equation}
This is a strictly positive self-adjoint operator with compact resolvent. For $\Rea\lambda>0$ define
\begin{equation}\label{eq:z}
z_\lambda=A(A+\lambda)^{-1}1=(A+\lambda)^{-1}a.
\end{equation}
Equivalently, $z_\lambda$ is the unique Neumann solution of
\begin{equation}\label{eq:zode}
-pz''+(a+\lambda)z=a.
\end{equation}

\begin{lemma}\label{lem:response}
There exists $a\in C^\infty(\overline\Om)$, with $\min a>0$, for which the origin belongs to the interior, in $\C\simeq\R^2$, of the convex hull of four interior values of $z_{\lambda_*}$.
\end{lemma}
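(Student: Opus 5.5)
We will construct $a$ as a smooth positive function that is a large constant on short sub-intervals and small elsewhere, and read off $z_{\lambda_*}$ through a local (WKB-type) approximation. The guiding observation comes from \eqref{eq:zode}. Where $a$ is locally constant equal to $\alpha$ on an interval much longer than the local decay length $\sqrt{p/|\alpha+\lambda_*|}$, the solution $z_{\lambda_*}$ is close to the constant $\alpha/(\alpha+\lambda_*)$ in the middle of that interval. The map $\alpha\mapsto \alpha/(\alpha+p(3+4\mathrm i))$ for $\alpha>0$ traces an arc of the circle through $0$ and $1$. Its imaginary part is negative, and its values range from near $0$ (for $\alpha$ small) to near $1$ (for $\alpha$ large). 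The arc of plateau values alone lies in a half-plane bounded by a line through $0$, so it cannot surround the origin. The missing directions must come from transition regions, where the solution oscillates because $\lambda_*$ is not real.

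The first step is therefore to exploit this oscillation. On a long interval where $a=\epsilon$ is tiny, the homogeneous equation $-pz''+(\epsilon+\lambda_*)z=0$ has solutions $e^{\pm\sigma x}$ with $\sigma=\sqrt{(\epsilon+\lambda_*)/p}\approx\sqrt{3+4\mathrm i}=2+\mathrm i$. Suppose a plateau with large value $\alpha$, where $z\approx 1$, is followed by a region with $a\approx 0$. In that region $z$ decays like $c\,e^{-(2+\mathrm i)(x-x_0)}$, and its argument rotates linearly, by one unit per unit length. Over a length of about $2\pi$ the argument sweeps a full turn while the modulus decays only by the factor $e^{-4\pi}$, which is still nonzero. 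The domain $(-1,2\pi)$ is chosen to accommodate exactly this.

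The plan is to pick $x_0$ near $-1/2$ with a large plateau on $(-1,x_0)$, and $a=\epsilon$ on the rest of the domain. We then compute $z$ explicitly, up to an error controlled by the size of $\epsilon$ and by the smoothing widths, for the piecewise-constant model. This gives $z(x)\approx C\cosh(\sigma(2\pi-x))$ on the low region, with the Neumann condition at $2\pi$ built in. The model solution should be such that $\arg z(x)$ passes through four values spaced roughly a quarter-turn apart at points $x_1,\dots,x_4$ in the interior of the low region, and $|z(x_j)|$ stays bounded below. Four points whose arguments are separated by less than $\pi$ cyclically put the origin in the interior of their convex hull. This condition is open, so it survives small perturbations.

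The second step is to pass from the piecewise-constant model to a smooth strictly positive $a$. We mollify $a$ and keep $\epsilon>0$. By continuity of $(A+\lambda_*)^{-1}a$ in $a$, measured in $L^1$ data with values in $C(\overline\Om)$ via the $H^1$ energy estimate for \eqref{eq:zode}, the values $z(x_j)$ move only slightly. The interior-of-convex-hull property is therefore preserved.

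The main obstacle is verifying the winding claim quantitatively. The cosh profile reflected from the Neumann end at $2\pi$ mixes growing and decaying exponentials. Near $2\pi$ the argument stalls, and the profile there is $\cosh(\sigma(2\pi-x))$ rather than a pure exponential. I would handle this by evaluating $\arg\cosh(\sigma y)$ for $y=2\pi-x$ directly. For $y\ge 1$, $\cosh(\sigma y)\approx e^{\sigma y}/2$, so the argument is approximately $y$ plus a constant. Choosing $y\in\{1.5,\,1.5+\pi/2,\,1.5+\pi,\,1.5+3\pi/2\}$, all less than $2\pi-1/2$ and hence interior points, gives four arguments spaced by $\pi/2$. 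The correction $e^{-2\sigma y}$ has modulus at most $e^{-6}$, which is too small to spoil the spacing. Consequently only the single constant $C$ matters, and it is nonzero because $z\not\equiv0$.
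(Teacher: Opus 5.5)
Your route is essentially the paper's: a piecewise-constant model potential (the paper takes $a_0=p$ on $(-1,0)$ and $a_0=0$ on $(0,2\pi)$), the explicit profile $C\cosh((2+\mathrm i)(2\pi-x))$ whose argument makes a full turn, four points on that profile surrounding the origin, and the energy estimate $\|z_a-z_{a_0}\|_\infty\le C_p\|a-a_0\|_1$ to pass to a smooth positive $a$. The paper evaluates at $2\pi-x=0,\pi/2,\pi,3\pi/2$, where $\cosh((2+\mathrm i)y)$ lies exactly on the four coordinate half-axes, instead of using $\cosh(\sigma y)\approx e^{\sigma y}/2$ as you do. Two small repairs are needed. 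First, your largest point $y=1.5+3\pi/2\approx 6.21$ is not below $2\pi-1/2$; this does not matter, since it only needs $y<2\pi-x_0\approx 2\pi+1/2$, which holds. Second, $C\neq0$ does not follow from $z\not\equiv0$. Argue it by matching at $x_0$: if $C=0$, then $z$ is constant on the low region, so $z'(x_0)=0$. The Neumann condition at $-1$ then kills the homogeneous part on the plateau, forcing $\alpha/(\alpha+\lambda_*)=\epsilon/(\epsilon+\lambda_*)$, which is impossible. Alternatively, use the paper's explicit formula for $C$.
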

\begin{proof}
Write $L=2\pi$ and $\kappa=2+\mathrm i$, so $\lambda_*=p\kappa^2$. First use the auxiliary, nonsmooth potential
\[
a_0(x)=\begin{cases}p,&-1<x<0,\\0,&0<x<L.\end{cases}
\]
The associated weak Neumann problem has a unique solution $z_0$ by coercivity, since $\Rea\lambda_*=3p>0$. Put
\[
\mu=\sqrt{4+4\mathrm i}\quad(\Rea\mu>0),\qquad c=(4+4\mathrm i)^{-1}.
\]
The solution has the form
\begin{equation}\label{eq:piecewise}
z_0(x)=\begin{cases}
c+D\cosh(\mu(x+1)),&-1<x<0,\\
C\cosh(\kappa(L-x)),&0<x<L.
\end{cases}
\end{equation}
Matching the value and first derivative at $0$ gives
\begin{equation}\label{eq:C}
C=\frac{c\mu\tanh\mu}
{\mu\tanh\mu\cosh(\kappa L)+\kappa\sinh(\kappa L)}\ne0.
\end{equation}
The numerator is nonzero because $\Rea\mu>0$. A vanishing denominator would produce a nonzero solution of the homogeneous Neumann problem, contradicting coercivity.

At $y=L-x=0,\pi/2,\pi,3\pi/2$, respectively, the right-hand formula gives
\begin{equation}\label{eq:fourvalues}
C,\qquad \mathrm iC\sinh\pi,\qquad
-C\cosh(2\pi),\qquad -\mathrm iC\sinh(3\pi).
\end{equation}
Before multiplication by $C$, these lie on the four coordinate half-axes with strictly positive lengths. Their convex hull therefore contains the origin strictly in its interior; multiplication by $C\ne0$ preserves this property. Move the first point from $y=0$ to a sufficiently small positive $y$. All four points are then interior points, and the strict inclusion remains true.

We now smooth the potential and make it positive. Choose $\chi_\delta\in C^\infty(\overline\Om)$ such that $0\le\chi_\delta\le1$, with $\chi_\delta=1$ for $x\le-\delta$ and $\chi_\delta=0$ for $x\ge\delta$. For $0<\eta<1$ let
\begin{equation}\label{eq:asmooth}
a_{\eta,\delta}=p[\eta+(1-\eta)\chi_\delta].
\end{equation}
Then $p\eta\le a_{\eta,\delta}\le p$, and $a_{\eta,\delta}\to a_0$ in $L^1$ as $\eta,\delta\to0$. If $w=z_{\eta,\delta}-z_0$, subtraction yields
\[
-pw''+(a_{\eta,\delta}+\lambda_*)w
=(a_{\eta,\delta}-a_0)(1-z_0).
\]
Testing by $\overline w$ and taking real parts gives
\[
p\norm{w'}_2^2+3p\norm{w}_2^2
\le\norm{a_{\eta,\delta}-a_0}_1\norm{1-z_0}_\infty\norm{w}_\infty.
\]
The embedding $H^1(\Om)\hookrightarrow C(\overline\Om)$ therefore implies
\[
\norm{z_{\eta,\delta}-z_0}_\infty
\le C_p\norm{a_{\eta,\delta}-a_0}_1\longrightarrow0.
\]
Strict interior inclusion is open under small perturbations of the four vertices. Fix sufficiently small, strictly positive $\eta,\delta$ once and for all, and put $a=a_{\eta,\delta}$. This proves the lemma with a fixed smooth positive potential.
\end{proof}

\noindent\textbf{Positive coefficients and an exact unstable shadow eigenvalues:}
From now on, fix the $a$ supplied by Lemma~\ref{lem:response}, and write $z=z_{\lambda_*}$.

\begin{lemma}\label{lem:moment}
There exist $w_0,w_1\in C^\infty(\overline\Om)$, constant near the endpoints, such that
\begin{equation}\label{eq:moments}
w_0\ge1,\qquad w_1\ge0,\qquad
P(w_0z)=0,\qquad P(w_1z)=-1.
\end{equation}
\end{lemma}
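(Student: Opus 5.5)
We will build $w_0$ and $w_1$ from smooth bumps placed at the four interior points supplied by Lemma~\ref{lem:response}. Denote these points by $x_1,\dots,x_4\in\Om$ and the values by $\zeta_k=z(x_k)$. By that lemma the origin lies strictly inside the convex hull of $\zeta_1,\dots,\zeta_4$. We will use two consequences of this fact.

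\textbf{Step 1 (two convex representations).} Since the origin is interior, there are weights $\theta_k>0$ with $\sum_k\theta_k\zeta_k=0$. Because the origin is interior rather than on the boundary, every point of a small disk about $0$ is also a strictly positive combination of the $\zeta_k$. Openness gives a cone statement as well: every complex number, in particular $-1$, is a nonnegative combination $\sum_k\vartheta_k\zeta_k$ with $\vartheta_k\ge0$. To see this, note that $-1$ times a small scalar lies in the hull, and rescale.

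\textbf{Step 2 (bumps).} Take $\varrho_\epsilon\ge0$ smooth, supported in $(x_k-\epsilon,x_k+\epsilon)\Subset\Om$, with $\int\varrho_\epsilon=|\Om|$. Continuity of $z$ (it is in $H^2$) gives $P(\varrho_\epsilon(\cdot-x_k)z)=\zeta_k+o(1)$. Set
\[
w_0=1+\sum_k c_k\varrho_\epsilon(\cdot-x_k),\qquad w_1=\sum_k d_k\varrho_\epsilon(\cdot-x_k),
\]
with $c_k,d_k\ge0$ to be chosen. The bumps vanish near the endpoints, so $w_0\equiv1$ and $w_1\equiv0$ there. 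Hence $w_0\ge1$ and $w_1\ge0$, and both are constant near the endpoints.

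\textbf{Step 3 (solving the moment conditions).} For $w_0$ we need
\[
\sum_k c_k\bigl(\zeta_k+o(1)\bigr)=-P(z).
\]
For $w_1$ we need
\[
\sum_k d_k\bigl(\zeta_k+o(1)\bigr)=-1.
\]
Each is a real linear system from $\R^4$ to $\C\simeq\R^2$, and we must solve it with nonnegative unknowns. The main obstacle is keeping the solutions nonnegative under the $o(1)$ perturbation. We handle this with the interior property from Step 1. Define the perturbed values $\tilde\zeta_k=P(\varrho_\epsilon(\cdot-x_k)z)$. For small $\epsilon$ the origin is still strictly inside the hull of the $\tilde\zeta_k$, since strict interior inclusion is stable under small perturbations of the vertices. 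So the cone generated by the $\tilde\zeta_k$ is all of $\C$. Consequently any target $\omega\in\C$, whether $-P(z)$ or $-1$, is a nonnegative combination of the $\tilde\zeta_k$.

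Concretely, choose $\tilde\theta_k>0$ with $\sum_k\tilde\theta_k\tilde\zeta_k=0$. Fix a nonnegative representation $\omega=\sum_k b_k\tilde\zeta_k$; one exists because the cone is all of $\C$, and it can be obtained by writing $\omega/T$ as a point of the hull for large $T$. Then the coefficients $b_k+s\tilde\theta_k$ represent $\omega$ for every $s\ge0$, and they are nonnegative. This gives both sets of coefficients and completes the construction.
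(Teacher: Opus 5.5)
Your proposal is correct and follows essentially the same route as the paper. The paper also places nonnegative bumps with unit average near the four points of Lemma~\ref{lem:response}, uses continuity of $z$ to keep the origin in the interior of the convex hull of the moments, and concludes that their nonnegative cone is all of $\C$, which gives $c_j,d_j\ge0$ solving $\sum_j c_jm_j=-Pz$ and $\sum_j d_jm_j=-1$. Your extra step with the strictly positive weights $\tilde\theta_k$ is harmless but unnecessary.
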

\begin{proof}
Choose four nonnegative functions $f_j\in C_c^\infty(\Om)$ with disjoint sufficiently small supports near the four points of Lemma~\ref{lem:response}, normalized by $Pf_j=1$. By continuity of $z$, their moments
\[
m_j=P(f_jz),\qquad j=1,2,3,4,
\]
still have a convex hull containing $0$ in its interior. Their nonnegative cone is consequently all of $\C$: a small disk centered at $0$ lies in the convex hull, and any complex number is a positive multiple of a point of that disk. Hence there exist real $c_j,d_j\ge0$ such that
\[
\sum_jc_jm_j=-Pz,\qquad \sum_jd_jm_j=-1.
\]
Take $w_0=1+\sum_jc_jf_j$ and $w_1=\sum_jd_jf_j$. These functions have all the required properties.
\end{proof}
Define, with a positive constant $T$ to be chosen,
\begin{equation}\label{eq:coeff}
t=Tw_0+w_1,\qquad
\beta=a(1+t)^2,\qquad
\gamma=at(1+t)-pt''.
\end{equation}
Let $a_{\min}=\min a$, $C_0=\norm{w_0''}_\infty$, and $C_1=\norm{w_1''}_\infty$. Choose
\begin{equation}\label{eq:T}
T\ge\max\left\{1,\frac{2p(C_0+C_1)}{a_{\min}}\right\}.
\end{equation}
Then $t\ge T$, $t'=0$ at the endpoints, and
\begin{align}
\gamma&\ge a_{\min}T^2-p(TC_0+C_1)
\ge\tfrac12a_{\min}T^2>0,\label{eq:gammapos}\\
\int_\Om(\beta-\gamma)\dd x
&=\int_\Om a(1+t)\dd x+p\int_\Om t''\dd x
=\int_\Om a(1+t)\dd x>0.\label{eq:risk}
\end{align}
Thus all coefficient assumptions hold. Crucially, increasing $T$ has not altered the exact moment condition:
\begin{equation}\label{eq:momentexact}
P(tz)=-1.
\end{equation}
All of $a,t,\beta,\gamma$ are now fixed, before $q$ is chosen.

\subsection{The shadow operator}
If the infected component is spatially constant, an equilibrium is
\[
(S_0,I_0)=(Ht,H),\qquad H=\frac{M}{\int_\Om(1+t)\dd x}.
\]
The susceptible equilibrium equation follows from \eqref{eq:coeff}; the integrated infected equation follows by integration. Let
\[
f(x,S,I)=\beta(x)\frac{SI}{S+I}-\gamma(x)I.
\]
At this shadow equilibrium,
\begin{equation}\label{eq:shadowderiv}
f_S=a,\qquad f_I=pt''-at=-At.
\end{equation}
A fixed-mass perturbation has spatially constant infected component $\psi=-P\phi$. Its susceptible component evolves under
\begin{equation}\label{eq:B}
B=-A-(At)P,\qquad D(B)=\HN(\Om;\C).
\end{equation}
The term $(At)P$ denotes $\phi\mapsto(At)P\phi$; it is a bounded rank-one operator. Thus $B$ has compact resolvent.

\begin{proposition}\label{prop:shadow}
The number $\lambda_*=p(3+4\mathrm i)$ is an eigenvalue of $B$.
\end{proposition}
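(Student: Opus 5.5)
The plan is to solve the eigenvalue equation for $B$ directly, using the rank-one structure of $(At)P$. This reduces the problem to a single scalar compatibility condition, which is exactly the moment identity \eqref{eq:momentexact}.

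\textbf{Reduction.} Since $\operatorname{Re}\lambda_*=3p>0$ and $A$ is strictly positive self-adjoint, the operator $A+\lambda_*$ is invertible from $\HN(\Om;\C)$ onto $L^2$. The equation $B\phi=\lambda_*\phi$ reads
\[
(A+\lambda_*)\phi=-(At)\,P\phi .
\]
If $P\phi=0$, then $(A+\lambda_*)\phi=0$, so $\phi=0$. Hence any eigenfunction has $P\phi\neq0$, and we may normalize $P\phi=1$. The equation then forces
\[
\phi=-(A+\lambda_*)^{-1}At .
\]
This expression makes sense because $t=Tw_0+w_1$ is smooth and constant near the endpoints. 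So $t\in\HN$, $At=-pt''+at\in C^\infty(\overline\Om)$, and therefore $\phi\in\HN(\Om;\C)=D(B)$. Conversely, this $\phi$ is an eigenfunction with eigenvalue $\lambda_*$ if and only if the consistency condition $P\phi=1$ holds. Note that $\phi\neq0$ automatically once $P\phi=1$.

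\textbf{Checking the compatibility condition.} Introduce the bilinear, not sesquilinear, pairing $(f,g)=\int_\Om fg\,\dd x$. Since $A$ has real coefficients and Neumann boundary conditions, integration by parts shows $(Af,g)=(f,Ag)$ for $f,g\in\HN$. Consequently $(A+\lambda)^{-1}$ is symmetric for this pairing, even for complex $\lambda$, and $A$ commutes with its resolvent. This gives
\[
|\Om|\,P\phi=-\bigl((A+\lambda_*)^{-1}At,\,1\bigr)=-\bigl(t,\,A(A+\lambda_*)^{-1}1\bigr)=-(t,z_{\lambda_*}),
\]
using the definition \eqref{eq:z} of $z_\lambda$. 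Therefore $P\phi=-P(tz)$, which equals $1$ by \eqref{eq:momentexact}. Hence $\phi$ is a nonzero element of $D(B)$ with $B\phi=\lambda_*\phi$.

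\textbf{Main obstacle.} The only delicate point is the symmetry step. One must use the bilinear rather than the Hermitian pairing, since the sesquilinear adjoint of $(A+\lambda_*)^{-1}$ is $(A+\overline{\lambda_*})^{-1}$, not $(A+\lambda_*)^{-1}$. One must also check that the Neumann conditions on $t$ and on the resolvent output make the boundary terms vanish. Everything else follows from the exact moment condition, which was arranged in advance through Lemma~\ref{lem:moment} and the choice \eqref{eq:coeff}.
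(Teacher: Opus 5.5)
Your proof is correct and follows the paper's argument essentially step for step. The rank-one structure reduces the problem to the scalar condition $1+P[(A+\lambda_*)^{-1}At]=0$; bilinear (not Hermitian) symmetry of $A$ under Neumann conditions turns this into $P(tz_{\lambda_*})=-1$, which is \eqref{eq:momentexact}; and $\phi_*=-(A+\lambda_*)^{-1}At$ is the explicit eigenfunction with $P\phi_*=1$, exactly as in the paper. Your extra checks, that $t$ satisfies the Neumann conditions so boundary terms vanish and that any eigenfunction must have $P\phi\neq0$, are sound and slightly more careful than the paper's write-up.
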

\begin{proof}
Since $\Rea\lambda_*>0$, the operator $A+\lambda_*$ is invertible. For a nonzero eigenfunction of $B$, the rank-one characteristic equation is
\begin{equation}\label{eq:characteristic}
1+P[(A+\lambda)^{-1}At]=0.
\end{equation}
For completeness, the moment identity needed here uses bilinear symmetry, without complex conjugation. Neumann integration by parts gives
\begin{equation}\label{eq:bilinear}
P[(A+\lambda)^{-1}At]
=P[tA(A+\lambda)^{-1}1]=P(tz_\lambda).
\end{equation}
It can also be checked by multiplying $(A+\lambda)v=At$ by the solution of $(A+\lambda)g=1$, and integrating twice by parts. At $\lambda=\lambda_*$, equation \eqref{eq:characteristic} follows from \eqref{eq:momentexact}.

Explicitly, let $\phi_*=-(A+\lambda_*)^{-1}At$. Then $P\phi_*=1$, so $\phi_*\ne0$, and
\[
-A\phi_*=At+\lambda_*\phi_*,\qquad
B\phi_*=\lambda_*\phi_*.
\]
The fixed-mass shadow eigenvector is $(\phi_*,-1)$.
\end{proof}
\begin{remark}
A negative direction for a proposed Lyapunov functional would not establish an unstable eigenvalue. Proposition~\ref{prop:shadow} instead constructs an actual eigenfunction for an explicitly specified spectral point.
\end{remark}

\noindent\textbf{Positive equilibrium for finite diffusion:}
The preceding coefficients remain unchanged throughout this section. Choose $0<\alpha<1$, set $\varepsilon=q^{-1}$, and for $v\in C_N^{2,\alpha}(\overline\Om)$ define
\begin{equation}\label{eq:j}
j_\varepsilon[v]=1-p\varepsilon(v-Pv).
\end{equation}
Its average is one, and $qj_\varepsilon[v]''=-pv''$. Consider
\begin{equation}\label{eq:F}
F(\varepsilon,v)=pv''
-\beta\frac{v j_\varepsilon[v]}{v+j_\varepsilon[v]}
+\gamma j_\varepsilon[v].
\end{equation}
This is a smooth map from a neighborhood of $(0,t)$ in
$\R\times C_N^{2,\alpha}$ to $C^{0,\alpha}$, restricted to positive $v,j_\varepsilon[v]$. We have $F(0,t)=0$, and
\begin{equation}\label{eq:DF}
D_vF(0,t)=p\partial_{xx}-\frac{\beta}{(1+t)^2}=-A.
\end{equation}
Because $\min a>0$, this Neumann elliptic operator is an isomorphism from $C_N^{2,\alpha}$ onto $C^{0,\alpha}$: uniqueness follows from the maximum principle or the energy identity, and existence with the stated regularity follows from the scalar elliptic problem.

The implicit function theorem provides $v_\varepsilon=t+O(\varepsilon)$ in $C^{2,\alpha}$ with $F(\varepsilon,v_\varepsilon)=0$. Write $j_\varepsilon=j_\varepsilon[v_\varepsilon]=1+O(\varepsilon)$. Both functions are strictly positive for sufficiently small $\varepsilon>0$. Set
\begin{equation}\label{eq:finiteeq}
H_\varepsilon=\frac{M}{\int_\Om(v_\varepsilon+j_\varepsilon)\dd x},\qquad
S_{0,q}=H_\varepsilon v_\varepsilon,\quad
I_{0,q}=H_\varepsilon j_\varepsilon.
\end{equation}
The first equilibrium equation is exactly $F=0$; the second follows from the first and $pv_\varepsilon''+qj_\varepsilon''=0$. The boundary conditions and mass constraint also hold. Elliptic bootstrapping gives smoothness.

The reaction derivatives at this equilibrium are
\begin{align}
a_q&=\beta\frac{j_\varepsilon^2}{(v_\varepsilon+j_\varepsilon)^2},\label{eq:aq}\\
c_q&=\beta\frac{v_\varepsilon^2}{(v_\varepsilon+j_\varepsilon)^2}-\gamma.\label{eq:cq}
\end{align}
Uniformly on $\overline\Om$,
\begin{equation}\label{eq:coeflimit}
a_q=a+O(q^{-1}),\qquad c_q=-At+O(q^{-1}).
\end{equation}
None of these quantities depends on the mass normalization $H_\varepsilon$.

\noindent\textbf{Persistence of unstable spectrum at finite \texorpdfstring{$q$:}{q}}\label{sec:spectrum}
The linearized operator on \eqref{eq:massspace} is
\begin{equation}\label{eq:linear}
L_q\binom\phi\psi=
\binom{p\phi''-a_q\phi-c_q\psi}
{q\psi''+a_q\phi+c_q\psi},
\qquad D(L_q)=\HN(\Om;\C)^2\cap\Xzero.
\end{equation}
Introduce the fixed Hilbert space
\[
\mathcal H=L^2(\Om;\C)\times L_0^2(\Om;\C),\qquad
L_0^2=\{w:Pw=0\},
\]
and the bounded isomorphism $\mathcal T(\phi,w)=(\phi,w-P\phi)$ onto $\Xzero$. In these coordinates,
\begin{equation}\label{eq:block}
\mathscr L_q=\mathcal T^{-1}L_q\mathcal T
=\begin{pmatrix}B_q&C_q\\E_q&D_q\end{pmatrix},
\end{equation}
where
\begin{align}
B_q&=p\partial_{xx}-a_q+c_qP,&C_q&=-c_q,\label{eq:blocks1}\\
E_q&=Q(a_q-c_qP),&D_q&=q\partial_{xx}|_{L_0^2}+Qc_q|_{L_0^2}.\label{eq:blocks2}
\end{align}
Multiplication by a coefficient is understood in these formulas. The common domain is $\HN\times(\HN\cap L_0^2)$. In particular, $B_q-B=O(q^{-1})$ as a bounded operator on $L^2$, and $C_q,E_q$ are uniformly bounded as $q\to\infty$.

Choose a positively oriented circle $\Gamma$ around $\lambda_*$, with its closed disk contained in $\{\Rea\lambda>0,\Ima\lambda>0\}$ and containing no other distinct spectral point of $B$. Such a circle exists because $B$ has compact resolvent. The eigenvalue $\lambda_*$ may have any finite algebraic multiplicity.

Let $\mu_1=(\pi/|\Om|)^2$ be the first positive Neumann eigenvalue of $-\partial_{xx}$. On $L_0^2$, the operator $Qc_q$ is self-adjoint, since $c_q$ is real and $Q$ is the orthogonal mean-zero projection. Thus
\[
\Rea\langle D_qw,w\rangle
\le(-q\mu_1+\norm{c_q}_\infty)\norm{w}_2^2.
\]
For large $q$, uniformly on $\Gamma$,
\begin{equation}\label{eq:fastresolvent}
R_q(\lambda):=(\lambda-D_q)^{-1},\qquad
\norm{R_q(\lambda)}\le
\frac{1}{q\mu_1+\min_\Gamma\Rea\lambda-\norm{c_q}_\infty}
=O(q^{-1}).
\end{equation}
The Schur complement of $\lambda-\mathscr L_q$ is
\begin{equation}\label{eq:schur}
\mathcal S_q(\lambda)=\lambda-B_q-C_qR_q(\lambda)E_q
=(\lambda-B)+O(q^{-1}),
\end{equation}
where the remainder is bounded on $L^2$. Uniform boundedness of $(\lambda-B)^{-1}$ on $\Gamma$ and a Neumann series show that $\mathcal S_q$ is invertible there, with
\[
\mathcal S_q(\lambda)^{-1}\longrightarrow(\lambda-B)^{-1}
\quad\text{uniformly in operator norm on }\Gamma.
\]
The exact block inverse is
\begin{equation}\label{eq:blockresolvent}
(\lambda-\mathscr L_q)^{-1}=
\begin{pmatrix}
\mathcal S_q^{-1}&\mathcal S_q^{-1}C_qR_q\\
R_qE_q\mathcal S_q^{-1}&R_q+R_qE_q\mathcal S_q^{-1}C_qR_q
\end{pmatrix}.
\end{equation}
Consequently,
\begin{equation}\label{eq:reslimit}
(\lambda-\mathscr L_q)^{-1}\longrightarrow
\begin{pmatrix}(\lambda-B)^{-1}&0\\0&0\end{pmatrix}
\quad\text{uniformly on }\Gamma.
\end{equation}
Integrating \eqref{eq:reslimit} around $\Gamma$ gives convergence in norm of Riesz projections:
\begin{equation}\label{eq:riesz}
\Pi_q=\frac{1}{2\pi\mathrm i}\int_\Gamma(\lambda-\mathscr L_q)^{-1}\dd\lambda
\longrightarrow
\Pi_\infty=\begin{pmatrix}\Pi_B&0\\0&0\end{pmatrix}.
\end{equation}
The projection $\Pi_B$ has positive finite rank. Two projections at operator-norm distance less than one have equal rank: restricting either to the range of the other is injective, in both directions. Hence, for every sufficiently large finite $q$, $\Pi_q$ also has positive rank. The operator $L_q$ therefore has an eigenvalue inside $\Gamma$, with positive real and imaginary parts. Shrinking $\Gamma$ proves the approximation assertion in Theorem~\ref{thm:mains}. Real coefficients provide its complex conjugate as well.

\section{Patch Model and Main Results}\label{B}

The standard-incidence SIS patch model describes the combined effects of
spatial heterogeneity and population movement on disease transmission.
Allen et al.~\cite{Allen} developed a threshold theory for this model
and investigated the existence, uniqueness, and small-dispersal asymptotic
profiles of endemic equilibria. Li and Peng~\cite{LiPengPatch} further
studied the dynamics and asymptotic properties of endemic equilibria and
established global stability in certain special cases. Gao and
Li~\cite{GaoLiPatch} considered equilibrium structure and disease prevalence
when the two population classes may have different connectivity matrices.
Here we focus on local stability for a common symmetric path graph and
compare explicitly the cases of two, three, and four patches.

Let $n\ge2$ and define the dispersal matrix $L_n$ for the path graph with
unit edge weights by
\begin{equation}\label{eq:L}
(L_n z)_1=z_2-z_1,\quad
(L_n z)_i=z_{i-1}-2z_i+z_{i+1}\ (2\le i\le n-1),\quad
(L_n z)_n=z_{n-1}-z_n.
\end{equation}
For $n=2$, only the endpoint formulas are used. Set $K_n=-L_n$ and
consider the system
\begin{equation}\label{eq:modelp}
\begin{aligned}
\dot S_i&=p(L_nS)_i-F_i(S_i,I_i),\\
\dot I_i&=q(L_nI)_i+F_i(S_i,I_i),\\
F_i(S_i,I_i)&=\beta_i\frac{S_iI_i}{S_i+I_i}-\gamma_iI_i,
\qquad i=1,\dots,n.
\end{aligned}
\end{equation}
Throughout the paper, we assume that
\begin{equation}\label{eq:assumptions}
p,q>0,\qquad \beta_i>\gamma_i>0\quad(i=1,\dots,n).
\end{equation}
All coefficients are independent of time, although $\beta_i$ and $\gamma_i$
may vary with the patch. A common positive edge weight can be absorbed
into $p$ and $q$; hence, the unit-weight formulation covers all path
graphs with equal edge weights.

For $M>0$, define the positive phase space with prescribed total population
and its tangent space by
\begin{equation}\label{eq:mass}
\mathcal H_M^+=\left\{(S,I)\in(0,\infty)^{2n}:
\sum_{i=1}^n(S_i+I_i)=M\right\},\qquad
\mathcal T=\{(u,v):\one^\top(u+v)=0\}.
\end{equation}
All notions of local stability in this paper are relative to
$\mathcal H_M^+$. A positive equilibrium $(s,h)\in\mathcal H_M^+$ is
said to be locally exponentially asymptotically stable if there exist
$\varepsilon,C,\eta>0$ such that every initial condition in the same
phase space satisfying
$\|(S(0),I(0))-(s,h)\|<\varepsilon$ obeys
\begin{equation}\label{eq:exp}
\|(S(t),I(t))-(s,h)\|
\le C e^{-\eta t}\|(S(0),I(0))-(s,h)\|,\qquad t\ge0.
\end{equation}
These constants may depend on the model parameters and the equilibrium.

\begin{theorem}\label{thm:mainp}
Under \eqref{eq:assumptions}, for every $n\ge2$ and $M>0$, system
\eqref{eq:model} admits a unique positive equilibrium in $\mathcal H_M^+$.
Moreover:
\begin{enumerate}[label=(\roman*),leftmargin=2.5em]
\item If $n=2$, this equilibrium is locally exponentially asymptotically
stable for all admissible parameters.
\item If $n=3$, this equilibrium is locally exponentially asymptotically
stable for all admissible parameters. The proof uses exact
integer-polynomial computations.
\item If $n=4$, there exist rational parameters satisfying
\eqref{eq:assumptions} for which the equilibrium in $\mathcal H_1^+$ is
locally unstable; its restricted Jacobian has exactly two eigenvalues,
counted with algebraic multiplicity, in the open right half-plane.
\end{enumerate}
\end{theorem}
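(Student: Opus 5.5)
Here \eqref{eq:model} in the statement is read as the patch system \eqref{eq:modelp}. The plan has three parts: existence and uniqueness for every $n$, a Routh--Hurwitz argument for $n=2$ (with a polynomial-positivity certificate for $n=3$), and an exact Routh table for the explicit $n=4$ example.

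\textbf{Existence and uniqueness.} Summing the two equations at an equilibrium gives $L_n(pS+qI)=0$. The path graph is connected, so $pS+qI=k\one$ for some constant $k>0$. Set $w=I/k\in(0,1)^n$, so that $S=k(1-qw)/p$; positivity of $S$ forces $w<1/q$. The infected equation becomes $q L_nw+w\circ b(w)=0$, where
\[
b_i(w)=\beta_i\frac{1-qw_i}{1-qw_i+pw_i}-\gamma_i
\]
is strictly decreasing in $w_i$. Existence follows by sub/supersolutions. A small multiple of the positive Perron eigenvector of $qL_n+\diag(\beta_i-\gamma_i)$ is a subsolution, because its eigenvalue is positive when $\beta_i>\gamma_i$. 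The constant $1/q$ is a supersolution, since $b_i(1/q)=-\gamma_i<0$. Uniqueness follows from the discrete version of the ratio argument already used in Section~\ref{basic:path}. Take $\theta=\max_i w^1_i/w^2_i$, look at an index attaining the maximum, use the sign of $L_n$ off the diagonal, and use strict monotonicity of $b_i$. Finally, $k$ is fixed by the mass constraint.

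\textbf{Two patches.} Linearize on $\mathcal T$. For $n=2$ the restricted Jacobian is $3\times3$, and I would apply the cubic Routh--Hurwitz criterion $c_1,c_2,c_3>0$ and $c_1c_2>c_3$. To compute the coefficients, I would use the equilibrium identities $\tau a_i s_i+c_ih_i=-q(L_2h)_i$ analogous to \eqref{basic:derivatives}. Writing $h_2>h_1$ without loss of generality, these give $f_{I,i}=-a_is_i/h_i+q(L_2h)_i/h_i$, so the entries reduce to the quantities $a_i,s_i,h_i,p,q$. The coefficients $c_1$ and $c_3$ are sums of positive terms; for $c_3$ this is the analogue of Lemma~\ref{lem:nozero}, a determinant with a definite sign. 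The main work is expanding $c_1c_2-c_3$ symbolically and checking that every monomial is positive after substituting the equilibrium relations. I would first try rewriting it as a sum of the energy-type terms from Lemma~\ref{lem:energy-identity}.

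\textbf{Three patches.} This is the main obstacle. The restricted characteristic polynomial has degree $5$, and all Hurwitz determinants must be shown positive over the full parameter set. I would eliminate $\beta,\gamma$ in favour of the equilibrium $(s,h)$ and the positive quantities $a_i$; this is the inverse parametrization of Section~\ref{seed:construction}. Then I would order $h$ along the path, which splits the problem into a few sign regions for $L_3h$. In each region the Hurwitz minors become integer polynomials in positive variables, and I would certify positivity exactly, for example by exhibiting all coefficients as positive after a substitution or by Pólya-type multiplication.

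\textbf{Four patches.} For $n=4$ I would verify the explicit data \eqref{eq:patchEE}--\eqref{eq:patchrates}. The checks are:
\begin{itemize}
\item $pL_4S_*=F(S_*,I_*)=-qL_4I_*$ and $\beta_i>\gamma_i$, in exact rational arithmetic;
\item the $7\times7$ restricted Jacobian, its characteristic polynomial, and its Routh table, whose first column has two sign changes.
\end{itemize}
Two sign changes give exactly two eigenvalues in the open right half-plane, counted with multiplicity. Local instability relative to $\mathcal H_1^+$ then follows from the linearization principle for ODEs.

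Local exponential stability in (i) and (ii) likewise follows from the linearization principle once the restricted spectrum lies in the left half-plane.
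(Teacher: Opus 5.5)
You have the same architecture as the paper: a ratio argument for uniqueness, cubic Routh--Hurwitz for $n=2$, an exact polynomial certificate over an ordering split for $n=3$, and an exact Routh table for the explicit four-patch data. The last is exactly how (iii) is proved. Your existence argument is a valid alternative to the paper's minimization of $\mathcal V$ on $[0,1]^n$: a small multiple of the Perron vector of $qL_n+\diag(\beta_i-\gamma_i)$ as subsolution, the constant $1/q$ as supersolution, plus the easy check $w_j<1/q$ at a maximizing index. The gap is that (i) and (ii) consist of sign certificates, which you do not produce, and your set-up for (ii) is not the right one. Parametrizing by $(s,h)$ and positive $a_i$ discards the hypothesis $\beta_i>\gamma_i>0$. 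In inverse variables that hypothesis is the piecewise lower bound $t_i>\max\{q(L_3x)_i/x_i,\,-(L_3x)_i/(1-x_i)\}$ of \eqref{eq:legal}. The regions P1--P4 and the slack variables $z_i>0$ exist precisely to turn this maximum into positivity of free variables. A certificate over $a_i>0$ alone would therefore have to establish a stronger statement that the paper neither claims nor checks. Even on the correct domain, coefficientwise positivity fails in the paper's variables for $\Delta_4$ in region P4. There the paper needs the decomposition \eqref{eq:three-squares}, $16N=R+F_A(16d-b)^2+16F_B(cd-b^2)^2+8F_C(cd-2b^2)^2$, with $R$ having only positive coefficients. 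A P\'olya-type argument does not fill this in. It needs strict positivity of a homogeneous form on a closed simplex, including faces where $q$, $d$ or $z_i$ vanish, and nothing in your plan supplies that.

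The same problem affects (i). An analogue of Lemma~\ref{lem:nozero} gives only $c_3\neq0$. Positivity comes from the determinant formula \eqref{eq:zero-derivative}, or from the balance identity \eqref{eq:two-balance}, which turns the constant coefficient into the manifestly positive expression \eqref{eq:two-Cpositive}. Moreover, $c_1c_2-c_3$ is not a positive sum of monomials in the natural variables. By \eqref{eq:two-last} it equals $\Sigma T+2p\Sigma(2p+\Sigma)+(q-p)H$. For $q<p$ you must use $k_1<u$ and $k_2<v$ (consequences of $f_i<1$) to get $H<\Sigma(2p+\Sigma+q)$, and then $c_1c_2-c_3>\Sigma\{T+(p+q)\Sigma+2p^2+pq+q^2\}>0$. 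Finally, watch a sign. Euler's identity $a_is_i+f_{I,i}h_i=F_i=-q(L_2h)_i$ gives $f_{I,i}=-a_is_i/h_i-q(L_2h)_i/h_i$, not $+q(L_2h)_i/h_i$.
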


The three-patch statement concerns only the equal-edge-weight path
\eqref{eq:L}; it does not extend, as stated, to arbitrary graphs on
three vertices or to three-patch chains with unequal edge weights. We do
not establish global attraction for general parameters or the existence
of time-periodic solutions.

\subsection{Positive Equilibria and the Structure of the Linearization}

\subsubsection{Conservation of Total Population asubnd Existence and Uniqueness of a Positive Equilibrium}

Since $\one^\top L_n=0$, summing all the equations in \eqref{eq:model}
gives
\begin{equation}\label{eq:mass-conservation}
\frac{d}{dt}\sum_i(S_i+I_i)=0.
\end{equation}
Solutions with strictly positive initial data remain strictly positive.
Indeed, in the positive region the incidence term satisfies
$0\le\beta_iS_iI_i/(S_i+I_i)\le\beta_iS_i$, and consequently each
component satisfies an inequality of the form $\dot S_i\ge-C_iS_i$ or
$\dot I_i\ge-\widetilde C_iI_i$. Combined with the upper bound furnished
by conservation of total population, these inequalities imply that
positive solutions exist for all $t\ge0$.

If $(s,h)$ is a positive equilibrium, adding its two equilibrium equations
shows that $L_n(ps+qh)=0$. Since the path graph is connected, there
exists $c_0>0$ such that
\begin{equation}\label{eq:equilibrium-constant}
ps_i+qh_i=c_0\qquad(i=1,\dots,n).
\end{equation}

\begin{proposition}\label{prop:exist-unique}
Under \eqref{eq:assumptions}, there exists a unique positive equilibrium
in $\mathcal H_M^+$ for every $M>0$.
\end{proposition}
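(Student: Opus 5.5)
Reduce to a scalar problem for the infected fraction on each patch; this mirrors the continuous uniqueness argument given after Theorem~\ref{thm:pathp} and in Section~\ref{seed:construction}.

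\textbf{Reduction.} By \eqref{eq:equilibrium-constant}, any positive equilibrium satisfies $ps_i+qh_i=c_0$. Set $w_i=qh_i/c_0\in(0,1)$, so that $s_i=c_0(1-w_i)/p$. Because $F_i$ is homogeneous of degree one, the infected equilibrium equation becomes
\[
(L_nw)_i+w_i\,b_i(w_i)=0,\qquad b_i(w)=\beta_i\frac{(1-w)/p}{(1-w)/p+w/q}-\gamma_i,
\]
with the factor $1/q$ absorbed appropriately. This equation does not involve $c_0$. The function $b_i$ is strictly decreasing on $[0,1]$, since
\[
\frac{d}{dw}\,\frac{(1-w)/p}{(1-w)/p+w/q}<0 .
\]
Moreover $b_i(0)=\beta_i-\gamma_i>0$ and $b_i(1)=-\gamma_i<0$.

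\textbf{Uniqueness.} Let $w,\tilde w$ be two solutions with entries in $(0,1)$, and put $\theta=\max_i w_i/\tilde w_i$. Suppose $\theta>1$. At an index $i_0$ attaining the maximum, $v=\theta\tilde w-w\ge0$ and $v_{i_0}=0$, so $(L_nv)_{i_0}\ge0$. On the other hand,
\[
(L_nv)_{i_0}=\theta\tilde w_{i_0}\bigl[b_{i_0}(w_{i_0})-b_{i_0}(\tilde w_{i_0})\bigr]<0,
\]
because $w_{i_0}>\tilde w_{i_0}$ and $b_{i_0}$ is decreasing. This is a contradiction, so $\theta\le1$. By symmetry $w=\tilde w$. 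The mass constraint then fixes $c_0$ uniquely: $c_0\sum_i\bigl((1-w_i)/p+w_i/q\bigr)=M$.

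\textbf{Existence.} I would use sub- and supersolutions for the cooperative system $\dot w=L_nw+w\,b(w)$ on $[0,1]^n$, where $b(w)$ is applied componentwise. The supersolution is $\bar w\equiv1$, since $L_n\mathbf 1=0$ and $b_i(1)<0$. For the subsolution, take $\underline w=\epsilon\mathbf 1$: because $L_n\mathbf 1=0$ and $b_i(\epsilon)>0$ for small $\epsilon$, the pointwise condition $\beta_i>\gamma_i$ makes this an immediate strict subsolution. The vector field is quasimonotone, as the off-diagonal entries of $L_n$ are nonnegative. Hence the order interval $[\epsilon\mathbf 1,\mathbf 1]$ is forward invariant, and the solution starting from $\epsilon\mathbf 1$ increases monotonically to an equilibrium $w^*$ with $\epsilon\le w^*_i\le1$.

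We must still rule out $w^*_i=1$. If $w^*_{i}=1$ for some $i$, then $(L_nw^*)_i\le0$ while $w^*_ib_i(1)<0$, so the equation fails. Thus $w^*\in(0,1)^n$. Alternatively, Brouwer's theorem on the box gives existence directly. Finally, recover $(s,h)$ from $w^*$ and $c_0$.

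The main obstacle is keeping the scaling bookkeeping right, i.e. the exact form of $b_i$ with both $p$ and $q$. The step I would check most carefully is the strict monotonicity of $b_i$, which depends only on $p,q>0$ and $\beta_i>0$.
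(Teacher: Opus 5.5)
Your proof is correct. The reduction and the uniqueness step are the paper's: your $b_i$ is exactly the paper's $g_i$, and both proofs use the same contact-index comparison with $\theta=\max_i w_i/\tilde w_i$, followed by recovery of $c_0$ from the mass. One bookkeeping fix is needed. With $b_i$ as you wrote it, the reduced equation is $q(L_nw)_i+w_ib_i(w_i)=0$. The factor $q$ cannot be absorbed into $b_i$ without rescaling $b_i$. This is harmless, because every step uses only $q>0$ and the sign and monotonicity properties of $b_i$.

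For existence you take a different route. The paper minimizes $\mathcal V(x)=\frac q2x^\top K_nx-\sum_i\int_0^{x_i}z\,g_i(z)\,dz$ over $[0,1]^n$. The inequality $\mathcal V(\delta\mathbf 1)<0=\mathcal V(0)$ rules out the trivial minimizer, connectivity rules out zero components, and $\gamma_j>0$ rules out components equal to $1$. That argument exploits the gradient structure coming from the symmetry of $K_n$ and needs no comparison theory. It uses the rate condition only through $\mathcal V(\delta\mathbf 1)<0$, for which $\sum_i(\beta_i-\gamma_i)>0$ already suffices.

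Your monotone scheme for the auxiliary cooperative system needs only nonnegative off-diagonal entries of $L_n$ and $L_n\mathbf 1=0$, so it does not need symmetry. Because the pointwise inequality makes $\epsilon\mathbf 1$ a strict subsolution, you never have to exclude zero components. The cost is an appeal to Kamke-type monotonicity of cooperative flows and a genuine use of $\beta_i>\gamma_i$ in every patch.

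Finally, drop or repair the Brouwer aside. Since $w=0$ solves the reduced equation, a fixed point on the whole cube proves nothing. On $[\epsilon\mathbf 1,\mathbf 1]$, the order-preserving map $w\mapsto(\kappa I-qL_n)^{-1}\bigl(\kappa w+w\,b(w)\bigr)$ with $\kappa$ large does work, but it is just a discrete version of your monotone argument.
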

\begin{proof}
Set $x_i=qh_i/c_0$. Then $0<x_i<1$ and
\begin{equation}\label{eq:reconstruction}
s_i=\frac{c_0(1-x_i)}p,\qquad h_i=\frac{c_0x_i}q.
\end{equation}
The equilibrium equation for infected individuals is equivalent to
\begin{equation}\label{eq:scalar-equilibrium}
q(L_nx)_i+x_i g_i(x_i)=0,\qquad
g_i(z)=\frac{\beta_iq(1-z)}{q(1-z)+pz}-\gamma_i.
\end{equation}
For $0\le z\le1$,
\[
g_i(0)=\beta_i-\gamma_i>0,\qquad
g_i(1)=-\gamma_i<0,\qquad
g_i'(z)=-\frac{\beta_ipq}{[q(1-z)+pz]^2}<0.
\]
Minimize the continuous function
\[
\mathcal V(x)=\frac q2x^\top K_nx
-\sum_i\int_0^{x_i} z g_i(z)\,dz
\]
over the compact cube $[0,1]^n$. For sufficiently small $\delta>0$,
$\mathcal V(\delta\one)<0=\mathcal V(0)$, so no minimizer is the zero
vector. If a minimizer has a zero component, the connectivity of the path
implies that some zero component $x_j=0$ is adjacent to a positive
component. Then $\partial_j\mathcal V=q(K_nx)_j<0$, and increasing
$x_j$ into the cube strictly decreases $\mathcal V$, a contradiction.
If $x_j=1$ for some component, then $(K_nx)_j\ge0$ and hence
$\partial_j\mathcal V=q(K_nx)_j+\gamma_j>0$. Decreasing this component
into the cube again lowers $\mathcal V$. Thus every minimizer lies in
$(0,1)^n$ and satisfies \eqref{eq:scalar-equilibrium}.

For uniqueness, let $u,v\in(0,1)^n$ be two solutions and set
$\theta=\max_i u_i/v_i$. Suppose that $\theta>1$. At some contact
index $j$ we have $u_j=\theta v_j$ and $u_i-\theta v_i\le0$ for all
$i$. Therefore $(L_n(u-\theta v))_j\le0$. Subtracting the two
equilibrium equations yields
\[
0=q(L_n(u-\theta v))_j+
\theta v_j[g_j(\theta v_j)-g_j(v_j)]<0,
\]
a contradiction. It follows that $u\le v$; interchanging $u$ and $v$
gives $v\le u$, hence $u=v$. Finally, the total-population constraint
uniquely determines
\[
c_0=\frac{M}{\displaystyle\sum_i
\left(\frac{1-x_i}{p}+\frac{x_i}{q}\right)}.
\]
Substituting into \eqref{eq:reconstruction} completes the proof.
\end{proof}

\subsubsection{A Quadratic Matrix Pencil and the Simple Zero Eigenvalue}

Fix a positive equilibrium $(s,h)$ and set
\begin{equation}\label{eq:notation}
N_i^*=s_i+h_i,\quad r_i=\frac{h_i}{N_i^*},\quad
t_i=\beta_i r_i,\quad c_i=\frac{q(L_nh)_i}{h_i}.
\end{equation}
The equilibrium equation for infected individuals gives
$\gamma_i=\beta_i(1-r_i)+c_i$. The two partial derivatives of the
reaction term at the equilibrium are
\begin{equation}\label{eq:reaction-derivatives}
\partial_SF_i=t_ir_i,\qquad
\partial_IF_i=-c_i-t_i(1-r_i).
\end{equation}
Define
\[
\mathsf A=\diag(t_ir_i),\qquad
\mathsf G=\diag[-c_i-t_i(1-r_i)].
\]
The full Jacobian is therefore
\begin{equation}\label{eq:full-J}
J=\begin{pmatrix}-pK_n-\mathsf A&-\mathsf G\\
\mathsf A&-qK_n+\mathsf G\end{pmatrix}.
\end{equation}

\begin{lemma}\label{lem:pencil}
Let
\[
\mathcal H=qK_n+\diag(c_i+t_i),\qquad
\mathcal C=p\mathcal H+(q-p)\mathsf A.
\]
Then
\begin{equation}\label{eq:pencil}
\det(\lambda I_{2n}-J)
=\det\!\left[\lambda^2I_n+\lambda(pK_n+\mathcal H)
+\mathcal C K_n\right].
\end{equation}
\end{lemma}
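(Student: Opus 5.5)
The plan is to reduce $\lambda I_{2n}-J$ by one block row operation to a $2\times2$ block matrix whose lower two blocks commute. Then I will apply the commuting-block determinant formula
\[
\det\begin{pmatrix}P&Q\\R&S\end{pmatrix}=\det(PS-QR)\qquad\text{whenever }RS=SR.
\]

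\textbf{Step 1 (coefficient bookkeeping).} Write $\mathsf D=\diag(c_i+t_i)$, so that $\mathcal H=qK_n+\mathsf D$. From \eqref{eq:reaction-derivatives} we have $-\mathsf G=\diag(c_i+t_i-t_ir_i)=\mathsf D-\mathsf A$, that is,
\[
\mathsf A-\mathsf G=\mathsf D .
\]
By \eqref{eq:full-J},
\[
\lambda I_{2n}-J=\begin{pmatrix}\lambda+pK_n+\mathsf A&\mathsf G\\-\mathsf A&\lambda+qK_n-\mathsf G\end{pmatrix},
\]
where $\lambda$ stands for $\lambda I_n$.

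\textbf{Step 2 (row operation).} Add the first block row to the second. This does not change the determinant. The new second block row is $\bigl(\lambda+pK_n,\ \lambda+qK_n\bigr)$, because the $\mathsf A$ terms cancel in the first entry and the $\mathsf G$ terms cancel in the second. Set
\[
P=\lambda+pK_n+\mathsf A,\quad Q=\mathsf G,\quad R=\lambda+pK_n,\quad S=\lambda+qK_n .
\]
Both $R$ and $S$ are polynomials in $K_n$, so they commute.

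\textbf{Step 3 (commuting-block formula).} If $S$ is invertible, the Schur complement gives
\[
\det\begin{pmatrix}P&Q\\R&S\end{pmatrix}=\det S\,\det(P-QS^{-1}R)=\det(PS-QS^{-1}RS).
\]
Since $RS=SR$, we have $S^{-1}RS=R$, and the right-hand side equals $\det(PS-QR)$. The matrix $S=\lambda+qK_n$ fails to be invertible for only finitely many $\lambda$, since $K_n$ has finitely many eigenvalues. Both sides are polynomials in $\lambda$, so the identity extends to all $\lambda$.

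\textbf{Step 4 (expansion).} Expanding,
\[
PS-QR=\lambda^2+\lambda\bigl(pK_n+qK_n+\mathsf A-\mathsf G\bigr)+pqK_n^2+q\mathsf AK_n-p\mathsf GK_n .
\]
Using $\mathsf A-\mathsf G=\mathsf D$, the linear coefficient is $pK_n+qK_n+\mathsf D=pK_n+\mathcal H$. Using $-\mathsf G=\mathsf D-\mathsf A$, the constant term is
\[
pqK_n^2+p\mathsf DK_n+(q-p)\mathsf AK_n=\bigl(p\mathcal H+(q-p)\mathsf A\bigr)K_n=\mathcal CK_n .
\]
This gives \eqref{eq:pencil}.

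\textbf{Expected obstacle.} The only delicate point is Step 3: the formula needs $RS=SR$ together with the right order of multiplication, $PS-QR$ rather than $SP-RQ$. I will justify it through the Schur complement with $S$ invertible and then extend by polynomial continuity in $\lambda$.
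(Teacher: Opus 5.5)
Your proposal is correct and follows essentially the same route as the paper: the same block row operation, the commuting-block identity $\det=\det(PS-QR)$ derived through the Schur complement where $\lambda I_n+qK_n$ is invertible, and the same polynomial extension to all $\lambda$. The paper only writes ``expanding $PT-QR$ gives the pencil''; your explicit expansion, using $\mathsf A-\mathsf G=\diag(c_i+t_i)$, fills in that step correctly.
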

\begin{proof}
In $\lambda I_{2n}-J$, add the upper block row to the lower block row.
This leaves the determinant unchanged and gives
\[
\begin{pmatrix}
\lambda I_n+pK_n+\mathsf A&\mathsf G\\
\lambda I_n+pK_n&\lambda I_n+qK_n
\end{pmatrix}.
\]
Denote the four blocks, in order, by $P,Q,R,T$. Whenever $T$ is
invertible, $RT=TR$, and therefore
\[
\det\begin{pmatrix}P&Q\\R&T\end{pmatrix}
=\det(P-QT^{-1}R)\det T=\det(PT-QR).
\]
Expanding $PT-QR$ gives \eqref{eq:pencil}. Since both sides are
polynomials in $\lambda$, the identity extends to the finitely many
values at which $T$ is singular. This calculation only commutes two
polynomials in $K_n$; it does not commute a heterogeneous diagonal
matrix with $K_n$.
\end{proof}

\begin{lemma}\label{lem:zero}
The zero eigenvalue of the full Jacobian is algebraically simple. A
corresponding right eigenvector is $(s,h)$, and zero is not an
eigenvalue of the restriction to the fixed-mass tangent space
$\mathcal T$.
\end{lemma}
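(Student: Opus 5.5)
The proof has three parts: verify that $(s,h)$ spans the kernel of $J$, rule out a Jordan chain at $\lambda=0$ using the conservation law, and deduce the statement on $\mathcal T$. It mirrors Lemma~\ref{lem:nozero}, with sums over edges replacing integrals.

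\textbf{Step 1: $(s,h)$ lies in the kernel.} The key identity is $r_is_i=h_i(1-r_i)$, which holds because $r_i=h_i/N_i^*$ and $s_i=N_i^*(1-r_i)$. It gives
\[
t_ir_is_i-\bigl(c_i+t_i(1-r_i)\bigr)h_i=-c_ih_i=-q(L_nh)_i .
\]
The first block row of $J(s,h)$ is then $p(L_ns)_i+q(L_nh)_i=(L_n(ps+qh))_i$, which vanishes by \eqref{eq:equilibrium-constant}. The second block row is its negative. (Alternatively, Euler's identity for the degree-one homogeneous vector field gives the same conclusion.)

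\textbf{Step 2: the kernel is one-dimensional.} Let $J(u,v)=0$, allowing complex entries. Adding the block rows gives $K_n(pu+qv)=0$, so connectivity yields $pu+qv=F\one$ for some constant $F$. Substitute $u=(F\one-qv)/p$ into the infected row and write $v_i=h_iz_i$. Using $c_ih_i=q(L_nh)_i$, the diffusion terms combine into $q\sum_{j\sim i}h_j(z_j-z_i)$. Using $(1-r_i)h_i=r_is_i$ and $ps_i+qh_i=c_0$, the reaction terms combine into $-(t_ir_i/p)(c_0z_i-F)$. Setting $y_i=z_i-F/c_0$, this gives
\[
q\sum_{j\sim i}h_j(y_j-y_i)-\frac{t_ir_ic_0}{p}\,y_i=0 .
\]
Now multiply by $h_i\overline{y_i}$ and sum over $i$. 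The result is
\[
-\frac q2\sum_{i\sim j}h_ih_j|y_i-y_j|^2-\sum_i\frac{t_ir_ic_0h_i}{p}|y_i|^2=0 ,
\]
and since $t_ir_i>0$ this forces $y=0$. Hence $v=(F/c_0)h$, and then $u=(F/c_0)s$. The kernel is therefore exactly the span of $(s,h)$.

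\textbf{Step 3: algebraic simplicity and the restriction to $\mathcal T$.} The columns of $J$ sum to zero, so $(\one^\top,\one^\top)J=0$. Suppose there were a generalized eigenvector $w$ with $Jw=(s,h)$. Pairing with $(\one,\one)$ would give $0=\sum_i(s_i+h_i)=M>0$, which is impossible. So the Jordan block at zero has size one, and the zero eigenvalue is algebraically simple.

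The tangent space $\mathcal T$ is invariant under $J$, again because $(\one^\top,\one^\top)J=0$. Any kernel vector lying in $\mathcal T$ has the form $\alpha(s,h)$ with $\alpha M=0$, so it vanishes. Therefore zero is not an eigenvalue of $J|_{\mathcal T}$.

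The only step requiring care is the algebra in Step 2, namely checking that the reaction terms collapse exactly to $-(t_ir_i/p)(c_0z_i-F)$. Once that is done, the symmetric summation by parts on the path graph is routine.
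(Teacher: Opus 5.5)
Your proof is correct, but it takes a different route from the paper's. The paper never computes $\ker J$ directly. Instead it combines four ingredients:
\begin{itemize}
\item the pencil factorization of Lemma~\ref{lem:pencil};
\item the adjugate identity $\adj K_n=\one\one^\top$;
\item positive definiteness of $\mathcal C=p\mathcal B+\diag(t_ic_0/N_i^*)$, obtained from the ground-state identity \eqref{eq:ground};
\item the relation \eqref{eq:Cidentity}, $\mathcal C^{-1}\mathcal H\one=N^*/c_0$.
\end{itemize}
With these it evaluates $\frac{d}{d\lambda}\det(\lambda I_{2n}-J)$ at $\lambda=0$ as $\det\mathcal C\,M/c_0>0$. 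A nonzero derivative at a root gives algebraic simplicity in one stroke, and $J(s,h)=0$ is read off from degree-one homogeneity.

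You instead split simplicity into two pieces. Geometric simplicity comes from a discrete analogue of Lemma~\ref{lem:nozero}. The absence of a Jordan chain comes from the left null vector $(\one,\one)$ together with $M>0$. The two proofs share the same core. With $w=v-(F/c_0)h$, your reduced equation in Step~2 is exactly $-\frac1p(\mathcal Cw)_i=0$. Your summation by parts is precisely the computation of $\overline w^\top\mathcal C w$, which the paper shows is positive via \eqref{eq:ground}.

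Your route is more elementary. It uses no determinant calculus and nothing about $K_n$ beyond symmetry, connectivity and mass conservation.

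The paper's route gives a quantitative by-product. Since $\det(\lambda I_{2n}-J)=\lambda\det(\lambda I_{2n-1}-J|_{\mathcal T})$, it identifies the constant term of the restricted characteristic polynomial as $\det\mathcal C\,M/c_0$. This shows the term is \emph{positive}, and that fact is reused later, for instance as $a_5>0$ in the three-patch Routh--Hurwitz analysis. Your argument only shows that this term is nonzero.
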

\begin{proof}
Define
\[
\mathcal B=q\left[K_n-\diag\!\left(\frac{(K_nh)_i}{h_i}\right)\right].
\]
Summing over the edges yields, for every $z\in\R^n$,
\begin{equation}\label{eq:ground}
z^\top\mathcal Bz
=q\sum_{i=1}^{n-1}h_ih_{i+1}
\left(\frac{z_i}{h_i}-\frac{z_{i+1}}{h_{i+1}}\right)^2\ge0.
\end{equation}
Since $c_i=-q(K_nh)_i/h_i$,
\[
\mathcal H=\mathcal B+\diag(t_i)>0,\qquad
\mathcal C=p\mathcal B+
\diag\!\left(\frac{t_ic_0}{N_i^*}\right)>0.
\]
Here $>0$ denotes symmetric positive definiteness. Moreover,
\[
\mathcal Ch=c_0\mathsf A\one,\qquad
\mathcal C\one=p\mathcal H\one+(q-p)\mathsf A\one.
\]
Using $pN^*=c_0\one-(q-p)h$, we obtain
\begin{equation}\label{eq:Cidentity}
\mathcal C^{-1}\mathcal H\one=N^*/c_0.
\end{equation}
The Laplacian of the unit-weight path graph satisfies
$\adj K_n=\one\one^\top$: its kernel is spanned by $\one$, and the
principal cofactor obtained by deleting an endpoint equals $1$ by the
tridiagonal determinant recurrence. Differentiating \eqref{eq:pencil}
at $\lambda=0$ and using
$\adj(\mathcal CK_n)=\adj K_n\,\adj\mathcal C$ gives
\begin{align}
\left.\frac d{d\lambda}\det(\lambda I_{2n}-J)\right|_{\lambda=0}
&=\det\mathcal C\,\one^\top
\mathcal C^{-1}(pK_n+\mathcal H)\one \notag\\
&=\det\mathcal C\,\frac{M}{c_0}>0. \label{eq:zero-derivative}
\end{align}
Thus the zero eigenvalue is algebraically simple. The original vector
field is homogeneous of degree one in $(S,I)$, so $J(s,h)=0$. Moreover,
$(s,h)\notin\mathcal T$, because its components sum to $M>0$.
Conservation of mass makes $\mathcal T$ an invariant subspace, proving
the assertion.
\end{proof}

By \eqref{eq:mass-conservation}, in a basis adapted to $\mathcal T$,
\begin{equation}\label{eq:restricted-characteristic}
\det(\lambda I_{2n}-J)=\lambda\,
\det(\lambda I_{2n-1}-J|_{\mathcal T}).
\end{equation}
Henceforth we study the restricted characteristic polynomial on the
right-hand side.

\subsection{Two Patches: A Direct Algebraic Proof}

On the fixed-mass manifold, use the coordinates
$n_1=N_1=S_1+I_1,\ x=I_1,\ y=I_2$.
Then $N_2=M-n_1$ and
\begin{equation}\label{eq:two-system}
\begin{aligned}
\dot n_1&=p(M-2n_1)+(q-p)(y-x),\\
\dot x&=q(y-x)+(\beta_1-\gamma_1)x-\beta_1x^2/n_1,\\
\dot y&=q(x-y)+(\beta_2-\gamma_2)y-\beta_2y^2/(M-n_1).
\end{aligned}
\end{equation}
At a positive equilibrium, define
\[
\theta=\frac{y^*}{x^*},\quad
f_1=\frac{x^*}{n_1^*},\quad f_2=\frac{y^*}{M-n_1^*},\quad
b_i=\beta_if_i,\quad k_i=b_if_i,
\]
\[
u=b_1+q\theta,\quad v=b_2+q/\theta,\quad
\Sigma=u+v,\quad T=uv-q^2.
\]
Note that $0<f_i<1$, all these quantities are positive, and
\begin{equation}\label{eq:two-T}
T=b_1b_2+q(b_1/\theta+b_2\theta)>0,\qquad k_1+k_2<\Sigma.
\end{equation}
Using the equilibrium equations for the infected components to
differentiate \eqref{eq:two-system}, we obtain
\[
J_2=\begin{pmatrix}
-2p&-(q-p)&q-p\\
k_1&-u&q\\
-k_2&q&-v
\end{pmatrix}.
\]
Write its characteristic polynomial as
$\lambda^3+A\lambda^2+B\lambda+C$. Direct expansion gives
\begin{align}
A&=2p+\Sigma, \notag\\
B&=T+2p\Sigma+(q-p)(k_1+k_2),\label{eq:two-abc}\\
C&=2pT+(q-p)\{k_1(v-q)+k_2(u-q)\}. \notag
\end{align}

Dividing the equilibrium relation \eqref{eq:equilibrium-constant} by
$x^*$ gives
\begin{equation}\label{eq:two-balance}
(q-p)(\theta-1)=p(1/f_1-\theta/f_2).
\end{equation}
Consequently,
\[
\begin{aligned}
2p/\theta+(q-p)f_1(1/\theta-1)&=p(1/\theta+f_1/f_2),\\
2p\theta+(q-p)f_2(\theta-1)&=p(\theta+f_2/f_1).
\end{aligned}
\]
Expanding $C$ in \eqref{eq:two-abc} and substituting these two
identities yields
\begin{align}
C={}&b_1b_2\{p(2-f_1-f_2)+q(f_1+f_2)\}\notag\\
&+pq\{b_1/\theta+b_2\theta+b_1f_1/f_2+b_2f_2/f_1\}>0.
\label{eq:two-Cpositive}
\end{align}
Set $H=k_1(2p+u+q)+k_2(2p+v+q)$.
Expanding $AB-C$ gives
\begin{equation}\label{eq:two-last}
AB-C=\Sigma T+2p\Sigma(2p+\Sigma)+(q-p)H.
\end{equation}
If $q\ge p$, then $B>0$ and \eqref{eq:two-last} is strictly positive.
If $q<p$, then \eqref{eq:two-T} gives
\[
B>T+(p+q)\Sigma>0.
\]
Since $k_1<u$ and $k_2<v$,
\[
H<u(2p+u+q)+v(2p+v+q)
<\Sigma(2p+\Sigma+q).
\]
Using $q-p<0$ in \eqref{eq:two-last}, we conclude that
\[
AB-C>\Sigma\{T+(p+q)\Sigma+2p^2+pq+q^2\}>0.
\]
Thus $A,B,C,AB-C$ are all strictly positive, and the cubic
Routh--Hurwitz criterion \cite{Hurwitz1895} shows that every eigenvalue
of $J_2$ has strictly negative real part. This proves the linear
stability assertion in Theorem~\ref{thm:main}(i).

\begin{remark}
Conditional on the existence of a positive equilibrium, the above
two-patch calculation uses only $p,q,\beta_i>0$ and $0<f_i<1$.
The patchwise assumptions $\beta_i>\gamma_i>0$ also ensure existence
and uniqueness of the positive equilibrium throughout this paper.
\end{remark}

\subsection{Three Patches: Exact Positivity Certificates for All Parameters}

\subsubsection{Normalization and the Admissible Parameter Domain}

Rescale time by $\tau=pt$ and multiply the population variables by
$p/c_0$; relabel the rates $(q/p,\beta_i/p,\gamma_i/p)$ as
$(q,\beta_i,\gamma_i)$. These are positive rescalings of time and
coordinates and do not change the sign of spectral real parts. We may
therefore take $p=1$, with equilibrium satisfying $s_i+qh_i=1$.
Define
\begin{equation}\label{eq:three-x}
x_i=qh_i,\quad s_i=1-x_i,\quad
r_i=\frac{x_i}{q(1-x_i)+x_i},\quad
t_i=\beta_ir_i,\quad c_i=\frac{q(L_3x)_i}{x_i}.
\end{equation}
Here $0<x_i<1$, and
\[
\gamma_i=\frac{t_i(1-r_i)}{r_i}+c_i,\qquad
\beta_i-\gamma_i=t_i-c_i.
\]
The two strict inequalities in \eqref{eq:assumptions} are therefore
equivalent to
\begin{equation}\label{eq:legal}
t_i>\max\left\{\frac{q(L_3x)_i}{x_i},
-\frac{(L_3x)_i}{1-x_i}\right\}.
\end{equation}
The simplification of the second term uses
$r_i/(1-r_i)=x_i/[q(1-x_i)]$.
This constraint is retained throughout the exact computations below.

Now $K=K_3$. By Lemma~\ref{lem:pencil},
\begin{equation}\label{eq:three-pencil}
\begin{aligned}
P(\lambda)&=\lambda^2I_3+
\lambda\{(1+q)K+\diag(d_i)\}
+qK^2+\diag(e_i)K,\\
d_i&=c_i+t_i,\qquad
e_i=c_i+\frac{qt_i}{q(1-x_i)+x_i}.
\end{aligned}
\end{equation}
The restricted characteristic polynomial is
\begin{equation}\label{eq:quintic}
\chi(\lambda)=\frac{\det P(\lambda)}{\lambda}
=\lambda^5+a_1\lambda^4+a_2\lambda^3+a_3\lambda^2+a_4\lambda+a_5.
\end{equation}
In this normalization, \eqref{eq:zero-derivative} also gives directly
$a_5=\det(\mathcal C)\sum_iN_i^*>0$.

The Hurwitz determinants for this quintic can be written as
\begin{align}
\Delta_1&=a_1,\qquad \Delta_2=a_1a_2-a_3,\notag\\
\Delta_3&=\Delta_2a_3-a_1^2a_4+a_1a_5,\label{eq:hurwitz5}\\
\Delta_4&=a_4\Delta_3-a_5(a_2\Delta_2-a_1a_4+a_5),\qquad
\Delta_5=a_5\Delta_4.\notag
\end{align}
It is therefore enough to establish
$a_5,\Delta_1,\Delta_2,\Delta_3,\Delta_4>0$.

\subsection{Four Exhaustive Parameterizations}

After reflecting the path to interchange its endpoints, we may assume
$x_1\le x_3$. Scale by $\min_i x_i$ and write $x_i=X_i/T$.
In each of the following cases, the parameters satisfy
\begin{equation}\label{eq:parameter-domain}
q,d,z_1,z_2,z_3>0,\qquad b,c\ge0.
\end{equation}
\begin{enumerate}[label=\textbf{P\arabic*.},leftmargin=3.2em]
\item The middle component is maximal:
\[
X=(1,1+b+c,1+b),\qquad T=1+b+c+d,
\]
\[
t=\left(q(b+c)+z_1,\ \frac{b+2c+z_2}{d},\
\frac{qc+z_3}{1+b}\right).
\]
\item The middle component is minimal:
\[
X=(1+b,1,1+b+c),\qquad T=1+b+c+d,
\]
\[
t=\left(\frac{b+z_1}{c+d},\ q(2b+c)+z_2,\
\frac{b+c+z_3}{d}\right).
\]
\item The components increase monotonically and the second gap is at
least as large as the first:
\[
X=(1,1+b,1+2b+c),\qquad T=1+2b+c+d,
\]
\[
t=\left(qb+z_1,\ \frac{qc+z_2}{1+b},\
\frac{b+c+z_3}{d}\right).
\]
\item The components increase monotonically and the first gap is at
least as large as the second:
\[
X=(1,1+b+c,1+2b+c),\qquad T=1+2b+c+d,
\]
\[
t=\left(q(b+c)+z_1,\ \frac{c+z_2}{b+d},\
\frac{b+z_3}{d}\right).
\]
\end{enumerate}

\begin{lemma}\label{lem:coverage}
The four parameterizations above, together with reflection of the
endpoints, cover all parameters satisfying $q>0$, $0<x_i<1$, and
\eqref{eq:legal}.
\end{lemma}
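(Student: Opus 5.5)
The plan is a direct case analysis. I will show that every admissible triple $(q,x,t)$, possibly after reversing the path, is the image of one of \textbf{P1}--\textbf{P4} for some parameters in \eqref{eq:parameter-domain}. Reflection $i\mapsto 4-i$ maps $L_3$ to itself, so it preserves \eqref{eq:legal} and the spectrum. We may therefore assume $x_1\le x_3$.

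Since the scaling in \eqref{eq:three-x} is fixed, I set $T:=1/\min_i x_i$ and $X_i:=Tx_i$. Then $\min_i X_i=1$, and $x_i<1$ gives $T>\max_i X_i$. In every parameterization, $T$ equals the largest $X_i$ plus $d$. Hence $d:=T-\max_iX_i>0$ is forced and is admissible.

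Under $x_1\le x_3$, the possible orderings are covered as follows:
\begin{itemize}
\item[(i)] $x_1\le x_3\le x_2$, the middle component is maximal: \textbf{P1}, with $b=X_3-1$ and $c=X_2-X_3$.
\item[(ii)] $x_2\le x_1\le x_3$, the middle component is minimal: \textbf{P2}, with $b=X_1-1$ and $c=X_3-X_1$.
\item[(iii)] $x_1\le x_2\le x_3$ with $x_3-x_2\ge x_2-x_1$: \textbf{P3}, with $b=X_2-1$ and $c=(X_3-X_2)-(X_2-1)$.
\item[(iv)] $x_1\le x_2\le x_3$ with $x_2-x_1\ge x_3-x_2$: \textbf{P4}, with $b=X_3-X_2$ and $c=(X_2-1)-(X_3-X_2)$.
\end{itemize}
These orderings exhaust all cases. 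Overlaps at ties are harmless, and in each case $b,c\ge0$ by construction.

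The main step is to check that the thresholds in \eqref{eq:legal} match the $z$-free parts of the displayed $t_i$. Then $z_i:=t_i-(\text{threshold})>0$ is exactly \eqref{eq:legal}, and the map is onto. For each index I compute $(L_3x)_i$ in the $X$-variables:
\[
(L_3x)_1=\frac{X_2-X_1}{T},\qquad (L_3x)_2=\frac{X_1-2X_2+X_3}{T},\qquad (L_3x)_3=\frac{X_2-X_3}{T}.
\]
The two quantities in the maximum in \eqref{eq:legal} have opposite signs, so the maximum is whichever one is nonnegative. This gives $q(L_3x)_i/x_i=q(L_3X)_i/X_i$ when $(L_3x)_i\ge0$. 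When $(L_3x)_i\le0$ it gives $-(L_3x)_i/(1-x_i)=-(L_3X)_i/(T-X_i)$.

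In case \textbf{P1} this yields:
\begin{itemize}
\item index $1$: $(L_3X)_1=b+c\ge0$, with threshold $q(b+c)$;
\item index $2$: $(L_3X)_2=-(b+2c)\le0$ and $T-X_2=d$, with threshold $(b+2c)/d$;
\item index $3$: $(L_3X)_3=c\ge0$, with threshold $qc/(1+b)$.
\end{itemize}
These agree with the formulas in \textbf{P1}. The other three cases are the same bookkeeping. For instance, in \textbf{P2} the first index has $(L_3X)_1=-b$ and $T-X_1=c+d$, which gives $b/(c+d)$. In \textbf{P4} the second index has $(L_3X)_2=-c$ and $T-X_2=b+d$, which gives $c/(b+d)$.

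The only genuine care needed, and what I expect to be the main obstacle, is the sign bookkeeping at ties. When $(L_3x)_i=0$, both candidates vanish, the threshold is $0$, and the displayed formula also reduces to $0$ plus $z_i$ (for example $qc$ with $c=0$). The correspondence is therefore still exact.

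I also need to confirm that the defining inequalities of each case are what make the sign of $(L_3X)_i$ as claimed. For example, in \textbf{P3} the condition $c\ge0$ is equivalent to $(L_3X)_2\ge0$, and in \textbf{P4} it is equivalent to $(L_3X)_2\le0$. This is precisely why the monotone ordering is split according to which gap is larger.

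Once these identities are verified, each admissible $(q,x,t)$ determines $(q,b,c,d,z_1,z_2,z_3)$ in \eqref{eq:parameter-domain} that reproduces it, and this proves the lemma.
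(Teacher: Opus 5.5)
Your proposal is correct and follows essentially the same route as the paper's proof. You reflect to assume $x_1\le x_3$, normalize by $\min_i x_i$, split into the same four orderings with the same recovery formulas for $b,c$ and $d=T-\max_i X_i>0$, and identify the nonnegative branch of the maximum in \eqref{eq:legal} with the $z$-free part of each listed $t_i$. One small bookkeeping slip: where a listed $t_i$ has a denominator, such as $(b+2c+z_2)/d$ in \textbf{P1}, the slack is $z_i$ divided by that positive denominator, so $z_i$ is that denominator times ($t_i$ minus the threshold), not $t_i$ minus the threshold. This changes neither positivity nor the bijection.
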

\begin{proof}
Let $m=\min_i x_i$, and set $X_i=x_i/m$ and $T=1/m$.
If the middle component is maximal or minimal, we obtain P1 or P2,
respectively. Otherwise, after ordering the endpoints, we have
$x_1\le x_2\le x_3$, and comparison of the two successive gaps leads
to P3 or P4. More explicitly, the nonnegative parameters in the four
cases are recovered as follows:
\[
\begin{array}{c|cc}
&b&c\\\hline
\mathrm{P1}&X_3-1&X_2-X_3\\
\mathrm{P2}&X_1-1&X_3-X_1\\
\mathrm{P3}&X_2-1&X_3-2X_2+1\\
\mathrm{P4}&X_3-X_2&2X_2-X_3-1
\end{array}
\]
Moreover, $d=T-\max_iX_i=(1-\max_i x_i)/m>0$.
Computing $L_3X$ in each case shows that the active lower bound in
\eqref{eq:legal} is $q(L_3X)_i/X_i$ when $(L_3X)_i\ge0$, and
$-(L_3X)_i/(T-X_i)$ when $(L_3X)_i\le0$.
Adding a positive slack variable to each bound yields precisely the
listed values of $t_i$. Conversely, each $t_i$ satisfying the strict
lower bound uniquely determines the corresponding positive slack
variable $z_i$. The choices $b=0$ and $c=0$ are allowed, so ties and
boundary cases are included.
\end{proof}

\subsubsection{Construction of Finite Polynomial Certificates}

To specify the objects to be recomputed, write each $t_i$ in the
parameterizations above as $U_i/V_i$, where $U_i,V_i$ are the listed
integer-coefficient polynomials. Set
\[
W_i=(L_3X)_i,\qquad D_i=q(T-X_i)+X_i.
\]
For each parameterization, first construct
\begin{align*}
\widehat R_i&=X_iV_iD_i,\\
\widehat D_i&=(qW_iV_i+U_iX_i)D_i,\\
\widehat E_i&=qW_iV_iD_i+qTU_iX_i.
\end{align*}
Divide these three polynomials by their normalized greatest common
divisor and denote the results by $R_i,D_i^{\,0},E_i^{\,0}$.
Then $d_i=D_i^{\,0}/R_i$ and $e_i=E_i^{\,0}/R_i$.
The corresponding row-wise denominator-cleared matrix is
\begin{equation}\label{eq:polynomial-matrix}
\widetilde P_{ij}(\lambda)=
qR_i(K^2)_{ij}+E_i^{\,0}K_{ij}
+\lambda\{(q+1)R_iK_{ij}+D_i^{\,0}\delta_{ij}\}
+\lambda^2R_i\delta_{ij}.
\end{equation}
Expanding the $3\times3$ determinant over its six permutations, define
\[
\det\widetilde P(\lambda)=
C_0\lambda^6+C_1\lambda^5+\cdots+C_5\lambda.
\]
The constant term vanishes identically, and $C_0=R_1R_2R_3$.
The coefficients $a_i=C_i/C_0$, after cancellation, are exactly those
of \eqref{eq:quintic}. All rational expressions are computed over the
ring of integer polynomials, and greatest common divisors are normalized
to have a positive leading coefficient in a fixed monomial order.
The positivity of the final denominators is also checked separately
by the coefficient certificates below; thus the sign conclusions do
not depend on any unverified sign convention during cancellation.

\begin{lemma}\label{lem:positive-polynomial}
Let $Q$ be a polynomial in $(q,b,c,d,z_1,z_2,z_3)$ with nonnegative
coefficients. If $Q(1,0,0,1,1,1,1)>0$, then $Q$ is strictly positive
throughout the domain \eqref{eq:parameter-domain}.
\end{lemma}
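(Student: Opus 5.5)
Since every coefficient of $Q$ is nonnegative, the argument reduces to a monomial-by-monomial comparison. Write
\[
Q=\sum_{\alpha}c_\alpha\, q^{\alpha_1}b^{\alpha_2}c^{\alpha_3}d^{\alpha_4}z_1^{\alpha_5}z_2^{\alpha_6}z_3^{\alpha_7},\qquad c_\alpha\ge0 .
\]
At the point $(1,0,0,1,1,1,1)$, every monomial containing a positive power of $b$ or of $c$ vanishes, and every other monomial equals $1$. The hypothesis $Q(1,0,0,1,1,1,1)>0$ therefore says exactly that
\[
\sum_{\alpha_2=\alpha_3=0}c_\alpha>0,
\]
so there is at least one index $\alpha^*$ with $\alpha^*_2=\alpha^*_3=0$ and $c_{\alpha^*}>0$.

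Now take any point of the domain \eqref{eq:parameter-domain}. There $q,d,z_1,z_2,z_3>0$ and $b,c\ge0$, so every monomial is nonnegative. Since $Q$ has nonnegative coefficients, $Q$ is bounded below by any single term, in particular by the term $c_{\alpha^*}$ times its monomial. That monomial involves only the strictly positive variables $q,d,z_1,z_2,z_3$, because it contains no $b$ and no $c$; hence it is strictly positive. Therefore $Q>0$ at every point of the domain.

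Two side issues need a remark. First, the convention $0^0=1$ is used for the monomials in $b,c$ with zero exponent, which is why the distinguished monomial survives even when $b=c=0$. Second, the argument covers the boundary cases $b=0$ or $c=0$ allowed in \eqref{eq:parameter-domain}: the distinguished monomial is independent of $b$ and $c$, while all remaining terms are merely nonnegative. No genuine obstacle remains. The only point to state carefully is that evaluating at $q=d=z_i=1$ certifies the existence of such a $(b,c)$-free monomial, rather than the positivity of any specific coefficient.
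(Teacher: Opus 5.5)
Your proof is correct and matches the paper's argument: evaluating at $(1,0,0,1,1,1,1)$ yields a monomial with positive coefficient that contains neither $b$ nor $c$, and that monomial is strictly positive on the domain because $q,d,z_1,z_2,z_3>0$, while every other term is nonnegative. One cosmetic point: naming the coefficients $c_\alpha$ clashes with the variable $c$, so a different letter would read more cleanly.
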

\begin{proof}
Positivity at this point shows that at least one monomial with a
positive coefficient contains neither $b$ nor $c$. The other five
variables are strictly positive in \eqref{eq:parameter-domain}, so this
monomial is strictly positive everywhere in the domain; all remaining
monomials are nonnegative.
\end{proof}

\begin{proposition}[Exact computer-assisted certificate]\label{prop:certificate}
For all parameters in P1--P4, the quantities
$a_5,\Delta_1,\Delta_2,\Delta_3,\Delta_4$ are strictly positive.
\end{proposition}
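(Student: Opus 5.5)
The certificate is established by direct symbolic computation followed by a sign check of coefficients, with Lemma~\ref{lem:positive-polynomial} as the final step.

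For each parameterization P1--P4 we will first build the integer polynomials $U_i,V_i,X_i,T,W_i,D_i$ in the variables $(q,b,c,d,z_1,z_2,z_3)$. We then form $\widehat R_i,\widehat D_i,\widehat E_i$, cancel their normalized gcd, and assemble the denominator-cleared matrix \eqref{eq:polynomial-matrix}. Its $3\times3$ determinant is expanded over the six permutations. We check that the constant term vanishes, read off $C_0,\dots,C_5$, and set $a_i=C_i/C_0$.

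Since $C_0=R_1R_2R_3$, the sign of each Hurwitz quantity is controlled by a single polynomial. Multiplying \eqref{eq:hurwitz5} by the appropriate power of $C_0$ gives
\[
C_0^{\,2}\Delta_2=C_0C_1C_2-C_0^{\,2}C_3,
\]
and similarly $C_0^{\,k}\Delta_k$ and $C_0 a_5=C_5$ are polynomials $N_k$ with integer coefficients. We take the normalization with $C_0>0$. To justify this, we first verify that each $R_i$ (hence $C_0$) has nonnegative coefficients and is positive at the test point $(1,0,0,1,1,1,1)$; Lemma~\ref{lem:positive-polynomial} then gives $C_0>0$ on \eqref{eq:parameter-domain}.

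The core step is to expand each numerator $N_1=C_1$, $N_2$, $N_3$, $N_4$, $N_5=C_5$ fully as a polynomial in the seven parameters. We then confirm by exact integer arithmetic that every coefficient is nonnegative and that the value at $(1,0,0,1,1,1,1)$ is strictly positive. Lemma~\ref{lem:positive-polynomial} then yields strict positivity of each numerator throughout the domain, and dividing by the positive power of $C_0$ gives $a_5,\Delta_1,\dots,\Delta_4>0$. Combined with Lemma~\ref{lem:coverage}, this covers all admissible three-patch parameters. As an independent sanity check, we will recompute $a_5$ and compare it with $\det(\mathcal C)\sum_iN_i^*$ from \eqref{eq:zero-derivative}.

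The main obstacle is $\Delta_4$. It is of high degree in the seven variables, so its expansion may contain a very large number of monomials, and it is not guaranteed a priori that all its coefficients are nonnegative in the chosen parameterization. The slack-variable parameterizations were designed to make this plausible, since they absorb the constraints \eqref{eq:legal} into positive variables. If some negative coefficients do survive, we would first try extracting the common gcd factors that were cancelled. Failing that, we would split the offending case further, for instance by ordering $b$ versus $c$ or substituting $d\mapsto$ shifted variables, and repeat the coefficient check on each subregion. If even that fails, we would group negative monomials with dominating positive ones via AM--GM-type term pairings, and record these pairings as part of the certificate.
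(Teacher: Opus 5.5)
Your plan matches the paper's for every quantity except one. You expand $\det\widetilde P(\lambda)$ from \eqref{eq:polynomial-matrix} exactly, check that numerators and denominators have nonnegative coefficients and are positive at $(1,0,0,1,1,1,1)$, and conclude with Lemma~\ref{lem:positive-polynomial}. This settles $a_5,\Delta_1,\Delta_2,\Delta_3$ in all four regions and $\Delta_4$ in P1--P3. There is one harmless slip: $C_0^{2}\Delta_2=C_1C_2-C_0C_3$, whereas your right-hand side equals $C_0^{3}\Delta_2$.

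The gap is $\Delta_4$ in P4, which is the case you yourself flag. There the plain coefficient test fails: the gcd-reduced numerator $N$ of $\Delta_4$ has negative coefficients. Nothing in your proposal shows that the larger unreduced $C_0^4\Delta_4$ fares better, and what you offer instead is a list of things to try, not an argument. What is missing is the paper's explicit weighted-square certificate. With the nonnegative-coefficient factors $F_A,F_B,F_C$ of \eqref{eq:FABC}, the polynomial
\[
R=16N-F_A(16d-b)^2-16F_B(cd-b^2)^2-8F_C(cd-2b^2)^2
\]
has only positive coefficients and is positive at the test point, so $R>0$ by Lemma~\ref{lem:positive-polynomial}. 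Together with $D>0$ (also certified by its coefficients), this gives $16D\,\Delta_4=R+F_A(16d-b)^2+16F_B(cd-b^2)^2+8F_C(cd-2b^2)^2>0$.

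Your AM--GM fallback is this idea in spirit; for instance, $F_A(16d-b)^2$ pairs $256F_Ad^2$ and $F_Ab^2$ against $-32F_Abd$. But finding the right squares and weights is the actual content of the certificate.

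Your first fallback cannot work. Any factor cancelled against a power of $C_0$ divides a power of $\prod_iX_iV_iD_i$. The irreducible factors of that product are the nonconstant $X_i,V_i,D_i$ themselves, all with positive coefficients. If $g$ has nonnegative coefficients and $Ng$ has a negative one, then so does $N$. So passing to reduced form never removes negative coefficients, and indeed the paper's $N$ is already reduced. The splits you propose (ordering $b$ against $c$, shifting $d$) are also not aligned with the sign-indefinite combinations $16d-b$, $cd-b^2$, $cd-2b^2$ that actually occur.
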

\begin{proof}
Appendix~\ref{app:code3} provides the complete exact verification
program. It computes the $C_i$ from \eqref{eq:polynomial-matrix} and
then evaluates the relevant quantities using \eqref{eq:hurwitz5}.
The numbers of nonzero terms in the reduced numerators are given in
Table~\ref{tab:counts}. With the exception of $\Delta_4$ in P4,
all listed numerators and denominators have only nonnegative coefficients
and are strictly positive at
$(q,b,c,d,z_1,z_2,z_3)=(1,0,0,1,1,1,1)$.

\begin{table}[htbp]\centering
\small
\begin{tabular}{c r r r r r}
\toprule
Region&$a_5$&$\Delta_1$&$\Delta_2$&$\Delta_3$&Positivity numerator for $\Delta_4$\\
\midrule
P1&1,230&35&6,204&143,282&1,231,162\\
P2&1,559&64&10,560&233,895&1,946,588\\
P3&1,194&34&6,300&145,406&1,239,473\\
P4&1,766&71&11,863&261,713&2,159,003\\
\bottomrule
\end{tabular}
\caption{Exact positivity checks. For P1--P3, the last column is the
reduced numerator of $\Delta_4$; for P4, it is the remainder $R$ defined
below. Every nonzero coefficient in the listed polynomials is positive.}
\label{tab:counts}
\end{table}

We treat P4 in detail. Suppose the verifier reduces $\Delta_4=N/D$.
Set $u=2cq+z_1$ and
\begin{align}
F_A&=b^8c^{11}d^4q^{12}u^3(22cq+7z_1),\notag\\
F_B&=b^6c^{11}d^3q^{11}(qd+c)u^4,\label{eq:FABC}\\
F_C&=b^5c^{11}d^4q^{12}z_3u^4.\notag
\end{align}
Define the integer polynomial
\begin{equation}\label{eq:three-squares}
R=16N-F_A(16d-b)^2-16F_B(cd-b^2)^2-8F_C(cd-2b^2)^2.
\end{equation}
The complete exact expansion verifies that all $2,159,003$
nonzero coefficients of $R$ are positive, as are all $9,909$
nonzero coefficients of $D$. At the point specified above,
\[
R=12\,386\,304\,000,\qquad D=512.
\]
Lemma~\ref{lem:positive-polynomial} therefore gives $R,D>0$.
Since $F_A,F_B,F_C\ge0$, it follows that
\begin{equation}\label{eq:D4positive}
\Delta_4=
\frac{R+F_A(16d-b)^2+16F_B(cd-b^2)^2+8F_C(cd-2b^2)^2}{16D}
>0.
\end{equation}
This identity follows directly from the definition
\eqref{eq:three-squares} and its exact expansion; no floating-point
approximation is involved.

At this point, all four parameterizations give the same characteristic
polynomial,
\[
(\lambda+1)^2(\lambda+2)(\lambda+3)(\lambda+4),
\]
and
\[
(a_5,\Delta_1,\Delta_2,\Delta_3,\Delta_4)
=(24,11,410,26160,1512000).
\]
Together with Lemma~\ref{lem:positive-polynomial}, the strict
positivity therefore includes all boundary cases with $b=0$ or
$c=0$. The certificates for all four regions are established.
\end{proof}

By Lemma~\ref{lem:coverage}, Proposition~\ref{prop:certificate}, and
the quintic Routh--Hurwitz criterion, all eigenvalues of
$J|_{\mathcal T}$ have strictly negative real parts. This proves the
linear stability assertion in Theorem~\ref{thm:main}(ii).

\subsubsection{From Linear Stability to Nonlinear Exponential Stability}

The argument applies to both the two- and three-patch cases. Choose
linear coordinates on $\mathcal H_M^+$ near the positive equilibrium
and write the equation as $\dot z=Az+\mathcal R(z)$, where
$A=J|_{\mathcal T}$ and $\mathcal R(z)=O(|z|^2)$.
Since the spectrum of $A$ lies strictly in the left half-plane,
there are $C_0\ge1$ and $\eta>0$ such that
$\|e^{At}\|\le C_0e^{-2\eta t}$. After reducing the neighborhood,
we may assume $|\mathcal R(z)|\le(\eta/C_0)|z|$.
Variation of constants yields
\[
e^{2\eta t}|z(t)|
\le C_0|z(0)|+\eta\int_0^t e^{2\eta s}|z(s)|\,ds.
\]
By Gronwall's inequality,
$|z(t)|\le C_0e^{-\eta t}|z(0)|$.
For sufficiently small initial data, this estimate ensures that the
trajectory remains in the chosen neighborhood, proving
\eqref{eq:exp}. As the positive equilibrium is in the interior of
the positive phase space, the neighborhood may be taken entirely
inside that space. This completes the proof of
Theorem~\ref{thm:main}(i)--(ii).

\subsection{Four Patches: An Exact Rational Counterexample to Stability}

\subsubsection{Parameters and Direct Verification of the Positive Equilibrium}

Take
\begin{equation}\label{eq:four-L}
L_4=\begin{pmatrix}
-1&1&0&0\\1&-2&1&0\\0&1&-2&1\\0&0&1&-1
\end{pmatrix},\qquad p=1,\quad q=20000.
\end{equation}
The transmission and recovery rates are
\begin{equation}\label{eq:four-rates}
\begin{aligned}
\beta&=\left(3781803890001,\frac{3785211}{193},
\frac{83996}{111},\frac{410410479}{499}\right),\\
\gamma&=\left(3781803890000,\frac{96000}{193},
\frac{20000}{999},\frac{400430000}{499}\right).
\end{aligned}
\end{equation}
Every $\gamma_i$ is positive, and
\begin{equation}\label{eq:four-rate-gap}
\beta-\gamma=
\left(1,\frac{3689211}{193},\frac{735964}{999},
\frac{9980479}{499}\right)>0.
\end{equation}
Let $D_0=20562972$ and set
\begin{equation}\label{eq:four-equilibrium}
s=\frac{(19800000,700000,20000,40000)}{D_0},\qquad
h=\frac{(10,965,999,998)}{D_0}.
\end{equation}
Clearly $s,h>0$, and the sum of their numerators is $D_0$, so the
total population is $1$. Moreover,
\[
ps+qh=\frac{20000000}{D_0}\one.
\]
Substituting \eqref{eq:four-rates} and \eqref{eq:four-equilibrium}
into the original reaction term and combining fractions gives,
component by component,
\begin{equation}\label{eq:four-F}
D_0F(s,h)=(-19100000,18420000,700000,-20000).
\end{equation}
Direct computation of the dispersal terms also yields
$D_0pL_4s=D_0F(s,h)$ and
$D_0qL_4h=-D_0F(s,h)$.
Thus \eqref{eq:four-equilibrium} is an exact positive equilibrium of
the original model, not merely an equilibrium of an approximate or
reduced system.

\subsubsection{The Fixed-Mass Jacobian and a Routh Certificate}

Order the original variables as
$(S_1,\ldots,S_4,I_1,\ldots,I_4)$ and construct the $8\times8$
Jacobian $J$ directly from \eqref{eq:full-J}. Set
\begin{equation}\label{eq:four-embedding}
T_0=\begin{pmatrix}I_7\\-\one_7^\top\end{pmatrix},\qquad
A=(JT_0)_{1:7,:}.
\end{equation}
The matrix $T_0$ identifies $\R^7$ with the fixed-mass tangent space
$\mathcal T$. Since $\one_8^\top J=0$, we have $JT_0=T_0A$;
hence $A$ is the genuine linearization of the original system on this
tangent space. Lemma~\ref{lem:zero} also gives
\[
\det(\lambda I_8-J)=\lambda\det(\lambda I_7-A).
\]
Write
\begin{equation}\label{eq:seven-poly}
\det(\lambda I_7-A)=\lambda^7+a_1\lambda^6+\cdots+a_6\lambda+a_7.
\end{equation}
All $a_i$ are exact rational numbers; their values are listed in
Appendix~\ref{app:coeff4}.

To avoid relying on floating-point eigenvalues to establish instability,
we construct the Routh table directly. Set $a_0=1$ and take its first
two rows to be
\[
\mathcal R_{0,\cdot}=(a_0,a_2,a_4,a_6),\qquad
\mathcal R_{1,\cdot}=(a_1,a_3,a_5,a_7).
\]
Pad the remaining rows with zeros on the right and use the recursion
\begin{equation}\label{eq:routh-recurrence}
\mathcal R_{i,j}=
\frac{\mathcal R_{i-1,0}\mathcal R_{i-2,j+1}
-\mathcal R_{i-2,0}\mathcal R_{i-1,j+1}}
{\mathcal R_{i-1,0}}
\quad (i=2,\dots,7;\ j=0,1,2).
\end{equation}
Table~\ref{tab:routh} gives strict rational bounds on every entry in
the first column. The verification program in Appendix~\ref{app:code4}
reconstructs $J$ and $A$ from the original parameters and checks each
bound exactly.

\begin{table}[htbp]\centering
\begin{tabular}{c c c}
\toprule
Power & Strict enclosure of the first-column entry & Sign\\
\midrule
$\lambda^7$&$1$&$+$\\
$\lambda^6$&$(3\cdot10^6,\ 4\cdot10^6)$&$+$\\
$\lambda^5$&$(3\cdot10^{11},\ 4\cdot10^{11})$&$+$\\
$\lambda^4$&$(9\cdot10^{15},\ 10^{16})$&$+$\\
$\lambda^3$&$(3\cdot10^{19},\ 4\cdot10^{19})$&$+$\\
$\lambda^2$&$(3\cdot10^{20},\ 4\cdot10^{20})$&$+$\\
$\lambda^1$&$(-2\cdot10^{21},\ -10^{21})$&$-$\\
$\lambda^0$&$(4\cdot10^{22},\ 5\cdot10^{22})$&$+$\\
\bottomrule
\end{tabular}
\caption{Exact Routh certificate for the four-patch counterexample.
All interval endpoints and comparisons are exact integers or rationals.}
\label{tab:routh}
\end{table}

All pivots are nonzero, and the first column exhibits exactly two
sign changes. The degree-seven polynomial \eqref{eq:seven-poly} therefore
has exactly two roots in the open right half-plane, counted with
multiplicity. The original vector field is smooth near the positive
equilibrium. By the principle of linearized instability for ordinary
differential equations, the equilibrium is nonlinearly locally unstable
within the fixed-mass manifold. For numerical illustration, these two
roots are approximately
\begin{equation}\label{eq:unstable-pair}
\lambda_\pm=0.66830587233517
\pm 10.48314099384637\,\mathrm i.
\end{equation}
The proof itself uses only the exact rational sign computations in
Table~\ref{tab:routh}, and not the approximations in
\eqref{eq:unstable-pair}. This proves Theorem~\ref{thm:main}(iii).

\subsection{Scope of the Results and Minimal Number of Patches}

\begin{corollary}\label{cor:minimum}
Within the class of standard-incidence SIS models with a common
equal-edge-weight path dispersal network, $p,q>0$, and
$\beta_i>\gamma_i>0$ in every patch, four is the smallest number of
patches for which a positive equilibrium can be locally unstable.
\end{corollary}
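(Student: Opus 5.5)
The corollary combines the three parts of Theorem~\ref{thm:mainp}. Write $N_{\min}$ for the smallest $n\ge2$ for which some admissible parameter choice ($p,q>0$, $\beta_i>\gamma_i>0$, unit-weight path $L_n$) yields a locally unstable positive equilibrium in $\mathcal H_M^+$. I plan two inclusions: $N_{\min}\le4$ and $N_{\min}\ge4$.

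For $N_{\min}\le 4$, I use Theorem~\ref{thm:mainp}(iii). The rational data \eqref{eq:four-rates} satisfy $\beta_i>\gamma_i>0$ by \eqref{eq:four-rate-gap}. The vector \eqref{eq:four-equilibrium} is an exact positive equilibrium of mass $1$. By Proposition~\ref{prop:exist-unique}, it is the unique positive equilibrium in $\mathcal H_1^+$. The Routh certificate in Table~\ref{tab:routh} gives two roots of the restricted characteristic polynomial in the open right half-plane. Linearized instability for smooth ODEs then shows the equilibrium is locally unstable within the fixed-mass manifold.

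I would add one remark so the example is not tied to $M=1$. The vector field is homogeneous of degree one, so $c(s,h)$ is the positive equilibrium of mass $c$. Its Jacobian is unchanged, since the partial derivatives of $F_i$ are homogeneous of degree zero. Hence instability holds for every $M$, although the corollary only needs one admissible instance.

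For $N_{\min}\ge4$, I rule out $n=2$ and $n=3$ for every admissible choice using Theorem~\ref{thm:mainp}(i)--(ii). For $n=2$, the Routh--Hurwitz computation \eqref{eq:two-abc}--\eqref{eq:two-last} places all eigenvalues of $J|_{\mathcal T}$ in the open left half-plane. For $n=3$, Lemma~\ref{lem:coverage} shows that P1--P4, together with endpoint reflection, exhaust all admissible parameters after normalizing to $p=1$. Proposition~\ref{prop:certificate} then gives positivity of the Hurwitz determinants. The time and population rescaling preserves the signs of real parts, so the normalization loses nothing. In both cases, the Gronwall argument of the preceding subsection upgrades linear stability to local exponential stability, which excludes local instability. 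Since $n=1$ is outside the class ($n\ge2$), the minimum is exactly $4$.

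The main obstacle is logical rather than computational. \emph{Locally unstable} must be read as the negation of local (Lyapunov, a fortiori exponential) stability in $\mathcal H_M^+$, with the same meaning in both directions. For $n\le3$ I must check that exponential stability precludes instability in this sense. For $n=4$ I must check that right half-plane eigenvalues on $\mathcal T$ (not on the full space, where $0$ is a simple eigenvalue by Lemma~\ref{lem:zero}) give genuine instability. The second point is handled by the invariance of $\mathcal T$ and the identity $JT_0=T_0A$ from \eqref{eq:four-embedding}. With this, the corollary follows, stated only for the equal-weight path class.
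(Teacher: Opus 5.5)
Your proposal is correct and follows the paper's route. The corollary is obtained by combining parts (i)--(iii) of Theorem~\ref{thm:mainp}: the exact four-patch Routh certificate gives instability, and the two- and three-patch Hurwitz arguments, upgraded by the Gronwall estimate, exclude it. The only difference is that the paper does not set aside $n=1$ by the convention $n\ge2$ but checks it directly: with $S+I=M$ one has $\dot I=(\beta-\gamma)I-\beta I^2/M$, whose derivative at the positive equilibrium is $-(\beta-\gamma)<0$. Including this one-line computation makes \emph{smallest} hold without depending on how the class is delimited.
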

\begin{proof}
For a single patch with total population $M$, the infected component
satisfies
$\dot I=(\beta-\gamma)I-\beta I^2/M$.
At its positive equilibrium, the derivative is
$-(\beta-\gamma)<0$. Theorem~\ref{thm:main}(i)--(ii) establishes
stability for two and three patches, and part (iii) supplies the
four-patch counterexample.
\end{proof}

This minimality statement does not concern other dispersal graphs or
other incidence functions. Moreover, local stability for two and
three patches does not imply that every positive trajectory converges
to the equilibrium. Since the restricted linearization is strictly
stable throughout the admissible parameter domain, a Hopf bifurcation
cannot occur at these positive equilibria; however, local spectral
information alone does not rule out periodic orbits elsewhere in the
phase space. The four-patch counterexample shows that the patchwise
conditions $\beta_i>\gamma_i>0$ alone do not ensure local stability
of positive equilibria for all chain-patch models. Transversality
conditions and nonlinear bifurcation coefficients have not been
verified for this example, so we do not claim to have proved the
existence of periodic solutions.

\subsection{Complete Verification Program for the Three-Patch Certificates}\label{app:code3}

The following program uses only integer-polynomial and rational-function
arithmetic. It requires \texttt{python-flint}, version \texttt{0.9.0}
in the present verification. Run the program separately with
\texttt{peak}, \texttt{valley}, \texttt{mono\_up}, and
\texttt{mono\_down}, corresponding to P1--P4. It checks
nonnegativity of polynomial coefficients and strict positivity at the
specified boundary point; the certificate for a region is rejected
if any assertion fails. Running Python with assertions disabled
(\texttt{-O} or \texttt{-OO}) is explicitly disallowed.
For example:
\begin{verbatim}
python verify_sis.py peak
python verify_sis.py valley
python verify_sis.py mono_up
python verify_sis.py mono_down
\end{verbatim}
\begin{lstlisting}
#!/usr/bin/env python3
"""Exact certificate for local stability of the three-patch SIS chain.

Dependency: python-flint (audited with version 0.9.0).
Usage: python verify_sis.py peak|valley|mono_up|mono_down
No floating-point arithmetic or numerical eigenvalues are used.
Variables: q,d,z1,z2,z3 > 0 and b,c >= 0; p is scaled to 1.
The companion paper explains the formulas and four-region coverage.
"""

from flint import fmpz_mpoly_ctx
from time import time
import sys, json
from pathlib import Path

if not __debug__:
    raise RuntimeError("Run without -O/-OO: checks require assertions.")

case = sys.argv[1] if len(sys.argv) > 1 else 'peak'
ctx = fmpz_mpoly_ctx.get(['q', 'b', 'c', 'd', 'z1', 'z2', 'z3'])
q, b, c, d, z1, z2, z3 = ctx.gens()
Z = ctx.constant(0)
O = ctx.constant(1)

# Each region gives x_i = X_i/T and t_i = TN_i/TD_i.
if case == 'peak':
    X = [O, 1+b+c, 1+b]
    T = 1+b+c+d
    TN = [q*(b+c)+z1, b+2*c+z2, q*c+z3]
    TD = [O, d, 1+b]
elif case == 'valley':
    X = [1+b, O, 1+b+c]
    T = 1+b+c+d
    TN = [b+z1, q*(2*b+c)+z2, b+c+z3]
    TD = [c+d, O, d]
elif case == 'mono_up':
    X = [O, 1+b, 1+2*b+c]
    T = 1+2*b+c+d
    TN = [q*b+z1, q*c+z2, b+c+z3]
    TD = [O, 1+b, d]
elif case == 'mono_down':
    X = [O, 1+b+c, 1+2*b+c]
    T = 1+2*b+c+d
    TN = [q*(b+c)+z1, c+z2, b+z3]
    TD = [O, b+d, d]
else:
    raise ValueError(case)

K = [[1, -1, 0], [-1, 2, -1], [0, -1, 1]]
K2 = [[2, -3, 1], [-3, 6, -3], [1, -3, 2]]
U = [-sum(K[i][j]*X[j] for j in range(3)) for i in range(3)]
D = [q*(T-x)+x for x in X]
R = []
DN = []
EN = []

# Clear denominators row by row, retaining the audited gcd order.
for i in range(3):
    rr = X[i]*TD[i]*D[i]
    dd = (q*U[i]*TD[i]+TN[i]*X[i])*D[i]
    ee = q*U[i]*TD[i]*D[i]+q*T*TN[i]*X[i]
    g = rr.gcd(dd).gcd(ee)
    R.append(rr//g)
    DN.append(dd//g)
    EN.append(ee//g)

# Matrix entries are lists of coefficients in increasing powers of lambda.
mat = [
    [
        [
            q*R[i]*K2[i][j]+EN[i]*K[i][j],
            (q+1)*R[i]*K[i][j]+(DN[i] if i == j else Z),
            R[i] if i == j else Z,
        ]
        for j in range(3)
    ]
    for i in range(3)
]


def mul(a, b):
    """Multiply two polynomials in lambda by coefficient convolution."""
    o = [Z]*(len(a)+len(b)-1)
    for i, x in enumerate(a):
        for j, y in enumerate(b):
            o[i+j] = o[i+j]+x*y
    return o


# Expand the 3 by 3 determinant in the original permutation order.
f = [Z]*7
for js, sgn in [
    ([0, 1, 2], 1), ([1, 2, 0], 1), ([2, 0, 1], 1),
    ([2, 1, 0], -1), ([1, 0, 2], -1), ([0, 2, 1], -1),
]:
    zz = mul(mul(mat[0][js[0]], mat[1][js[1]]), mat[2][js[2]])
    f = [a+sgn*b for a, b in zip(f, zz)]

assert f[0].is_zero()
C0, C1, C2, C3, C4, C5 = f[6:0:-1]
assert C0 == R[0]*R[1]*R[2]
print(
    case, 'coeffcounts',
    [len(v.coeffs()) for v in [C0, C1, C2, C3, C4, C5]],
    flush=True,
)


# Rational expressions are stored as (numerator, denominator) pairs.
def red(n, d):
    g = n.gcd(d)
    return n//g, d//g


def mulf(a, b):
    n1, d1 = a
    n2, d2 = b
    g = n1.gcd(d2)
    h = n2.gcd(d1)
    return (n1//g)*(n2//h), (d1//h)*(d2//g)


def subf(a, b):
    n1, d1 = a
    n2, d2 = b
    g = d1.gcd(d2)
    return red(n1*(d2//g)-n2*(d1//g), (d1//g)*d2)


def addf(a, b):
    return subf(a, (-b[0], b[1]))


a1, a2, a3, a4, a5 = [red(C, C0) for C in [C1, C2, C3, C4, C5]]
D2 = subf(mulf(a1, a2), a3)
D3 = addf(
    subf(mulf(D2, a3), mulf(mulf(a1, a1), a4)),
    mulf(a1, a5),
)
summary = {}


def inspect(name, rat):
    """Check coefficient signs and strict positivity at a boundary point."""
    num, den = rat
    vv = num.coeffs()
    ww = den.coeffs()
    subs = {'q': 1, 'b': 0, 'c': 0, 'd': 1, 'z1': 1, 'z2': 1, 'z3': 1}
    nn = num.subs(subs)
    dd = den.subs(subs)
    info = {
        'num_terms': len(vv),
        'num_neg': sum(v < 0 for v in vv),
        'den_terms': len(ww),
        'den_neg': sum(v < 0 for v in ww),
        'num_corner': str(nn),
        'den_corner': str(dd),
    }
    assert (
        min(vv) > 0 and min(ww) > 0
        and int(str(nn)) > 0 and int(str(dd)) > 0
    ), (name, info)
    summary[name] = info
    print(name, info, flush=True)


for name, rat in [('a5', a5), ('D1', a1), ('D2', D2), ('D3', D3)]:
    inspect(name, rat)

H = addf(subf(mulf(a2, D2), mulf(a1, a4)), a5)
print('delta4 start', flush=True)
ti = time()
D4 = subf(mulf(a4, D3), mulf(a5, H))
print('delta4secs', time()-ti, flush=True)

if case == 'mono_down':
    original, denominator = D4
    U = 2*c*q+z1
    FA = b**8*c**11*d**4*q**12*U**3*(22*c*q+7*z1)
    FB = b**6*c**11*d**3*q**11*(q*d+c)*U**4
    FC = b**5*c**11*d**4*q**12*z3*U**4
    square_sum = (
        FA*(16*d-b)**2 + 16*FB*(c*d-b*b)**2
        + 8*FC*(c*d-2*b*b)**2
    )
    residual = 16*original-square_sum
    # Every factor multiplying a square is nonnegative on the whole domain.
    # Thus Delta4 >= residual / (16*denominator) > 0.
    for factor in [FA, FB, FC]:
        assert all(v > 0 for v in factor.coeffs())
    inspect('D4_positive_lower_bound', (residual, 16*denominator))
    summary['D4_square_count'] = 3
    summary['D4_original_terms'] = len(original.coeffs())
    summary['D4_original_negative_coefficients'] = sum(
        v < 0 for v in original.coeffs()
    )
    summary['D4_original_denominator_terms'] = len(denominator.coeffs())
else:
    inspect('D4', D4)

Path('results_'+case+'.json').write_text(json.dumps(summary, indent=2))
\end{lstlisting}

\subsection{Exact Coefficients of the Four-Patch Characteristic Polynomial of Degree Seven}\label{app:coeff4}

The following coefficients are listed in the order used in
\eqref{eq:seven-poly}. They are computed directly from the
original eight-dimensional Jacobian restricted to the fixed-mass
tangent space.
\begin{align*}
a_1&=\frac{379101134939063}{96210693}\\[5pt]
a_2&=\frac{193766864332063468067756905110304087}{500325970604190369701337}\\[5pt]
a_3&=\frac{520687638879246021078634334760985433193367}{55536182737065131036848407}\\[5pt]
a_4&=\frac{1612103833169558194724173560965884227493629293}{47308600109351778290648643}\\[5pt]
a_5&=\frac{20727403672028117211993541652904365306718074000}{55536182737065131036848407}\\[5pt]
a_6&=\frac{123981990079279790905013081846746303759300000000}{38707036453106000419621617}\\[5pt]
a_7&=\frac{5358076291508776112148992615827345600000000000}{116237346706024025284149}.
\end{align*}

\subsection{Complete Verification Program for the Four-Patch Counterexample}\label{app:code4}

The following complete program requires only \texttt{SymPy} and performs
all comparisons over the rational numbers. It directly verifies the
original model, the invariance of the fixed-mass tangent space, the
characteristic polynomial, and Table~\ref{tab:routh}, without calling
any numerical eigenvalue solver.
\begin{lstlisting}
#!/usr/bin/env python3
"""Exact verification of an unstable four-patch SIS endemic equilibrium.

Dependency: SymPy.  All quantities used in the proof are integers or rational
numbers; the script does not compute floating-point roots.

Run:
    python verification_n4.py
    python verification_n4.py --json /path/to/n4_certificate.json

The model is S' = p L S - F, I' = q L I + F, where
F_i = beta_i S_i I_i / (S_i + I_i) - gamma_i I_i.
L is the symmetric four-vertex path generator, with negative diagonal.
The seven-dimensional restriction uses the first seven coordinates of
(S_1,...,S_4,I_1,...,I_4); the last perturbation is minus their sum.
"""

from __future__ import annotations

import argparse
import json
from pathlib import Path

import sympy as sp


def require(condition: bool, message: str) -> None:
    """Keep verification active even when Python is invoked with -O."""
    if not condition:
        raise AssertionError(message)


def rational_string(value: sp.Expr) -> str:
    value = sp.cancel(value)
    require(value.is_Rational is True, "A certificate value is not rational.")
    numerator, denominator = value.as_numer_denom()
    if denominator == 1:
        return str(numerator)
    return f"{numerator}/{denominator}"


def matrix_strings(matrix: sp.Matrix) -> list[list[str]]:
    return [
        [rational_string(matrix[i, j]) for j in range(matrix.cols)]
        for i in range(matrix.rows)
    ]


def vector_strings(vector: sp.Matrix) -> list[str]:
    return [rational_string(value) for value in vector]


def routh_table(coefficients: list[sp.Expr]) -> list[list[sp.Expr]]:
    """Build the ordinary Routh table; reject exceptional zero cases."""
    degree = len(coefficients) - 1
    width = (degree + 2) // 2
    rows = [[sp.Integer(0) for _ in range(width)] for _ in range(degree + 1)]
    for j, value in enumerate(coefficients[0::2]):
        rows[0][j] = value
    for j, value in enumerate(coefficients[1::2]):
        rows[1][j] = value
    for i in range(2, degree + 1):
        require(rows[i - 1][0] != 0, "A Routh pivot vanishes.")
        for j in range(width - 1):
            rows[i][j] = sp.cancel(
                (
                    rows[i - 1][0] * rows[i - 2][j + 1]
                    - rows[i - 2][0] * rows[i - 1][j + 1]
                )
                / rows[i - 1][0]
            )
        require(any(value != 0 for value in rows[i]), "A Routh row vanishes.")
    require(all(row[0] != 0 for row in rows), "A Routh first entry vanishes.")
    return rows


def verify() -> dict:
    rational = sp.Rational
    p = sp.Integer(1)
    q = sp.Integer(20000)
    common_denominator = sp.Integer(20562972)
    S = sp.Matrix([19800000, 700000, 20000, 40000]) / common_denominator
    I = sp.Matrix([10, 965, 999, 998]) / common_denominator
    beta = sp.Matrix(
        [3781803890001, rational(3785211, 193), rational(83996, 111),
         rational(410410479, 499)]
    )
    gamma = sp.Matrix(
        [3781803890000, rational(96000, 193), rational(20000, 999),
         rational(400430000, 499)]
    )
    L = sp.Matrix(
        [[-1, 1, 0, 0], [1, -2, 1, 0],
         [0, 1, -2, 1], [0, 0, 1, -1]]
    )

    require(p > 0 and q > 0, "Dispersal rates must be positive.")
    require(all(value > 0 for value in S), "S is not strictly positive.")
    require(all(value > 0 for value in I), "I is not strictly positive.")
    require(sum(S) + sum(I) == 1, "The total mass is not one.")
    require(all(gamma[i] > 0 for i in range(4)), "Some gamma_i is not positive.")
    require(all(beta[i] > gamma[i] for i in range(4)), "Some beta_i <= gamma_i.")
    require(L == L.T and L * sp.ones(4, 1) == sp.zeros(4, 1),
            "The diffusion generator is incorrect.")

    reaction = sp.Matrix(
        [beta[i] * S[i] * I[i] / (S[i] + I[i]) - gamma[i] * I[i]
         for i in range(4)]
    )
    residual_S = p * L * S - reaction
    residual_I = q * L * I + reaction
    require(residual_S == sp.zeros(4, 1), "The susceptible equilibrium equation fails.")
    require(residual_I == sp.zeros(4, 1), "The infected equilibrium equation fails.")

    # Differentiate the standard-incidence reaction directly at (S, I).
    reaction_dS = sp.diag(
        *[beta[i] * (I[i] / (S[i] + I[i])) ** 2 for i in range(4)]
    )
    reaction_dI = sp.diag(
        *[beta[i] * (S[i] / (S[i] + I[i])) ** 2 - gamma[i] for i in range(4)]
    )
    J_full = sp.Matrix.vstack(
        sp.Matrix.hstack(p * L - reaction_dS, -reaction_dI),
        sp.Matrix.hstack(reaction_dS, q * L + reaction_dI),
    )
    embedding = sp.Matrix.vstack(sp.eye(7), -sp.ones(1, 7))
    J_mass = (J_full * embedding)[:7, :]
    require(sp.ones(1, 8) * J_full == sp.zeros(1, 8),
            "The Jacobian does not conserve total mass.")
    require(J_full * embedding == embedding * J_mass,
            "The fixed-mass coordinate restriction is incorrect.")
    require(J_full * S.col_join(I) == sp.zeros(8, 1),
            "The homogeneity null vector is incorrect.")

    lam = sp.Symbol("lambda")
    polynomial = J_mass.charpoly(lam).as_poly()
    coefficients = polynomial.all_coeffs()
    require(polynomial.degree() == 7 and coefficients[0] == 1,
            "The restricted characteristic polynomial is not monic of degree seven.")
    require(
        sp.expand(J_full.charpoly(lam).as_expr() - lam * polynomial.as_expr()) == 0,
        "The full and restricted characteristic polynomials disagree.",
    )
    require(coefficients[-1] != 0, "The full zero eigenvalue is not simple.")
    primitive_polynomial = polynomial.clear_denoms(convert=True)[1].primitive()[1]
    require(primitive_polynomial.LC() > 0, "The primitive leading coefficient is negative.")

    table = routh_table(coefficients)
    first_column = [row[0] for row in table]
    signs = [int(sp.sign(value)) for value in first_column]
    require(signs == [1, 1, 1, 1, 1, 1, -1, 1], "Unexpected Routh sign pattern.")
    sign_changes = sum(a != b for a, b in zip(signs, signs[1:]))
    require(sign_changes == 2, "There are not exactly two Routh sign changes.")
    bounds = [
        (3*10**6, 4*10**6), (3*10**11, 4*10**11),
        (9*10**15, 10**16), (3*10**19, 4*10**19),
        (3*10**20, 4*10**20), (-2*10**21, -10**21),
        (4*10**22, 5*10**22),
    ]
    require(first_column[0] == 1, "The first Routh entry is not one.")
    for value, (lower, upper) in zip(first_column[1:], bounds):
        require(lower < value < upper, "A paper Routh bound failed.")
    enclosures = []
    for power, value in zip(range(7, -1, -1), first_column):
        lower, upper = int(sp.floor(value)), int(sp.ceiling(value))
        require(lower <= value <= upper, "An exact integer enclosure failed.")
        if lower != upper:
            require(lower < value < upper, "A strict integer enclosure failed.")
        enclosures.append(
            {"row_power": power, "exact": rational_string(value),
             "floor": str(lower), "ceil": str(upper), "sign": int(sp.sign(value))}
        )

    return {
        "certificate": "Four-patch standard-incidence SIS unstable endemic equilibrium",
        "arithmetic": "Exact integers and rationals; no floating-point roots are used",
        "sympy_version": sp.__version__,
        "model": {
            "equations": "S'=p*L*S-F; I'=q*L*I+F; F_i=beta_i*S_i*I_i/(S_i+I_i)-gamma_i*I_i",
            "p": rational_string(p), "q": rational_string(q),
            "L": matrix_strings(L),
            "beta": vector_strings(beta), "gamma": vector_strings(gamma),
            "beta_minus_gamma": vector_strings(beta - gamma),
        },
        "equilibrium": {
            "S": vector_strings(S), "I": vector_strings(I), "total_mass": "1",
            "reaction": vector_strings(reaction),
            "residual_S": vector_strings(residual_S),
            "residual_I": vector_strings(residual_I),
            "pS_plus_qI": vector_strings(p * S + q * I),
        },
        "characteristic_polynomial": {
            "variable": "lambda", "degree": 7,
            "coefficient_order": "descending powers, lambda^7 through lambda^0",
            "monic_rational_coefficients": [rational_string(v) for v in coefficients],
            "primitive_integer_coefficients":
                [str(v) for v in primitive_polynomial.all_coeffs()],
            "full_jacobian_identity": "det(lambda*I_8-J_full)=lambda*det(lambda*I_7-J_mass)",
        },
        "routh": {
            "table": [[rational_string(v) for v in row] for row in table],
            "first_column": enclosures,
            "signs": signs, "sign_changes": sign_changes,
            "zero_pivots": False, "zero_rows": False,
            "right_half_plane_root_count": 2,
        },
        "conclusion": "The endemic equilibrium is linearly unstable on the fixed-total-mass manifold.",
        "verified": True,
    }


def main() -> None:
    parser = argparse.ArgumentParser(description=__doc__)
    parser.add_argument(
        "--json", type=Path,
        default=Path(__file__).resolve().with_name("n4_certificate.json"),
        help="Path for the exact JSON certificate (default: next to this script).",
    )
    args = parser.parse_args()
    certificate = verify()
    print("Model, positivity, mass, equilibrium, and Jacobian assertions: PASSED")
    print("Monic characteristic polynomial coefficients (lambda^7 to lambda^0):")
    for power, value in zip(
        range(7, -1, -1),
        certificate["characteristic_polynomial"]["monic_rational_coefficients"],
    ):
        print(f"  lambda^{power}: {value}")
    print("Routh first-column exact integer enclosures:")
    for entry in certificate["routh"]["first_column"]:
        if entry["floor"] == entry["ceil"]:
            bound = f"= {entry['floor']}"
        else:
            bound = f"in ({entry['floor']}, {entry['ceil']})"
        print(f"  row lambda^{entry['row_power']}: {bound}; sign {entry['sign']:+d}")
    print("Exact Routh sign changes: 2; right-half-plane eigenvalues: 2")
    print(certificate["conclusion"])
    args.json.parent.mkdir(parents=True, exist_ok=True)
args.json.write_text(json.dumps(certificate, indent=2) + "\n", encoding="utf-8")
    print(f"Certificate written to {args.json}")


if __name__ == "__main__":
    main()
\end{lstlisting}

\section*{Declaration of AI use}

The authors used ChatGPT  to assist in
developing the construction of an unstable endemic
equilibrium under the pointwise condition
$\beta>\gamma>0$ in
Section~\ref{seed:construction},
and in developing the analytical arguments,
proofs, and computational verification code
for the patch-model results in
Section~\ref{sec:patch} and Appendix~\ref{B}.
The authors take full responsibility for all
mathematical results and the final content
of the manuscript.

\end{document}